\newcommand{\ber}{\mathrm{Ber}}
\newcommand{\ad}{\mathrm{ad}}
\newcommand{\spa}[1]{\mathrm{Spa}\left(#1\right)}
\newcommand{\spf}[1]{\mathrm{Spf}\left(#1\right)}
\newcommand{\weak}[1]{\langle #1\rangle^\dagger}
\newcommand{\et}{\mathrm{\'{e}t}}
\theoremstyle{plain}
\newtheorem*{thrm}{Theorem}
\newtheorem{theo}{Theorem}[section]
\newtheorem{prop}[theo]{Proposition}
\newtheorem{cor}[theo]{Corollary}
\newtheorem{lem}[theo]{Lemma}
\newtheorem{guess[theo]}{Guess}
\theoremstyle{definition}
\newtheorem{defn}[theo]{Definition}
\theoremstyle{remark}
\newtheorem{rem}[theo]{Remark}
\newtheorem{exa}[theo]{Example}
\newtheorem*{note}{Note}
\newcommand{\proofof}[1]{\end{#1}\begin{proof}}
\renewcommand\section{\@startsection {section}{1}{\z@}%
  {-3.5ex \@plus -1ex \@minus -.2ex}{2.3ex \@plus.2ex}%
  {\normalfont\large\bfseries}}
\renewcommand\subsection{\@startsection{subsection}{2}{\z@}%
  {-3.25ex\@plus -1ex \@minus -.2ex}{1.5ex \@plus .2ex}%
  {\normalfont\bfseries}}
\newcommand{\sh}[1]{\mathcal{#1}}
\newcommand{\Q}{{\mathbb Q}}
\newcommand{\R}{{\mathbb R}}
\renewcommand{\P}{{\mathbb P}}
\newcommand{\A}{{\mathbb A}}
\DeclareMathAlphabet{\mathrmsl}{OT1}{cmr}{m}{sl}
\newcommand{\rssymb}[2]{\newcommand{#1}{\mathrmsl{#2}} }
\newcommand{\oper}[3][n]{\newcommand{#2}{\mathop{\mathrm{#3}}%
\ifx n#1\nolimits\else\limits\fi} }
\newcommand{\rsoper}[3][n]{\newcommand{#2}{\mathop{\mathrmsl{#3}}%
\ifx n#1\nolimits\else\limits\fi} }
\newcommand{\lser}[1]{(\!(#1)\!)}
\newcommand{\pow}[1]{\llbracket #1 \rrbracket}
\newcommand{\spec}[1]{\mathrm{Spec}\left(#1\right)}
\newcommand{\cur}[1]{\mathcal{#1}}
\newcommand{\Norm}[1]{\left\Vert #1\right\Vert}
\newcommand{\norm}[1]{\left\vert#1\right\vert}
\newcommand{\isomto}{\overset{\sim}{\rightarrow}}
\newcommand{\bu}{\bullet}
\newcommand{\rig}{\mathrm{rig}}
\newcommand{\ek}{\cur{E}_K}
\newcommand{\ekd}{\cur{E}_K^\dagger}
\newcommand{\tate}[1]{\langle #1 \rangle}
\oper\Ad{Ad}
\oper\val{val}
\oper\coker{coker}
\oper\mult{mult}
\oper\Iso{Iso}
\oper\End{End}
\oper\Aut{Aut}
\oper\Sub{Sub}
\oper\Alt{Alt}
\oper\Ext{Ext}
\oper\Pic {Pic}
\oper\Sym{Sym}
\oper\Spec{Spec}
\oper\Spf{Spf}
\oper\Sp{Sp}
\oper\Spa{Spa}
\oper\Proj{Proj}
\rsoper\divg{div}
\rsoper{\sym}{sym}
\rsoper{\alt}{alt}
\rsoper\trace{tr}
\rssymb\id{id}
\newcommand{\thismonth}{\ifcase\month\or
  January\or February\or March\or April\or May\or June\or
  July\or August\or September\or October\or November\or December\fi
  \space\number\year}
\title{Rigid cohomology over Laurent series fields I: First definitions and basic properties}
\author{Christopher Lazda and Ambrus P\'{a}l}
\begin{document}

\maketitle 

\abstract{This is the first in a series of papers in which we construct and study a new $p$-adic cohomology theory for varieties over Laurent series fields $k\lser{t}$ in characteristic $p$. This will be a version of rigid cohomology, taking values in the bounded Robba ring $\ekd$, and in this paper, we give the basic definitions and constructions. The cohomology theory we define can be viewed as a relative version of Berthelot's rigid cohomology, and is constructed by compactifying $k\lser{t}$-varieties as schemes over $k\pow{t}$ rather than over $k\lser{t}$. We reprove the foundational results necessary in our new context to show that the theory is well defined and functorial, and we also introduce a category of `twisted' coefficients. In latter papers we will show some basic structural properties of this theory, as well as discussing some arithmetic applications including the weight monodromy conjecture and independence of $\ell$ results for equicharacteristic local fields.}

\tableofcontents

\section*{Introduction}\addcontentsline{toc}{section}{Introduction}

This is the first in a series of papers \cite{rclsf2,rclsf3} dedicated to the construction of a new $p$-adic cohomology theory for varieties over local fields of positive characteristic, that is fields which are isomorphic to a Laurent series field $k\lser{t}$ over a finite field $k$.   (Actually we expect to be able to phrase things purely in terms of valued fields in characteristic $p$, but for now we stick to the explicit case of $k\lser{t}$ for simplicity).   Here we will give a general overview of the context and motivation behind the work, and provide an introduction to the series of papers as a whole.

The study of the cohomology of arithmetic varieties has a long and distinguished history, arguably beginning with Weil's address 1954 ICM address in which he speculated that a suitably robust cohomology theory for varieties over finite fields would have remarkable implications concerning the numbers of points on varieties over finite fields. Ever since then, the search for and study of such `Weil cohomology theories' has been a major driving force in algebraic geometry and number theory, the clearest example of this being the foundational work of Grothendieck and his school on the \'{e}tale topology for schemes, culminating in Deligne's final proof of the Riemann hypothesis for smooth projective varieties of all dimensions in his 1974 paper.

Even though it was the $\ell$-adic theory that eventually led to the first full proof of the Weil conjectures, rationality of the zeta function was first proved via $p$-adic methods by Dwork, a full 10 years before Grothendieck's proof via \'{e}tale cohomology. Dwork's method was later put into a more conceptual framework of a $p$-adic cohomology theory by Monsky and Washnitzer, and thanks largely to work of Berthelot, this eventually grew into the more sophisticated theory of rigid cohomology. Since the early success of $p$-adic approaches, however, it is the $\ell$-adic theory that has generally lead the way in known results and power of the machinery, for example, finite dimensionality of $p$-adic cohomology for smooth varieties was not known until Berthlot's proof in 1997, and it was not known that $p$-adic cohomology admitted Grothendieck's `six cohomological operations' until recent work of Caro and Tsuzuki. A completely $p$-adic proof of the full Weil conjectures was only given in 2006 by Kedlaya. 

One particular area in which we are interested in is the cohomology of varieties over local fields, and in particular the interaction with the cohomology of their reductions over the residue field. So let us suppose that we have a smooth and proper variety $X$ over a local field $F$ of residue characteristic $p>0$, and let $\ell$ be a prime different from $p$. Then, like all $\ell$-adic representations, the \'{e}tale cohomology $H^i_\et(X_{F^\mathrm{sep}},\Q_\ell)$ is quasi-unipotent, that is after making a finite separable extension of $F$ the inertia group acts unipotently. This is a cohomological interpretation of semistable reduction, geometrically speaking we expect that there exists a finite separable extension of $F$ such that $X$ admits a semistable model over $\cur{O}_F$, the ring of integers of $F$.

In the case where we do actually have semistable reduction, i.e. a semistable scheme $\sh{X}/\cur{O}_F$ with generic fibre $X$, then there is a close relation between the cohomology of $X$ and that of the special fibre $\sh{X}_s$ - this is given by the weight spectral sequence. 
$$ \bigoplus_{l\geq \max\left\{ 0,-r\right\}} H^{i-r-2l}_\et(D_{\bar{k}}^{(2l+r+1)},\Q_\ell(-r-l))\Rightarrow H^i_\et\left(X_{F^\mathrm{sep}},\Q_\ell\right)
$$
where the $D^{(l)}$ are disjoint unions of intersections of the components of $\sh{X}_s$. In this case, the weight monodromy conjecture asserts that the filtration induced by the weight spectral sequence on $ H^i_\et(X_{F^\mathrm{sep}},\Q_\ell)$ is equal to the monodromy filtration, coming from the quasi-unipotence of the action of the interia group $I_F$. This is closely bound up with the notion of $\ell$-independence, which more or less states that the Galois representation $H^i_\et(X_{F^\mathrm{sep}},\Q_\ell)$ is independent of $\ell$. More specifically, the $\ell$-adic monodromy theorem allows us to attach Weil--Deligne representations to the \'{e}tale cohomology of some variety $X$ over $F$, and the one conjectures that the whole family of $\ell$-adic Weil--Deligne representations for $\ell\neq p$ is `compatible' in a certain precise sense.

When $F$ is of mixed characteristic $(0,p)$ and $\ell=p$ then the story is more complicated. For example, one no longer has a monodromy theorem, and one has to impose a condition of `potential semistability' to get a reasonable category of Galois representations, it is then a hard theorem that all representations coming from geometry are potentially semistable. As in the $\ell$-adic situation, it is this potential semistability that is crucial in attaching Weil--Deligne representations to the $p$-adic Galois representations coming from geometry, and thus being able to formulate a weight-monodromy conjecture in this case, as well as including the case $\ell=p$ in $\ell$-independence results and conjectures. The vanishing cycles formalism is also a lot more involved, see for example \cite{geiss,tsuj}.

The situation we are interested in, namely $F$ of equal characteristic $p$ and $\ell=p$ (i.e. $F\cong k\lser{t}$) is even more mysterious. More precisely let $k$ be a field of characteristic $p$, $\cur{V}$ a complete DVR whose residue field is $k$ and fraction field $K$ is of characteristic $0$. Let $\pi$ be a uniformiser for $\cur{V}$, fix a norm $\norm{\cdot}$ on $K$ such that $\norm{p}=p^{-1}$, and let $r=\norm{\pi}^{-1}>1$. Here the monodromy theorem one has concerns $(\varphi,\nabla)$-modules over the Robba ring $\cur{R}_K$, where the Robba ring 
$$ \cur{R}_K = \left\{\left.\sum_i a_it^i \in K\pow{t,t^{-1}}\;\right|\; \begin{matrix} \exists \eta<1 \text{ s.t. } \norm{a_i}\eta^i\rightarrow 0\text{ as }i\rightarrow -\infty \\ \forall \rho<1,\;\norm{a_i}\rho^i\rightarrow 0\text{ as }i\rightarrow \infty\end{matrix}\right\}
$$
is the ring of functions converging on some open annulus $\eta<\norm{t}<1$. These $(\varphi,\nabla)$-modules can then be viewed as $p$-adic differential equations on an open annulus, together with a Frobenius structure (here $K$ is a complete $p$-adic field of characteristic $0$ with residue field $k$). As well as the `constant' $(\varphi,\nabla)$-module $\cur{R}_K$ and its Tate twists, examples of such modules include the Bessel isocrystal introduced by Dwork in \cite{dw}. This is a free $\cur{R}_K$-module of rank $2$, with basis $\{e_1,e_2\}$ on which the connection acts via $$\nabla(e_1)=t^{-2}\pi^2e_2 ,\;\;\nabla(e_2)=t^{-1}e_1$$
and the Frobenius acts via some matrix  $A$ with $\det A=p$ and $A\equiv \left( \begin{matrix}1 & 0 \\ 0 & 0 \end{matrix} \right) \;(\pi)$. 

These modules appear in nature as the fibres of overconvergent $F$-isocrystals on a smooth curve $C/k$ around a missing point
$$ \spec{k\lser{t}} \rightarrow C
$$
and can be viewed as a $p$-adic analogue of $\ell$-adic Galois representations.

However, has previously been no satisfactory link between these objects and the $p$-adic cohomology of varieties over $k\lser{t}$, which is currently most easily expressed as taking values in the category of $(\varphi,\nabla)$-modules over a related ring, the Amice ring
$$ \ek = \left\{\left.\sum_i a_it^i \in K\pow{t,t^{-1}}\;\right|\;  \sup_i\norm{a_i}<\infty,\;a_i\rightarrow 0\text{ as }i\rightarrow -\infty \right\}
$$
which is a complete $p$-adic field with residue field $k\lser{t}$, and the theory that gives rise to these cohomology groups is Berthelot's rigid cohomology $X\mapsto H^i_\rig(X/\ek)$. Here $(\varphi,\nabla)$-modules have no obvious geometric interpretation, since $\ek$ itself does not, but they still make sense as finite dimensional vector spaces over $\ek$ equipped with a connection and a horizontal Frobenius. Although $\ek$ and $\cur{R}_K$ are both subrings of the double power series ring $K\pow{t,t^{-1}}$, neither of them contains the other, and thus there is no straightforward way to relate $(\varphi,\nabla)$-modules over either to those over the other, and hence no straightforward way in which to view quasi-unipotence of $(\varphi,\nabla)$-modules over $\cur{R}_K$ as a cohomological manifestation of potential semistability.  

One of our goals in constructing a new theory of $p$-adic cohomology for varieties over $k\lser{t}$ is to connect rigid cohomology to the quasi-unipotence theorem, by showing that each $H^i_\rig(X/\ek)$ has a canonical lattice over the bounded Robba ring
$$ \ekd= \left\{\left.\sum_i a_it^i \in K\pow{t,t^{-1}}\;\right|\;  \sup_i\norm{a_i}<\infty,\;\exists \eta<1 \text{ s.t. } \norm{a_i}\eta^i\rightarrow 0\text{ as }i\rightarrow -\infty \right\} 
$$
which appears as the intersection of $\cur{R}_K$ and $\ek$ inside $K\pow{t,t^{-1}}$. This is a Henselian valued field with residue field $k\lser{t}$, and what we expect is a `refinement' of rigid cohomology 
$$ X\mapsto H^*_\rig(X/\ekd)
$$
takings values in vector spaces over $\ekd$, such that when we base change to $\ek$ we recover $\ek$-valued rigid cohomology. These spaces should also come with the structure of $(\varphi,\nabla)$-modules over $\ekd$. It is also worth noting that those $(\varphi,\nabla)$-modules over $\cur{R}_K$ that arise in geometry as the fibres of overconvergent isocrystals on smooth curves around missing points are actually canonically defined over $\ekd$, since the pullback functor actually factors naturally through the category of $(\varphi,\nabla)$-modules over $\ekd$. 

Actually there is a sense in which the existence of the theory $H^*_\rig(X/\ekd)$ can be viewed as an analogue of the mixed characteristic result that $p$-adic Galois representations coming from geometry are potentially semistable. The natural base extension functor from $(\varphi,\nabla)$-modules over $\ekd$ to those over $\ek$ is fully faithful, thus there is a natural condition on $(\varphi,\nabla)$-modules of being `overconvergent', that is of coming from a $(\varphi,\nabla)$-module over $\ekd$. It is exactly those `overconvergent' modules which can be base changed to $\cur{R}_K$, and thus can be said to be `quasi-unipotent' in a certain sense. We can therefore view overconvergence as an equicharacteristic analogue of potential semistability, and the existence of an $\ekd$-valued cohomology theory as proving that those $(\varphi,\nabla)$-modules coming from geometry are overconvergent.

When we simply take $\cur{V}=W=W(k)$ to be the Witt vectors of $k$, then, for smooth and proper varieties at least, this `overconvergence' property of $H^*_\rig(X/\ek)$ was proven by Kedlaya in his thesis \cite{kedthesis}, and using full faithfulness this means that one can just simply define $H^*_\rig(X/\ekd)$ to be any $(\varphi,\nabla)$-module over $\ekd$ whose base change to $\ek$ is $H^*_\rig(X/\ek)$, and one gets a functor on the category of smooth and proper $k\lser{t}$-varieties essentially for free. Depsite this, there are many justifications for the long and sometimes tedious effort of setting up a new theory of rigid cohomology that we undertake in these papers. Firstly, it is always conceptually and practically more satisfying to actually construct something than merely show that it has to exist - Kedlaya's overconvergence result shows that the groups $H^*_\rig(X/\ekd)$ have to exist, at least for smooth and proper varieties, however, what we show is how we expect these cohomology groups to be constructed. Secondly, the fact that our construction applies to arbitrary varieties over $k\lser{t}$, not necessarily smooth or proper, will enable us to extend these results to include open and/or singular varieties. Finally, and currently somewhat more speculatively, we expect that our approach to the problem will most naturally lead to the development of a $p$-adic vanishing cycles formalism and the construction of a $p$-adic weight spectral sequence describing the weight filtration on $H^i_\rig(X/\ekd)\otimes_{\ekd} \cur{R}_K$, at least once a suitably robust cohomological formalism has been developed.

To reiterate then, what we are seeking therefore is a refinement of Berthelot's theory of rigid cohomology taking values in vector spaces over the bounded Robba ring $\ekd$, and the clue as to how to proceed in our construction comes from the observation that $\ekd$ itself can be viewed as a kind of `dagger algebra' over $K$, of the sort that appears in Monsky-Washnitzer or rigid cohomology. Dagger algebras are quotients of the ring of overconvergent power series
$$ K\weak{x_1,\ldots,x_n}= \left\{\left.\sum_Ia_Ix^I \right\vert \exists \rho>1\text{ s.t. } \norm{a_I}\rho^I\rightarrow 0 \right\}
$$
and appear when one calculates the rigid cohomology of smooth affine varieties $X=\spec{A_0}/k$: one takes a dagger algebra $A$ lifting $A_0$, then the rigid cohomology of $X$ is just the de Rham cohomology of $A$.

If we let $S_K=\cur{V}\pow{t}\otimes_\cur{V} K$, and equip it with the $\pi$-adic topology, then the Amice ring $\ek$ is just the `Tate algebra' $$S_K\tate{t^{-1}}=\frac{S_K\tate{x}}{(tx-1)}$$ over $S_K$, and the bounded Robba ring $\ekd$ is the corresponding `dagger algebra' $$S_K\weak{t^{-1}}=\frac{S_K\weak{x}}{(tx-1)}.$$
In more geometric terms, we consider the `frame' 
$$\left(\spec{k\lser{t}},\spec{k\pow{t}},\spf{\cur{V}\pow{t}}\right)$$
and let us suppose for a minute that we have a good notion of the `generic fibre' of $\spf{\cur{V}\pow{t}}$, as some kind of rigid space over $K$, let us call it $\mathbb{D}^b_K$ (the notation is meant to suggest some form of `bounded open unit disc', or an `open unit disc with infinitesimal boundary'). If we believe that Berthelot's notions of tubes and strict neighbourhoods can also made to be work in this context, then the tube of $\spec{k\lser{t}}$ inside $\mathbb{D}^b_K$ should be defined by
$$ \left\{\left.x\in \mathbb{D}^b_K \right|\norm{t(x)}\geq 1\right\}$$
and a cofinal system of `strict' neighbourhoods of this tube inside $\mathbb{D}^b_K$ should be given by
$$ \left\{\left.x\in \mathbb{D}^b_K \right|\norm{t(x)}\geq r^{-1/n}\right\}.
$$
Each of these should be the affinoid rigid space associated to the ring 
$$ \cur{E}_{r^{-1/n}}:= \frac{S_K\tate{T}}{(t^nT-\pi)}= \left\{\left.\sum_i a_it^i \;\right|\;  a_i\in K,\;\sup_i\norm{a_i}<\infty,\;\norm{a_i}r^{-i/n}\rightarrow 0\text{ as }i\rightarrow -\infty \right\}
$$ 
and so we find that the global sections $\Gamma(\mathbb{D}^b_K,j_{\spec{k\lser{t}}}^\dagger\cur{O}_{\mathbb{D}^b_K})$ of the ring of overconvergent functions should be nothing other than $\ekd$. 

Luckily, Huber's theory of adic spaces, or equivalently Fujiwara/Kato's theory of Zariski-Riemann spaces (as explained in \cite{huber} and \cite{rigidspaces} respectively), provides the framework in which to make these heuristics completely rigourous. Equipping $S_K$ with the $\pi$-adic topology, the adic space $\mathbb{D}^b_K:=\mathrm{Spa}(S_K,\cur{V}\pow{t})$ admits a specialisation map
$$\mathrm{sp}:\mathbb{D}^b_K \rightarrow \spec{k\pow{t}} $$
and we can define the tube of $\spec{k\lser{t}}$ inside $\mathbb{D}^b_K$, which will be the \emph{closure} of the inverse image under the specialisation map. (The reason for this is that we want these tubes to be `overconvergent' subsets of $\mathbb{D}^b_K$). A cofinal system of neighbourhoods of $]\spec{k\lser{t}}[_{\mathbb{D}^b_K}$ is then given exactly as expected, and so we do genuinely get an isomorphism
$$ \Gamma(\mathbb{D}^b_K,j_{\spec{k\lser{t}}}^\dagger\cur{O}_{\mathbb{D}^b_K})\isomto \ekd
$$ between global sections of an appropriately constructed sheaf of overconvergent functions and $\ekd$. Moreover, there will be an equivalence of categories between coherent $j^\dagger_{\spec{k\lser{t}}}\cur{O}_{\mathbb{D}^b_K}$-modules and finite dimensional $\ekd$-vector spaces. 

Thus what this is suggesting to us is that we should be looking for is a version of `relative' rigid cohomology, that is rigid cohomology relative to the frame 
$$\left(\spec{k\lser{t}},\spec{k\pow{t}},\spf{\cur{V}\pow{t}}\right)$$
rather than the frame
$$\left(\spec{k\lser{t}},\spec{k\lser{t}},\spf{\cur{O}_{\ek}}\right).$$
In other words, rather than compactifying our varieties over $k\lser{t}$ and then embedding them in smooth formal schemes over $\cur{O}_{\ek}$, we should instead compactify them over $k\pow{t}$ and then embed them in smooth, $\pi$-adic, formal schemes over $\cur{V}\pow{t}$. Thus we are lead to consider the notion of a smooth and proper frame over $\cur{V}\pow{t}$ as a triple $(X,Y,\frak{P})$ where $X$ is a $k\lser{t}$-variety, $Y$ is a proper, $k\pow{t}$-scheme with an open immersion $X\rightarrow Y$, and $\frak{P}$ is a $\pi$-adic formal $\cur{V}\pow{t}$-scheme together with a closed immersion $Y\rightarrow \frak{P}$ such that $\frak{P}$ is smooth over $\cur{V}\pow{t}$ around $X$. We can then use Huber and Fujiwara/Kato's theories to systematically work on the generic fibre of such the formal scheme $\frak{P}$.

The reader familiar with Berthelot's foundational preprint \cite{iso} might well suggest that we can make sense of the generic fibre of $\spf{\cur{V}\pow{t}}$ within Tate's original theory of rigid spaces as long as we are prepared to use the $\frak{m}=(\pi,t)$-adic, rather than the $\pi$-adic topology. There are a couple of reasons why we do not do this. First of all, if we want to compactify our varieties as schemes over $k\pow{t}$, we need the mod-$\pi$ reductions of the formal schemes of $\cur{V}\pow{t}$ we consider to be schemes, not formal schemes, over $k\pow{t}$. Another reason is that the $j^\dagger$ construction on the generic fibre of $\spf{\cur{V}\pow{t}}$ give rise to the Robba ring $\cur{R}_K$, not the bounded Robba ring $\ekd$. If we are to construct a theory over the latter, then we really must work with the $p$-adic topology on $\cur{V}\pow{t}$.

Thus we proceed to try to construct a theory of rigid cohomology relative to $$\left(\spec{k\lser{t}},\spec{k\pow{t}},\spf{\cur{V}\pow{t}}\right)$$ by using the notion of a frame $(X,Y,\frak{P})$ over $\cur{V}\pow{t}$ described above. Modulo some technical checks to ensure that the rigid spaces we want to consider are suitably nicely behaved, the theory proceeds more or less identically to `classical' rigid cohomology. We get entirely analogous `standard' systems of neighbourhoods, we have a strong fibration theorem and a Poincar\'{e} lemma, and categories of coefficients are constructed in exactly the same way (at least, `relative' coefficients are, we will return to this issue shortly). Thus a large bulk of this paper (and its sequels) consists of checking that as many of the known results about rigid cohomology as possible can also be proved in our new context. Thus our main result in this first paper is the following. 

\begin{thrm}[\ref{maintheo}, \ref{maintheocoeffs}, \ref{maindefs}] For any variety $X/k\lser{t}$ there are well-defined and functorial cohomology groups $H^i_\rig(X/\ekd)$ which are vector spaces over $\ekd$. There are also well-defined and functorial categories of coefficients $F\text{-}\mathrm{Isoc}^\dagger(X/\ekd)$, as well as well-defined and functorial cohomology groups $H^i_\rig(X/\ekd,\sh{E})$ with values in some coefficient object $\sh{E}$.
\end{thrm}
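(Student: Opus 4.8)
The plan is to transcribe Berthelot's construction of rigid cohomology, systematically replacing the frame $(\spec{k\lser{t}},\spec{k\lser{t}},\spf{\cur{O}_{\ek}})$ by $(\spec{k\lser{t}},\spec{k\pow{t}},\spf{\cur{V}\pow{t}})$. First I would develop the rigid geometry relative to $\cur{V}\pow{t}$: to a $\pi$-adic formal $\cur{V}\pow{t}$-scheme $\mathfrak{P}$ one attaches, via Huber's adic spaces (equivalently Fujiwara--Kato's Zariski--Riemann spaces), a generic fibre $\mathfrak{P}_K$ together with a specialisation map $\mathrm{sp}\colon\mathfrak{P}_K\to\mathfrak{P}$, the model case being $\mathbb{D}^b_K=\mathrm{Spa}(S_K,\cur{V}\pow{t})$ with $S_K=\cur{V}\pow{t}\otimes_{\cur{V}}K$ given the $\pi$-adic topology. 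For a smooth and proper frame $(X,Y,\mathfrak{P})$ over $\cur{V}\pow{t}$ one then defines the tube $]X[_{\mathfrak{P}}$ as the closure of $\mathrm{sp}^{-1}(X)$, a cofinal system of strict neighbourhoods of it, and the functor $j^\dagger$ of sections overconvergent along the complement; the basic sanity check, already indicated in the introduction, is that these strict neighbourhoods are genuinely affinoid and that $\Gamma(\mathbb{D}^b_K,j^\dagger_{\spec{k\lser{t}}}\cur{O}_{\mathbb{D}^b_K})\isomto\ekd$, with an attendant equivalence between coherent $j^\dagger\cur{O}$-modules and finite-dimensional $\ekd$-vector spaces.

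With this in hand, for a variety $X/k\lser{t}$ admitting an embedding into such a frame I would set $H^i_\rig(X/\ekd)$ to be the hypercohomology on $]X[_{\mathfrak{P}}$ of the de Rham complex $j^\dagger\Omega^\bullet_{]Y[_{\mathfrak{P}}/\mathbb{D}^b_K}$, computed equally as the cohomology of this complex over a cofinal system of strict neighbourhoods. The substance of the paper is then to prove this independent of the frame and functorial, which requires re-establishing, now relative to $\cur{V}\pow{t}$, the two pillars of the classical theory: the strong fibration theorem — comparing two embeddings by passing to the graph of a lift and reducing to the case of an affine (or projective) space bundle — and the Poincar\'{e} lemma, that $j^\dagger$-cohomology is insensitive to such bundles. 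Functoriality in $X$ follows by replacing a pair of frames by their fibre product so that a given morphism lifts to a morphism of frames. For an arbitrary $X/k\lser{t}$, which need not be smooth, proper, or even embeddable, one descends from a cover by embeddable opens (or a suitable proper hypercover), using that the construction above is local on $Y$ and compatible with the relevant covers; this is where the cohomology groups are finally seen to be well defined for all varieties.

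The coefficient theory runs in parallel: $\mathrm{Isoc}^\dagger(X/\ekd)$ is the category of coherent $j^\dagger_X\cur{O}$-modules on the tube equipped with an overconvergent integrable connection relative to $\mathbb{D}^b_K$, together with the cocycle datum on a cover that renders the notion independent of the frame; $F\text{-}\mathrm{Isoc}^\dagger(X/\ekd)$ adds a horizontal Frobenius structure coming from a lift of the $q$-power Frobenius of $k\pow{t}$ to $\cur{V}\pow{t}$. Cohomology with coefficients $H^i_\rig(X/\ekd,\sh{E})$ is then the hypercohomology of the de Rham complex of $\sh{E}$, and its well-definedness and functoriality follow from the same fibration-theorem-plus-Poincar\'{e}-lemma machinery carried out with nontrivial coefficients.

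I expect the principal obstacle to be the foundational rigid-analytic input rather than the formal transcription of Berthelot's arguments. Because $S_K$ is equipped with the $\pi$-adic topology, $\mathbb{D}^b_K$ is not an affinoid in Tate's sense and the smooth formal $\cur{V}\pow{t}$-schemes in play are not topologically of finite type over a field, so one must check carefully that Huber's formalism does produce well-behaved adic spaces — in particular that tubes and strict neighbourhoods are quasi-compact and quasi-separated and enjoy the coherence and vanishing statements needed to run the spectral-sequence comparisons — and only then can the fibration theorem and Poincar\'{e} lemma be pushed through in this genuinely relative setting.
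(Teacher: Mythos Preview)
Your proposal is correct and matches the paper's approach essentially step for step: adic generic fibres over $\cur{V}\pow{t}$, tubes and standard neighbourhoods, the $j^\dagger$ functor, then the Strong Fibration Theorem and Poincar\'{e} Lemma to prove frame-independence, with coefficients and Frobenius structures built on top in the expected way. Two small divergences of presentation rather than substance: the paper defines $\mathrm{Isoc}^\dagger(X/\ekd)$ intrinsically as compatible families over all $X$-frames and only then identifies it with modules with overconvergent connection on a fixed frame, and it treats only embeddable varieties here, deferring the descent to non-embeddable $X$ that you sketch to the sequel.
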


As expected, the category $F\text{-}\mathrm{Isoc}^\dagger(X/\ekd)$ will consist of certain modules with an overconvergent, integrable connection on an appropriate tube, together with a Frobenius structure. If we fix some Frobenius $\sigma$ on $\ekd$, then the cohomology groups $H^i_\rig(X/\ekd,\sh{E})$ will also come with a $\sigma$-linear endomorphism in the usual fashion.

The vast majority of the definitions, results, and proofs in the first paper will be entirely familiar to anyone well-versed in the constructions of `classical' rigid cohomology, as outlined in \cite{iso,rigcoh}, and there are almost no surprises whatsoever to be found. In parts we repeat more or less word-for-word the original proofs given by Berthelot in \cite{iso} and Le Stum in \cite{rigcoh}, and in others (whenever we have suitable base change results to hand) we can actually use the fact that these results are known over $\ek$ to give straightforward proofs. We hope that the reader will forgive us for going into such detail with material that is essentially well-known, but we considered it important to be as thorough as we thought reasonable, given the novel context.

The second paper in the series \cite{rclsf2} is then concerned with trying to prove certain basic properties that one expects of the theory $X\mapsto H^i_\rig(X/\ekd,\sh{E})$. The three most important results that one would like to know about $\ekd$-valued rigid cohomology are the following.

\begin{enumerate} \item finite dimensionality, that is $H^i_\rig(X/\ekd,\sh{E})$ should be finite dimensional over $\ekd$;
\item bijectivity of Frobenius, that is the linearised Frobenius map
$$ H^i_\rig(X/\ekd,\sh{E})\otimes_{\ekd,\sigma} \ekd \rightarrow H^i_\rig(X/\ekd,\sh{E})
$$
should be an isomorphism;
\item base change, that is there should be a functor
\begin{align*} F\text{-}\mathrm{Isoc}^\dagger(X/\ekd)&\rightarrow F\text{-}\mathrm{Isoc}^\dagger(X/\ek) \\ \sh{E}&\mapsto \hat{\sh{E}}
\end{align*}
giving rise to a natural isomorphism
$$ H^i_\rig(X/\ekd,\sh{E})\otimes_{\ekd} \ek \isomto H^i_\rig(X/\ek,\hat{\sh{E}})
$$
comparing the new theory to `classical' rigid cohomology.
\end{enumerate}
Of course, the third of these implies the other two, and in some sense is the most fundamental result. In fact, if we had a suitably robust proof of the third of these, then actually a lot of the work in these papers would be unnecessary. 

To see why, note that for any smooth and proper frame $(X,Y,\frak{P})$ over $\cur{V}\pow{t}$ in the sense outlined above, we can base change to get a frame $(X,Y_{k\lser{t}},\frak{P}_{\cur{O}_{\ek}})$, smooth and proper over $\cur{O}_{\ek}$ in the `classical' sense. If we could prove a suitable base change result for \emph{any} frame, that is that the base change of the rigid cohomology computed using $(X,Y,\frak{P})$ is the rigid cohomology computed using $(X,Y_{k\lser{t}},\frak{P}_{\cur{O}_{\ek}}),$ then finite dimensionality and invariance of the choice of frame would follow immediately from the corresponding result in `classical' rigid cohomology. However, the issue of base change for the cohomology of rigid analytic varieties is somewhat delicate, and becomes even more so when one introduces overconvegent structure sheaves, and we are not currently certain that this approach can be made to work. If it could, however, then it would given must simpler proofs of the theorems in this paper and the next, as well as proving results going beyond what we have managed so far, for example finite dimensionally in general, or cohomological descent.

Lacking such a base change theorem, we are forced to proceed in a more pedestrian manner, and can only obtain limited results. The basic idea of how to prove base change and finite dimensionality is to follow Kedlaya's proof of finite dimensionality in \cite{kedlayafiniteness}, but for reasons that we will explain shortly we were only able to make this work fully in dimension $1$. Thus the first step is to prove a suitable version of the $p$-adic local monodromy theorem, which then implies finite dimensionality and base change for $\A^1$. This is achieved again by exploiting the observation that the relation between $\ekd$ and $\ek$ is exactly analogous to the relation between a dagger algebra and its affinoid completion. Hence we can prove our required result by `descending' from the $p$-adic monodromy theorem over $\ek$, in exactly the same way as Kedlaya proves a monodromy theorem over a dagger algebra by `descending' from the completion of its fraction field. In this part of the proof, it is not really necessary to restrict to the `absolute' one-dimensional case, indeed, we see no reason why Kedlaya's methods will not apply to give a generic pushforward in relative dimension 1 exactly as in \cite{kedlayafiniteness}, which one might hope would pave the way for a general proof of finite dimensionality for smooth varieties by induction on the dimension.

It is in the second stage of the proof, however, that we find ourselves needing to make the restriction to dimension 1, and is closely linked to the difficulty in giving a `Monsky-Washnitzer' style interpretation of $\ekd$-valued rigid cohomology. The problem is in finding a suitable morphism of frames $(X,Y,\frak{P})\rightarrow (\A^n_{k\lser{t}},\P^n_{k\pow{t}},\widehat{\P}^n_{\cur{V}\pow{t}})$ extending a finite \'{e}tale morphism $X\rightarrow \A^n_{k\lser{t}}$ when $n>1$. When $n=1$ we can achieve what we need by extending the ground field so that the compactification of $X$ acquires semi-stable reduction, thus we can choose a morphism $Y\rightarrow \P^1_{k\pow{t}}$ which is a local complete intersection, and \'{e}tale away from a divisor of $\P^1_{k\lser{t}}$ - it thus lifts. In higher dimensions though, we were not able to find a suitable lifting to construct pushforward functors. Thus the most general result concerning base change that we can currently prove is the following.

\begin{thrm}[\cite{rclsf2}] Let $X/k\lser{t}$ be a smooth curve, and $\sh{E}\in F\text{-}\mathrm{Isoc}^\dagger(X/\ekd)$. Then the base change map
$$ H^i_\rig(X/\ekd,\sh{E})\otimes_{\ekd} \ek \rightarrow H^i_\rig(X/\ek,\hat{\sh{E}})
$$
is an isomorphism.
\end{thrm}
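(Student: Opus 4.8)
The plan is to transport Kedlaya's proof of finiteness of rigid cohomology in relative dimension one \cite{kedlayafiniteness} to the $\ekd$-valued setting, and to extract the base change statement stage by stage, each time comparing the $\ekd$-construction with its $\ek$-counterpart, where the corresponding facts are already known. The three stages are: (1) a local monodromy theorem and a relative-dimension-one pushforward over $\ekd$, compatible with base change; (2) reduction of an arbitrary smooth curve to $\A^1$ via semistable reduction of its compactification; (3) assembling the comparison.

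\textbf{Step 1: monodromy and a pushforward over $\ekd$.} First I would prove the version of the $p$-adic local monodromy theorem appropriate here: a $\pn$-module over the ``Robba ring relative to $\ekd$'' is quasi-unipotent after a finite extension. The point, already used to motivate the whole theory, is that $\ekd\hookrightarrow\ek$ behaves like the inclusion of a dagger algebra into its affinoid completion; since the monodromy theorem is known over $\ek$, and since base extension of $\pn$-modules from $\ekd$ to $\ek$ is fully faithful, one can detect which $\ek$-structures descend and run Kedlaya's own descent argument --- the one by which he proves a monodromy theorem over a dagger algebra starting from the completion of its fraction field --- essentially verbatim. Combined with Frobenius descent and the slope-filtration machinery, this gives finite dimensionality and base change for $X=\A^1_{k\lser{t}}$, and more generally a generic higher direct image $R^if_*$ for smooth morphisms of relative dimension one, compatible with $-\otimes_{\ekd}\ek$.

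\textbf{Step 2: reduction of a smooth curve to $\A^1$.} Given $X/k\lser{t}$ a smooth curve and $\sh{E}\in F\text{-}\mathrm{Isoc}^\dagger(X/\ekd)$, I would first reduce to $X$ affine via Mayer--Vietoris, using that both sides satisfy the appropriate gluing and that the comparison map respects it. A finite extension of $k\lser{t}$ is harmless, since base change along $\ekd\to\ekd'$ and $\ek\to\ek'$ is faithfully flat and commutes with rigid cohomology (again by the dagger/affinoid comparison); so one may assume that a smooth compactification $\bar X$ of $X$ has semistable reduction over $k\pow{t}$, whence there is a finite morphism $\bar X\to\P^1_{k\pow{t}}$ which is a local complete intersection and is \'{e}tale away from a divisor of $\P^1_{k\lser{t}}$. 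Such a morphism lifts to a morphism of frames $(X,\bar X,\mathfrak{P})\to(\A^1_{k\lser{t}},\P^1_{k\pow{t}},\widehat{\P}^1_{\cur{V}\pow{t}})$ after shrinking $\A^1_{k\lser{t}}$ to the open locus $U$ over which the restriction of $\bar X\to\P^1$ to $X$ is finite \'{e}tale. Writing $f\colon X_U\to U$ for this finite \'{e}tale map and pushing forward, $f_*\sh{E}\in F\text{-}\mathrm{Isoc}^\dagger(U/\ekd)$, one gets $H^i_\rig(X_U/\ekd,\sh{E})\isomto H^i_\rig(U/\ekd,f_*\sh{E})$, and since the base change functor $\sh{E}\mapsto\hat{\sh{E}}$ is compatible with finite \'{e}tale pushforward one has $\widehat{f_*\sh{E}}=f_*\hat{\sh{E}}$.

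\textbf{Step 3: conclusion, and the main obstacle.} Applying Step 1 to $U$ and $f_*\sh{E}$ gives $H^i_\rig(U/\ekd,f_*\sh{E})\otimes_{\ekd}\ek\isomto H^i_\rig(U/\ek,\widehat{f_*\sh{E}})$; chasing this through the identifications of Step 2 and their $\ek$-analogues, and checking that the result is the canonical comparison map of the first paper, proves the theorem. I expect the technical heart of the argument to be Step 1: establishing the monodromy theorem and the relative pushforward over $\ekd$, and --- crucially --- making the comparison with the $\ek$-construction \emph{canonical} throughout, not merely an abstract isomorphism. This requires a careful analysis of the Robba ring relative to $\ekd$, and of Frobenius descent and the slope filtration in this relative situation. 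Step 2, by contrast, is a fairly routine application of semistable reduction for curves together with the vanishing of the obstruction to lifting a local complete intersection; and it is exactly the breakdown of the analogue of Step 2 for $n>1$ --- the inability to lift a finite \'{e}tale $X\to\A^n_{k\lser{t}}$ to a morphism of frames --- that confines the method to curves.
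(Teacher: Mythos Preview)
Your proposal is correct and follows essentially the same approach as the paper: the theorem is not proved in this first paper but deferred to the sequel \cite{rclsf2}, and the strategy outlined in the introduction here matches yours step for step --- a descended $p$-adic local monodromy theorem over $\ekd$ giving base change for $\A^1$, then reduction of a general smooth curve to $\A^1$ via a finite separable extension to acquire semistable reduction and a liftable local-complete-intersection morphism $Y\rightarrow \P^1_{k\pow{t}}$ \'{e}tale away from a divisor. You have also correctly identified both where the technical weight lies (Step~1) and the precise obstruction in higher dimensions (the failure of Step~2 for $n>1$), exactly as the paper explains.
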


We also introduce $\ekd$-valued rigid cohomology with compact supports, and can prove a similar base change theorem for smooth curves, although with restrictions on the coefficients. Thus by using the fact that Poincar\'{e} duality is known over $\ek$, we easily obtain the following result.

\begin{thrm}[\cite{rclsf2}] For any smooth curve over $k\lser{t}$ there is a trace map
$$ \mathrm{Tr}: H^2_{c,\rig}(X/\ekd)\rightarrow \ekd(-1)
$$ such that for any $\sh{E}\in F\text{-}\mathrm{Isoc}^\dagger(X/\ekd)$ which extends to $\overline{\sh{E}}\in F\text{-}\mathrm{Isoc}^\dagger(\overline{X}/\ekd)$ on a smooth compactification of $X$, the induced pairing
$$ H^i_{\rig}(X/\ekd,\sh{E}) \times H^{2-i}_{c,\rig}(X/\ekd,\sh{E}^\vee)\rightarrow \ekd(-1)
$$
is a perfect pairing of $\varphi$-modules over $\ekd$.
\end{thrm}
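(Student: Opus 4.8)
\emph{Strategy.} The statement should follow formally from Poincaré duality over the Amice ring $\ek$ (which is known) by transporting it across the base change isomorphisms of the previous two theorems, using that $\ekd\hookrightarrow\ek$ is a faithfully flat extension of fields (recall $\ekd$ is a Henselian valued \emph{field}). First I would construct the trace map by descent from the $\ek$-valued trace, and then deduce perfectness of the cup product pairing by base change.

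\emph{The trace map.} Over $\ek$ one has the Berthelot--Le Stum trace isomorphism $\mathrm{Tr}_\ek\colon H^2_{c,\rig}(X/\ek)\isomto\ek(-1)$. Since the constant coefficient $\ekd$ extends to any smooth compactification $\overline X$, the compactly supported base change theorem applies with trivial coefficients and gives $H^2_{c,\rig}(X/\ekd)\otimes_\ekd\ek\isomto H^2_{c,\rig}(X/\ek)$; composing with $\mathrm{Tr}_\ek$ produces an $\ekd$-linear map $H^2_{c,\rig}(X/\ekd)\to\ek(-1)$, and the claim is that it lands in the $\ekd$-line $\ekd(-1)$. Arguing one connected component of $\overline X$ at a time (and, if necessary, after a finite extension of the residue field, as in Le Stum's treatment), we may assume $\overline X$ geometrically connected, so $H^2_{c,\rig}(X/\ekd)=H^2_\rig(\overline X/\ekd)$ is one-dimensional over $\ekd$ by the finite dimensionality result for curves. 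For $\overline X=\P^1_{k\pow{t}}$ this is a direct computation: $H^2_\rig(\P^1/\ekd)$ is computed from the de Rham cohomology of $\widehat{\P}^1_{\cur{V}\pow{t}}$ and is literally the free rank one $\ekd$-module $\ekd(-1)$, on which we normalise the trace to be the identity. For general $\overline X$ choose a finite morphism $f\colon\overline X\to\P^1_{k\pow{t}}$, which exists since $\overline X$ is projective and, by the semistable reduction argument sketched in the introduction, can be arranged to lift to a morphism of smooth proper frames over $\cur{V}\pow{t}$ so that a pushforward $f_*$ is available; set $\mathrm{Tr}_\ekd:=\frac{1}{\deg f}\,\mathrm{Tr}_{\P^1}\circ f_*$, independence of $f$ being the usual argument. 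As $f$ base changes to the corresponding finite morphism over $k\lser{t}$ and pushforward commutes with base change, $\mathrm{Tr}_\ekd\otimes_\ekd\ek=\mathrm{Tr}_\ek$.

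\emph{Perfectness of the pairing.} Cup product together with $\mathrm{Tr}$ gives, for $\sh{E}\in F\text{-}\mathrm{Isoc}^\dagger(X/\ekd)$ extending to $\overline{\sh{E}}$ on $\overline X$, a pairing $H^i_\rig(X/\ekd,\sh{E})\times H^{2-i}_{c,\rig}(X/\ekd,\sh{E}^\vee)\to\ekd(-1)$ and hence an adjoint $\theta\colon H^i_\rig(X/\ekd,\sh{E})\to\Hom_\ekd\!\big(H^{2-i}_{c,\rig}(X/\ekd,\sh{E}^\vee),\ekd(-1)\big)$. Since $\sh{E}$ extends, so does $\sh{E}^\vee$, so the base change theorems for $H^i$ and $H^i_c$ of curves apply to both. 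Using finite dimensionality over $\ekd$, $\Hom_\ekd(-,\ekd(-1))$ commutes with the flat base change $\ekd\to\ek$, so after $\otimes_\ekd\ek$ the map $\theta$ is identified with the adjoint $\theta_\ek\colon H^i_\rig(X/\ek,\hat{\sh{E}})\to\Hom_\ek\!\big(H^{2-i}_{c,\rig}(X/\ek,\hat{\sh{E}}^\vee),\ek(-1)\big)$ of the $\ek$-valued pairing (here one uses that cup products are compatible with base change and, for the trace, the previous step). But $\theta_\ek$ is an isomorphism, this being Poincaré duality for curves over $\ek$; a homomorphism of finite dimensional $\ekd$-vector spaces becoming an isomorphism after the faithfully flat $\ekd\to\ek$ is already an isomorphism, so $\theta$ is an isomorphism. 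As cup product, trace, and the base change maps are all morphisms of $\varphi$-modules once $\sigma$ is fixed, the pairing is perfect as a pairing of $\varphi$-modules.

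\emph{Main obstacle.} Granting the base change theorems quoted above the argument is essentially formal; the genuinely new point is the descent of the trace to $\ekd$, which rests on the explicit $\P^1$ computation together with the existence of a pushforward along a finite morphism $\overline X\to\P^1_{k\pow{t}}$ lifted to frames over $\cur{V}\pow{t}$ — precisely the place where being in dimension $1$, and hence able to invoke semistable reduction of the compactification after a finite extension of $k\lser{t}$, is used. One must also check that the ingredients needed to identify $H^2_{c,\rig}(X/\ekd)$ with $H^2_\rig(\overline X/\ekd)$ (excision and purity at the missing points) and the finite dimensionality of the cohomology of curves over $\ekd$ are in place, so that $\Hom$ commutes with $\ekd\to\ek$.
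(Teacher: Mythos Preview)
Your proposal is correct and matches the paper's approach: the paper gives no detailed proof here (the result is cited from the sequel \cite{rclsf2}) but only the one-line indication ``Thus by using the fact that Poincar\'{e} duality is known over $\ek$, we easily obtain the following result,'' which is precisely your strategy of descending along the base change isomorphisms for $H^i_\rig$ and $H^i_{c,\rig}$ of curves and invoking faithful flatness of $\ekd\hookrightarrow\ek$. Your extra care in actually constructing $\mathrm{Tr}$ over $\ekd$ (via the explicit $\P^1$ computation and pushforward along a finite map $\overline{X}\to\P^1_{k\pow{t}}$ lifted to frames, using semistable reduction) goes beyond what is sketched here but is in line with the mechanism the introduction describes for pushing forward to $\P^1$ in dimension one; the paper itself leaves all such details to \cite{rclsf2}.
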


Finally, in the third paper \cite{rclsf3} in this series, we get to some arithmetic applications. The first task is to introduce a more refined category of coefficients, one for which the differential structure is relative to $K$, rather than $\ekd$. Objects $\sh{E}$ in the resulting category $\mathrm{Isoc}^\dagger(X/K)$ will then have connections on their cohomology groups $H^i_\rig(X/\ekd,\sh{E})$, arising via the Gauss--Manin construction. Thus if $\sh{E}$ is equipped with a Frobenius structure, i.e. is an object of the category $F\text{-}\mathrm{Isoc}^\dagger(X/K)$, and $X$ is a smooth curve over $k\lser{t}$, then the cohomology groups $H^i_\rig(X/\ekd,\sh{E})$ will be $(\varphi,\nabla)$-modules over $\ekd$. Actually, to prove that the $\sigma$ and $\nabla$-module structures are compatible is slightly delicate, and necessitates a discussion of descent, this also has the added bonus of extending $\ekd$-valued rigid cohomology to non-embeddable varieties. We also get connection on cohomology groups with compact support, and the Poincar\'{e} pairing will turn out to be a perfect pairing of $(\varphi,\nabla)$-modules over $\ekd$. Using this theory, we can then attach $p$-adic Weil--Deligne representations to smooth curves over $k\lser{t}$, the point being that $H^i_\rig(X/\cur{R}_K):=H^i_\rig(X/\ekd)\otimes_{\ekd}\cur{R}_K$ will be a $(\varphi,\nabla)$-module over $\cur{R}_K$, and hence we can use Mamora's procedure from \cite{marmora} to produce an associated Weil--Deligne representation. This can then be compared with the $\ell$-adic Weil--Deligne representations coming from $\ell$-adic cohomology. For more details see the introduction to \cite{rclsf3}. 

\section{Rigid cohomology and adic spaces}\label{opening}

In this section $k$ is again a field of characteristic $p$, $\cur{V}$ a complete DVR whose residue field is $k$ and fraction field $K$ is of characteristic $0$. Moreover $\pi$ is a uniformiser for $\cur{V}$, and as above, we fix a norm $\norm{\cdot}$ on $K$ such that
$\norm{p}=p^{-1}$, and let $r=\norm{\pi}^{-1}>1$. Berthelot's theory of rigid cohomology
$$
X\mapsto H^*_\rig(X/K)
$$
is a $p$-adic cohomology theory for $k$-varieties, whose construction we quickly recall. To define $H^*_\rig(X/K)$ one first compactifies $X$ into a proper scheme $Y/k$, and them embeds $Y$ into a formal scheme $\mathfrak{P}/\cur{V}$, which is smooth over $\cur{V}$ in a neighbourhood of $X$. One then considers the generic fibre $\mathfrak{P}_K$ of $\mathfrak{P}$, which is a rigid analytic space, and one has a specialisation map
$$ \mathrm{sp}:\frak{P}_K\rightarrow P 
$$ where $P$ is the special fibre of $\frak{P}$, that is its mod-$\pi$ reduction. Assocaited to the subschemes $X,Y\subset P$ one has the tubes
$$ ]X[_\frak{P}:=\mathrm{sp}^{-1}(X),\;\;]Y[_\frak{P}=\mathrm{sp}^{-1}(Y),
$$
let $j:]X[_\frak{P}\rightarrow ]Y[_\frak{P}$ denote the inclusion. One then takes $j_X^\dagger\Omega^*_{]Y[_\mathfrak{P}}$ to be the subsheaf of $j_*\Omega^*_{]X[_\frak{P}}$ consisting of \emph{overconvergent} differential forms, that is those that converge on some strict neighbourhood of $]X[_\frak{P}$ inside $]Y[_\frak{P}$. The rigid cohomology of $X$ is then 
$$ H^i_\rig(X/K):= H^i(]Y[_\frak{P},j_X^\dagger\Omega^*_{]Y[_\frak{P}}),
$$
this is independent of both $Y$ and $\mathfrak{P}$.

In the theory that we wish to construct, we will want to consider the `generic fibres' of more general formal schemes, namely $\pi$-adic formal schemes topologically of finite type over $\cur{V}\pow{t}$, and as such this falls somewhat outside the scope of Tate's theory of rigid spaces. Luckily, this is nicely covered by Huber's theory of adic spaces, or equivalently, Fujiwara-Kato's theory of Zariski-Riemann spaces (the equivalence of these two perspectives, at least in all the cases we will need in this article, is Theorem II.A.5.2 of \cite{rigidspaces}). Thus as a warm-up for the rest of the paper, as well as to ensure `compatibility' of our new theory with traditional rigid cohomology, in this opening section we show that rigid cohomology can be computed using adic spaces. 

This is rather straightforward, and is achieved more or less by showing that certain cofinal systems of neighbourhoods for $]X[_\mathfrak{P}$ in $]Y[_\mathfrak{P}$ inside the rigid space $\mathfrak{P}_K$ are also cofinal systems of strict neighbourhoods of $]X[_\mathfrak{P}$ in $]Y[_\mathfrak{P}$ inside the corresponding adic space. We can then use the fact that corresponding rigid and adic spaces have the same underlying topoi to conclude that the two different constructions of $j_X^\dagger\Omega^*_{]Y[_\mathfrak{P}}$ (in the rigid and adic worlds) give \emph{the same} object in the appropriate topos, and hence have the same cohomology. 

So let $(X,Y,\mathfrak{P})$ be a smooth and proper frame, as appearing in Berthelot's construction, that is $X\rightarrow Y$ is an open immersion of $k$-varieties, $Y$ is proper over $k$, $Y\rightarrow \mathfrak{P}$ is a closed immersion of formal $\cur{V}$-schemes and $\mathfrak{P}$ flat over $\cur{V}$ and formally smooth over $\cur{V}$ in some neighbourhood of $X$. Let $P$ denote the special fibre of $\mathfrak{P}$, so that there is a homeomorphism of topological spaces $P \simeq \mathfrak{P}$, and let $Z=Y\setminus X$, with some closed subscheme structure.

\begin{note} A \emph{variety} will always mean a separated scheme of finite type, and formal schemes over $\cur{V}$ will always be assumed to be separated, $\pi$-adic and topologically of finite type.
\end{note}

In this situation, we will want to consider three different sorts of generic fibre of $\mathfrak{P}$, rigid, Berkovich, and adic. To describe them, we work locally on $\mathfrak{P}$, and assume it to be of the form $\mathrm{Spf}(A)$ for some topologically finite type $\cur{V}$-algebra $A$, for any such $A$, we we let $A^+$ denote the integral closure of $A$ inside $A_K:=A\otimes_{\cur{V}}K$.

\begin{itemize} \item The rigid generic fibre $\mathfrak{P}^\rig$. This is the set $\mathrm{Sp}(A_K)$ of maximal ideals of $A_K$, considered as a locally $G$-ringed space in the usual way (see for example Chapter 4 of \cite{freput}). Alternatively, this is the collection of (equivalence classes of) \emph{discrete} continuous valuations $v:A_K\rightarrow \left\{0 \right\}\cup \R^{>0}$.
\item The Berkovich generic fibre $\mathfrak{P}^\ber$. This is the set $\sh{M}(A_K)$ of (equivalence classes of) continuous rank $1$ valuations $v:A_K\rightarrow \left\{0\right\}\cup \R^{>0}$, considered as a topological space as in Chapter 1 of \cite{berk2}.
\item The adic generic fibre. This is the set $\mathrm{Spa}(A_K,A^+)$ of (equivalence classes of) continuous valuations $v:A_K\rightarrow \left\{0\right\} \cup \Gamma$ into some totally ordered group $\Gamma$ (of possibly rank $>1$), satisfying $v(A^+)\leq 1$. It is considered as a locally ringed space as in \cite{huber}.
\end{itemize}

\begin{rem} Whenever $B$ is a topologically finite type $K$-algebra, that is a quotient of some Tate algebra $K\tate{x_1,\ldots,x_n}$, we will also write $B^+$ for the integral closure of the image of $\cur{V}\tate{x_1,\ldots,x_n}$ inside $B$, and $\spa{B}$ instead of $\spa{B,B^+}$. Note that $B^+$ does not depend on the choice of presentation of $B$.
\end{rem}

\begin{rem} It is generally conventional when working with higher rank valuations for them to be written multiplicatively, and we will do so throughout this article. Hence the slightly strange looking definition of the Berkovich space $\sh{M}(A_K)$. 
\end{rem}

There are several relations among these spaces, for example, there is an obvious inclusion $\mathfrak{P}^\ber\rightarrow \mathfrak{P}^\ad$ which is \emph{not} continuous, but there \emph{is} a continuous map $[\cdot]:\mathfrak{P}^\ad\rightarrow \mathfrak{P}^\ber$ which exhibits $\mathfrak{P}^\ber$ as the maximal separated (Hausdorff) quotient of $\mathfrak{P}^\ad$ (as follows from Proposition II.C.1.8 of \cite{rigidspaces}). There is also an inclusion $\mathfrak{P}^\rig\rightarrow \mathfrak{P}^\ad$ as the subset of \emph{rigid} points, this factors throughout $\frak{P}^\ber$. For $x$ a point of any of these spaces, we will write $v_x(\cdot)$ for the corresponding valuation, note this is compatible with the embeddings $\mathfrak{P}^\rig
\rightarrow \mathfrak{P}^\ber\rightarrow \mathfrak{P}^\ad$ but \emph{not} with the map $[\cdot]:\mathfrak{P}^\ad\rightarrow \mathfrak{P}^\ber$. 

For $\#\in\left\{\rig,\ber,\ad\right\}$ there are specialisation maps
$$
\mathrm{sp}:\mathfrak{P}^\# \rightarrow \mathfrak{P} \simeq P
$$
which are compatible with the inclusions $\mathfrak{P}^\rig\rightarrow \mathfrak{P}^\ber\rightarrow \mathfrak{P}^\ad$, but \emph{not} with the quotient map $[\cdot]:\mathfrak{P}^\ad\rightarrow \mathfrak{P}^\ber$. The maps $\mathfrak{P}^\rig\rightarrow \mathfrak{P}$ and $\mathfrak{P}^\ad\rightarrow \mathfrak{P}$ are continuous (for the $G$-topology on $\mathfrak{P}^\rig$), but the map $\mathfrak{P}^\ber\rightarrow\mathfrak{P}$ is anti-continuous, that is the inverse image of an open set is closed and vice versa.

\begin{defn} For the closed subvariety $Y\subset P$ define the tubes
\begin{align*} ]Y[_\mathfrak{P}^\rig&:=\mathrm{sp}^{-1}(Y)\subset \mathfrak{P}^\rig \\
]Y[_\mathfrak{P}^\ber&:=\mathrm{sp}^{-1}(Y)\subset \mathfrak{P}^\ber \\
]Y[_\mathfrak{P}^\ad&:=\mathrm{sp}^{-1}(Y)^\circ\subset \mathfrak{P}^\ad,
\end{align*}
note the fact that the adic tube is the interior of the `na\"{i}ve' tube $\mathrm{sp}^{-1}(Y)$. Also note that this definition works for any closed subset of $P$, in particular we can talk about the tubes $]Z[^\#_\mathfrak{P}$.
\end{defn}

These tubes can be calculated locally as follows. Suppose that $\mathfrak{P}=\spf{A}$ is affine, and that $f_1,\ldots,f_n\in A$ are functions such that $Y\subset P$ is the vanishing locus of the reductions $\bar{f}_i$. Then we have:

\begin{align*}
]Y[_\mathfrak{P}^\rig&=\left\{\left.x\in\mathfrak{P}^\rig \;\right|\; v_x(f_i) <1 \;\forall i\right\} \\
]Y[_\mathfrak{P}^\ber&=\left\{\left.x\in\mathfrak{P}^\ber \;\right|\; v_x(f_i) <1 \;\forall i\right\} \\
]Y[_\mathfrak{P}^\ad&=\left\{\left.x\in\mathfrak{P}^\ad \;\right|\; v_{[x]}(f_i) <1 \;\forall i\right\} 
\end{align*}
again note the difference in the description of the adic tube. Also worth noting is the fact that $]Y[_\mathfrak{P}^\ad$ is the inverse image of $]Y[_\mathfrak{P}^\ber$ under the map $[\cdot]:\mathfrak{P}^\ad\rightarrow \mathfrak{P}^\ber$.

\begin{lem} In the above situation we have:
\begin{align*}]Y[_\mathfrak{P}^\ad &=\bigcup_{n\geq1} [Y]_n^\ad\textrm{, where}  \\
[Y]_n^\ad&= \left\{\left. x\in\mathfrak{P}^\ad\;\right|\; v_x(\pi^{-1}f_i^n)\leq1  \;\forall i\right\}, 
\end{align*} and
\begin{align*}
 ]Y[_\mathfrak{P}^\rig &=\bigcup_{n\geq1} [Y]_n^\rig\textrm{, where}  \\
 [Y]_n^\rig &= \left\{ \left. x\in\mathfrak{P}^\rig\;\right|\; v_x(\pi^{-1}f_i^n)\leq1  \;\forall i\right\}
\end{align*}
are (admissible) affinoid coverings.
\end{lem}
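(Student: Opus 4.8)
The plan is to prove the two halves in parallel, treating $\#\in\{\rig,\ad\}$ uniformly, and to establish three things: (a) each $[Y]_n^\#$ is an affinoid subdomain of $\mathfrak{P}^\#$ (so in particular the covering of $]Y[_\mathfrak{P}^\#$ by the $[Y]_n^\#$ is admissible in the rigid case, and an open covering by quasi-compact opens in the adic case); (b) $]Y[_\mathfrak{P}^\#=\bigcup_{n\geq1}[Y]_n^\#$ as sets; and (c) in the rigid case this is indeed an \emph{admissible} affinoid covering. For (a), note that the locus $v_x(\pi^{-1}f_i^n)\leq1$ for all $i$ is, by definition of the topology on $\mathfrak{P}^\rig$ and $\mathfrak{P}^\ad$, the rational subdomain of $\mathfrak{P}^\rig$ respectively $\mathfrak{P}^\ad$ cut out by the inequalities $\norm{f_i^n}\leq\norm{\pi}$; its ring of functions is the Tate algebra $A_K\tate{\pi^{-1}f_1^n,\ldots,\pi^{-1}f_n^n}$ together with its integral closure on the $+$-side, which is affinoid. (Here one should remark that the case $n=1$ with $f$ replaced by $f^n$ is just a relabelling, so it suffices to observe that $\{v_x(\pi^{-1}g_i)\leq1\}$ is affinoid for any finite set of $g_i\in A$, which is the standard Weierstrass/Laurent domain construction.)

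The heart of the matter is (b), and the only genuine subtlety is the adic case, because the adic tube is defined as the \emph{interior} $\mathrm{sp}^{-1}(Y)^\circ$, equivalently (as recalled just before the lemma) as the set of $x$ with $v_{[x]}(f_i)<1$ for all $i$, i.e. the condition is imposed on the maximal-rank-$1$ generization $[x]$, not on $x$ itself. So for the inclusion $\supseteq$ I would argue: if $x\in[Y]_n^\ad$ then $v_x(\pi^{-1}f_i^n)\leq1$, hence $v_x(f_i^n)\leq v_x(\pi)<1$ (as $\norm\pi<1$), and since $[\cdot]$ is compatible with valuations on elements of $A$ up to the equivalence that passes to the rank-$1$ quotient, one gets $v_{[x]}(f_i)^n=v_{[x]}(f_i^n)\leq v_{[x]}(\pi)=\norm\pi<1$, whence $v_{[x]}(f_i)<1$; thus $x\in{}]Y[_\mathfrak{P}^\ad$. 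For $\subseteq$: if $v_{[x]}(f_i)<1$ for all $i$, then since $[x]$ is a rank-$1$ valuation (valued in $\R^{>0}$) and $\norm\pi<1$ is a fixed real, there is some integer $n$ with $v_{[x]}(f_i)^n\leq\norm\pi$ for all $i$ simultaneously — here one uses finiteness of the index set and that $v_{[x]}(f_i)<1$ strictly, so $v_{[x]}(f_i)^n\to0$. This gives $v_{[x]}(\pi^{-1}f_i^n)\leq1$; but now one must \emph{transfer this back to $x$ itself}, which is where one must be careful, since $[\cdot]$ is not compatible with the valuations in general. The point is that $x$ and $[x]$ have the same support and $x$ is a vertical generization/specialization-type companion of $[x]$: concretely, $x\in[Y]_n^\ad$ is the condition $v_x(\pi^{-1}f_i^n)\leq1$, which says $x$ lies in a certain rational domain, and membership of a point in a rational domain defined by a non-strict inequality is detected on the rank-$1$ quotient precisely because such domains are closed under the vertical generizations collapsed by $[\cdot]$; I would cite the relevant compatibility (the description $]Y[_\mathfrak{P}^\ad=[\cdot]^{-1}(]Y[_\mathfrak{P}^\ber)$ stated above, together with the analogous fact that $[Y]_n^\ad=[\cdot]^{-1}$ of its Berkovich counterpart, the latter being the genuinely affinoid/closed description) to close the loop. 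The rigid case is strictly easier: there $v_x=v_{[x]}$ is already rank $1$, so $\subseteq$ and $\supseteq$ are the one-line archimedean arguments above with no transfer needed.

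Finally, for (c), admissibility of $\{[Y]_n^\rig\}$ as a covering of $]Y[_\mathfrak{P}^\rig$: by Tate's acyclicity and the standard criterion (e.g. in the sense used throughout \cite{iso}), it suffices that the covering be set-theoretically as claimed, that each member be an affinoid subdomain, and that it be \emph{locally finite} on quasi-compact opens, or more simply that any affinoid subdomain of $]Y[_\mathfrak{P}^\rig$ is already contained in some single $[Y]_n^\rig$. The latter is the usual maximum-modulus argument: on a quasi-compact affinoid, $\sup_x v_x(f_i)$ is attained and is $<1$, so a uniform $n$ works. I would simply invoke this, exactly as in Berthelot's treatment of the standard tubes. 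I expect the main obstacle to be purely bookkeeping: making the rank-$>1$ transfer in (b) airtight, i.e.\ justifying that the non-strict inequality defining $[Y]_n^\ad$ is insensitive to passing between $x$ and $[x]$ — everything else is a direct unwinding of the definitions of the Tate-algebra topologies on $\mathfrak{P}^\rig$ and $\mathfrak{P}^\ad$ and the elementary observation that $\eta<1\implies\eta^n\le\norm\pi$ for $n\gg0$.
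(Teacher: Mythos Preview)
Your overall strategy matches the paper's: identify $[Y]_n^\#$ as a rational (Weierstrass) subdomain, then prove the set-theoretic equality by passing through the rank-$1$ generization $[x]$. The rigid case is fine, and your direction $[Y]_n^\ad\subset{}]Y[_\mathfrak{P}^\ad$ is correct (it is exactly the first implication of the paper's Lemma~\ref{vals}). The paper also simply cites Berthelot for admissibility in the rigid case, as you do.

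There is, however, a genuine gap in your argument for ${}]Y[_\mathfrak{P}^\ad\subset\bigcup_n[Y]_n^\ad$. You choose $n$ so that $v_{[x]}(f_i)^n\leq\norm{\pi}$, obtaining only $v_{[x]}(\pi^{-1}f_i^n)\leq 1$, and then claim this transfers back to $x$ because ``membership in a rational domain defined by a non-strict inequality is detected on the rank-$1$ quotient''. This is false. Rational subdomains are open, hence closed under \emph{generization}; thus $x\in[Y]_n^\ad\Rightarrow[x]\in[Y]_n^\ad$, which is the direction you do not need. The converse --- that $[x]\in[Y]_n^\ad$ forces $x\in[Y]_n^\ad$ --- would require $[Y]_n^\ad$ to be closed under specialization, and a Weierstrass domain typically is not: at a boundary rank-$1$ point where $v_{[x]}(\pi^{-1}f_i^n)=1$ exactly, there exist rank-$2$ specializations $x$ with $v_x(\pi^{-1}f_i^n)>1$. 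So your proposed transfer principle, and hence the asserted equality $[Y]_n^\ad=[\cdot]^{-1}$ of its Berkovich counterpart, does not hold.

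The fix is the one the paper uses: since $v_{[x]}(f_i)<1$ strictly and $[x]$ is rank~$1$, choose $n$ with $v_{[x]}(f_i)^n<\norm{\pi}$, i.e.\ $v_{[x]}(\pi^{-1}f_i^n)<1$ strictly. The \emph{strict} inequality does transfer from $[x]$ to $x$: this is the second implication of Lemma~\ref{vals}, proved by applying the first implication to the inverse element. One then gets $v_x(\pi^{-1}f_i^n)<1\leq 1$, hence $x\in[Y]_n^\ad$. In short, the ``bookkeeping'' you flagged is not bookkeeping but the actual content of the step, and it is resolved by insisting on strict inequality rather than by an (incorrect) appeal to $[\cdot]$-saturation of rational domains.
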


\begin{proof} Note that each $[Y]_n^\rig=\mathrm{Sp}(A_K\tate{T}/(\pi T-f^n))=\left\{\left. x\in\mathfrak{P}^\rig \;\right|\; v_x(f)\leq r^{-1/n} \right\}$ is affinoid, and these form an admissible cover of $]Y[^\rig_\mathfrak{P}$ by Proposition 1.1.9 of \cite{iso}.

In the adic case, note that each $[Y]_n^\ad=\mathrm{Spa}(A_K\tate{T}/(\pi T-f^n) )$ is an affinoid open subspace of $\mathfrak{P}^\ad$, we must show that they form an open cover of $]Y[_\mathfrak{P}^\ad$. Firstly, note that $v_x(\pi^{-1}f^n)\leq 1\Rightarrow v_{[x]}(\pi^{-1}f^n)\leq 1\Rightarrow v_{[x]}(f)\leq r^{-1/n}<1$ (recall that $r=\norm{\pi}^{-1}$), the first implication following from Lemma \ref{vals} below and the second using the fact that $v_{[x]}$ is a valuation of rank $1$, so can be viewed as a multiplicative map into $\R^{\geq0}$. Hence $[Y]_n^\ad\subset ]Y[_\mathfrak{P}^\ad$ for each $n$.

Now suppose that $x\in ]Y[_\mathfrak{P}^\ad$, that is $v_{[x]}(f_i)<1$. Then there exists some $n$ such that $v_{[x]}(\pi^{-1}f_i^n)<1$. Hence again by Lemma \ref{vals} below we must have $v_{x}(\pi^{-1}f_i^n)\leq 1$ and hence $x\in [Y]^\ad_n$ for some $n$.
\end{proof}

\begin{lem} \label{vals} Let $\sh{X}=\spa{B}$ be an affinoid adic space for some $B$ topologically of finite type over $K$. Then for any point $x\in \sh{X}$ and any $f\in B$ we have
\begin{align*} v_x(f)\leq1&\Rightarrow v_{[x]}(f)\leq1 \\ 
v_{[x]}(f) <1 &\Rightarrow v_{x}(f)< 1.
\end{align*}
\end{lem}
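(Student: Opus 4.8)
The plan is to analyze what the map $[\cdot]\colon \sh{X}=\spa{B}\to \sh{X}^\ber$ does to valuations and exploit that $[x]$ is the maximal rank-$1$ (hence $\R^{\geq 0}$-valued) valuation lying below $x$ in the natural sense. Concretely, recall from the cited Proposition II.C.1.8 of \cite{rigidspaces} that $[\cdot]$ realises the Berkovich space as the maximal separated quotient; the point $[x]$ can be described via the convex rank-$1$ quotient of the valuation group of $v_x$. Let $\Gamma_x$ be the value group of $v_x$, and let $H\subseteq \Gamma_x$ be the largest convex subgroup not containing the image of $\pi$ (equivalently, the subgroup of elements that are ``infinitesimally close to $1$'' relative to $\norm{\pi}$); then $v_{[x]}$ is, up to the usual identifications, the composite of $v_x$ with the quotient $\Gamma_x\to \Gamma_x/H$, the latter being an ordered subgroup of $\R^{>0}$.

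First I would prove the first implication. Suppose $v_x(f)\le 1$. The key point is that the set $B^+=\{g\in B : v_y(g)\le 1\ \forall y\in \sh{X}\}$ is precisely the ring of power-bounded elements, and membership in $B^+$ is detected identically by $v_x$ and by $v_{[x]}$ when the inequality is non-strict — more simply, since $\Gamma_x\to\Gamma_x/H$ is order-preserving and sends $\{\le 1\}$ into $\{\le 1\}$, we get $v_{[x]}(f)\le 1$ immediately. (If one prefers to avoid the explicit convex-subgroup description: $v_x(f)\le 1$ means $f\in B^+$ after passing to the residue field at $x$, and $[x]$ factors through the same local data, so the reduction of $f$ is again integral, forcing $v_{[x]}(f)\le 1$.)

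For the second implication, suppose $v_{[x]}(f)<1$. Since $v_{[x]}$ has rank $1$, there is some positive integer $m$ with $v_{[x]}(f)^m < \norm{\pi} = r^{-1}$, i.e. $v_{[x]}(\pi^{-1}f^m)<1$, hence $\le 1$ in $\Gamma_x/H$. Now I would argue that $v_x(\pi^{-1}f^m)\le 1$: if not, then $v_x(\pi^{-1}f^m)>1$ in $\Gamma_x$, and since the quotient map $\Gamma_x\to\Gamma_x/H$ is order-preserving the image would satisfy $v_{[x]}(\pi^{-1}f^m)\ge 1$ — but an element mapping to something $\ge 1$ and which is itself $>1$ cannot map to something $<1$; more carefully, $>1$ maps to $\ge 1$, contradicting $v_{[x]}(\pi^{-1}f^m)<1$. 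Hence $v_x(\pi^{-1}f^m)\le 1$, so $v_x(\pi^{-1}f^m)\le 1 < r^{1/m}\cdot 1$ forces, after taking $m$-th roots in the totally ordered group and using $v_x(\pi)=r^{-1}$, that $v_x(f)\le r^{-1/m}<1$. This gives $v_x(f)<1$.

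The step I expect to be the main obstacle is pinning down the precise relationship between $v_x$ and $v_{[x]}$ in terms of convex subgroups and verifying that $\Gamma_x\to\Gamma_x/H$ is genuinely order-preserving with the stated effect on $\{\le 1\}$ and $\{<1\}$ — in other words, making rigorous the assertion that $[x]$ is obtained from $x$ by ``killing the infinitesimal part'' of the value group relative to $\norm{\pi}$. Once that structural fact is in hand (which is essentially the content of Proposition II.C.1.8 of \cite{rigidspaces} together with the continuity/$\pi$-adic boundedness built into $\spa{B}=\spa{B,B^+}$), both implications are short order-theoretic manipulations, with the asymmetry (non-strict $\le$ in one direction, strict $<$ in the other) reflecting exactly the asymmetry between the kernel $H$ being ``too small to see $\pi$'' and the fact that a strict inequality downstairs must already be witnessed by a genuine, non-infinitesimal gap upstairs.
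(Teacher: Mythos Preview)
Your approach is essentially correct and, for the first implication, coincides with the paper's: both use that $v_{[x]}$ is obtained from $v_x$ by passing to a rank-one quotient (the paper phrases this via the height-one prime $\mathfrak{p}=\sqrt{(\pi)}\subset V_x$, you via the dual convex subgroup $H\subset\Gamma_x$; these are equivalent), and order-preservation gives $v_x(f)\le 1\Rightarrow v_{[x]}(f)\le 1$ immediately.

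For the second implication you take a different, more laborious route. Your argument via an auxiliary power $f^m$ does work, but the phrasing ``$v_x(\pi)=r^{-1}$'' and ``$v_x(f)\le r^{-1/m}$'' is not literally correct in a higher-rank value group $\Gamma_x$, where real numbers like $r^{-1/m}$ need not exist; what you actually have is $v_x(f)^m\le v_x(\pi)<1$, hence $v_x(f)<1$ in any totally ordered group, and that is enough. The paper's proof is much cleaner: after observing that one may assume $v_x(f)\neq 0$, it extends both valuations to $B\tate{f^{-1}}$ and simply applies the first implication to $f^{-1}$, obtaining the second by contraposition. This avoids the detour through powers and any mention of real numbers in $\Gamma_x$.
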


\begin{proof} Let $I\subset B$ denote the support of the valuation $v_x$ corresponding to $x$, that is the ideal of elements with valuation $0$. Let $V_x$ be the valuation ring of the induced valuation $v:\mathrm{Frac}(B/I) \rightarrow \left\{ 0\right\} \cup\Gamma$, and let $P_x\subset V_x$ denote the prime ideal of elements whose valuation is $<1$. The radical $\frak{p}:=\sqrt{(\pi)}\subset P_x$ of $(\pi)$ is a height one prime ideal of $V_x$ to which we may associate a rank one valuation $v_\mathfrak{p}:\mathrm{Frac}(B/I)\rightarrow \left\{0\right\}\cup \Gamma'$, which is the valuation associated to $[x]$ (see for example II.3.3.(b) of \cite{rigidspaces}). Since $\mathfrak{p}\subset P_x$ it follows that $v(\lambda)\leq 1\Rightarrow v_\mathfrak{p}(\lambda)\leq1$ for all $\lambda\in B/I$, which proves the first claim. For the second, note that we may assume that $v_x(f)\neq0$, that is $f\notin I$, and hence $v_x$ and $v_{[x]}$ both extend uniquely to valuations on $B\tate{f^{-1}}$. To obtain the second claim we now just simply apply the first to $f^{-1}$. \end{proof}

In particular, $]Y[_\mathfrak{P}^\ad$ is an adic space locally of finite type over $\spa{K}$. In II.B of \cite{rigidspaces}, Fujiwara and Kato construct an equivalence
$$
\sh{X}\mapsto \sh{X}_0
$$
from the category of adic spaces locally of finite type over $\spa{K}$ to rigid spaces locally of finite type over $\mathrm{Sp}(K)$, which is such that $\spa{B}_0=\mathrm{Sp}(B)$ for any affinoid algebra $B$, and such that $(\mathfrak{P}^\ad)_0= \mathfrak{P}^\rig$ for any formal scheme $\mathfrak{P}$ of the type considered above. The previous lemma allows us to deduce the same result for the tube $]Y[_\mathfrak{P}$.

\begin{cor} \label{tubes}There is an isomorphism
$$ (]Y[^\ad_\mathfrak{P})_0\cong ]Y[_\mathfrak{P}^\rig $$
as rigid spaces over $\mathrm{Sp}(K)$.
\end{cor}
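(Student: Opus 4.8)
The plan is to obtain this as a formal consequence of the previous lemma together with the basic properties of the Fujiwara--Kato equivalence $\sh{X}\mapsto\sh{X}_0$ recalled above: that it carries $\spa{B}$ to $\mathrm{Sp}(B)$, carries $\mathfrak{P}^\ad$ to $\mathfrak{P}^\rig$, and is compatible with (admissible) open immersions and with gluing. So the genuinely substantive input has already been supplied by the lemma, which matches up the two affinoid covers; the corollary itself is then essentially bookkeeping.

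First I would reduce to the case $\mathfrak{P}=\spf{A}$ affine. Both $]Y[^\ad_\mathfrak{P}$ and $]Y[^\rig_\mathfrak{P}$ are obtained by gluing the corresponding tubes over an affine open cover $\{\mathfrak{U}_\alpha\}$ of $\mathfrak{P}$, using that forming the interior of the na\"{i}ve tube commutes with restriction to an open subspace; since $\sh{X}\mapsto\sh{X}_0$ is an equivalence it commutes with these gluings, so it suffices to construct the isomorphism functorially over each $\mathfrak{U}_\alpha$. Assuming then $\mathfrak{P}=\spf{A}$, pick finitely many $f_i\in A$ whose reductions cut out $Y\subset P$. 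By the previous lemma we have admissible affinoid coverings $]Y[^\ad_\mathfrak{P}=\bigcup_{n\geq1}[Y]^\ad_n$ and $]Y[^\rig_\mathfrak{P}=\bigcup_{n\geq1}[Y]^\rig_n$, with $[Y]^\ad_n=\mathrm{Spa}(A_K\tate{T}/(\pi T-f^n))$ an affinoid open subspace of $\mathfrak{P}^\ad$ and $[Y]^\rig_n=\mathrm{Sp}(A_K\tate{T}/(\pi T-f^n))$ the corresponding affinoid subdomain of $\mathfrak{P}^\rig$. Applying $\sh{X}\mapsto\sh{X}_0$ identifies $([Y]^\ad_n)_0$ with $[Y]^\rig_n$, compatibly both with the transition inclusions $[Y]^\ad_n\hookrightarrow[Y]^\ad_{n+1}$ and with the inclusions into $\mathfrak{P}^\ad$; passing to the union, which is a colimit preserved by the equivalence, then yields
$$ (]Y[^\ad_\mathfrak{P})_0=\bigcup_{n\geq1}[Y]^\rig_n=]Y[^\rig_\mathfrak{P} $$
as open subspaces of $\mathfrak{P}^\rig$, which is the assertion.

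The one point requiring a little care, and essentially the only step that is not immediate, is checking that the functor genuinely matches up these two open immersions, i.e.\ that $(]Y[^\ad_\mathfrak{P})_0$ is \emph{equal} to the admissible open $]Y[^\rig_\mathfrak{P}$ of $\mathfrak{P}^\rig$ rather than merely abstractly isomorphic to it. This comes down to the facts that on affinoid building blocks the functor is literally $\spa{B}\mapsto\mathrm{Sp}(B)$ and that $[Y]^\#_n$ is in either world the same rational locus $\{v(f^n)\leq v(\pi)\}$, so that the compatibility of $\sh{X}\mapsto\sh{X}_0$ with open immersions (\cite{rigidspaces} II.B) forces the local identifications to be the canonical ones and to glue. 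I expect this bookkeeping to be the main, if modest, obstacle; everything else follows formally from the lemma and the cited equivalence.
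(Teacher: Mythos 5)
Your proposal is correct and is essentially the argument the paper intends (the paper leaves it implicit, deducing the corollary directly from the covering lemma and the stated properties of the Fujiwara--Kato equivalence $\sh{X}\mapsto\sh{X}_0$ on affinoids and open immersions). The only extra content in your write-up is the explicit reduction to affine $\mathfrak{P}$ and the gluing/colimit bookkeeping, which is exactly the routine verification the paper omits.
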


This should convince any doubtful reader that the definition above of $]Y[_\mathfrak{P}^\ad$ is the correct one. Note that in all cases $\#\in\left\{\rig,\ber,\ad\right\}$, the specialisation map gives rise to a map
$$
]Y[_\mathfrak{P}^\#\rightarrow Y.
$$

\begin{defn} For the open subvariety $X\subset Y$ define the tubes
\begin{align*} ]X[_\mathfrak{P}^\rig&:=\mathrm{sp}^{-1}(X)\subset ]Y[_\mathfrak{P}^\rig \\
]X[_\mathfrak{P}^\ber&:=\mathrm{sp}^{-1}(X)\subset ]Y[_\mathfrak{P}^\ber \\
]X[_\mathfrak{P}^\ad&:=\overline{\mathrm{sp}^{-1}(X)}\subset ]Y[_\mathfrak{P}^\ad,
\end{align*}
again note the fact that the adic tube is the closure of the `na\"{i}ve' tube $\mathrm{sp}^{-1}(X)$. 
\end{defn}

As before, these tubes can be calculated locally as follows. Suppose that $\mathfrak{P}=\spf{A}$ is affine, and suppose that $g_1,g_2,\ldots,g_m\in A$ are functions such that $X=Y\cap ( \cup_j D(\bar{g}_j) ) $, where $\bar{g}_j$ the reduction of $g_j$. Then we have:

\begin{align*}
]X[_\mathfrak{P}^\rig&=\left\{\left.x\in]Y[_\mathfrak{P}^\rig \;\right|\; \exists j \text{ s.t. }v_x(g_j) \geq1 \right\} \\
]X[_\mathfrak{P}^\ber&=\left\{\left.x\in]Y[_\mathfrak{P}^\ber \;\right|\; \exists j \text{ s.t. }v_x(g_j) \geq1 \right\} \\
]X[_\mathfrak{P}^\ad&=\left\{\left.x\in]Y[_\mathfrak{P}^\ad \;\right|\; \exists j \text{ s.t. }v_{[x]}(g_j) \geq1 \right\} 
\end{align*}
and again note that $]X[_\mathfrak{P}^\ad$ is the inverse image of $]X[_\mathfrak{P}^\ber$ under the map $[\cdot]:\mathfrak{P}^\ad\rightarrow \mathfrak{P}^\ber$. For $\#\in\left\{\rig,\ber,\ad\right\}$ denote by
$$
j:]X[_\mathfrak{P}^\#\rightarrow ]Y[_\mathfrak{P}^\#
$$
the canonical inclusion, note that for $\#=\ber,\ad$ this is the inclusion of a \emph{closed} subset, but for $\#=\rig$ this is an \emph{open} immersion. For $\#=\ber,\ad$ and a sheaf $\sh{F}$ on $]Y[_\mathfrak{P}^\#$ we define $j_X^\dagger\sh{F}:=j_*j^{-1}\sh{F}$, however, the definition in the rigid case is slightly more involved. 

\begin{defn} A strict neighbourhood of $]X[_\mathfrak{P}^\rig$ in $]Y[_\mathfrak{P}^\rig$ is an open subset $V\subset ]Y[_\mathfrak{P}^\rig$ such that $]Y[_\mathfrak{P}^\rig= V\cup ]Z[_\mathfrak{P}^\rig$ is an admissible open covering, where recall that $Z$ is the complement of $X$ in $Y$. For any such $V$, we let $j_V:V\rightarrow ]Y[_\mathfrak{P}^\rig$ be the canonical open immersion.
\end{defn}

For a sheaf $\sh{F}$ on $]Y[_\mathfrak{P}^\rig$, define $j_X^\dagger\sh{F}=\mathrm{colim}_V j_{V*}j_V^{-1}\sh{F}$, where the colimit is taken over all strict neighbourhoods $V$ of $]X[_\mathfrak{P}^\rig$ in $]Y[_\mathfrak{P}^\rig$. 

\begin{defn} The rigid cohomology of $X$ is defined to be 
$$H^i_\rig(X/K):= H^i(]Y[_\mathfrak{P}^\rig,j_X^\dagger\Omega^*_{]Y[_\mathfrak{P}^\rig}). $$ This does not depend on the choice of frame $(X,Y,\mathfrak{P})$.
\end{defn}

We want to show that we can compute this instead as
$$
H^i(]Y[_\mathfrak{P}^\ad,j_X^\dagger\Omega^*_{]Y[_\mathfrak{P}^\ad})=H^i(]X[_\mathfrak{P}^\ad,j^{-1}\Omega^*_{]Y[_\mathfrak{P}^\ad}).
$$
In order to do this we must first recall Berthelot's construction of a cofinal system of strict neighbourhoods from Section 1.2 of \cite{iso}. For $\mathfrak{P}$ affine we have constructed affinoids $[Y]_n^\rig$ and $[Y]_n^\ad$, which depended on the choice of functions $f_i\in\cur{O}_\mathfrak{P}$ cutting out $Y$ in the reduction $P$.

\begin{lem} For $n\gg0$ the affinoids $[Y]_n^\rig$ and $[Y]_n^\ad$ are independent of the choice of the $f_i$. Hence they glue over an open affine covering of $\mathfrak{P}$.
\end{lem}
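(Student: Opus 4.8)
The plan is to follow Berthelot's treatment of the corresponding statement in classical rigid cohomology (\cite{iso}, \S1.2). Fix the affine formal scheme $\mathfrak{P}=\spf{A}$ and write $\cur{I}_Y\subset A/\pi A$ for the ideal of $Y$ in the special fibre $P=\spec{A/\pi A}$. Here ``functions $f_i$ cutting out $Y$'' must be read as: the reductions $\bar{f}_i$ \emph{generate} $\cur{I}_Y$ for a fixed subscheme structure on $Y$ — this is essential, since merely prescribing the underlying closed set does not pin down the $[Y]_n^\#$ even for large $n$ (compare $\bar{x}$ with $\bar{x}^2$ on the formal disc). It then suffices to compare two such generating families $\{f_i\}$ and $\{g_j\}$. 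Since $(\bar{f}_i)=(\bar{g}_j)=\cur{I}_Y$ in $A/\pi A$, I would lift the relations expressing each family in terms of the other: there are $a_{ij},c_j,b_{ji},d_i\in A$ with $g_j=\sum_i a_{ij}f_i+\pi c_j$ and $f_i=\sum_j b_{ji}g_j+\pi d_i$.

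The key estimate uses only two facts about the generic fibre: every element of $A$ has valuation $\le 1$ at every point (elements of $A$ are power-bounded in $A_K$, resp. lie in $A^+$), while $v_x(\pi)=\norm{\pi}=r^{-1}<1$. For the rigid tube: if $x\in[Y]_n^\rig$ relative to $\{f_i\}$, i.e. $v_x(f_i)\le r^{-1/n}$ for all $i$, then the relation above and the ultrametric inequality give $v_x(g_j)\le\max\left(\max_i v_x(f_i),\,r^{-1}\right)\le r^{-1/n}$, the last step being where $n\gg0$ enters (here it amounts to $r^{-1/n}\ge r^{-1}$, which is exactly what absorbs the $\pi c_j$ term). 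Hence $[Y]_n^\rig(f)\subset[Y]_n^\rig(g)$, and by symmetry they coincide. For the adic tube one argues similarly after expanding $g_j^n=\left(\sum_i a_{ij}f_i+\pi c_j\right)^n$: every monomial in the expansion is either divisible by $\pi$, hence of valuation $\le v_x(\pi)$, or is a product of $n$ of the $f_i$, in which case $v_x\!\left(\prod_k f_{i_k}\right)^n=\prod_k v_x\!\left(f_{i_k}^n\right)\le v_x(\pi)^n$ forces $v_x\!\left(\prod_k f_{i_k}\right)\le v_x(\pi)$ (extraction of $n$-th roots is order-preserving in a totally ordered group). Thus $v_x(\pi^{-1}g_j^n)\le1$, so $[Y]_n^\ad(f)\subset[Y]_n^\ad(g)$, and again equality follows by symmetry. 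Lemma \ref{vals}, or a direct repetition of this computation for $v_{[x]}$, is what keeps the higher-rank points under control.

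Finally, the gluing statement is formal. Over the intersection of two affine opens of $\mathfrak{P}$ one passes to a common affine refinement, on which the chosen generators of $\cur{I}_Y$ restrict to generators of the ideal of $Y$ there; the independence just proved then shows that the two locally defined $[Y]_n^\#$ restrict to the same affinoid, so they glue to a well-defined affinoid associated to $Y$ over any affine covering of $\mathfrak{P}$, and the result is independent of the covering. The glued rigid and adic objects correspond to one another under the equivalence $\sh{X}\mapsto\sh{X}_0$ of \cite{rigidspaces}, by Corollary \ref{tubes}.

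I do not expect a deep obstacle here: the content is essentially bookkeeping. The one point requiring genuine care is the reading of ``cutting out $Y$'' — without requiring the $\bar{f}_i$ to generate the ideal of a fixed scheme structure the statement is simply false, as the $\bar{x}$ versus $\bar{x}^2$ example shows — after which the $\pi$-adic error terms must be organised so that the threshold on $n$ is uniform across the (finitely many) generators and across the charts used in the gluing. Once the relations $g_j=\sum_i a_{ij}f_i+\pi c_j$ are fixed the estimates are immediate, and the adic case is no harder than the rigid one given the valuation-theoretic input already assembled in Lemma \ref{vals}.
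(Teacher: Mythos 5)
Your proof is correct and is essentially the paper's: the paper simply cites Proposition 1.1.8 of \cite{iso} and remarks that the adic case is identical, and your argument is exactly Berthelot's (lift the relations $g_j=\sum_i a_{ij}f_i+\pi c_j$ between generators and absorb the $\pi$-term, with the binomial expansion replacing $n$-th roots so that higher-rank valuations are handled directly). Your caveat that the $\bar{f}_i$ must generate the ideal of the fixed closed subscheme $Y$ (not merely cut out its underlying set) is the correct reading and matches the hypothesis in \cite{iso}, so it is a clarification rather than a gap.
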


\begin{proof} For $[Y]_n^\rig$ this is proved in 1.1.8 of \cite{iso}, and the proof for $[Y]_n^\ad$ is identical.
\end{proof}

Note that $([Y]^\ad_n)_0\cong [Y]^\rig_n$, and that these are the same as the closed tubes $[Y]_{r^{-1/n}}$ of radius $r^{-1/n}$, constructed by Berthelot in \cite{iso}. We also have
$$
]Y[^\#_\mathfrak{P}=\bigcup_{n\gg0} [Y]^\#_{n}
$$
and this is an admissible covering if $\#=\rig$. Similarly, when $\mathfrak{P}$ is affine and we have $f_i,g_j$ as above, so that $Y=\cap_i Z(\bar{f}_i)$ and $X=Y\cap (\cup_j D(\bar{g}_j))$ we can define
\begin{align*}
U_m^\rig &=\left\{\left.x\in]Y[_\mathfrak{P}^\rig \;\right|\; \exists j \text{ s.t. }v_x(\pi^{-1}g_j^m) \geq1 \right\} \\
U_m^\ad &= \left\{\left.x\in]Y[_\mathfrak{P}^\rig \;\right|\; \exists j \text{ s.t. }v_x(\pi^{-1}g_j^m) \geq1 \right\}
\end{align*}
as well as 
\begin{align*}
U_{m,j}^\rig &=\left\{\left.x\in]Y[_\mathfrak{P}^\rig \;\right|\; v_x(\pi^{-1}g_j^m) \geq1 \right\} \\
U_{m,j}^\ad &= \left\{\left.x\in]Y[_\mathfrak{P}^\rig \;\right|\; v_x(\pi^{-1}g_j^m) \geq1 \right\}
\end{align*}
so that $U_m^\#=\cup_jU_{m,j}^\#$, and this is an admissible open covering when $\#=\rig$. As before, for $m\gg 0$ these are independent of the choice of the $g_j$ and hence glue over an open affine covering of $\mathfrak{P}$. Finally we set
\begin{align*}
V_{n,m}^\#&=[Y]^\#_{n}\cap U_{m}^\# \\
V_{n,m,j}^\#&=[Y]^\#_{n}\cap U_{m,j}^\#
\end{align*}
so that $V_{n,m}^\#=\cup_jV_{n,m,j}^\#$ and this covering is admissible when $\#=\rig$. 

\begin{lem} When $\mathfrak{P}$ is affine, the $V_{n,m,j}^\#$ are affinoid, and $(V_{n,m,j}^\ad)_0\cong V_{n,m,j}^\rig$. 
\end{lem}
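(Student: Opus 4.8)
The plan is to realise $V_{n,m,j}^\rig$ and $V_{n,m,j}^\ad$ as, respectively, the rigid and the adic affinoid space attached to one and the same affinoid $K$-algebra $C$. Granting this, $V_{n,m,j}^\rig=\mathrm{Sp}(C)$ is visibly affinoid, and the isomorphism $(V_{n,m,j}^\ad)_0\cong V_{n,m,j}^\rig$ follows immediately from the defining property $\spa{C}_0=\mathrm{Sp}(C)$ of the Fujiwara--Kato equivalence $\sh{X}\mapsto\sh{X}_0$ recalled above.

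First I would use the earlier lemmas, together with the hypothesis that $\mathfrak{P}$ is affine (which is exactly what lets one avoid any gluing), to write $[Y]_n^\rig=\mathrm{Sp}(B)$ and $[Y]_n^\ad=\spa{B}$ for a single affinoid $K$-algebra $B$ --- concretely $B=A_K\tate{T_i}/(\pi T_i-f_i^n)$. The key elementary observation is that $\pi$, being a uniformiser of $\cur{V}$, is a unit in the $K$-algebra $B$; hence $g_j^m$ and $\pi$ generate the unit ideal of $B$, and the subset of $[Y]_n^\#$ cut out by the condition $v_x(\pi^{-1}g_j^m)\ge 1$ is precisely the rational subdomain on which $v_x(\pi)\le v_x(g_j^m)$, with coordinate ring $C:=B\tate{S}/(g_j^m S-\pi)$. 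Indeed, since $v_x(\pi)\neq 0$ at every point $x$, the inequality $v_x(\pi^{-1}g_j^m)\ge 1$ automatically forces $v_x(g_j^m)\neq 0$, so the set-theoretic description of $U_{m,j}^\#$ recorded above coincides with the usual description of this rational subset; moreover this argument makes no reference to the rank of the valuation, so it applies verbatim for $\#=\rig$ and for $\#=\ad$. Intersecting with $[Y]_n^\#$ we conclude $V_{n,m,j}^\rig=\mathrm{Sp}(C)$ and $V_{n,m,j}^\ad=\spa{C}$, where $C^+$ is the canonical integral structure of the Remark above, and in particular is independent of the chosen presentation.

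Feeding the identification $V_{n,m,j}^\ad=\spa{C}$ into the equivalence then gives $(V_{n,m,j}^\ad)_0=(\spa{C})_0=\mathrm{Sp}(C)=V_{n,m,j}^\rig$, as desired. I do not expect any genuine obstacle here, since the content is essentially bookkeeping; the only points that deserve a little care are that the set-theoretic definition of $U_{m,j}^\ad$ really does reproduce the Huber rational subset $\spa{C}\subseteq\spa{B}$ and not some larger locus involving higher-rank valuations --- which is what the observation that $\pi$ is a unit takes care of --- and that the $(-)^+$-structures in play are the standard ones, so that $\spa{C}$ unambiguously denotes $\spa{C,C^+}$; this last point is precisely the content of the earlier remark on the independence of $B^+$ from the presentation.
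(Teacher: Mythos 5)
Your proof is correct and follows essentially the same route as the paper: both exhibit $V_{n,m,j}^\rig$ and $V_{n,m,j}^\ad$ as the rigid and adic affinoid spaces attached to one and the same affinoid algebra (the paper writes it directly as $A_K\tate{T_1,\ldots,T_n,S}/(\pi T_i-f_i^n,\pi-g_j^mS)$, which is your $C=B\tate{S}/(g_j^mS-\pi)$) and then invoke the property $\spa{C}_0=\mathrm{Sp}(C)$ of the Fujiwara--Kato equivalence. Your additional remarks — that $\pi$ is a unit in the $K$-algebra $B$, so the locus $v_x(\pi^{-1}g_j^m)\geq 1$ is exactly the rational subset with the stated coordinate ring, for valuations of any rank — merely spell out what the paper leaves implicit.
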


\begin{proof} Suppose that $\mathfrak{P}\cong\spf{A}$ is affine, and choose $f_i,g_j$ as above. Then
\begin{align*} V_{n,m,j}^\ad&\cong \spa{\frac{A_K\tate{T_1,\ldots,T_n,S}}{(\pi T_i-f_i,\pi-g_j^mS)}} \\
 V_{n,m,j}^\rig&\cong \mathrm{Sp}\left(\frac{A_K\tate{T_1,\ldots,T_n,S}}{(\pi T_i-f_i,\pi-g_j^mS)}\right)
\end{align*}
and the lemma follows.
\end{proof}

\begin{cor} For all frames $(X,Y,\mathfrak{P})$, we have $(V_{n,m}^\ad)_0\cong V_{n,m}^\rig$.\qed
\end{cor}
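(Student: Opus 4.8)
The plan is to reduce to the local computation already carried out in the previous lemma, using that the Fujiwara--Kato functor $\sh{X}\mapsto \sh{X}_0$ is an \emph{equivalence} from adic spaces locally of finite type over $\Spa(K)$ to rigid spaces locally of finite type over $\Sp(K)$; as such it is compatible with the formation of affinoids, with (admissible) open immersions, and with gluing along them.

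First I would treat the case $\mathfrak{P}=\spf{A}$ affine, with auxiliary functions $f_i,g_j$ chosen as above. By the previous lemma each $V_{n,m,j}^\ad$ is affinoid and $(V_{n,m,j}^\ad)_0\cong V_{n,m,j}^\rig$, these isomorphisms being the canonical ones coming from the explicit presentations $\spa{A_K\tate{T_1,\ldots,T_n,S}/(\pi T_i-f_i,\,\pi-g_j^mS)}$ and its $\Sp$-analogue. Since $V_{n,m}^\#=\bigcup_j V_{n,m,j}^\#$ is an open cover for both $\#=\ad$ and $\#=\rig$, it suffices to check that these isomorphisms agree on overlaps. But $V_{n,m,j}^\ad\cap V_{n,m,j'}^\ad$ is again a standard affinoid, namely $\spa{A_K\tate{T_1,\ldots,T_n,S,S'}/(\pi T_i-f_i,\,\pi-g_j^mS,\,\pi-g_{j'}^mS')}$, with $(-)_0$ sending it to the corresponding $\Sp$, and the two restriction maps to $V_{n,m,j}^\ad$ and $V_{n,m,j'}^\ad$ correspond to the evident maps of affinoid algebras; applying $(-)_0$ to this diagram shows the transition isomorphisms match, and the cocycle condition holds automatically by functoriality. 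Hence the local isomorphisms glue to give $(V_{n,m}^\ad)_0\cong V_{n,m}^\rig$.

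For an arbitrary frame I would choose a finite open affine cover $\mathfrak{P}=\bigcup_k\mathfrak{P}_k$. By the lemmas recalled above, for $n,m\gg0$ the subspaces $[Y]_n^\#$, $U_m^\#$, and therefore $V_{n,m}^\#$, are independent of the choices of the $f_i$ and $g_j$, so they glue over this cover and $V_{n,m}^\#\cap\mathfrak{P}_k^\#$ is the object attached to the restriction of $(X,Y,\mathfrak{P})$ to the open $\mathfrak{P}_k$. The affine case applied to each $k$ produces isomorphisms $(V_{n,m}^\ad\cap\mathfrak{P}_k^\ad)_0\cong V_{n,m}^\rig\cap\mathfrak{P}_k^\rig$ which, by the same functoriality argument on the (again affine, since formal $\cur{V}$-schemes are separated) overlaps over $\mathfrak{P}_k\cap\mathfrak{P}_{k'}$, glue to the desired isomorphism. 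I do not anticipate any real obstacle here: the only point requiring a little care is that intersections of the standard affinoids $V_{n,m,j}^\#$ are once more standard affinoids of the same shape, so that the local isomorphisms of the previous lemma are automatically compatible on overlaps; granting this, the statement is formal from $(-)_0$ being an equivalence compatible with gluing.
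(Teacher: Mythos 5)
Your argument is correct and is exactly the reasoning the paper leaves implicit behind its \qed: the previous lemma handles the affinoid pieces $V_{n,m,j}^\#$, and since $(-)_0$ is an equivalence compatible with affinoids, open immersions and gluing (and the $V_{n,m}^\#$ glue over an open affine cover of $\mathfrak{P}$ for $n,m\gg0$), the local isomorphisms patch to give $(V_{n,m}^\ad)_0\cong V_{n,m}^\rig$. Your additional checks on overlaps simply make explicit what the paper treats as formal.
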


Now, for any increasing sequence of integers $\underline{m}(n)\rightarrow \infty$, we let 
$$
V^\#_{\underline{m}}=\bigcup_n V^\#_{n,\underline{m}(n)}.
$$
The previous corollary tells us that $(V^\ad_{\underline{m}})_0\cong V^\rig_{\underline{m}}$, and it it proved in 1.2.4 of \cite{iso} that the $V^\rig_{\underline{m}}$ for varying $\underline{m}$ form a cofinal system of strict neighbourhoods of $]X[^\rig_\mathfrak{P}$ inside $]Y[^\rig_\mathfrak{P}$. In order to show that the same is true in the adic world, we need the following lemma.

\begin{lem} \label{cofinalkey} Let $\sh{X}=\mathrm{Spa}(B)$ be an affinoid rigid space, locally of finite type over $K$. Let $V\subset \sh{X}$ be an open subset, and $g\in B$ such that
$$
V\supset \left\{\left.x\in\sh{X} \;\right|\; v_{[x]}(g)\geq1\right\}.
$$
Then there exists some $m$ such that 
$$
V\supset \left\{\left.x\in\sh{X} \;\right|\; v_x(\pi^{-1}g^m) \geq1 \right\}.
$$
\end{lem}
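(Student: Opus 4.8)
The plan is to reduce the statement to a compactness argument on the adic space $\sh{X}$, exploiting the fact that $\sh{X}=\Spa(B)$ is quasi-compact and that the sets $\{x\in\sh{X}\mid v_x(\pi^{-1}g^m)\geq1\}$ form an increasing chain of open subsets whose union is exactly the open set $\{x\in\sh{X}\mid v_{[x]}(g)\geq1\}^\circ$-type locus. More precisely, first I would observe, using Lemma \ref{vals} applied to the affinoid $\sh{X}$ (or rather its relevant rational subspaces), that
$$ \bigcup_{m\geq1}\left\{x\in\sh{X}\;\middle|\;v_x(\pi^{-1}g^m)\geq1\right\}=\left\{x\in\sh{X}\;\middle|\;v_{[x]}(g)\geq1\right\}. $$
Indeed the inclusion $\subseteq$ follows from the implication $v_x(\pi^{-1}g^m)\geq1\Rightarrow v_{[x]}(\pi^{-1}g^m)\geq1\Rightarrow v_{[x]}(g)\geq r^{-1/m}$, which for rank-one $v_{[x]}$ (multiplicative into $\R^{\geq0}$) does \emph{not} immediately give $v_{[x]}(g)\geq1$, so this needs care — see below. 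For $\supseteq$, if $v_{[x]}(g)\geq1$ then $v_{[x]}(\pi^{-1}g^m)=\norm{\pi}^{-1}v_{[x]}(g)^m\to\infty>1$ for $m\gg0$, so $v_{[x]}(\pi^{-1}g^m)>1$ hence by the contrapositive of the second implication of Lemma \ref{vals} (with $f=\pi g^{-m}$, valid once $g\notin\mathrm{supp}(v_x)$) we get $v_x(\pi^{-1}g^m)\geq1$.

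The subtlety with the inclusion $\subseteq$ is the reason the statement is phrased with the \emph{open} set $V$ containing the closed locus $\{v_{[x]}(g)\geq1\}$: the sets $\{v_x(\pi^{-1}g^m)\geq1\}$ are \emph{not} all contained in $\{v_{[x]}(g)\geq1\}$, only eventually, and only after taking the open hull. So the cleaner route is: let $W_m=\{x\in\sh{X}\mid v_x(\pi^{-1}g^m)\geq1\}$; each $W_m$ is a rational (hence quasi-compact open) subset of $\sh{X}$, being $\Spa$ of $B\tate{T}/(\pi T-g^m)$ as in the earlier lemmas; the chain $W_1\subset W_2\subset\cdots$ is increasing since $v_x(\pi^{-1}g^m)\geq1\Rightarrow v_x(\pi^{-1}g^{m+1})=v_x(g)v_x(\pi^{-1}g^m)$ and one checks $v_x(g)\geq v_x(\pi)=\norm{\pi}<1$... actually monotonicity needs the observation that on $W_m$ one has $v_x(g)\geq\norm{\pi}^{1/m}$, whence $v_x(\pi^{-1}g^{m+1})\geq\norm{\pi}^{1/m}\geq\norm{\pi}^{1/(m+1)}\cdot(\text{stuff})$; I will verify this elementary inequality directly. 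Granting monotonicity, $\bigcup_m W_m$ is an open subset of $\sh{X}$, and I claim it equals a quasi-compact open — or rather, I claim $\sh{X}\setminus V$ is a closed subset disjoint from $\bigcup_m W_m$.

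The heart of the argument is then the following: $V\supseteq\{x\mid v_{[x]}(g)\geq1\}$, and I want $V\supseteq W_m$ for some $m$. Suppose not; then for every $m$ there is $x_m\in W_m\setminus V$. The complement $\sh{X}\setminus V$ is closed in the spectral (hence quasi-compact) space $\sh{X}$, so it is quasi-compact. The sets $W_m\cap(\sh{X}\setminus V)$ form a decreasing... no — increasing chain of opens-in-a-closed-set whose union I must show is empty, which would be the contradiction. To see the union is empty: if $x\in\bigcap$ were in some $W_m$ for all large $m$, then $v_x(\pi^{-1}g^m)\geq1$ for all large $m$ forces $v_x(g)\geq1$ (else $v_x(g)^m\to0$ faster than $\norm{\pi}$... this is where rank-one reduction enters), hence $v_{[x]}(g)\geq1$ by Lemma \ref{vals}, hence $x\in V$, contradiction. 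The quasi-compactness of $\sh{X}\setminus V$ then upgrades ``$\bigcup_m(W_m\setminus V)=\varnothing$'' — which is really the statement that the $W_m$ eventually cover $\sh{X}\setminus V$'s complement of themselves — into the uniform bound: since $\{\sh{X}\setminus W_m\}_m$ is a decreasing family of closed subsets of the quasi-compact $\sh{X}$ with $\bigcap_m(\sh{X}\setminus W_m)\subseteq V$, equivalently $\bigcap_m(\sh{X}\setminus W_m)\cap(\sh{X}\setminus V)=\varnothing$, and each term is closed in the quasi-compact space $\sh{X}\setminus V$, the finite intersection property fails, so some finite (hence, by monotonicity, some single) $\sh{X}\setminus W_m$ is already disjoint from $\sh{X}\setminus V$, i.e. $W_m\supseteq\sh{X}\setminus V$... which gives even more than needed. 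Rewriting: $W_m\cup V=\sh{X}\supseteq V$, but I actually want $V\supseteq W_m$; so I instead apply the finite-intersection argument to conclude $\sh{X}\setminus V\subseteq W_m$ is false and rather that $(\sh{X}\setminus W_m)\supseteq(\sh{X}\setminus V)$ eventually...

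Let me restate the endgame cleanly, as this is the one genuinely load-bearing step: the family of closed sets $\{\sh{X}\setminus W_m\}_{m\geq1}$ in the quasi-compact space $\sh{X}$ is decreasing, and $\bigcap_m(\sh{X}\setminus W_m)=\sh{X}\setminus\bigcup_m W_m\subseteq\sh{X}\setminus\{v_{[x]}(g)\geq1\}$. I want to deduce $V\supseteq W_m$ for some $m$, equivalently $\sh{X}\setminus W_m\supseteq\sh{X}\setminus V$. Since $\sh{X}\setminus V$ is open and $\{W_m\}$ is an increasing open cover of the open set $\bigcup W_m\supseteq\{v_{[x]}(g)\geq1\}$, the set $K:=\sh{X}\setminus V$, being closed in the quasi-compact $\sh{X}$ hence quasi-compact, is covered by... no, $K$ need not be inside $\bigcup W_m$. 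The correct formulation: $V\cup W_m$ is an increasing family of open sets; $\bigcup_m(V\cup W_m)=V\cup\bigcup_m W_m\supseteq V\cup\{v_{[x]}(g)\geq1\}=\sh{X}$ — here using the hypothesis $V\supseteq\{v_{[x]}(g)\geq1\}$ together with the identity that $\bigcup_m W_m$ contains the open complement of $\{v_{[x]}(g)<1\}$, which covers whatever $V$ misses. By quasi-compactness of $\sh{X}$ and monotonicity, $V\cup W_m=\sh{X}$ for some $m$, i.e. $W_m\supseteq\sh{X}\setminus V$. That is the opposite of what I want, so the real content must be: $V$ is open and \emph{contains} the closed set $\{v_{[x]}(g)\geq1\}=\bigcap_m(\sh{X}\setminus W_m)^c$...

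\textbf{Cleanest version.} Work with the complements. Let $A_m=\sh{X}\setminus W_m$ (closed, decreasing) and $C=\sh{X}\setminus V$ (closed). The hypothesis says $C\cap\{v_{[x]}(g)\geq1\}=\varnothing$. I showed $\bigcap_m A_m=\{v_{[x]}(g)<1\}$... not quite — $\bigcap_m A_m$ is the set of $x$ with $v_x(\pi^{-1}g^m)<1$ for all $m$, which by the displayed identity equals $\{v_{[x]}(g)<1\}$. So $\bigcap_m A_m=\{v_{[x]}(g)<1\}\supseteq C$, giving \emph{no} contradiction, so the finite intersection property does \emph{not} force $C\cap A_m=\varnothing$. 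Hence a straight compactness argument on $\sh{X}$ fails — and this is why the lemma must be proved by a more hands-on estimate, which I anticipate is exactly the approach of Berthelot (1.2.4 of \cite{iso}) transported to the adic setting: cover $\sh{X}$ by finitely many affinoids $\Spa(B_\alpha)$ adapted to $V$, reduce to $V$ itself being a finite union of rational subsets $\{v_x(h_{\alpha k}/\pi^{N_\alpha})\geq1,\ v_x(\pi/h_{\alpha k})\leq1\}$ or so, and on each such piece solve the inequality $v_x(\pi^{-1}g^m)\geq1$ explicitly using the spectral norm of $g$ restricted there — which is bounded by the corresponding generator since $V$ already contains $\{v_{[x]}(g)\geq1\}$. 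The main obstacle, then, is \emph{not} abstract point-set topology but the bookkeeping of reducing $V$ to a standard form: once $V$ is written as a finite union of rational domains on which $g$ is ``small,'' the exponent $m$ can be read off from the defining data, exactly as in Berthelot's original rigid-analytic argument, and the passage from $v_{[x]}$ to $v_x$ throughout is handled by repeated appeals to Lemma \ref{vals}. I expect the proof to conclude by invoking the rigid statement 1.2.4 of \cite{iso} through the equivalence $\sh{X}\mapsto\sh{X}_0$ of \cite{rigidspaces} and Corollary \ref{tubes} / the preceding lemmas, noting that rational subsets correspond and the valuative conditions $v_x(\pi^{-1}g^m)\geq1$ are insensitive to whether one works adically or rigidly on a rational domain, so that the adic lemma follows formally from its rigid counterpart.
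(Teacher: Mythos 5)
Your proposal does not reach a proof: after correctly observing that the naive finite-intersection/open-cover compactness argument with the sets $W_m=\{x\in\sh{X}\mid v_x(\pi^{-1}g^m)\geq1\}$ cannot work (the complement $T=\sh{X}\setminus V$ may well meet $\bigcap_m(\sh{X}\setminus W_m)$, e.g.\ at points where $v_{[x]}(g)\leq r^{-1}$), you abandon compactness altogether and end with an unexecuted plan: rewrite $V$ as a union of standard rational domains and ``invoke'' Berthelot's 1.2.4 through the rigid/adic equivalence. That reduction is not a formality and, as stated, is not available: the hypothesis concerns an arbitrary open subset $V$ of the adic space and the overconvergent closed set $\{x\mid v_{[x]}(g)\geq1\}$, neither of which corresponds under $(-)_0$ to data visible in the rigid space (a general adic open need not be quasi-compact or come from an admissible open, and the condition on $v_{[x]}$ at higher-rank points has no rigid counterpart). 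The whole purpose of this lemma in the paper is to supply precisely the adic-specific input that is not already contained in 1.2.4 of \cite{iso}, so concluding ``by invoking the rigid statement'' is circular.

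The missing idea is that compactness does work, but applied to a real-valued function rather than to the cover by the $W_m$: as in \S II.4.3 of \cite{rigidspaces}, $g$ defines a \emph{continuous} function $\Norm{g}\colon\sh{X}\rightarrow\R^{\geq0}$ which is a consistent normalisation of $x\mapsto v_{[x]}(g)$; this continuity on the adic space is the key input your proposal never identifies. The set $T=\sh{X}\setminus V$ is closed in the spectral space $\sh{X}$, hence quasi-compact, and the hypothesis says $\Norm{g}(T)\subset[0,1)$; quasi-compactness of the image then yields a uniform bound $\Norm{g}(T)\subset[0,\eta]$ with $\eta<1$. Choosing $m$ with $r\eta^m<1$ gives $v_{[x]}(\pi^{-1}g^m)<1$ for all $x\in T$, and the second implication of Lemma \ref{vals} converts this into $v_x(\pi^{-1}g^m)<1$ on $T$, which is exactly the desired inclusion $V\supset\{x\mid v_x(\pi^{-1}g^m)\geq1\}$. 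Your manipulations passing between $v_x$ and $v_{[x]}$ via Lemma \ref{vals} are fine as far as they go, but without the uniform bound $\eta<1$ coming from the continuous function $\Norm{g}$ on the quasi-compact set $T$ there is no way to select a single exponent $m$.
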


\begin{proof} Let $T=\sh{X}\setminus V$ denote the complement of $V$, this is a quasi-compact topological space. As in \S II.4.3 of \cite{rigidspaces}, $g$ defines a continuous function $\Norm{g}:\sh{X}\rightarrow \R^{\geq0}$ (although the function there depends upon a choice of an ideal of definition $\sh{I}$ and a non-zero constant $c<1$, there are canonical choices in our case, namely $\sh{I}=(\pi)$ and $c$ such that the induced norm on constant functions is the fixed norm on $K$).

This induces a continuous function 
$$ \Norm{g}:T\rightarrow \R^{\geq0}
$$ 
which is in effect a consistent normalisation of $\norm{v_{[x]}(g)}\in\R$ for varying $x$. Therefore by assumption $\Norm{g}(T)\subset [0,1)$. But since $T$ is quasi-compact, so must its image under $\Norm{g}$ be, and hence $\Norm{g}(T)\subset [0,\eta]$ for some $\eta<1$. Hence
$$
T\subset \left\{\left.x\in \sh{X} \;\right|\; v_{[x]}(g) \leq \eta \right\}
$$
and by Lemma \ref{vals}, there exists some $m$ such that 
$$ T \subset \left\{\left. x\in \sh{X} \;\right|\; v_x(\pi^{-1}g^m) \leq 1\right\}.
$$
The claim follows.
\end{proof}

\begin{prop} \label{cofinal} As $\underline{m}$ varies, the $V^\ad_{\underline{m}}$ form a cofinal system of open neighbourhoods of $]X[^\ad_\mathfrak{P}$ inside $]Y[^\ad_\mathfrak{P}$.
\end{prop}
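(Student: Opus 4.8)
The plan is to show two things: first, that each $V^\ad_{\underline{m}}$ is an open neighbourhood of $]X[^\ad_\mathfrak{P}$, and second, that any open neighbourhood of $]X[^\ad_\mathfrak{P}$ contains some $V^\ad_{\underline{m}}$. For the first point, one works locally on an affine piece $\mathfrak{P}=\spf{A}$ and uses the local description $]X[_\mathfrak{P}^\ad=\{x\in]Y[_\mathfrak{P}^\ad\mid \exists j,\ v_{[x]}(g_j)\geq 1\}$. Given a point $x$ in $]X[^\ad_\mathfrak{P}$, it lies in some $[Y]^\ad_n$ (since the $[Y]^\ad_n$ cover $]Y[^\ad_\mathfrak{P}$) and satisfies $v_{[x]}(g_j)\geq 1$ for some $j$; one must check that for a suitable $m$ (which one can take to be $\underline{m}(n)$, since $\underline{m}$ is increasing and the relevant bound only improves with larger $m$) we have $v_x(\pi^{-1}g_j^m)\geq 1$. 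This is the content of the implication $v_{[x]}(g_j)\geq 1 \Rightarrow v_{[x]}(\pi^{-1}g_j^m)\geq 1$ for $m$ large (using that $v_{[x]}$ has rank one and $v_{[x]}(\pi)=r^{-1}<1$), combined with the contrapositive-style half of Lemma \ref{vals}: from $v_{[x]}(\pi^{-1}g_j^m)>1$, i.e. $v_{[x]}((\pi^{-1}g_j^m)^{-1})<1$, one gets $v_x((\pi^{-1}g_j^m)^{-1})<1$, hence $v_x(\pi^{-1}g_j^m)\geq 1$. One has to be slightly careful about whether a strict or non-strict inequality is available, but picking $m$ so that $v_{[x]}(g_j^m)$ exceeds $r$ strictly handles this. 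So $]X[^\ad_\mathfrak{P}\subset V^\ad_{\underline{m}}$, and $V^\ad_{\underline{m}}$ is open since it is a union of the affinoid opens $V^\ad_{n,\underline{m}(n),j}$.

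For the cofinality, let $W$ be any open neighbourhood of $]X[^\ad_\mathfrak{P}$ in $]Y[^\ad_\mathfrak{P}$. I would argue locally on each affinoid $[Y]^\ad_n$ of the cover of $]Y[^\ad_\mathfrak{P}$, and use Lemma \ref{cofinalkey} applied to $\sh{X}=[Y]^\rig_n$ (equivalently, via Corollary \ref{tubes} and the equivalence $\sh{X}\mapsto\sh{X}_0$, to the affinoid adic space $[Y]^\ad_n$), together with its open subset $W\cap [Y]^\ad_n$ and the functions $g_j$. The hypothesis of Lemma \ref{cofinalkey} requires $W\cap[Y]^\ad_n\supset \{x\mid v_{[x]}(g_j)\geq 1\}$; this holds because $\{x\in [Y]^\ad_n\mid v_{[x]}(g_j)\geq 1\}$ is contained in $]X[^\ad_\mathfrak{P}\cap[Y]^\ad_n\subset W$. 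Applying the lemma for each $j$ (and taking the maximum of the finitely many resulting exponents) produces an integer $m_n$ with $W\cap[Y]^\ad_n\supset \{x\in[Y]^\ad_n\mid \exists j,\ v_x(\pi^{-1}g_j^{m_n})\geq 1\}=V^\ad_{n,m_n}$. Setting $\underline{m}(n)$ to be an increasing sequence dominating these $m_n$ (and using that enlarging the exponent only shrinks $V^\ad_{n,m}$), one gets $V^\ad_{n,\underline{m}(n)}\subset W$ for all $n$, hence $V^\ad_{\underline{m}}=\bigcup_n V^\ad_{n,\underline{m}(n)}\subset W$.

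The main obstacle is the passage from the local statements on each $[Y]^\ad_n$ to a global choice of $\underline{m}$, and the bookkeeping that makes the two halves match up on overlaps: one needs to know that the $V^\ad_{n,m}$ glue correctly over an affine cover of $\mathfrak{P}$ (for $m\gg 0$, which is exactly the independence-of-choices statement recorded just before the proposition) and that the comparison $(V^\ad_{n,m})_0\cong V^\rig_{n,m}$ together with the topological equivalence of the adic and rigid topoi lets one transport the quasi-compactness argument of Lemma \ref{cofinalkey} — which is genuinely a statement about the rigid/adic affinoid $[Y]_n$ — into the adic setting. The rest is a matter of carefully chasing the inequalities in Lemma \ref{vals}, which are precisely engineered for this purpose, and noting that all unions and intersections involved are over the index $n$ with $\underline{m}(n)$ a single increasing sequence, so that no uniformity beyond "increasing and eventually large" is needed.
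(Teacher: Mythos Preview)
Your argument is correct and follows essentially the same route as the paper: reduce to each $[Y]_n^\ad$, apply Lemma~\ref{cofinalkey} for each $g_j$, and assemble the resulting exponents into an increasing sequence $\underline{m}$. Two minor remarks: the paper does not bother to verify the containment $]X[^\ad_\mathfrak{P}\subset V^\ad_{\underline{m}}$ (your first half), taking it as evident, and your worry about ``transporting'' Lemma~\ref{cofinalkey} into the adic setting via $(-)_0$ is unnecessary---the lemma is already stated and proved for $\sh{X}=\mathrm{Spa}(B)$, so it applies directly to $[Y]_n^\ad$.
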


\begin{proof} Let $V\subset ]Y[_\mathfrak{P}$ be an open subset containing $]X[_\mathfrak{P}$. It suffices to show that for all $n$ there exists some $m$ such that
$$
 [Y]_n \cap V \supset [Y]_n \cap U_m.
$$
Since the $[Y]_n$ are quasi-compact and glue over an open affine covering of $\mathfrak{P}$, we may assume that $\mathfrak{P}$ is affine, and hence the $[Y]_n$ are affinoid. Let $g_j\in\cur{O}_\mathfrak{P}$ be functions whose reductions $\bar{g}_j$ satisfy $X= Y\cap (\cup_j D(\bar{g}_j))$, so that
\begin{align*}
[Y]_n \cap U_m &= \cup_j U_{m,j} \\
[Y]_n \cap U_{m,j} &= \left\{ \left. x\in [Y]_n \;\right|\; v_x(\pi^{-1}g_j^m)\geq1 \right\}.
\end{align*}
It thus suffices to show that for all $j$, there exists $m$ such that $V\cap [Y]_n \supset [Y]_n \cap U_{m,j}$. But this is exactly the content of Lemma \ref{cofinalkey} above.
\end{proof}

Before we prove the fundamental result of this section, Proposition \ref{comp1}, we need the following topological lemma.

\begin{lem} \label{closed} Let $i: T\rightarrow V$ be the inclusion of a closed subspace $T$ of a topological space $V$. Suppose that there exists a basis $\cur{B}$ of open subsets of $V$ such that for every $W\in\cur{B}$ and every open subset $U'$ of $V$ containing $T\cap W$, there exists an open neighbourhood $U$ of $T$ in $V$ such that $U\cap W\subset U'$. Then for any sheaf $\sh{F}$ on $V$ there exists an isomorphism
$$
i_*i^{-1}\sh{F}\cong \mathrm{colim}_{U\supset T} j_{U*}j_U^{-1}\sh{F}
$$
where the colimit runs over all open neighbourhoods $U$ of $T$ in $V$, and $j_{U}:U\rightarrow V$ denotes the corresponding inclusion. 
\end{lem}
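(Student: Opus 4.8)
The plan is to build the isomorphism stalkwise, and more precisely to prove it already on sections over members of the basis $\cur{B}$, since restriction to such a basis is enough to determine a sheaf. So fix $W\in\cur{B}$. On the left-hand side we have $(i_*i^{-1}\sh{F})(W) = (i^{-1}\sh{F})(T\cap W) = \mathrm{colim}_{U'\supset T\cap W}\sh{F}(U')$, where $U'$ runs over open subsets of $V$ containing $T\cap W$; this is the definition of the inverse image sheaf followed by its sections over the closed subspace. On the right-hand side, filtered colimits commute with sections over the quasi-compact (here: arbitrary, since colimits of presheaves are computed sectionwise) open $W$, so $\bigl(\mathrm{colim}_{U\supset T} j_{U*}j_U^{-1}\sh{F}\bigr)(W) = \mathrm{colim}_{U\supset T}\sh{F}(U\cap W)$, where $U$ runs over open neighbourhoods of $T$ in $V$. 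Thus what must be shown is that the natural map
$$ \mathrm{colim}_{U\supset T}\sh{F}(U\cap W)\longrightarrow \mathrm{colim}_{U'\supset T\cap W}\sh{F}(U') $$
is an isomorphism, this map being induced by $U'=U\cap W$ (which is an open of $V$ containing $T\cap W$, giving a map of index categories).

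To see this is an isomorphism of filtered colimits, I would exhibit the index category $\{U : U\supset T\text{ open in }V\}$, mapped in via $U\mapsto U\cap W$, as cofinal in $\{U' : U'\supset T\cap W\text{ open in }V\}$. Cofinality amounts to: for every open $U'\supset T\cap W$ there is an open $U\supset T$ with $U\cap W\subset U'$ — and this is exactly the hypothesis placed on the basis $\cur{B}$. (One also needs that any two such $U\cap W$ are refined by a third, which is immediate since the $U$'s are closed under intersection and $T\cap W\subset U'_1\cap U'_2$ when $T\subset U_1\cap U_2$.) A cofinal functor induces an isomorphism on filtered colimits, so the displayed map is an isomorphism for every $W\in\cur{B}$. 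Since both sides are sheaves and $\cur{B}$ is a basis, these isomorphisms are compatible with restriction and glue to the desired isomorphism of sheaves on $V$.

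The only genuinely delicate point is the bookkeeping with inverse images along the closed immersion $i$: one must be slightly careful that $(i^{-1}\sh{F})(T\cap W)$ really is the colimit over opens of $V$ meeting $T$ in a set containing $T\cap W$, rather than over opens of the subspace $T$ — but opens of $T$ are exactly the traces $U'\cap T$ of opens $U'$ of $V$, and an open of $T$ containing $T\cap W$ is the trace of such a $U'$, so the two colimits agree. Everything else is the standard fact that a cofinal subdiagram computes the same filtered colimit, together with the sheaf property on a basis; no quasi-compactness or separatedness is needed, only the interpolation hypothesis on $\cur{B}$. I therefore expect this hypothesis-unwinding — recognising that the required cofinality is literally the stated condition — to be the crux, and the rest to be routine.
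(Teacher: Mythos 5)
Your central observation --- that the hypothesis on $\cur{B}$ is precisely the cofinality of $U\mapsto U\cap W$, from the directed set of open neighbourhoods of $T$ into that of open subsets containing $T\cap W$ --- is indeed the heart of the matter, and it is also the key point of the paper's proof. However, the two section-level identities on which you hang the argument are not justified, and in the stated generality (an arbitrary topological space) they are false. First, $(i^{-1}\sh{F})(T\cap W)=\mathrm{colim}_{U'\supset T\cap W}\sh{F}(U')$ is not ``the definition of the inverse image sheaf'': $i^{-1}\sh{F}$ is the \emph{sheafification} of the presheaf with those sections, and the natural map from the colimit to $\Gamma(T\cap W,i^{-1}\sh{F})$ is injective but need not be surjective --- a section of the sheafification is only \emph{locally} represented by sections of $\sh{F}$ on opens of $V$, and gluing those representatives into a single section on one open neighbourhood of $T\cap W$ needs compactness/paracompactness-type input that the interpolation hypothesis does not supply. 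Second, the colimit in the lemma is a colimit in the category of sheaves, i.e.\ the sheafification of the presheaf colimit; your parenthetical ``colimits of presheaves are computed sectionwise'' therefore does not apply, and $\Gamma(W,\mathrm{colim}_U j_{U*}j_U^{-1}\sh{F})=\mathrm{colim}_U\sh{F}(U\cap W)$ in general requires $W$ quasi-compact (a hypothesis you mention and then discard). So exactly the point you describe as ``the only genuinely delicate bookkeeping'' is where the proof, as written, fails; your closing claim that no quasi-compactness is needed is true of the lemma but not of your argument for it.

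The repair is the paper's route, and it keeps your cofinality step intact: observe that $i^{-1}$ commutes with sheafification for formal reasons, and that $i_*$ also commutes with sheafification because $i$ is a \emph{closed} immersion (check on stalks: both sides vanish at $x\notin T$ and give $\sh{G}_x$ at $x\in T$). Consequently $i_*i^{-1}\sh{F}$ is the sheafification of the presheaf $W\mapsto\mathrm{colim}_{U'\supset T\cap W}\Gamma(U',\sh{F})$, while $\mathrm{colim}_U j_{U*}j_U^{-1}\sh{F}$ is the sheafification of $W\mapsto\mathrm{colim}_{U\supset T}\Gamma(U\cap W,\sh{F})$; the natural map between these presheaves is, by your cofinality argument, an isomorphism on every $W\in\cur{B}$, hence on stalks, hence an isomorphism after sheafification. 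With that adjustment your proof becomes essentially the paper's.
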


\begin{proof} Note that by general nonsense, $i^{-1}$ commutes with sheafification, we claim that the same is actually true for $i_*$. Indeed, for any presheaf $\sh{G}$ there is a natural morphism
$$
(i_*\sh{G})^a\rightarrow i_*(\sh{G}^a)
$$
where $(-)^a$ denotes sheafification. To check that it is an isomorphism, we can check on stalks. For any point $x\notin T$, the stalks of both sides at $x$ are $0$, and for any point $x\in T$, the stalks of both sides at $x$ is just the stalk $\sh{G}_x$.

It thus follows that $i_*i^{-1}\sh{F}$ is the sheafification of the presheaf
$$
W\mapsto \mathrm{colim}_{U'\supset T\cap W}\Gamma(U',\sh{F}).
$$
Since sheafification preserves colimits, it follows that $\mathrm{colim}_{U\supset T} j_{U*}j_U^{-1}\sh{F}$ is the sheafification of the presheaf
$$
W\mapsto \mathrm{colim}_{U\supset T}\Gamma(U\cap W,\sh{F}).
$$
thus there is a natural map
$$
\mathrm{colim}_{U\supset T} j_{U*}j_U^{-1}\sh{F}\rightarrow i_*i^{-1}\sh{F}
$$
which is induced by
\begin{align*} \left\{ U\supset T \right\} &\rightarrow \left\{ U'\supset W\cap T \right\} \\
U&\mapsto  U\cap W.
\end{align*}
The condition in the statement of the lemma is exactly that this is a cofinal map of directed sets for a basis for the topology of $V$.
\end{proof}

\begin{prop} \label{comp1} Under the equivalence $(-)_0:(]Y[_\mathfrak{P}^\ad,\cur{O}_{]Y[_\mathfrak{P}^\ad})\cong(]Y[_\mathfrak{P}^\rig,\cur{O}_{]Y[_\mathfrak{P}^\rig}) $ of ringed topoi induced by Corollary \ref{tubes} and II.B.2(e) of \cite{rigidspaces}, we have an isomorphism
$$
(j_X^\dagger \sh{F})_0\cong j_X^\dagger(\sh{F}_0)
$$
for any $\cur{O}_{]Y[_\mathfrak{P}^\ad}$-module $\sh{F}$.
\end{prop}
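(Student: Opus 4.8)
The plan is to reduce the statement to the fact, just established, that the adic strict neighbourhoods $V^\ad_{\underline{m}}$ form a cofinal system (Proposition \ref{cofinal}), together with the compatibility of the equivalence $(-)_0$ with the topoi on both sides. Concretely, recall that $]X[^\ad_\mathfrak{P}$ is a \emph{closed} subset of $]Y[^\ad_\mathfrak{P}$, with inclusion $j$, and that by definition $j_X^\dagger\sh{F}=j_*j^{-1}\sh{F}$ in the adic world. So the first step is to apply Lemma \ref{closed} with $V=]Y[^\ad_\mathfrak{P}$, $T=]X[^\ad_\mathfrak{P}$, and $\cur{B}$ the basis of open subsets obtained by intersecting the quasi-compact opens $[Y]^\ad_n$ with an arbitrary open of $]Y[^\ad_\mathfrak{P}$; the cofinality hypothesis of Lemma \ref{closed} is exactly what the proof of Proposition \ref{cofinal} verifies (via Lemma \ref{cofinalkey}), since shrinking $U'$ around $[Y]^\ad_n\cap T$ one finds some $U^\ad_m$, hence some $V^\ad_{\underline{m}}$, with $[Y]^\ad_n\cap V^\ad_{\underline{m}}\subset U'$. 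This gives an isomorphism
$$
j_X^\dagger\sh{F}=j_*j^{-1}\sh{F}\cong \colim_{\underline{m}} j_{V^\ad_{\underline{m}}*}j_{V^\ad_{\underline{m}}}^{-1}\sh{F},
$$
the colimit running over the cofinal family $\{V^\ad_{\underline{m}}\}$ rather than all open neighbourhoods (cofinal subsystems compute the same colimit).

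The second step is to transport this across the equivalence of ringed topoi $(-)_0$. Since $(-)_0$ is an equivalence of \emph{ringed topoi} it commutes with $\colim$ of $\cur{O}$-modules, and by the compatibility of $(-)_0$ with open immersions (Corollary \ref{tubes} together with the corollary that $(V^\ad_{n,m})_0\cong V^\rig_{n,m}$, hence $(V^\ad_{\underline{m}})_0\cong V^\rig_{\underline{m}}$) it intertwines $j_{V^\ad_{\underline{m}}*}j_{V^\ad_{\underline{m}}}^{-1}$ with $j_{V^\rig_{\underline{m}}*}j_{V^\rig_{\underline{m}}}^{-1}$. Therefore
$$
(j_X^\dagger\sh{F})_0\cong \colim_{\underline{m}} j_{V^\rig_{\underline{m}}*}j_{V^\rig_{\underline{m}}}^{-1}(\sh{F}_0).
$$
The final step is to recognise the right-hand side as $j_X^\dagger(\sh{F}_0)$: by definition (in the rigid world) $j_X^\dagger(\sh{F}_0)=\colim_V j_{V*}j_V^{-1}(\sh{F}_0)$ over \emph{all} strict neighbourhoods $V$ of $]X[^\rig_\mathfrak{P}$, and by 1.2.4 of \cite{iso} the $V^\rig_{\underline{m}}$ are cofinal among these, so the colimit over the subsystem agrees with the full one.

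I expect the main obstacle to be purely bookkeeping: checking carefully that the hypothesis of Lemma \ref{closed} is met, i.e. that the system $\{V^\ad_{\underline{m}}\}$ is cofinal not merely among the special opens $U^\ad_m$ but among \emph{all} open neighbourhoods of the closed tube, intersected with each $[Y]^\ad_n$. This is essentially the assertion of Proposition \ref{cofinal}, whose proof already handles it, but one must make sure the diagonal choice $\underline{m}(n)$ can be made uniformly — which is fine because for each fixed $n$ Lemma \ref{cofinalkey} supplies an $m=\underline{m}(n)$, and these assemble into an increasing sequence after passing to $\max$. The other points — that $(-)_0$, being an equivalence of ringed topoi, commutes with colimits and with the functors $j_{V*}j_V^{-1}$ for $V$ an admissible open whose adic avatar is an open subspace — are formal, using only that open immersions of adic spaces correspond under $(-)_0$ to admissible open immersions of rigid spaces, which is part of II.B of \cite{rigidspaces} as recalled above.
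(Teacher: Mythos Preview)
Your proposal is correct and follows essentially the same approach as the paper's proof: apply Lemma \ref{closed} together with Proposition \ref{cofinal} to rewrite the adic $j_X^\dagger\sh{F}$ as $\colim_{\underline{m}} j_{\underline{m}*}j_{\underline{m}}^{-1}\sh{F}$, pass through the equivalence $(-)_0$ (which commutes with colimits, pushforward and pullback), and then invoke 1.2.4 of \cite{iso} to identify the result with the rigid $j_X^\dagger(\sh{F}_0)$. The only cosmetic difference is your choice of basis $\cur{B}$ for Lemma \ref{closed}: the paper simply takes affinoid opens of $\mathfrak{P}^\ad$ (noting that Proposition \ref{cofinal} goes through verbatim on any such affinoid), which makes the verification of the hypothesis slightly cleaner than your ``$[Y]^\ad_n$ intersected with an arbitrary open'' --- but this is a matter of bookkeeping, not substance.
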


\begin{proof} First note that Proposition \ref{cofinal} also holds when we restrict to an affinoid subset of $\mathfrak{P}^\ad$, and hence the conditions of Lemma \ref{closed} are met for the inclusion $]X[_\mathfrak{P}^\ad\rightarrow ]Y[_\mathfrak{P}^\ad$, and we have
$$
j_X^\dagger\sh{F}\cong \mathrm{colim}_{\underline{m}} j_{\underline{m}*}j_{\underline{m}}^{-1}\sh{F}
$$
where $j_{\underline{m}}:V^\ad_{\underline{m}}\rightarrow ]Y[^\ad_\mathfrak{P}$ denotes the inclusion. The functor $(-)_0$ commutes with push-forward and pullback, and hence by 1.2.4 of \cite{iso}, which proves an analogue of Proposition \ref{cofinal} in the rigid world, we have
\begin{align*}
(j_X^\dagger\sh{F})_0 &\cong (\mathrm{colim}_{\underline{m}} j_{\underline{m}*}j_{\underline{m}}^{-1}\sh{F})_0 \\
&\cong \mathrm{colim}_{\underline{m}} j_{\underline{m}0*}j_{\underline{m}0}^{-1}\sh{F}_0 \\
&\cong j_X^\dagger(\sh{F}_0) 
\end{align*}
as required.
\end{proof}

\begin{cor} There is an isomorphism
$$
H^i(]Y[^\ad_\mathfrak{P},j_X^\dagger\Omega^*_{]Y[^\ad_\mathfrak{P}/K})\cong H^i(]Y[^\rig_\mathfrak{P},j_X^\dagger\Omega^*_{]Y[^\rig_\mathfrak{P}/K}).
$$ 
\end{cor}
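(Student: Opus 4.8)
The plan is to combine Proposition~\ref{comp1} with two soft facts: that the equivalence $(-)_0$ carries the de Rham complex of $]Y[^\ad_\mathfrak{P}$ to that of $]Y[^\rig_\mathfrak{P}$, and that an equivalence of topoi preserves the hypercohomology of a bounded complex of abelian sheaves. First I would record that the equivalence
$$ (-)_0\colon \left(]Y[^\ad_\mathfrak{P},\cur{O}_{]Y[^\ad_\mathfrak{P}}\right)\isomto\left(]Y[^\rig_\mathfrak{P},\cur{O}_{]Y[^\rig_\mathfrak{P}}\right) $$
of Corollary~\ref{tubes} (and II.B.2(e) of \cite{rigidspaces}) is an equivalence of ringed topoi \emph{over} $K$, compatible with the structure maps to $\spa{K}$ and $\msp{K}$. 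Since $\Omega^1_{]Y[^\ad_\mathfrak{P}/K}$ and $\Omega^1_{]Y[^\rig_\mathfrak{P}/K}$ are the coherent sheaves which on an affinoid piece $\spa{B}$, resp.\ $\msp{B}$, are given by $\wt{\Omega^1_{B/K}}$, and since such pieces correspond under $(-)_0$ with $\spa{B}_0=\msp{B}$, the functor $(-)_0$ sends $\Omega^1_{]Y[^\ad_\mathfrak{P}/K}$ to $\Omega^1_{]Y[^\rig_\mathfrak{P}/K}$; being an equivalence of ringed topoi it also sends the universal $K$-derivation $d$ on the adic side to the one on the rigid side. Passing to exterior powers and the induced de Rham differentials then gives an isomorphism of complexes of abelian sheaves $\left(\Omega^*_{]Y[^\ad_\mathfrak{P}/K}\right)_0\cong\Omega^*_{]Y[^\rig_\mathfrak{P}/K}$.

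Next I would apply Proposition~\ref{comp1} in each degree to the $\cur{O}_{]Y[^\ad_\mathfrak{P}}$-modules $\Omega^p_{]Y[^\ad_\mathfrak{P}/K}$, which yields
$$ \left(j_X^\dagger\Omega^*_{]Y[^\ad_\mathfrak{P}/K}\right)_0\cong j_X^\dagger\left(\Omega^*_{]Y[^\ad_\mathfrak{P}/K}\right)_0\cong j_X^\dagger\Omega^*_{]Y[^\rig_\mathfrak{P}/K}. $$
One needs the isomorphisms coming from Proposition~\ref{comp1} to be compatible with the de Rham differentials, but this is immediate: $j_X^\dagger$ is built from a colimit of pushforward and pullback functors along open immersions, all of which are additive and commute with restriction, hence with $d$. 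Finally, an equivalence of topoi identifies $\Gamma(]Y[^\ad_\mathfrak{P},-)$ with $\Gamma(]Y[^\rig_\mathfrak{P},(-)_0)$, hence their derived functors, and so the hypercohomology of a bounded complex of abelian sheaves on $]Y[^\ad_\mathfrak{P}$ agrees with that of its image under $(-)_0$. Combining this with the previous display gives
$$ H^i\left(]Y[^\ad_\mathfrak{P},j_X^\dagger\Omega^*_{]Y[^\ad_\mathfrak{P}/K}\right)\cong H^i\left(]Y[^\rig_\mathfrak{P},j_X^\dagger\Omega^*_{]Y[^\rig_\mathfrak{P}/K}\right), $$
which is the assertion.

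I do not expect a serious obstacle here. The only step requiring genuine, if routine, verification is the identification $\left(\Omega^*_{]Y[^\ad_\mathfrak{P}/K}\right)_0\cong\Omega^*_{]Y[^\rig_\mathfrak{P}/K}$ as \emph{complexes} --- that is, the compatibility of the Fujiwara--Kato equivalence with the de Rham differential --- which follows from the functoriality of K\"{a}hler differentials and of the equivalence $(-)_0$ on affinoids; everything else is formal once Proposition~\ref{comp1} is available.
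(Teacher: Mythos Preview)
Your proposal is correct and follows essentially the same approach as the paper: the paper's proof simply invokes Proposition~\ref{comp1} together with the identification $(\Omega^*_{]Y[^\ad_\mathfrak{P}/K})_0\cong \Omega^*_{]Y[^\rig_\mathfrak{P}/K}$, which is exactly what you spell out in more detail. Your additional remarks on compatibility with the de Rham differential and preservation of hypercohomology under an equivalence of topoi are the routine justifications the paper leaves implicit.
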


\begin{proof} This follows from the previous corollary together with the fact that there is an isomorphism $(\Omega^*_{]Y[^\ad_\mathfrak{P}/K})_0\cong \Omega^*_{]Y[^\rig_\mathfrak{P}/K}$.
\end{proof}

\section{Rigid cohomology over Laurent series fields}\label{rcolsf}

Let $k,\cur{V},K,\pi,r$ be as in the previous section. As discussed in the introduction, if we take our ground field to be $k\lser{t}$, the Laurent series field over $k$, then rigid cohomology is a functor
$$
X\mapsto H^*_\rig(X/\cur{E}_K)
$$
taking values in vector spaces over the Amice ring
$$
\cur{E}_K=\left\{\left.\sum_i a_it^i \in K\pow{t,t^{-1}}\;\right|\;  \sup_i\norm{a_i}<\infty,\;a_i\rightarrow 0\text{ as }i\rightarrow -\infty \right\}.
$$
Again, as we said there, if we are to obtain a theory 
$$
X\mapsto H^*_\rig(X/\cur{E}^\dagger_K)
$$
taking values in the bounded Robba ring
$$
\cur{E}_K^\dagger = \left\{\left.\sum_i a_it^i \in\cur{E}_K\;\right|\;  \exists \eta<1 \text{ s.t. } \norm{a_i}\eta^i\rightarrow 0\text{ as }i\rightarrow -\infty \right\}
$$
then we need to take into account overconvergence conditions along $t=0$, or, in other words, we should compactify our varieties over $k\pow{t}$ rather than over $k\lser{t}$. This leads to the following definition.

\begin{defn}\label{framevt} A frame over $\cur{V}\pow{t}$ is a triple $(X,Y,\mathfrak{P})$ where $X\rightarrow Y$ is an open immersion of a $k\lser{t}$-variety $X$ into a separated, $k\pow{t}$-scheme $Y$ of finite type, and $Y\rightarrow \mathfrak{P}$ is a closed immersion of $Y$ into a separated, topologically finite type, $\pi$-adic formal $\cur{V}\pow{t}$-scheme. We say that a frame is proper if $Y$ is proper over $k\pow{t}$ and smooth if $\mathfrak{P}$ is smooth over $\cur{V}\pow{t}$ in a neighbourhood of $X$. We say that a $k\lser{t}$-variety $X$ is embeddable if there exists a smooth and proper frame of the form $(X,Y,\mathfrak{P})$.
\end{defn}

\begin{exa} Two extremely important example will be the frames $$\left(\A^1_{k\lser{t}},\P^1_{k\pow{t}},\widehat{\P}^1_{\cur{V}\pow{t}}\right)$$ and $$\left(\spec{k\lser{t}},\spec{k\pow{t}},\widehat{\A}^n_{\cur{V}\pow{t}}\right)$$ for $n\geq 0$. Here $\widehat{(\cdot)}$ denote the $\pi$-adic completion functor on schemes over $\cur{V}\pow{t}$. 
\end{exa}

Since we will now be exclusively we working with Huber's adic spaces, or equivalently Fujiwara/Kato's Zariski-Riemann spaces, we will henceforth (unless otherwise mentioned) use the word rigid space to mean a rigid space locally of finite type over $\spf{\cur{V}\pow{t}}^\rig$ in the sense of Definition II.2.2.18 and II.2.3.1 of \cite{rigidspaces}, or equivalently an adic space locally of finite type over $\mathrm{Spa}(S_K,\cur{V}\pow{t})$ in the sense of (1.1.2) and Definition 1.2.1 of \cite{huber2}, where $S_K=\cur{V}\pow{t}\otimes_\cur{V} K$. The equivalence of these two definitions is Theorem II.A.5.2. of \cite{rigidspaces}, and we will freely pass between the two interpretations, also note that this includes the notion of `classical' rigid spaces locally of finite type over $K$ in the sense of Tate. All rigid spaces we will consider shall be locally of finite type over $S_K$, and for any $S_K$-algebra $B$, topologically of finite type over $S_K$, denote by $B^+$ the integral closure of the image of $\cur{V}\pow{t}\tate{x_1,\ldots,x_n}$ inside $B$ for some presentation $S_K\tate{x_1,\ldots,x_n}\rightarrow B$. We will also denote $\mathrm{Spa}(B,B^+)\cong \mathrm{Spf}(B^+)^\rig$ by $\mathrm{Spa}(B)$, none of this depends on the choice of presentation. If $B=A_K:=A\otimes_{\cur{V}}K$ for some topologically finite type $\cur{V}\pow{t}$-algebra $A$, we will also write $A^+$ for the integral closure of $A$ inside $B$, thus $A^+=B^+$.

\begin{rem} It is worth noting that since $\cur{V}\pow{t}$ is Noetherian, thus we satisfy the hypothesis (1.1.1) of \cite{huber2} as well as being in the `t.u. rigid Noetherian' case of \cite{rigid spaces}.
\end{rem}

If $(X,Y,\mathfrak{P})$ is a frame then we will let $\mathfrak{P}_K=\mathfrak{P}^\rig$ denote the generic fibre of $\mathfrak{P}$, this is an rigid space of finite type over $\mathbb{D}^b_K:=\mathrm{Spa}(S_K,\cur{V}\pow{t})$. We will also let $P$ denote the mod-$\pi$ reduction of $\mathfrak{P}$, so that there is a homeomorphism $P\simeq\mathfrak{P}$. Then there is a specialisation map
$$
\mathrm{sp}:\mathfrak{P}_K\rightarrow \mathfrak{P}\simeq P
$$
as in \S II.3.1 of \cite{rigidspaces}, which locally on $\mathfrak{P}$ can be described as follows. If $\mathfrak{P}=\spf{A}$, then points of $\mathfrak{P}_K$ can be identified with certain valuations on $A_K$, and points of $\spf{A}$ with open prime ideals of $A$. Then the specialisation map sends $v:A_K\rightarrow \left\{0\right\}\cup\Gamma$ to the prime ideal consisting of elements $a\in A$ such that $v(a)<1$.

Let $[\mathfrak{P}_K]\subset \mathfrak{P}_K$ denote the subset of points whose corresponding valuation is of rank 1, by II.2.3.(c) and Proposition II.4.1.7 of \cite{rigidspaces} there is a map
$$
[\cdot]: \mathfrak{P}_K\rightarrow [\mathfrak{P}_K]
$$
which takes a point to its `maximal generisation'. The set $[\mathfrak{P}_K]$ is topologised via this quotient map, with respect to this topology it is Hausdorff, and $[\cdot]$ identifies $[\mathfrak{P}_K]$ with the maximal Hausdorff quotient of $\mathfrak{P}_K$ (Proposition II.2.3.9 of \cite{rigidspaces}). With respect to this topology, the inclusion $[\mathfrak{P}_K]\rightarrow \mathfrak{P}_K$ is \emph{not} continuous in general. 

This has the following local description. Suppose that $\mathfrak{P}=\spf{A}$ is affine, so that $\mathfrak{P}_K=\spa{A_K}$, the set of (equivalence classes of) continuous valuations $v$ on $A_K$ such that $v(a)\leq 1$ for every $a\in A^+$. Then $[\mathfrak{P}_K]=\sh{M}(A_K)$ is identified with the Berkovich spectrum of $A_K$, that is the set of (equivalence classes of) continuous rank 1 valuations $A_K\rightarrow \{0\}\cup \R^{>0}$ (although in \cite{rigidspaces}, this identification is only made for affinoid algebras over $K$, the same argument as given in Propositoon II.C.1.8 of \emph{loc. cit.} will work more generally). The map $\spa{A_K}\rightarrow \sh{M}(A_K)$ can then be described as follows. Let $v:A_K\rightarrow \left\{0 \right\}\cup\Gamma $ be a valuation, and let $I\subset A_K$ denote its support. Let $v:\mathrm{Frac}(A_K/I)\rightarrow \left\{ 0\right\}\cup\Gamma $ denote the induced valuation, and $V$ its valuation ring, with valuation ideal $P_v\subset V$. Then $\mathfrak{p} = \sqrt{(p)}\subset P_v$ is a height one prime ideal of $V$, and hence corresponds to a rank one valuation $v_\mathfrak{p}:A_K \rightarrow \left\{0 \right\}\cup\R^{>0}$. Then $[v]=v_\mathfrak{p}$.

If $Z\subset P$ a closed subset, we define
$$ ]Z[_\mathfrak{P}= \mathrm{sp}^{-1}(Z)^\circ.
$$
to be interior of the inverse image of $Z$ by the specialisation map. Exactly as in the previous section, if $\mathrm{sp}_{[\cdot]}:[\mathfrak{P}_K]\rightarrow P$ denotes the induced specialisation map on the subset of rank 1 points, then we have $]Z[_\mathfrak{P}=[\cdot]^{-1}(\mathrm{sp}_{[\cdot]}^{-1}(Z))$, and if locally we have $f_i\in \cur{O}_{\mathfrak{P}}$ whose reductions $\bar{f}_i$ define $Z$ inside $P$, then 
$$
]Z[_\mathfrak{P} = \left\{\left.x\in\mathfrak{P}_K \;\right|\; v_{[x]}(f_i)< 1 \;\forall i  \right\}
$$
(Proposition II.4.2.11 of \cite{rigidspaces}). Specialisation induces a continuous map $\mathrm{sp}_Z: ]Z[_\mathfrak{P}\rightarrow Z$. If $U\subset Z$ is open, then we set
$$
]U[_\mathfrak{P} = \overline{\mathrm{sp}_Y^{-1}(U)}.
$$
Again, we have $]U[_\mathfrak{P} = [\cdot]^{-1}(\mathrm{sp}_{[\cdot]}^{-1}(U))$ which shows that $]U[_\mathfrak{P}$ only depends on $U$ and $\mathfrak{P}$ (and not on $Z$) and if locally we have $g_j\in \cur{O}_\mathfrak{P}$ such that $U= Z\cap (\cup_j D(g_j) )$, then 
$$
]U[_\mathfrak{P} = \left\{\left. x\in]Z[_\mathfrak{P}  \;\right|\; \exists j \text{ s.t. }v_{[x]}(g_i) \geq 1  \right\}.
$$

\begin{rem} \label{interior} \begin{enumerate} 
\item We will often refer to $\mathrm{sp}_Y^{-1}(U)$ as the interior tube of $U$, and denote it by $]U[^\circ_\mathfrak{P}$. We do not know if it is literally the interior of $]U[_\mathfrak{P}$, however, this should not cause too much confusion.  
\item Since $\cur{O}_{\cur{E}_K} = \cur{V}\pow{t}\tate{t^{-1}}$, if $U$ is actually a scheme over $k\lser{t}$ then this interior tube is a rigid space locally of finite type over $\ek$. Moreover if $g_j$ are as above, then it can be described as
$$ \left\{\left.x\in ]Z[_\mathfrak{P} \;\right|\; \exists j\text{ s.t. }v_x(g_j)\geq 1  \right\}.
$$
In particular, if $\mathfrak{P}_{\cur{O}_{\ek}}$ denotes the base change of $\mathfrak{P}$ to $\cur{O}_{\ek}$, then for $U/k\lser{t}$, the interior tube of $U$ in $\mathfrak{P}$ and the interior tube of $U$ in $\mathfrak{P}_{\cur{O}_{\ek}}$ (defined in the obvious manner) are equal as rigid spaces over $\ek$.
\item Since $]U[_\mathfrak{P}=[\cdot]^{-1}(\mathrm{sp}_{[\cdot]}^{-1}(U))$ for any locally closed subscheme $U\subset P$, we can see that the formation of tubes behaves well with regard to unions and intersections of subschemes of $P$. For example, if $U=U_1\cup U_2$ is a union of closed subschemes $U_i$, then  $]U[_\mathfrak{P}=]U_1[_\mathfrak{P}\cup ]U_2[_\mathfrak{P}$, and if $U,V$ are subschemes of $P$ such that $U\cap V=\emptyset$, then $]U[_\mathfrak{P}\cap ]V[_\mathfrak{P} =\emptyset$. Note that neither of these is immediately obvious from the definitions. Another fact that follows along the same lines that we will need later on is that if we have a Cartesian diagram
$$
\xymatrix{ U' \ar[r]\ar[d] & \mathfrak{P}'\ar[d]^u \\ U \ar[r] & \mathfrak{P}}
$$
with horizontal arrows immersions, then $u_K^{-1}(]U[_\mathfrak{P})=]U'[_{\mathfrak{P}'}$.
\end{enumerate}
\end{rem}

If $(X,Y,\mathfrak{P})$ is a frame, we let $j:]X[_\mathfrak{P}\rightarrow ]Y[_\mathfrak{P}$ denote the inclusion. As in the previous section, for a sheaf $\sh{F}$ on $]Y[_\mathfrak{P}$ we define $j_X^\dagger\sh{F}:=j_*j^{-1}\sh{F}$.

\begin{defn} We define the rigid cohomology of the frame $(X,Y,\mathfrak{P})$ to be
$$H^i_\rig((X,Y,\mathfrak{P})/\cur{E}_K^\dagger):=H^i(]Y[_\mathfrak{P},j^\dagger_X\Omega^*_{]Y[_\mathfrak{P}/S_K})=H^i(]X[_\mathfrak{P},j^{-1}\Omega^*_{]Y[_\mathfrak{P}/S_K}).$$
\end{defn}

We will see shortly that these are indeed vector spaces over $\ekd$, thus justifying the notation. Beforehand, however, we will first discuss how the cofinal systems of neighbourhoods we saw in the previous section can be constructed entirely similarly in the context of frames over $\cur{V}\pow{t}$. 

So suppose that we have a frame $(X,Y,\mathfrak{P})$, with $\mathfrak{P}$ affine, and let $f_i,g_j\in\cur{O}_\mathfrak{P}$ be functions such that, letting $\bar{f}_i,\bar{g}_j\in\cur{O}_P$ denote their mod-$\pi$ reductions, we have
\begin{align*}
Y&= \cap_i V(\bar{f}_i) \subset P \\
X&= Y \cap \left(\cup_j D(\bar{g}_j)\right).
\end{align*}
Define
\begin{align*}
[Y]_n &= \left\{\left. x\in\mathfrak{P}_K \;\right|\; v_x(\pi^{-1}f_i^n)\leq 1\;\forall i \right\} \\
U_{m,j} &= \left\{\left. x\in ]Y[_\mathfrak{P} \;\right|\; v_x(\pi^{-1}g_j^m)\geq 1 \right\} \\
U_m &= \cup_j U_{m,j} \\
V_{n,{m},j} &= [Y]_n \cap U_{m,j} \\
V_{n,m} &= [Y]_n \cap U_m
\end{align*}
as in the previous section. Exactly as before, for $n,m\gg0$, these do not depend on the choice of $f_i,g_j$, and hence glue over an open affine cover of $\mathfrak{P}$. Moreover, we have $]Y[_\mathfrak{P}=\cup_n [Y_n]$. 
We will also need a slightly different version of the $U_m$ which better reflects the fact that we always have a non-trivial open immersion $X\rightarrow Y$. With this in mind, we choose $g'_j$ such that $X=Y\cap D(t)\cap \left(\cup_j D(g'_j)\right)$, and define
\begin{align*}
U'_{m,j} &= \left\{ \left. x\in ]Y[_\mathfrak{P} \;\right|\; v_x(\pi^{-1}g'^m_j)\geq 1,v_x(\pi^{-1}t^m)\geq 1 \right\} \\
U'_m &= \cup_j U_{m,j} \\
V'_{n,{m},j} &= [Y]_n \cap U_{m,j} \\
V'_{n,m} &= [Y]_n \cap U_m,
\end{align*}
again these do not depend on the choice of the $g'_j$ and hence glue over an open affine covering of $\mathfrak{P}$. Finally, for any increasing sequence of integers $\underline{m}(n)\rightarrow\infty$, we set
\begin{align*}
V_{\underline{m}}&=\bigcup_n V_{n,\underline{m}(n)} \\
V'_{\underline{m}}&=\bigcup_n V'_{n,\underline{m}(n)}.
\end{align*}

\begin{prop} \label{cofinal2} \begin{enumerate}
\item For all $n\geq0$, both $V_{n,m}$ and $V_{n,m}'$ form a cofinal system of neighbourhoods of $[Y]_n\;\cap\;]X[_\mathfrak{P}$ in $[Y]_n$.
\item As $\underline{m}$ varies, both $V_{\underline{m}}$ and $V'_{\underline{m}}$ form a cofinal system of neighbourhoods of $]X[_\mathfrak{P}$ in $]Y[_\mathfrak{P}$.
\end{enumerate}
\end{prop}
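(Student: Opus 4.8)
The plan is to mimic, essentially word for word, the proof of Proposition~\ref{cofinal} in Section~\ref{opening}, with Lemma~\ref{cofinalkey} as the key input. First I would reduce to the case $\mathfrak{P}=\spf{A}$ affine: the $[Y]_n$ are quasi-compact and, for $n\gg0$, glue over an open affine cover of $\mathfrak{P}$, and both assertions are local on $\mathfrak{P}$, so it suffices to treat the affine case, where each $[Y]_n$ is affinoid; moreover part (ii) will follow formally from part (i). So fix $f_i,g_j\in\cur{O}_\mathfrak{P}$ as in the construction of the neighbourhoods.

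For part (i) with the $V_{n,m}$: openness of $V_{n,m}$ in $[Y]_n$ is clear since $V_{n,m}=\bigcup_j V_{n,m,j}$ with each $V_{n,m,j}=[Y]_n\cap U_{m,j}$ a rational affinoid. For $[Y]_n\cap\,]X[_\mathfrak{P}\subset V_{n,m}$: any such $x$ has $v_{[x]}(g_j)\geq1$ for some $j$, and since $g_j\in A$ forces $v_x(g_j)\leq1$ hence $v_{[x]}(g_j)\leq1$ by Lemma~\ref{vals}, we get $v_{[x]}(g_j)=1$, so $v_{[x]}(\pi^{-1}g_j^m)=r>1$; the contrapositive of the first implication of Lemma~\ref{vals} gives $v_x(\pi^{-1}g_j^m)>1$, i.e.\ $x\in[Y]_n\cap U_{m,j}=V_{n,m,j}\subset V_{n,m}$. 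For cofinality, let $W\subset[Y]_n$ be open with $W\supset[Y]_n\cap\,]X[_\mathfrak{P}$. For each fixed $j$ one has $W\supset\{x\in[Y]_n\mid v_{[x]}(g_j)\geq1\}$, since such a point lies in $[Y]_n\cap\,]X[_\mathfrak{P}$; applying Lemma~\ref{cofinalkey} on $[Y]_n$ with the function $g_j$ yields $m_j$ with $W\supset\{x\in[Y]_n\mid v_x(\pi^{-1}g_j^{m_j})\geq1\}=V_{n,m_j,j}$. Since $V_{n,m,j}$ is decreasing in $m$ (again because $v_x(g_j)\leq1$), taking $m=\max_j m_j$ gives $V_{n,m}=\bigcup_j V_{n,m,j}\subset W$. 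Part (ii) then follows: given open $V\supset\,]X[_\mathfrak{P}$ in $]Y[_\mathfrak{P}$, apply part (i) inside each $[Y]_n$ to $W=V\cap[Y]_n$ to obtain $m_n$ with $V_{n,m_n}\subset V$, and set $\underline{m}(n)=\max(n,m_1,\ldots,m_n)$; then $\underline{m}$ is increasing with $\underline{m}(n)\to\infty$ and $V_{\underline{m}}=\bigcup_n V_{n,\underline{m}(n)}\subset V$, while $V_{\underline{m}}$ is an open neighbourhood of $]X[_\mathfrak{P}$ since each $V_{n,\underline{m}(n)}$ is open in $]Y[_\mathfrak{P}$ and contains $[Y]_n\cap\,]X[_\mathfrak{P}$, and $]X[_\mathfrak{P}=\bigcup_n\bigl([Y]_n\cap\,]X[_\mathfrak{P}\bigr)$.

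For the primed neighbourhoods $V'_{n,m}$ the argument is identical once one records the evident two-function strengthening of Lemma~\ref{cofinalkey}: if $V$ is open in an affinoid rigid space and $V\supset\{x\mid v_{[x]}(g)\geq1 \text{ and } v_{[x]}(h)\geq1\}$, then some $m$ satisfies $V\supset\{x\mid v_x(\pi^{-1}g^m)\geq1 \text{ and } v_x(\pi^{-1}h^m)\geq1\}$; the proof is the same, noting that on the quasi-compact complement $T$ the continuous function $\min\bigl(\Norm{g},\Norm{h}\bigr)$ takes values in $[0,1)$, hence in $[0,\eta]$ for some $\eta<1$, so $T$ is covered by $\{v_{[x]}(g)\leq\eta\}\cup\{v_{[x]}(h)\leq\eta\}$, each of which lies in $\{v_x(\pi^{-1}g^m)<1\}$, resp.\ $\{v_x(\pi^{-1}h^m)<1\}$, for $m$ large enough that $r\eta^m<1$, by Lemma~\ref{vals}. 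One applies this with $g=g'_j$, $h=t$ (both in $A$, so $v_x(g'_j),v_x(t)\leq1$, which also supplies the monotonicity used above), and uses $X=Y\cap D(t)\cap(\cup_j D(g'_j))$ to check, exactly as before, that every $V'_{n,m}$ contains $[Y]_n\cap\,]X[_\mathfrak{P}$ and that any open $W$ containing the latter contains some $V'_{n,m}$.

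The one genuinely new point — and the only real obstacle — is that here $[Y]_n$ is a rigid space locally of finite type over $S_K$ rather than over $K$, whereas Lemmas~\ref{vals} and~\ref{cofinalkey} were stated in Section~\ref{opening} for the $K$-case. I would therefore note that both proofs carry over verbatim to this generality: Lemma~\ref{vals} only uses the valuation ring of a point and the height-one prime $\sqrt{(\pi)}$, and the proof of Lemma~\ref{cofinalkey} only uses quasi-compactness of the complement together with the continuous norm function $\Norm{g}$ of \S II.4.3 of \cite{rigidspaces}, which is available in the t.u.\ rigid Noetherian setting once an ideal of definition — here $(\pi)$ — has been fixed; nothing in either argument uses topological finiteness over $K$. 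Granting this bookkeeping, the proposition is a formal reprise of the corresponding statements over $K$.
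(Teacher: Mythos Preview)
Your proposal is correct and takes essentially the same approach as the paper, whose proof is simply the one-line remark that ``exactly the same argument that proves Lemma~\ref{cofinalkey} and Proposition~\ref{cofinal} works here.'' You have faithfully unpacked that remark, including the needed extension of Lemmas~\ref{vals} and~\ref{cofinalkey} to affinoids over $S_K$ (which the paper itself invokes later, e.g.\ in the proof of Proposition~\ref{overconv}) and the mild two-function variant required for the primed neighbourhoods.
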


\begin{proof} Exactly the same argument that proves Lemma \ref{cofinalkey} and Proposition \ref{cofinal} works here.
\end{proof}

We can now prove that our notation for the rigid cohomology of a frame over $(X,Y,\mathfrak{P})$ is justified.

\begin{lem} The cohomology groups $H^i_\rig((X,Y,\mathfrak{P})/\cur{E}_K^\dagger)$ are vector spaces over $\cur{E}_K^\dagger$.
\end{lem}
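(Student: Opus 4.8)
The plan is to identify $H^i_\rig((X,Y,\mathfrak{P})/\cur{E}_K^\dagger)$ as the cohomology of a complex of sheaves of $\ekd$-modules, and for this it suffices to construct a map of sheaves of rings $\ekd \to j_X^\dagger \cur{O}_{]Y[_\mathfrak{P}}$ (or rather to see that $j_X^\dagger\Omega^*_{]Y[_\mathfrak{P}/S_K}$ is naturally a complex of $\ekd$-modules), compatibly with the differentials, so that all the cohomology groups inherit an $\ekd$-action. Concretely, I would first reduce to the case where everything is affine: since the statement is about the global cohomology groups $H^i(]Y[_\mathfrak{P},-)$ and these can be computed (e.g. via a \v{C}ech or hypercohomology spectral sequence) from an affine cover of $\mathfrak{P}$, and since for $n,m\gg 0$ the $[Y]_n$, $U_{m,j}$, $V_{n,m,j}$ glue over such a cover, it is enough to produce the $\ekd$-structure locally and check it is compatible with restriction. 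So assume $\mathfrak{P}=\spf{A}$ with $A$ a topologically finite type $\cur{V}\pow{t}$-algebra.

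Next I would exhibit the ring $\ekd$ directly on the tube. Recall from the introduction's heuristic (made rigorous by the adic-space formalism) that $\ekd = S_K\weak{t^{-1}} = \Gamma(\mathbb{D}^b_K, j^\dagger_{\spec{k\lser{t}}}\cur{O}_{\mathbb{D}^b_K})$, where $j^\dagger$ is taken with respect to the tube of $\spec{k\lser{t}}$ inside $\mathbb{D}^b_K = \spa{S_K,\cur{V}\pow{t}}$. The structural map of the frame gives a morphism of formal schemes $\mathfrak{P}\to \spf{\cur{V}\pow{t}}$, hence a morphism of rigid spaces $\mathfrak{P}_K \to \mathbb{D}^b_K$ which by construction is compatible with specialisation and with the inclusions $X\hookrightarrow Y\hookrightarrow \mathfrak{P}$ lying over $\spec{k\lser{t}}\hookrightarrow\spec{k\pow{t}}\hookrightarrow\spf{\cur{V}\pow{t}}$ (this last point being exactly the compatibility of tubes with Cartesian diagrams noted in Remark \ref{interior}(3), applied to the pullback of $\spec{k\lser{t}}$). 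Pulling back the overconvergent structure sheaf along this map and using that $j_X^\dagger$ commutes with such pullbacks (again via the compatibility of tubes with the morphism, together with the colimit description of $j_X^\dagger$ coming from Proposition \ref{cofinal2} and Lemma \ref{closed}), one gets a morphism of sheaves of rings $\varphi^{-1}j^\dagger_{\spec{k\lser{t}}}\cur{O}_{\mathbb{D}^b_K} \to j_X^\dagger\cur{O}_{]Y[_\mathfrak{P}}$ on $]Y[_\mathfrak{P}$; taking global sections of the source gives the ring homomorphism $\ekd \to \Gamma(]Y[_\mathfrak{P}, j_X^\dagger\cur{O}_{]Y[_\mathfrak{P}})$ we want. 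Since $\Omega^*_{]Y[_\mathfrak{P}/S_K}$ is a complex of $\cur{O}_{]Y[_\mathfrak{P}}$-modules and the differentials are $S_K$-linear (hence $\ekd$-linear, as $\ekd$ is a localisation-type extension of $S_K$ and the connection is relative to $S_K$), the complex $j_X^\dagger\Omega^*_{]Y[_\mathfrak{P}/S_K}$ is a complex of sheaves of $\ekd$-modules, and therefore so is its hypercohomology — this gives the $\ekd$-vector space structure on each $H^i_\rig((X,Y,\mathfrak{P})/\cur{E}_K^\dagger)$, the fact that it is a \emph{field} making these automatically vector spaces.

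The main obstacle I anticipate is not the abstract construction of the ring map but checking that the pulled-back overconvergent sheaf really does have the stated global sections $\ekd$ — i.e. that the tube of $\spec{k\lser{t}}$ in $\mathbb{D}^b_K$ is computed by the expected cofinal system $\{v_x(t)\geq r^{-1/n}\}$ and that $\Gamma$ of the corresponding $j^\dagger$ is $\varinjlim_n \cur{E}_{r^{-1/n}} = \ekd$. This is essentially the computation sketched in the introduction, and it relies on Proposition \ref{cofinal2} (to know the $V_{\underline m}$ or $V'_{\underline m}$ are cofinal) together with an explicit identification of $\Gamma(V_{\underline m}^{},\cur{O})$; one should also double-check that the colimit of global sections computes the global sections of the colimit sheaf, which is fine here since the transition maps are flat and the spaces quasi-compact quasi-separated. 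I would expect the cleanest write-up to simply invoke the affine model $\left(\spec{k\lser{t}},\spec{k\pow{t}},\spf{\cur{V}\pow{t}}\right)$, compute $\Gamma(]\spec{k\lser{t}}[_{\spf{\cur{V}\pow{t}}}, j^\dagger\cur{O}) \cong \ekd$ once and for all there, and then deduce the general case by functoriality of $j_X^\dagger$ under the structural morphism of the frame.
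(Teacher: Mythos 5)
Your proposal is correct and follows essentially the same route as the paper: the paper's proof likewise uses the morphism of frames $(X,Y,\mathfrak{P})\rightarrow\left(\spec{k\lser{t}},\spec{k\pow{t}},\spf{\cur{V}\pow{t}}\right)$ to get a morphism of ringed spaces $(]Y[_\mathfrak{P},j_X^\dagger\cur{O}_{]Y[_\mathfrak{P}})\rightarrow(\mathbb{D}^b_K,j^\dagger_{\spec{k\lser{t}}}\cur{O}_{\mathbb{D}^b_K})$ and then computes $\Gamma(\mathbb{D}^b_K,j^\dagger_{\spec{k\lser{t}}}\cur{O}_{\mathbb{D}^b_K})\cong\ekd$ via Proposition \ref{cofinal2} and Lemma \ref{closed}, exactly as you do. Your extra steps (\v{C}ech reduction to the affine case, the explicit colimit $\mathrm{colim}_n\,\cur{E}_{r^{-1/n}}$, and the check that the relative differentials are $\ekd$-linear) are just a more detailed spelling-out of what the paper leaves implicit.
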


\begin{proof} There is a morphism of frames
$$(X,Y,\mathfrak{P})\rightarrow \left(\spec{k\lser{t}},\spec{k\pow{t}},\mathrm{Spf}\left(\cur{V}\pow{t}\right)\right)$$
which induces a morphism of ringed spaces 
$$
(]Y[_\mathfrak{P},j_X^\dagger\cur{O}_{]Y[_\mathfrak{P}}) \rightarrow (\mathbb{D}^b_K, j_{\spec{k\lser{t}}}^\dagger\cur{O}_{\mathbb{D}^b_K})
$$
where $\mathbb{D}^b_K=\mathrm{Spa}(S_K,\cur{V}\pow{t})$. Now just observe that by Proposition \ref{cofinal2} and Lemma \ref{closed} that 
$$ \Gamma(\mathbb{D}^b_K, j_{\spec{k\lser{t}}}^\dagger\cur{O}_{\mathbb{D}^b_K}) = \mathrm{colim}_m \frac{S_K\tate{T}}{(\pi-t^mT)} = \ekd $$
and the lemma follows.
\end{proof}

\section{Sundry properties of rigid spaces and morphisms between them}\label{sundry}

In this section we collect together a few technical results we will need about rigid spaces and morphisms between them, and as such it can be safely skimmed and the results referred back to as necessary. There are certain properties of morphisms of rigid spaces, that we will need to use, such as finite, proper, etc., which are defined both by Huber in \cite{huber2} and by Fujiwara and Kato in \cite{rigidspaces}. Since results proved both in \cite{huber2} and in \cite{rigidspaces} will be useful for us, it will be necessary to know that the two definitions coincide. Thus part of this section is devoted to proving these equivalences. We will also need a results concerning the \'{e}tale locus of a morphism of rigid spaces. First, however, we will prove a few results about the support of coherent sheaves on rigid spaces, and about the interaction of closed analytic subspaces with the kinds of open subspaces considered in the previous section. Unless otherwise mentioned, all rigid spaces will be assumed to be locally of finite type over $\mathbb{D}^b_K$.

\begin{defn} Let $\sh{X}$ be a rigid space. Then a closed analytic subspace of $\sh{X}$ is a subspace defined by a coherent sheaf of ideals $\sh{I}\subset \cur{O}_{\sh{X}}$. This is again a rigid space, with structure sheaf given by $\cur{O}_{\sh{X}}/\sh{I}$.
\end{defn}

\begin{rem} Note that by Proposition II.7.3.5 of \cite{rigidspaces} a closed analytic subspace of $\sh{X}$ is exactly the image of a closed immersion of rigid spaces in the sense of Definition II.7.3.7 of \emph{loc. cit}. 
\end{rem}

\begin{prop} \label{support} Let $\sh{F}$ be a coherent sheaf on a rigid space $\sh{X}$. Then the support $\mathrm{supp}(\sh{F})$ of $\sh{F}$ is contained in a closed analytic subspace of $\sh{X}$.
\end{prop}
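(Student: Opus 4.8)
The statement is local on $\sh{X}$, so we may assume $\sh{X} = \spa{B}$ is affinoid, with $B$ topologically of finite type over $S_K$, and that $\sh{F}$ corresponds to a finite $B$-module $M$. The natural candidate for the closed analytic subspace is the one cut out by the annihilator ideal $\sh{I} = \mathrm{Ann}_{\cur{O}_{\sh{X}}}(\sh{F})$, or locally $I = \mathrm{Ann}_B(M)$; the plan is to show this ideal sheaf is coherent and that its vanishing locus is exactly $\mathrm{supp}(\sh{F})$ (or at least contains it). Since $B$ is Noetherian — here we use the Remark that $\cur{V}\pow{t}$ is Noetherian, so $S_K$ and all $B$ locally of finite type over it are Noetherian — the annihilator $I$ of the finite module $M$ is a finitely generated ideal, hence defines a coherent sheaf of ideals $\sh{I} \subset \cur{O}_{\sh{X}}$, and thus a genuine closed analytic subspace $\sh{Z} \subset \sh{X}$ in the sense of the Definition above.

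Next I would check that $\mathrm{supp}(\sh{F}) \subset \sh{Z}$ as subsets of the adic space. Concretely, a point $x \in \sh{X}$ lies outside $\mathrm{supp}(\sh{F})$ iff $\sh{F}_x = 0$, i.e. iff $M_x = 0$ over the local ring $\cur{O}_{\sh{X},x}$; since $M$ is finitely generated, $M_x = 0$ forces the annihilator to escape the maximal ideal, i.e. there is $a \in I$ with $v_x(a) \neq 0$, which says precisely $x \notin \sh{Z}$. Conversely the same finite-generation argument shows that if $x \in \sh{Z}$ then every element of $I$ vanishes at $x$ and $M_x \neq 0$, so in fact $\mathrm{supp}(\sh{F}) = \sh{Z}$ on the nose. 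The one point needing a little care is passing between the local ring of the adic space at $x$ and the localization of $B$ at the corresponding prime (the support of the valuation $v_x$): one uses that $\cur{O}_{\sh{X},x}$ is a local ring whose residue field receives the valuation, together with Nakayama over this local ring for the finite module $M_x$; the equivalence of the Huber and Fujiwara–Kato frameworks quoted earlier guarantees the structure sheaf behaves as expected. Finally, the locally-defined ideals $\mathrm{Ann}_B(M)$ glue: on overlaps of affinoids they agree because annihilators commute with the (flat) localization maps between rings of sections, so the sheaf $\sh{I}$ is well defined globally and the corresponding closed analytic subspace is independent of the covering.

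The main obstacle, such as it is, is purely the bookkeeping in relating the scheme-theoretic support (via $\mathrm{Ann}_B(M)$ and primes of $B$) to the adic-space support (via valuations and the local rings $\cur{O}_{\sh{X},x}$), and making sure that "coherent sheaf of ideals" in the sense of the Definition really is produced — everything else is formal once Noetherianity is invoked. If one only wants the weaker containment stated in the Proposition (rather than equality), it suffices to exhibit $\sh{Z}$ with $\sh{Z} \supset \mathrm{supp}(\sh{F})$, and then the argument reduces to: $I = \mathrm{Ann}_B(M)$ is finitely generated, defines a closed analytic subspace, and no point of $\mathrm{supp}(\sh{F})$ can avoid it. I would present the argument in that order: reduce to the affinoid case, form $\sh{I} = \mathrm{Ann}(\sh{F})$ and cite Noetherianity for coherence, identify the closed analytic subspace it defines, verify the support inclusion stalk-by-stalk via Nakayama, and remark that the construction glues and is in fact an equality.
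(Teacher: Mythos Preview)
Your argument is correct and uses the same essential idea as the paper: reduce to the affinoid case $\sh{X}=\spa{B}$, take $I=\mathrm{Ann}_B(M)$, and show the closed analytic subspace it cuts out contains the support. The paper packages this slightly differently: rather than checking stalks directly, it invokes the canonical morphism of ringed spaces $\varphi:\spa{B}\to\spec{B}$, writes $\sh{F}\cong\varphi^*\sh{F}^{\mathrm{alg}}$, observes $\mathrm{supp}(\sh{F})\subset\varphi^{-1}(\mathrm{supp}(\sh{F}^{\mathrm{alg}}))=\varphi^{-1}(V(I))$, and then cites Proposition II.7.3.16 of \cite{rigidspaces} to identify this preimage with the closed analytic subspace defined by $I$. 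This is more economical since the scheme-to-adic comparison is outsourced to a reference, while your route is more self-contained. A couple of minor points: since the statement only asks for containment and is local, the gluing discussion and the converse direction (equality of support with $\sh{Z}$) are not needed; and in your stalk argument the key fact making it work is that the pullback of $\mathfrak{m}_x\subset\cur{O}_{\sh{X},x}$ to $B$ is exactly the support $\mathfrak{p}_x$ of $v_x$, so Nakayama transports cleanly between $B_{\mathfrak{p}_x}$ and $\cur{O}_{\sh{X},x}$ --- you gesture at this but it is worth making explicit.
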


\begin{proof} The question is local on $\sh{X}$, which we may thus assume to be affine $\sh{X}\cong \mathrm{Spa}(B)$ for some topologically finite type $S_K$-algebra $B$. Then $\sh{F}$ is the $\cur{O}_\sh{X}$-module associated to some finite $B$-module $M$. If $\sh{F}^\mathrm{alg}$ is the coherent sheaf on $X=\spec{A}$ associated to $M$, and $\varphi: \sh{X}\rightarrow X$ the canonical morphism of ringed spaces, then $\sh{F}\cong \varphi^*\sh{F}^\mathrm{alg}$ (see for example \S II.6.6 of \cite{rigidspaces}). Hence 
$$
\mathrm{supp}(\sh{F})\subset \varphi^{-1}( \mathrm{supp}(\sh{F}^\mathrm{alg}))
$$ 
and $\mathrm{supp}(\sh{F}^\mathrm{alg})$ is the closed subset $V(I)$ of $\spec{A}$ defined by the ideal $I=\mathrm{Ann}(M)\subset A$. By Proposition II.7.3.16 of \cite{rigidspaces}, this inverse image coincides with the closed analytic subspace of $\mathrm{Spa}(A)$ corresponding to $I$.
\end{proof}

\begin{prop} \label{overconv} Let $\sh{X}$ be a quasi-compact rigid space, and $f\in\Gamma(\sh{X},\cur{O}_\sh{X})$. Let $T\subset \sh{X}$ be a closed analytic subspace, and suppose that 
$$
T\cap \left\{\left.x\in\sh{X}\;\right|\; v_x(f)\geq 1 \right\} =\emptyset.
$$
Then there exists some $m$ such that 
$$
T\cap \left\{\left.x\in\sh{X}\;\right|\; v_x(\pi^{-1}f^m)\geq 1 \right\} =\emptyset.
$$
\end{prop}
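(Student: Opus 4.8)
\emph{Approach.} The plan is to reduce to the affinoid case and then appeal to Lemma~\ref{cofinalkey} (in the form valid over $\mathbb{D}^b_K$ that is obtained by exactly the argument used to prove Proposition~\ref{cofinal2}). The one point needing care is that the hypothesis is stated in terms of the possibly higher rank valuations $v_x$, whereas Lemma~\ref{cofinalkey} is phrased in terms of the rank one valuations $v_{[x]}$, and by Lemma~\ref{vals} one only has $\{x\mid v_x(f)\ge1\}\subset\{x\mid v_{[x]}(f)\ge1\}$, so the given hypothesis is \emph{a priori} weaker than what Lemma~\ref{cofinalkey} requires.

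\emph{Reduction to the affinoid case.} Covering the quasi-compact $\sh{X}$ by finitely many affinoid opens $\sh{U}_1,\dots,\sh{U}_r$, each intersection $T\cap\sh{U}_i$ is a closed analytic subspace of $\sh{U}_i$ disjoint from $\{x\in\sh{U}_i\mid v_x(f)\ge1\}$, so granting the affinoid case there is an $m_i$ with $T\cap\sh{U}_i\cap\{x\mid v_x(\pi^{-1}f^{m_i})\ge1\}=\emptyset$. On $T$ we have $v_x(f)<1$, hence $v_x(\pi^{-1}f^{m})=v_x(\pi)^{-1}v_x(f)^{m}\le v_x(\pi^{-1}f^{m_i})$ for all $m\ge m_i$, and taking $m=\max_i m_i$ gives the conclusion on all of $T$. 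So we may assume $\sh{X}=\Spa(B)$ and $T=\Spa(B/I)$ for an ideal $I\subset B$, with $f\in B$.

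\emph{The key step.} I would then show that $v_{[x]}(f)<1$ for every $x\in T$. Here it is essential that $T$ is a closed \emph{analytic} subspace: since $[x]$ is, by construction, a valuation on $\mathrm{Frac}\big(B/\mathrm{supp}(v_x)\big)$, it has the same support as $v_x$, which contains $I$, so $x\in T\Rightarrow[x]\in T$. If some $x\in T$ had $v_{[x]}(f)\ge1$, then $[x]$ would be a point of $T$ at which the valuation of $f$ is $\ge1$, contradicting $T\cap\{x\mid v_x(f)\ge1\}=\emptyset$. Hence $V:=\sh{X}\setminus T$ is an open set with $V\supset\{x\mid v_{[x]}(f)\ge1\}$, and Lemma~\ref{cofinalkey} applied with $g=f$ yields an $m$ with $V\supset\{x\mid v_x(\pi^{-1}f^m)\ge1\}$, that is, $T\cap\{x\mid v_x(\pi^{-1}f^m)\ge1\}=\emptyset$. (Equivalently, one reruns the proof of that lemma directly: $T$ is quasi-compact, the continuous function $\Norm{f}$ of \S II.4.3 of \cite{rigidspaces} maps $T$ into $[0,1)$ by the claim, hence into $[0,\eta]$ for some $\eta<1$, and then Lemma~\ref{vals} applied to $\pi^{-1}f^m$ for $m$ with $\Norm{\pi}^{-1}\eta^m<1$ finishes it.)

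\emph{Main obstacle.} The only genuinely delicate point — really a point to be careful about rather than a hard one — is the bridging step above: for a higher rank point one can have $v_x(f)<1$ while $v_{[x]}(f)=1$, and such a point would violate the conclusion for every $m$. What prevents this is precisely that $T$ is cut out by a coherent ideal, so that membership in $T$ depends only on the support of the valuation and is therefore unchanged on passing from $x$ to $[x]$; this is what promotes the hypothesis on $v_x$ to the stronger statement on $v_{[x]}$ needed to run the real-valued compactness argument.
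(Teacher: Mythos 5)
Your proof is correct and takes essentially the same route as the paper: the paper likewise reduces to the affinoid case and combines quasi-compactness of the rank-one (Berkovich) locus of $T$ with Lemma \ref{vals}, only it works intrinsically on $T\cong\mathrm{Spa}(B)$ with the spectral seminorm rather than routing through Lemma \ref{cofinalkey} applied to $V=\sh{X}\setminus T$ (your parenthetical ``rerun the proof directly'' is almost verbatim the paper's argument). Your bridging observation that $x\in T\Rightarrow[x]\in T$, because membership in a closed analytic subspace depends only on the support of the valuation, is exactly the point implicit in the paper's restriction to height-one points of $T$, so there is no gap.
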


\begin{proof} The question is local, so we may assume that $\sh{X}$, and hence $T$, is affinoid, say $T\cong \mathrm{Spa}(B)$. Let $g\in B$ be the pullback of $f$, we are required to show that
$$ v_x(g) < 1\; \forall x\in T \Rightarrow \exists m\text{ s.t. }v_x(\pi^{-1}g^m) < 1\;\forall x\in T.
$$
By Lemma \ref{vals} (or rather, its analogue for rigid varieties over $S_K$, the proof goes over verbatim) we may restrict to height one points $x\in [T]$. But now this can be rephrased as
$$
v_x(g) < 1\; \forall x\in [T] \Rightarrow \exists m\text{ s.t. }v_x(g) < r^{-1/m} \;\forall x\in [T],
$$
so if we let $\Norm{\cdot}_\mathrm{sup}=\mathrm{sup}_{v\in\sh{M}(B)} v(\cdot)$ denote the spectral semi-norm on $B$, then it suffices to show that 
$$
v(g)<1\; \forall v\in\sh{M}(B)\Rightarrow \Norm{g}_\mathrm{sup} <1.
$$
This then follows from compactness of $\sh{M}(B)$.
\end{proof}

\begin{rem} This proposition is closely related to the fact that the underlying set of a closed analytic subspace of $\sh{X}$ is an overconvergent closed subset of $\sh{X}$, i.e. the inverse image of a closed subset of $[\sh{X}]$.
\end{rem}

Now we turn to proving the equivalences we require between the definitions of separatedness, properness and finiteness given by Huber and Fujiwara/Kato.

\begin{defn} A morphism $\sh{X}\rightarrow \sh{Y}$ of rigid spaces is separated if the diagonal morphism
$$
\Delta:\sh{X}\rightarrow \sh{X}\times_{\sh{Y}}\sh{X}
$$
is a closed immersion. Note that this is the definition given in both \cite{rigidspaces} and \cite{huber2}. 
\end{defn}

\begin{defn} From now on `rigid variety over $S_K$' will mean `rigid space separated and locally of finite type over $\mathbb{D}^b_K$'.
\end{defn}

Note that by Corollary II.7.5.12 (3) of \cite{rigidspaces}, any morphism between rigid varieties is separated. Any morphism is also locally of finite type in the sense of Definition II.2.3.1 of \cite{rigidspaces} by Proposition II.2.3.2 of \cite{rigidspaces}, and in the sense of Definition 1.2.1 of \cite{huber2} by Lemma 3.5 (iv) of \cite{huber}. A morphism of rigid varieties over $S_K$ is of finite type (in the sense of either \cite{rigidspaces} or \cite{huber2}) if and only if it is quasi-compact. Indeed, this follows by Proposition II.7.1.5 (1) of \cite{rigidspaces} and is the definition of finite type in \cite{huber2}.

\begin{defn} A morphism $f:\sh{X}\rightarrow\sh{Y}$ of finite type between rigid varieties over $S_K$ is said to be:
\begin{enumerate} \item proper in the sense of Huber if for any morphism $\sh{Z}\rightarrow \sh{Y}$ of adic spaces (not necessarily locally of finite type over $\mathbb{D}^b_K$) the map 
$$
\sh{X}\times_{\sh{Y}}\sh{Z}\rightarrow \sh{Z}
$$
is closed.
\item proper in the sense of Fujiwara/Kato if for any morphism $\sh{Z}\rightarrow \sh{Y}$ of rigid spaces (not necessarily locally of finite type of $\mathbb{D}^b_K$) the map 
$$
\sh{X}\times_{\sh{Y}}\sh{Z}\rightarrow \sh{Z}
$$
is closed.
\end{enumerate}
\end{defn}

Note that \emph{a priori} the two definitions are not the same, since the category of objects we are base changing by could be different. However, by Corollary II.7.5.16 of \cite{rigidspaces} it suffices to check the universal closedness for Fujiwara/Kato properness for $\sh{Z}= \mathbb{D}_\sh{Y}^n$ for $n\geq1$, and hence Huber properness implies Fujiwara/Kato properness.

To show the converse, we first recall some notation. For an adic formal scheme $\mathfrak{X}$ of finite ideal type, not necessarily of finite type over $\cur{V}\pow{t}$, we let $\mathfrak{X}^\rig$ denote the associated coherent rigid space in the sense of \S II.2.1 of \cite{rigidspaces}. For a rigid variety $\sh{X}$ over $S_K$ and $x\in\sh{X}$ we let $A_x=\cur{O}^+_{\sh{X},x}$ denote the stalk of the integral structure sheaf of $\sh{X}$ at $x$, $B_x=\cur{O}_{\sh{X},x}=A_x[\pi^{-1}]$ the stalk of the structure sheaf, $\mathfrak{m}_x$ the maximal idea of $B_x$, and $K_x$ the residue field $B_x/\mathfrak{m}_x$. Corollary II.3.2.8 of \cite{rigidspaces} tells us that $\mathfrak{m}_x\subset A_x$, and that $V_x:=A_x/\mathfrak{m}_x$ is a valuation ring inside $K_x$. Let $k_x$ denote the residue field of $V_x$. 

Following \S1.1 of \cite{huber2}, we define an affinoid field to be a pair $(A^\triangleright,A^+)$ where $A^+$ is a valuation ring with quotient field $A^\triangleright$, and the valuation topology on $A^\triangleright$ is induced by a valuation of rank 1. Following \S II.3.3 of \cite{rigidspaces}, we define a rigid point of a rigid variety $\sh{X}$ over $S_K$ to be a morphism of rigid spaces $\spf{V}^\rig\rightarrow \sh{X}$ (not necessarily of finite type over $\mathbb{D}^b_K$) where $V$ is an $a$-adically complete valuation ring for some $a\in\mathfrak{m}_V\setminus \left\{ 0\right\}$. 

\begin{lem} If $f:\sh{X}\rightarrow \sh{Y}$ is Fujiwara/Kato proper, then it is Huber proper.
\end{lem}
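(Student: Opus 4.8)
The plan is to compare the two notions of properness through a valuative criterion, which is available on the Fujiwara/Kato side directly from the definition, and then to propagate it to Huber's larger supply of test objects. Since $f$ is a morphism of rigid varieties over $S_K$ it is separated (Corollary II.7.5.12(3) of \cite{rigidspaces}), and of finite type, hence quasi-compact, by hypothesis. By Huber's valuative criterion of properness (\S1.3 of \cite{huber}) it is therefore enough to show: for every affinoid field $(A^\triangleright,A^+)$ and every commutative square
$$
\xymatrix{
\spa{A^\triangleright,A^\triangleright}\ar[r]\ar[d] & \sh{X}\ar[d]^f \\
\spa{A^\triangleright,A^+}\ar[r] & \sh{Y}
}
$$
there is a morphism $\spa{A^\triangleright,A^+}\rightarrow\sh{X}$ making both triangles commute; such a lift is automatically unique because $f$ is separated.

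To build the lift I would first note that, as the square lies over $\mathbb{D}^b_K=\spa{S_K,\cur{V}\pow{t}}$ and $\pi$ is a topologically nilpotent \emph{unit} of $S_K$, its image $a\in A^+$ is a nonzero element of the height one prime of $A^+$; consequently $A^\triangleright=A^+[a^{-1}]$, and the $a$-adic completion $\widehat{A^+}$ is again a valuation ring, $a$-adically complete, with $\widehat{A^+}[a^{-1}]$ the completion $\widehat{A^\triangleright}$ of $A^\triangleright$ for its rank one topology. Hence $\spf{\widehat{A^+}}^\rig\cong\spa{\widehat{A^\triangleright},\widehat{A^+}}$ is a rigid point in the sense introduced above, and pulling the square back along $\spa{\widehat{A^\triangleright},\widehat{A^+}}\rightarrow\spa{A^\triangleright,A^+}$ yields a rigid point $\spf{\widehat{A^+}}^\rig\rightarrow\sh{Y}$ together with a lift of its generic point to $\sh{X}$.

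At this point Fujiwara/Kato properness applies to the rigid space $\spf{\widehat{A^+}}^\rig\rightarrow\sh{Y}$: the projection $\sh{X}\times_{\sh{Y}}\spf{\widehat{A^+}}^\rig\rightarrow\spf{\widehat{A^+}}^\rig$ is closed. The chosen lift of the generic point is a point of the source over the unique generic point of $\spf{\widehat{A^+}}^\rig$, and closedness forces its closure to meet the fibre over the closed point; separatedness of $f$ then makes this specialisation unique, producing the desired extension $\spf{\widehat{A^+}}^\rig\rightarrow\sh{X}$ over $\sh{Y}$. One must then descend this back to a morphism $\spa{A^\triangleright,A^+}\rightarrow\sh{X}$: using that $\sh{X}$ is locally of finite type over $\mathbb{D}^b_K$, together with closedness of the base changes of $f$ along the intermediate complete rigid points interpolating between $\spf{\widehat{A^+}}^\rig$ and $\spa{A^\triangleright,A^+}$ (and the analogue, in the adic setting, of the fact from Section~\ref{opening} that inverse images under $[\,\cdot\,]$ of closed sets are closed), one recovers a lift along the whole chain of generisations of the closed point of $\spa{A^\triangleright,A^+}$, which is what was wanted.

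The main obstacle is exactly this last step. Huber's valuative criterion tests against affinoid fields $(A^\triangleright,A^+)$ whose integral part may have rank $>1$, while the Fujiwara/Kato rigid points $\spf{V}^\rig$ coming from $a$-adically complete valuation rings only see the rank one boundary directly, so $\spa{A^\triangleright,A^+}$ need not itself be a Fujiwara/Kato rigid space. One therefore has to carry out the d\'evissage along the chain of generisations/specialisations of $\spa{A^\triangleright,A^+}$, pass to $a$-adic completions at each stage to land in the Fujiwara/Kato world, verify the valuation-theoretic inputs used along the way (that the $a$-adic completion of a valuation ring is a valuation ring with the expected generic fibre, and that $A^\triangleright=A^+[a^{-1}]$), and check that the resulting local lifts are compatible. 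The hypothesis that all spaces are (locally) of finite type over $\mathbb{D}^b_K=\spa{S_K,\cur{V}\pow{t}}$, with $\cur{V}\pow{t}$ Noetherian, is what makes these inputs available.
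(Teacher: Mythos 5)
Your overall strategy---reduce both notions to valuative criteria and translate between Huber's affinoid fields and Fujiwara/Kato's rigid points---is in the same spirit as the paper's proof, but the two middle steps do not hold up as written. The central lifting step is unjustified: after pulling back to $\spf{\widehat{A^+}}^\rig$ you invoke only the \emph{definition} of Fujiwara/Kato properness (closedness of $\sh{X}\times_{\sh{Y}}\spf{\widehat{A^+}}^\rig\rightarrow \spf{\widehat{A^+}}^\rig$) and argue that the closure of the lifted generic point meets the fibre over the closed point. That produces a \emph{point} of the fibre product specialising your lift, not a morphism $\spf{\widehat{A^+}}^\rig\rightarrow\sh{X}$ over $\sh{Y}$: one still has to construct a map of rigid spaces on the whole chain, compatibly with the structure sheaves, and this passage from universal closedness to the unique lifting of generisations of rigid points is precisely the content of the valuative criterion, Theorem II.7.5.17 of \cite{rigidspaces}, which the paper's proof invokes and you never do. Either cite it (after which most of your intermediate construction becomes unnecessary) or prove it; as it stands this is a genuine gap.

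Moreover, the difficulty you single out at the end is misdiagnosed, and the step you call the main obstacle is left unproved. Rigid points in the sense used here are morphisms $\spf{V}^\rig\rightarrow\sh{X}$ with $V$ an $a$-adically complete valuation ring of \emph{arbitrary} rank, so there is no ``rank one boundary'' restriction to circumvent and no d\'evissage along chains of generisations is needed. The clean translation is the one via 1.1.8 of \cite{huber2}: a morphism $\spa{A^\triangleright,A^+}\rightarrow\sh{X}$ is a pair $(x,\varphi)$ with $\varphi:K_x\rightarrow A^\triangleright$ continuous and $V_x=\varphi^{-1}(A^+)$; completing $V_x$ gives a rigid point, and conversely a rigid point of $\sh{X}$ gives back a continuous map $V_x\rightarrow B^+$, hence $K_x\rightarrow A^\triangleright$ and the desired morphism of adic spaces. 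Without this (or an equivalent) identification your last sentence does not actually yield the required lift $\spa{A^\triangleright,A^+}\rightarrow\sh{X}$. Finally, your test diagram is off: $\spa{A^\triangleright,A^\triangleright}$ is not a legitimate adic space (the integral subring must be an open, integrally closed subring of power-bounded elements), and the criterion as used in the paper quantifies over diagrams carrying \emph{two} integral structures $B^+\subset A^+$ on the same complete field, the map to $\sh{X}$ being given by the possibly higher-rank ring $A^+$; testing only against the rank one generic point does not obviously account for specialisations of higher rank points of $\sh{X}$, so you would need a separate argument to reduce to that case.
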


\begin{proof} This more or less follows from the respective valuative criteria, i.e. Theorem II.7.5.17 of \cite{rigidspaces} and Lemma 1.3.10 of \cite{huber2}, with a little care taken to ensure that rigid points of $\sh{X}$ correspond to morphisms $\mathrm{Spa}(A^\triangleright,A^+)\rightarrow \sh{X}$ where $(A^\triangleright,A^+)$ is an affinoid field.

So suppose that we have a diagram of adic spaces
$$ \xymatrix{ \mathrm{Spa}(A^\triangleright,A^+) \ar[r]\ar[d] & \sh{X}  \ar[d] \\
\mathrm{Spa}(B^\triangleright, B^+) \ar[r] & \sh{Y}
}
$$
with $\sh{X}\rightarrow \sh{Y}$ Fujiwara/Kato proper, such that $\widehat{A}^\triangleright=\widehat{B}^\triangleright$, we must show that there is a unique morphism $\mathrm{Spa}(B^\triangleright,B^+)\rightarrow \sh{X}$ making the diagram commute. Actually, since $\sh{X}\rightarrow \sh{Y}$ is separated, by the valuative criterion for separatedness (i.e. Lemma 1.3.10 of \cite{huber2}) it suffices to show that there exists such a morphism. Note that since $ \mathrm{Spa}(A^\triangleright,A^+)$ only depends on the completion $(\widehat{A}^\triangleright,\widehat{A}^+)$ we may assume that $A^\triangleright=B^\triangleright$ is complete, and we have valuation rings $B^+\subset A^+ \subset A^\triangleright$. 

Then by 1.1.8 of \cite{huber2}, the morphism $ \mathrm{Spa}(A^\triangleright,A^+) \rightarrow \sh{X}$ corresponds to a pair $(x,\varphi)$ where $x\in \sh{X}$ and $\varphi:K_x\rightarrow A^\triangleright$ is a continuous homomorphism such that $V_x=\varphi^{-1}(A^+)$. Hence this extends uniquely to a morphism $\widehat{V}_x\rightarrow A^+$ of complete valuation rings (where $\widehat{V}_x$ is the $\pi$-adic completion of $V_x$) and hence a rigid point $ \spf{A^+}^\rig\rightarrow \sh{X}$, similarly the morphism $\mathrm{Spa}(A^\triangleright, B^+) \rightarrow \sh{Y}$ corresponds to a rigid point of $\sh{Y}$, and there is a commutative diagram
$$ \xymatrix{ \spf{A^+}^\rig \ar[r]\ar[d] & \sh{X} \ar[d] \\
\spf{B^+}^\rig \ar[r] & \sh{Y}.
}
$$
Since the morphism $B^+\rightarrow A^+$ is a localisation at a prime ideal of $B^+$, the morphism $$\spf{A^+}^\rig\rightarrow \spf{B^+}^\rig$$ is a generisation in the sense of II.7.5(c) of \cite{rigidspaces}, and hence there exists a unique morphism $\spf{B^+}^\rig\rightarrow \sh{X}$ making the diagram commute. Again, the rigid point $\spf{B^+}^\rig\rightarrow \sh{X}$ corresponds to a point $x\in\sh{X}$ and a continuous homomorphism $\widehat{V}_x\rightarrow B^+$ of complete valuation rings, and hence a continuous homomorphism $V_x\rightarrow B^+$. This extends uniquely to a continuous homomorphism $K_x\rightarrow A^\triangleright$ and hence a morphism $\mathrm{Spa}(A^\triangleright,B^+) \rightarrow \sh{X}$ as required.
\end{proof}

Henceforth we will simply refer to a morphism $f:\sh{X}\rightarrow \sh{Y}$ of rigid varieties over $S_K$ being proper. 

\begin{defn} A morphism $f:\sh{X}\rightarrow \sh{Y}$ of rigid analytic varieties over $S_K$ is said to be:
\begin{enumerate} \item finite in the sense of Huber if locally on $\sh{Y}$ it is of the form
$$\mathrm{Spa}(B)\rightarrow \mathrm{Spa}(A)$$
for some finite morphism $A\rightarrow B$ of topologically finite type $S_K$-algebras $A,B$;
\item finite in the sense of Fujiwara/Kato if locally on $\sh{Y}$ it arises as the generic fibre of a finite morphism $\mathfrak{X}\rightarrow \mathfrak{Y}$ between formal schemes of finite type over $\cur{V}\pow{t}$.
\end{enumerate}
\end{defn}

It is clear that Fujiwara/Kato finiteness implies Huber finiteness.

\begin{lem} If $f:\sh{X}\rightarrow \sh{Y}$ is Huber finite, then it is Fujiwara/Kato finite.
\end{lem}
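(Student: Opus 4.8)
The statement to prove is that a Huber-finite morphism $f:\sh{X}\rightarrow\sh{Y}$ of rigid varieties over $S_K$ is Fujiwara/Kato finite, i.e.\ locally on $\sh{Y}$ it arises as the generic fibre of a finite morphism of $\pi$-adic formal schemes of finite type over $\cur{V}\pow{t}$. The question being local on $\sh{Y}$, I would immediately reduce to the affinoid situation: $\sh{Y}=\spa{A}$ and $\sh{X}=\spa{B}$ with $A\rightarrow B$ a finite morphism of topologically finite type $S_K$-algebras. So the entire content is to produce, possibly after shrinking $\sh{Y}$ further, a formal model: a topologically finite type $\cur{V}\pow{t}$-algebra $\mathfrak{A}$ with $\mathfrak{A}_K=A$ and a \emph{finite} $\mathfrak{A}$-algebra $\mathfrak{B}$ with $\mathfrak{B}_K=B$, inducing $A\rightarrow B$ after inverting $\pi$.

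\textbf{Step 1: choose a formal model of the base.} Since $A$ is topologically of finite type over $S_K=\cur{V}\pow{t}\tens_{\cur V}K$, pick any presentation $S_K\tate{x_1,\dots,x_r}\twoheadrightarrow A$ and let $\mathfrak{A}$ be the image of $\cur{V}\pow{t}\tate{x_1,\dots,x_r}$ — equivalently take $\mathfrak{A}=A^+$ in the notation of the excerpt, which is topologically of finite type over $\cur{V}\pow{t}$ because $\cur{V}\pow{t}$ is Noetherian (this is exactly the point flagged in the Remark after Definition~\ref{framevt}). Then $\spf{\mathfrak{A}}^\rig\cong\spa{A}$.

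\textbf{Step 2: build a finite formal model of $f$.} Write $B$ as a quotient of a free $A$-module; since $B$ is module-finite over $A$ we may choose generators $b_1=1,b_2,\dots,b_s$ of $B$ as an $A$-module, and each product $b_ib_j$ and each $b_i$ as an element of $B$ expands in these generators with coefficients in $A$. Now $A=\mathfrak{A}[\pi^{-1}]$, so after clearing denominators — i.e.\ rescaling the $b_i$ by a suitable power of $\pi$, which does not change $B$ as an $A$-algebra — all the structure constants defining the multiplication and the $A$-module structure lie in $\mathfrak{A}$. Let $\mathfrak{B}$ be the $\mathfrak{A}$-subalgebra (in fact $\mathfrak{A}$-submodule) of $B$ generated by the rescaled $b_i$: by construction $\mathfrak{B}$ is a finite $\mathfrak{A}$-module, hence a topologically finite type (even finite) $\cur{V}\pow{t}$-algebra, $\mathfrak{A}$-flat after inverting $\pi$ is automatic, and $\mathfrak{B}[\pi^{-1}]=B$. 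Taking $\pi$-adic completions (which is harmless on finite modules over a Noetherian ring) gives a finite morphism $\spf{\mathfrak{B}}\rightarrow\spf{\mathfrak{A}}$ of $\pi$-adic formal schemes of finite type over $\cur{V}\pow{t}$, and its generic fibre is $\spa{B}\rightarrow\spa{A}$ by the compatibility $\mathfrak{B}[\pi^{-1}]=B$, $\mathfrak{A}[\pi^{-1}]=A$ together with $\spf{(\cdot)}^\rig$ commuting with finite morphisms.

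\textbf{Main obstacle and bookkeeping.} The one genuinely delicate point is that the model $\mathfrak{B}$ produced this way need not be \emph{$\pi$-torsion free}, nor need $\spf{\mathfrak{B}}^\rig$ be computed from $\mathfrak{B}$ on the nose rather than from $\mathfrak{B}/(\pi\text{-torsion})$ or from its integral closure $\mathfrak{B}^+$; one must check that killing $\pi$-power torsion (which disappears after inverting $\pi$) keeps the morphism finite — it does, being a quotient of a finite module over the Noetherian ring $\mathfrak{A}$ — and that the Fujiwara/Kato $\mathrm{rig}$-functor, applied to $\spf{\mathfrak{B}}\to\spf{\mathfrak{A}}$, really recovers the given $\spa{B}\to\spa{A}$; for this I would cite the relevant comparison in \S II.A and \S II.2.1 of \cite{rigidspaces} (that $\spf{(\cdot)}^\rig$ of a finite morphism of finite-type $\cur V\pow t$-formal schemes is finite, and that it only depends on the generic fibre data). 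Everything else is the routine "clear denominators" argument; since the statement is local on $\sh{Y}$, no gluing issues arise, and the converse implication (Fujiwara/Kato finite $\Rightarrow$ Huber finite) was already noted to be clear.
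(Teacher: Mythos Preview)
Your proof is correct and follows the same strategy as the paper: reduce to $\spa{B}\to\spa{A}$ with $A\to B$ finite, pick module-generators $b_i$ of $B$ over $A$, scale by a power of $\pi$, and take $A^+[b_1,\ldots,b_n]$ as a finite $A^+$-algebra modelling $B$. The paper's phrasing is marginally cleaner---rather than clearing denominators in the structure constants, it simply chooses the $b_i$ to lie in $B^+$, which is by definition the integral closure of $A^+$ in $B$, so integrality (hence module-finiteness) of $A^+[b_1,\ldots,b_n]$ over $A^+$ is immediate, and your extra bookkeeping about $\pi$-torsion and the $\mathrm{rig}$-functor is not needed.
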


\begin{proof} We may suppose that $f$ is associated to a finite morphism $A\rightarrow B$ of topologically finite type $S_K$-algebras. Let $A^+\rightarrow B^+$ be the associated morphism of $+$-parts, note that $B^+$ is the integral closure of $A^+$ in $B$. Choose $b_1,\ldots,b_n\in B^+$ which generate $B$ as an $A$-module, and hence topologically as an $A$-algebra. Since each $b_i$ is integral over $A^+$,
$$A^+\rightarrow A^+[b_1,\ldots,b_n]$$
is a finite formal model for $A\rightarrow B$.
\end{proof}

Henceforth we will simply refer to a morphism $f:\sh{X}\rightarrow \sh{Y}$ of rigid varieties over $S_K$ being finite. Having proved the required equivalence between the different definitions of properness and finiteness, we move on to the second main result of this section, concerning the openness of the \'{e}tale locus of a morphism of rigid varieties over $S_K$. 

\begin{defn} Let $f:\sh{X}\rightarrow \sh{Y}$ be a morphism between rigid spaces over $S_K$, $\Delta:\sh{X}\rightarrow \sh{X}\times_{\sh{Y}}\sh{X}$ the closed immersion defined by the diagonal, and $\sh{I}= \ker(\cur{O}_{\sh{X}\times_{\sh{Y}}\sh{X}}\rightarrow \Delta_*\cur{O}_{\sh{X}})$ the kernel of the multiplication map. Then we define the module of differentials 
$$ \Omega^1_{\sh{X}/\sh{Y}} = \Delta^*(\sh{I})=\sh{I}/\sh{I}^2.$$
by 1.6 of \cite{huber2} this is a coherent $\cur{O}_{\sh{X}}$-module,
\end{defn}

While the following definitions are not those given in \S1 of \cite{huber2}, the results there show that they are equivalent.

\begin{defn} A morphism $f:\sh{X}\rightarrow \sh{Y}$ of rigid varieties over $S_K$ is said to be:
\begin{enumerate} 
\item unramified if $\Omega^1_{\sh{X}/\sh{Y}}=0$;
\item flat if for each $x\in \sh{X}$, $\cur{O}_{X,x}$ is flat over $\cur{O}_{Y,f(x)}$;
\item \'{e}tale if it is flat and unramified.
\end{enumerate}
\end{defn}

It follows immediately from Proposition \ref{support} that the locus where a morphism $f:\sh{X}\rightarrow \sh{Y}$ is not unramified is a closed analytic subspace of $\sh{X}$. The following result says that the same is true for \'{e}taleness.

\begin{prop} \label{openetale} Let $f:\sh{X}\rightarrow \sh{Y}$ be a morphism of rigid varieties over $S_K$. Then $f$ is \'{e}tale away from a closed analytic subspace of $\sh{X}$.
\end{prop}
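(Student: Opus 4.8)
The assertion is local on $\sh{X}$, so I would begin by assuming $\sh{X}=\mathrm{Spa}(B)$ and $\sh{Y}=\mathrm{Spa}(A)$ are affinoid with $A\rightarrow B$ topologically of finite type. By the definitions in the excerpt, $f$ is \'etale at a point $x$ precisely when it is flat at $x$ and $\Omega^1_{\sh{X}/\sh{Y}}$ has vanishing stalk at $x$; hence the non-\'etale locus of $f$ is the union of $\mathrm{supp}(\Omega^1_{\sh{X}/\sh{Y}})$ with the non-flat locus of $f$. The first of these is a closed analytic subspace $\sh{Z}_1\subset\sh{X}$ by Proposition \ref{support} and the remark following it, and away from $\sh{Z}_1$ the morphism $f$ is unramified. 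So it suffices to show that, after replacing $\sh{X}$ by the open complement $\sh{U}:=\sh{X}\setminus\sh{Z}_1$, the non-flat locus of $f|_{\sh{U}}$ is the underlying set of a closed analytic subspace of $\sh{U}$, and then to reassemble the two pieces.

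On $\sh{U}$ the morphism $f$ is unramified, hence quasi-finite (its fibres are spectra of finite separable algebras over the residue fields). I would then use the local structure of unramified morphisms: after shrinking $\sh{U}$ further, $f|_{\sh{U}}$ factors as $\sh{U}\xrightarrow{\,i\,}\sh{W}\xrightarrow{\,g\,}\sh{Y}$ with $i$ a closed immersion and $g$ \'etale. This is proved exactly as over a field or a Noetherian base, the only input being the availability of \'etale coordinates for rigid spaces locally of finite type over $S_K$, which one has from the results in \S1 of \cite{huber2}. Since $g$ is \'etale it induces a faithfully flat local homomorphism on stalks, and a short argument with Nakayama's lemma (over $\cur{O}_{\sh{W},i(x)}$) shows that $f=g\circ i$ is flat at $x\in\sh{U}$ if and only if the coherent ideal $\sh{J}\subset\cur{O}_{\sh{W}}$ defining $i$ has vanishing stalk at $i(x)$. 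Thus the non-flat locus of $f|_{\sh{U}}$ is $i^{-1}(\mathrm{supp}(\sh{J}))$; since $\mathrm{supp}(\sh{J})$ is a closed analytic subspace of $\sh{W}$ by Proposition \ref{support}, and the scheme-theoretic preimage of a closed analytic subspace under a morphism of rigid spaces underlies a closed analytic subspace, we conclude that the non-flat locus of $f|_{\sh{U}}$ underlies a closed analytic subspace of $\sh{U}$. (Alternatively, one could avoid the structure theorem for unramified morphisms by invoking the rigid-analytic analogue of the openness of the flat locus for morphisms of finite type over a Noetherian base, noting that $\cur{V}\pow{t}$, and hence all rings in sight, is Noetherian and excellent, so that a closed subset of $\sh{U}$ carries a reduced closed analytic subspace structure.)

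Finally I would glue: the non-flat locus of $f|_{\sh{U}}$ is intrinsically defined, so the reduced closed analytic subspace structures produced locally agree on overlaps and glue to a closed analytic subspace $\sh{Z}_2\subset\sh{U}$ whose complement is exactly the \'etale locus of $f$ inside $\sh{U}$. As $\sh{Z}_2$ is Zariski-locally-closed in $\sh{X}$, its closure $\overline{\sh{Z}_2}$ is a closed analytic subspace of $\sh{X}$ meeting $\sh{U}$ in $\sh{Z}_2$; since a union of two closed analytic subspaces is again one, $f$ is \'etale away from the closed analytic subspace $\sh{Z}_1\cup\overline{\sh{Z}_2}$. The formal bookkeeping around $\Omega^1$ and the final gluing are routine, so the flatness analysis on the unramified locus is the main obstacle — concretely, making sense of the local factorisation into a closed immersion followed by an \'etale morphism in the rigid-analytic setting, or equivalently establishing openness of the flat locus together with a coherent (reduced) ideal structure on its complement.
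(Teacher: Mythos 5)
Your proposal is correct and follows essentially the same route as the paper: split off the non-unramified locus via $\mathrm{supp}(\Omega^1_{\sh{X}/\sh{Y}})$ and Proposition \ref{support}, then locally factor the unramified morphism as a closed immersion followed by an \'{e}tale map (Proposition 1.6.8 of \cite{huber2}) and apply Proposition \ref{support} to the ideal of that closed immersion --- your Nakayama/flat-locus argument is just a rephrasing of the paper's observation that the closed immersion is an isomorphism away from the support of $\ker(\cur{O}_{\sh{Z}}\rightarrow g_*\cur{O}_{\sh{X}})$. The only divergence is your final gluing/closure step (the analyticity of $\overline{\sh{Z}_2}$ is the least solid point), which the paper sidesteps by treating the whole question as local on $\sh{X}$ and $\sh{Y}$, so the substance of the argument is the same.
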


\begin{proof} We may assume that $f$ is unramified, and the question is local on both $\sh{X}$ and $\sh{Y}$, which we may thus assume to be affinoid. Hence by Proposition 1.6.8 of \cite{huber2} we may factor $\sh{X}\rightarrow \sh{Y}$ as a closed immersion $g:\sh{X}\rightarrow \sh{Z}$ follows by an \'{e}tale map $h:\sh{Z}\rightarrow \sh{Y}$. Thus it suffices to show that the closed immersion $g:\sh{X}\rightarrow \sh{Z}$ is an isomorphism away from a closed analytic subspace of $\sh{Z}$. But now this just follows from the fact that $g$ is an isomorphism away from the support of the coherent sheaf
$$
\ker(\cur{O}_{\sh{Z}}\rightarrow g_*\cur{O}_{\sh{X}})
$$ together with Proposition \ref{support}.
\end{proof}

\begin{rem} Note that this closed subspace will in general be the whole of $\sh{X}$, and this result will only be useful when we already know that $f$ is generically \'{e}tale, i.e. \'{e}tale on some open subset of $\sh{X}$.
\end{rem}

\section{Independence of the frame}

In this section we prove that for a smooth and proper frame $(X,Y,\mathfrak{P})$, the rigid cohomology
$$
H^i_\rig((X,Y,\mathfrak{P})/\ekd)
$$
only depends on $X$, and gives rise to a functor
$$
H^i_\rig(X/\ekd)
$$
on the category of embeddable varieties. (It is relatively easy to then extend this to non-embeddable varieties, we will do this in the sequel \cite{rclsf3}). We will follow closely Berthlot's original proof of independence for rigid cohomology in \cite{rigcohber} and \cite{iso}, the key results being the Strong Fibration Theorem and the overconvergent Poincar\'{e} Lemma below. 

\begin{prop}[Strong Fibration Theorem] \label{strongprop} Suppose that 
$$
\xymatrix{
& Y'  \ar[r]^{i'}\ar[d]^v & \mathfrak{P}' \ar[d]^u \\
X \ar[ur]^{j'} \ar[r]^j & Y \ar[r]^i & \mathfrak{P}
}
$$
is a diagram of frames over $\cur{V}\pow{t}$, such that $v$ is proper, and $u$ is \'{e}tale in a neighbourhood of $X$. Then $u_K$ induces an isomorphism between a cofinal system of neighbourhoods of $]X[_{\mathfrak{P}}$ in $]Y[_{\mathfrak{P}}$ and a cofinal system of neighbourhoods of $]X[_{\mathfrak{P}'}$ in $]Y'[_{\mathfrak{P}'}$. 
\end{prop}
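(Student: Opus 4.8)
The plan is to adapt Berthelot's proof of the strong fibration theorem (\S1.3 of \cite{iso}) essentially verbatim, replacing the classical rigid tubes and strict neighbourhoods by the adic tubes $]X[_{\mathfrak{P}}$, $]X[_{\mathfrak{P}'}$ and the systems $V_{n,m}$, $V'_{n,m}$ of Proposition \ref{cofinal2}. The first step is to reduce to the case where $u$ is an \emph{open immersion} in a neighbourhood of $X$. The assertion is local on $\mathfrak{P}$, so we may take $\mathfrak{P}=\spf{A}$ affine. Commutativity of the diagram gives $u\circ i'\circ j'=i\circ j$, so on special fibres $u$ carries the locally closed subscheme $i'(j'(X))$ of $P'$ isomorphically onto the locally closed subscheme $i(j(X))$ of $P$; since $u$ is \'{e}tale near $X$, the image of $X$ is open and closed in $u^{-1}(i(j(X)))$, so after shrinking $\mathfrak{P}'$ around $X$ we may assume $u$ is an open immersion near $X$. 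One has to check that this shrinking changes neither $]X[_{\mathfrak{P}'}$ nor its systems $V'_{n,m}$, which follows from Remark \ref{interior} — in particular from the fact that $]X[_{\mathfrak{P}'}$ is disjoint from the tube of any closed subscheme of $P'$ disjoint from $X$, together with the fact that the relevant neighbourhoods of $]X[_{\mathfrak{P}'}$ may all be taken inside $\mathrm{sp}^{-1}$ of a given open neighbourhood of $X$ in $P'$.

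With $u$ an open immersion near $X$, the induced map $u_K$ is an open immersion in a neighbourhood of $]X[_{\mathfrak{P}'}$, so it carries $]X[_{\mathfrak{P}'}$ homeomorphically onto a subset of $]Y[_{\mathfrak{P}}$ which, again by $u\circ i'\circ j'=i\circ j$, contains $\mathrm{sp}^{-1}(X)$. Here properness of $v$ enters: since $v\circ j'=j$ the morphism $v$ is surjective over $X$, and the valuative criterion of properness for $v$ then shows that every point of the closure $]X[_{\mathfrak{P}}=\overline{\mathrm{sp}^{-1}(X)}$ lies in the image of $]X[_{\mathfrak{P}'}$ under $u_K$. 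Hence $u_K$ restricts to a homeomorphism $]X[_{\mathfrak{P}'}\isomto ]X[_{\mathfrak{P}}$ compatible with the structure sheaves, extending to an open immersion on a neighbourhood of $]X[_{\mathfrak{P}'}$ whose open image $W$ contains $]X[_{\mathfrak{P}}$. The local descriptions of the tubes via the map $[\cdot]$ recalled in \S\ref{rcolsf} are what let this run exactly as in \cite{iso}.

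It remains to match up the cofinal systems. By the first part of Proposition \ref{cofinal2} the $V_{n,m}$ with $m\gg0$ are cofinal among neighbourhoods of $[Y]_n\cap\,]X[_{\mathfrak{P}}$ in $[Y]_n$; since $W\cap[Y]_n$ is such a neighbourhood, $V_{n,m}\subseteq W$ once $m$ is large (depending on $n$), and then $u_K$ restricts to an isomorphism $u_K^{-1}(V_{n,m})\isomto V_{n,m}$ with $u_K^{-1}(V_{n,m})$ a neighbourhood of $]X[_{\mathfrak{P}'}$. Letting $\underline m(n)\to\infty$, the cofinal system $\{V_{\underline m}\}$ of neighbourhoods of $]X[_{\mathfrak{P}}$ in $]Y[_{\mathfrak{P}}$ (second part of Proposition \ref{cofinal2}) corresponds under $u_K$ to $\{u_K^{-1}(V_{\underline m})\}$, which is cofinal among neighbourhoods of $]X[_{\mathfrak{P}'}$ because any such neighbourhood $W'$ has open image $u_K(W')\supseteq\,]X[_{\mathfrak{P}}$ and so contains some $V_{\underline m}$. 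This gives the matching cofinal systems and the asserted isomorphism.

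I expect the main obstacle to be the first reduction together with the homeomorphism claim in the second paragraph: one must check carefully that passing to the open locus where $u$ is an open immersion perturbs neither $]X[_{\mathfrak{P}'}$ nor its neighbourhood systems, and that the \emph{closure}-type adic tube $]X[_{\mathfrak{P}}=\overline{\mathrm{sp}^{-1}(X)}$ behaves well under lifting points along the proper morphism $v$. These steps are routine in Berthelot's rigid framework, but in the adic setting they genuinely rely on the structural results assembled in \S\ref{opening} and \S\ref{sundry} — notably Remark \ref{interior}, Lemma \ref{closed}, and the identity $]Z[_{\mathfrak{P}}=[\cdot]^{-1}(\mathrm{sp}_{[\cdot]}^{-1}(Z))$ — which replace the corresponding elementary facts about Tate's rigid spaces.
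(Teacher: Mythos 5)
Your first reduction is where the argument breaks down, and unfortunately it is the whole content of the theorem. From ``$u$ is \'{e}tale near $X$'' and ``the image of $X$ is open and closed in $u^{-1}(X)$'' you cannot conclude that after shrinking $\mathfrak{P}'$ around $X$ the map $u$ becomes an open immersion near $X$: an \'{e}tale morphism is not Zariski-locally on the source an open immersion (think of $y\mapsto y^2$ on $\widehat{\A}^1_{\cur{V}\pow{t}}$, $p\neq 2$, with $X$ a point of $\{y=1\}$ --- every Zariski open neighbourhood of $X$ in the source still contains pairs $\pm b$ over most fibres, so $u$ is never injective on it). The open-and-closed statement you invoke is the paper's observation that $X$ is open and closed in $P'_X=X\times_P P'$, i.e.\ a statement about the special fibre restricted over $X$ coming from the section $j'$; it gives no open formal subscheme of $\mathfrak{P}'$ on which $u$ is injective. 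Moreover, even if one could shrink, replacing $\mathfrak{P}'$ by an open neighbourhood of $X$ replaces $Y'$ by an open subscheme and so destroys the properness of $v$, which you then need in your second paragraph (the surjectivity of $]X[_{\mathfrak{P}'}\rightarrow]X[_{\mathfrak{P}}$ via the valuative criterion) and which is genuinely indispensable --- the statement is false without it. The paper's permitted modification is the opposite one: replace $Y'$ by the \emph{closed} subscheme given by the closure of $X$, which preserves properness.

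The claim that $u_K$ is an open immersion on a neighbourhood of $]X[_{\mathfrak{P}'}$ is essentially the conclusion of the strong fibration theorem, not an input, and the actual proof has to earn it. The paper (following Berthelot's 1.3.5) does this by decomposing $u_K^{-1}(]X[_{\mathfrak{P}})$ and $U'_m$ into the components supported on $[Y']_d$ and on the tube of $D=P'_Y\setminus X$ (this is where $u^{-1}(X)\cap Y'=X$ and a maximum-principle argument on the separated quotient enter), and then showing that $T^m_K=[Y']_d\cap U'_m\rightarrow U_m$ is \'{e}tale for $m\gg0$ (openness of the \'{e}tale locus, Proposition \ref{openetale}, plus the overconvergence spreading results, Propositions \ref{support} and \ref{overconv}), finite (properness of $u_K$ over $U_m$ plus Huber's properness-plus-quasi-finiteness criterion), and finally an isomorphism because it is one on the interior tube by the weak fibration theorem, again spread out with Propositions \ref{support} and \ref{overconv}. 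None of this machinery is optional, and your proposal as written bypasses it by assuming the difficult statement at the outset; as it stands the proof does not go through.
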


\begin{proof} We follow closely the proof of Th\'{e}or\`{e}me 1.3.5 of \cite{iso}. We may replace $Y'$ by the closure of $X$ in $Y'$, and hence assume that $u^{-1}(X)\cap Y'=X$. Using the standard neighbourhoods $V_{\underline{m}}$ constructed in the previous section, it therefore suffices to prove that for all $n\gg0$ there exists some $d\geq n$, such that
$$ [Y']_{d} \cap u_K^{-1}([Y]_n \cap U_m )\rightarrow [Y]_n \cap U_m
$$
is an isomorphism for $m\gg0$. The question is local on $\mathfrak{P}$, which we may thus assume to be affine, isomorphic to $\spf{A}$. After base changing to $[Y]_n$ we may assume that $[Y]_n=\mathfrak{P}_K$ and that the closed immersion $Y\rightarrow P$ of $Y$ into the reduction of $\mathfrak{P}$ is nilpotent. The question is also local on $X$, which we may thus assume to be of the form $D(tg)\cap Y$ for some $g\in A$. Thus we have
$$
U_m=\left\{\left.x\in \mathfrak{P}_K \;\right|\; v_x(p^{-1}(tg)^m)\geq 1 \right\}
$$
and we wish to show that there exists some $d\geq n$ such that
$$
[Y']_d \cap u_K^{-1}(U_m) \isomto U_m
$$
for $m\gg0$. Let $P$ and $P'$ denote the reductions of $\mathfrak{P}$ and $\mathfrak{P}'$ respectively, and let $U'_m=u_K^{-1}(U_m)$. Write $P'_Y=Y\times_P P'$ and $P'_X=X\times_{P}P'$, note that $P'_Y$ has the same underlying space as $P'$. Since $P'_X\rightarrow X$ admits a section around which it is \'{e}tale, it follows that $X$ is open and closed in $P'_X$, and since $P'_X$ is open in $P'_Y$, $X$ must be open in $P'_Y$.

Let $D=P'_Y\setminus X$ be the closed complement, since $X\subset Y'$ it follows that $P'_Y$ is the union of its two closed subschemes $Y'$ and $D$, and $P'_X$ is the union of its two components $X$ and $P'_X\setminus X = D\cap u^{-1}(X)$. Thus we have
\begin{align*}
\mathfrak{P}'_K &= ]Y'[_{\mathfrak{P}'}\; \cup \;]D[_{\mathfrak{P}'} \\
]P'_X[_{\frak{P}'}=u_K^{-1}(]X[_\mathfrak{P}) &= ]X[_{\mathfrak{P}'} \;\cup\; (]D[_{\mathfrak{P}'}\; \cap\; u_K^{-1}(]X[_\mathfrak{P}))
\end{align*}
the first being an open covering and the second being a decomposition into components. Arguing by quasi-compactness, we can see that there must be some $d,l$ such that 
$$
\mathfrak{P}'_K =[Y']_{d}\cup [D]_{l}.
$$
Since $u^{-1}(X)\cap Y'=X$, it follows that 
$$]Y'[_{\mathfrak{P}'} \;\cap\; ]D[_{\mathfrak{P}'}\;\cap \;u_K^{-1}(]X[_{\mathfrak{P}}) =]X[_{\mathfrak{P}'}\cap ]D[_{\mathfrak{P}}'=]X\cap D[_{\frak{P}'}=\emptyset,$$
so \emph{a fortiori} $[Y']_{d} \cap [D]_l \cap u_K^{-1}(]X[_\mathfrak{P})=\emptyset$. By the maximum principle applied on the separated quotient of $[Y']_{d} \cap [D]_l$ together with Lemma \ref{vals}, we can see that we must in fact have
$$ [Y']_{d}\cap [D]_l\cap U'_m=\emptyset
$$ 
for $m\gg0$. Thus $U'_m = ([Y']_d \cap U'_m)\cup([D]_l\cap U'_m)$ is a decomposition of $U'_m$ into components.

Define $T_K^m=[Y']_d\cap U'_m$, this is an open and closed subset of $U'_m$, and hence a quasi-compact rigid space over $S_K$. By Remark \ref{interior}(2) and the weak fibration theorem (Proposition 1.3.1 of \cite{iso}), $T_K^m\rightarrow U_m$ induces an isomorphism between the interior tubes $]X[^\circ_{\mathfrak{P}'}$ and $]X[_\mathfrak{P}^\circ$. By Proposition \ref{openetale}, the locus where $T_K^m\rightarrow U_m$ is not \'{e}tale is a closed analytic subset of $T_K^m$, and the fact that it is \'{e}tale on the interior tubes together with Proposition \ref{overconv} implies that $T_K^m\rightarrow U_m$ is \'{e}tale for all $m\gg0$. Since $u_K$ is proper, we must also have that $T_K^m\rightarrow U_m$ is proper, thus by Proposition 1.5.5 of \cite{huber2} it is finite.

Proposition II.7.2.4 of \cite{rigidspaces} now shows that (again, for $m\gg0$) $T_K^m\rightarrow U_m$ is the morphism associated to a coherent $\cur{O}_{U_m}$-algebra $\sh{A}$, say, and by the `classical' weak fibration theorem we know that this morphism is an isomorphism on the interior tube $]X[_\mathfrak{P}^\circ$. Hence by applying Proposition \ref{support} to the kernel and cokernel of $\cur{O}_{U_m}\rightarrow \cur{A}$, and then using Proposition \ref{overconv} we can see that it is an isomorphism on $U_m$ for $m\gg0$. This completes the proof.
\end{proof}

To be able to use this, we will need to know that, locally, a smooth morphism of frames $(X,Y,\mathfrak{P}')\rightarrow (X,Y,\mathfrak{P})$ factors into an \'{e}tale morphism of frames followed by a projection $(X,Y,\widehat{\A}^d_\mathfrak{P})\rightarrow (X,Y,\mathfrak{P})$, where $Y$ is embedded in $\widehat{\A}^d_\mathfrak{P}$ via the zero section. This is the content of the following lemma. 

\begin{lem} \label{frameetale} Let 
$$
\xymatrix{
& Y'  \ar[r]^{i'}\ar[d]^v & \mathfrak{P}' \ar[d]^u \\
X \ar[ur]^{j'} \ar[r]^j & Y \ar[r]^i & \mathfrak{P}
}
$$
be a diagram of smooth frames over $\cur{V}\pow{t}$, such that $u$ is smooth in a neighbourhood of $X$. Let $\sh{I}'\subset \cur{O}_{\mathfrak{P}'}$ denote the ideal of $Y'$ in $\mathfrak{P}'$, and $I'$ that of $Y'$ inside $P'_Y=\mathfrak{P}'\times_{\mathfrak{P}} Y$. Suppose that there are sections $t_1,\ldots,t_d\in\Gamma(\mathfrak{P}',\sh{I}')$ inducing a basis $\bar{t}_1,\ldots,\bar{t}_d$ of the conormal sheaf $I'/I'^2$ in a neighbourhood of $X$. Then the morphism $\varphi:\mathfrak{P}'\rightarrow \widehat{\A}^d_\mathfrak{P}$ defined by $t_1,\ldots,t_d$ maps $Y'$ into $Y$ and is \'{e}tale in a neighbourhood of $X$.
\end{lem}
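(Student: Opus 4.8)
The plan is first to dispose of the (easy) claim that $\varphi$ maps $Y'$ into $Y$, and then to establish \'etaleness near $X$ by identifying, in a neighbourhood of $X$, the pullback of $\Omega^1_{\mathfrak{P}'/\mathfrak{P}}$ to $X$ with the conormal sheaf of a suitable section of $u$, in such a way that the classes $\bar t_i$ correspond to the differentials $dt_i$. For the first claim, note that $\varphi$ is by construction a morphism over $\mathfrak{P}$ and that the composite $Y'\xrightarrow{i'}\mathfrak{P}'\xrightarrow{u}\mathfrak{P}$ equals $Y'\xrightarrow{v}Y\xrightarrow{i}\mathfrak{P}$, hence factors through $Y\hookrightarrow\mathfrak{P}$. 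Since moreover the coordinate functions $x_1,\dots,x_d$ on $\widehat{\A}^d_\mathfrak{P}$ pull back under $\varphi$ to $t_1,\dots,t_d\in\Gamma(\mathfrak{P}',\sh{I}')$, which vanish on $Y'$, the composite $Y'\to\mathfrak{P}'\xrightarrow{\varphi}\widehat{\A}^d_\mathfrak{P}$ factors through the zero section over $Y$, i.e. through $Y\hookrightarrow\widehat{\A}^d_\mathfrak{P}$.

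For \'etaleness the question is local, so we may take $\mathfrak{P}$, hence $\mathfrak{P}'$, affine and restrict to an open neighbourhood of $X$ on which $u$ is smooth and on which $\bar t_1,\dots,\bar t_d$ form a basis of $I'/I'^2$. Then both $\mathfrak{P}'$ and $\widehat{\A}^d_\mathfrak{P}$ are smooth over $\mathfrak{P}$, and $\varphi$ induces a map of coherent sheaves $\varphi^*\Omega^1_{\widehat{\A}^d_\mathfrak{P}/\mathfrak{P}}\to\Omega^1_{\mathfrak{P}'/\mathfrak{P}}$ carrying the standard basis $dx_1,\dots,dx_d$ to $dt_1,\dots,dt_d$. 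It therefore suffices to show that $u$ has relative dimension $d$ near $X$ and that $dt_1,\dots,dt_d$ generate $\Omega^1_{\mathfrak{P}'/\mathfrak{P}}$ in a neighbourhood of $X$: then $\Omega^1_{\mathfrak{P}'/\mathfrak{P}}$ is locally free of rank $d$ near $X$, the $dt_i$ are a basis there, the above map is an isomorphism, and since a morphism between two formal schemes smooth over a common base which induces an isomorphism on sheaves of relative differentials is \'etale (a standard fact, valid here by reduction modulo powers of $\pi$), $\varphi$ is \'etale near $X$. By coherence of $\Omega^1_{\mathfrak{P}'/\mathfrak{P}}$ and Nakayama it is in turn enough to show that the restrictions $dt_1|_X,\dots,dt_d|_X$ generate $(i'\circ j')^*\Omega^1_{\mathfrak{P}'/\mathfrak{P}}$.

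Here is the crux. Put $P'_X=\mathfrak{P}'\times_\mathfrak{P} X$, the base change of $u$ along $X\to Y\xrightarrow{i}\mathfrak{P}$, which is smooth over $X$, and let $\sigma\colon X\to P'_X$ be the section determined by $i'\circ j'\colon X\to\mathfrak{P}'$ and $\mathrm{id}_X$. Being the section of a smooth morphism, $\sigma$ is a regular immersion and its conormal sheaf $I_\sigma/I_\sigma^2$ is canonically isomorphic, via $\bar f\mapsto\sigma^*(df)$, to $\sigma^*\Omega^1_{P'_X/X}=(i'\circ j')^*\Omega^1_{\mathfrak{P}'/\mathfrak{P}}$ (using that differentials commute with base change); in particular it is locally free of rank equal to the relative dimension of $u$ near $X$. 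Since $X$ is open in $Y'$, after shrinking the neighbourhood the ideal $I'$ of $Y'$ in $\mathfrak{P}'\times_\mathfrak{P} Y$ restricts to $I_\sigma$, so $\bar t_1,\dots,\bar t_d$ restrict to a basis of $I_\sigma/I_\sigma^2$ near $X$. Consequently $u$ has relative dimension $d$ near $X$, and under the isomorphism above the elements $\sigma^*(dt_i)=dt_i|_X$ form a basis of $(i'\circ j')^*\Omega^1_{\mathfrak{P}'/\mathfrak{P}}$ near $X$. This is exactly what was needed, and the proof is complete.

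The only genuine points requiring care — and what I would regard as the main obstacle — are, on the one hand, checking that the differential-calculus inputs used above (base change for $\Omega^1$, the conormal isomorphism for a section of a smooth morphism, coherence of $\Omega^1$, and the criterion ``isomorphism on relative differentials $\Rightarrow$ \'etale'') are all available for the $\pi$-adic formal $\cur{V}\pow{t}$-schemes at hand, which follows by working modulo $\pi^n$ (or by appealing directly to Berthelot's treatment in \cite{iso}); and, on the other, the bookkeeping needed to pass from the hypothesis ``in a neighbourhood of $X$'' to the conclusion ``in a neighbourhood of $X$'', in particular the identification of $I'$ with $I_\sigma$ near $X$, which rests only on $X$ being open in $Y'$.
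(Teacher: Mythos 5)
Your proposal is correct, and it is essentially the argument the paper intends: the paper simply cites Th\'eor\`eme 1.3.7 of Berthelot's \cite{iso}, whose proof is exactly this computation — $\varphi$ sends $Y'$ into the zero section, and \'etaleness near $X$ follows by comparing $\varphi^*\Omega^1_{\widehat{\A}^d_\mathfrak{P}/\mathfrak{P}}\to\Omega^1_{\mathfrak{P}'/\mathfrak{P}}$ via the canonical isomorphism between the conormal sheaf of the section of the smooth morphism $P'_X\to X$ and $\sigma^*\Omega^1$, plus Nakayama/coherence to spread out from $X$. The standard differential-calculus facts you flag do carry over to the $\pi$-adic formal setting by working modulo $\pi^n$, so there is no gap.
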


\begin{proof} The proof is identical to that of Th\'{e}or\`{e}me 1.3.7 in \cite{iso}.
\end{proof}

The other fundamental result that we require is a suitable version of the Poincar\'{e} Lemma.

\begin{prop}[Poincar\'{e} Lemma] \label{poincare} Let $(X,Y,\mathfrak{P})$ be a smooth frame over $\cur{V}\pow{t}$, and let
$$
u:  (X,Y,\widehat{\A}^d_\mathfrak{P})\rightarrow(X,Y,\mathfrak{P})
$$
be the natural morphism of frames. Then the induced morphism
$$
j^\dagger _X\cur{O}_{]Y[_\mathfrak{P}} \rightarrow \mathbf{R}u_{K*} j_X^\dagger \Omega^*_{]Y[_{\widehat{\A}^d_\mathfrak{P}}/]Y[_\mathfrak{P}}
$$
is a quasi-isomorphism. Hence the induced morphism
$$ j^\dagger _X\Omega^*_{]Y[_\mathfrak{P}/S_K} \rightarrow \mathbf{R}u_{K*} j_X^\dagger \Omega^*_{]Y[_{\widehat{\A}^d_\mathfrak{P}}/S_K}
$$
is a quasi-isomorphism.
\end{prop}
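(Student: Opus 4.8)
The plan is to follow closely Berthelot's proof of the overconvergent Poincar\'{e} Lemma (Th\'{e}or\`{e}me 1.4.1 of \cite{iso}; see also Chapter 5 of \cite{rigcoh}), the only non-formal ingredient being an estimate on $p$-adic overconvergent power series that involves nothing but division by rational integers, and so carries over unchanged to the relative situation over $\mathbb{D}^b_K$ --- the extra variable $t$ never intervenes. First I would reduce to $d=1$. Writing $\widehat{\A}^d_\mathfrak{P}=\widehat{\A}^1_{\widehat{\A}^{d-1}_\mathfrak{P}}$, the morphism $u$ factors through a frame $(X,Y,\widehat{\A}^{d-1}_\mathfrak{P})$ ($Y$ embedded by the relevant zero sections, and this frame again smooth over $\cur{V}\pow{t}$ around $X$). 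Using the transitivity filtration on $\Omega^*_{]Y[_{\widehat{\A}^d_\mathfrak{P}}/]Y[_\mathfrak{P}}$, whose graded pieces are pullbacks of $\Omega^p_{]Y[_{\widehat{\A}^{d-1}_\mathfrak{P}}/]Y[_\mathfrak{P}}$ tensored with $\Omega^{*-p}_{]Y[_{\widehat{\A}^d_\mathfrak{P}}/]Y[_{\widehat{\A}^{d-1}_\mathfrak{P}}}$, together with exactness of $j_X^\dagger=j_*j^{-1}$, the projection formula for $\mathbf{R}u_{K*}$ along locally free sheaves, and the compatibility of $j_X^\dagger$ with pushforward built into Proposition \ref{cofinal2}, the assertion for $d$ follows from that for $d-1$ over the base $\mathfrak{P}$, once known for $d=1$ over the base $\widehat{\A}^{d-1}_\mathfrak{P}$. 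The second (``absolute'') quasi-isomorphism is deduced from the first in the same way, now using the Hodge-type filtration of $\Omega^*_{]Y[_{\widehat{\A}^d_\mathfrak{P}}/S_K}$ with graded pieces $u_K^{-1}\Omega^p_{]Y[_\mathfrak{P}/S_K}\otimes\Omega^{*-p}_{]Y[_{\widehat{\A}^d_\mathfrak{P}}/]Y[_\mathfrak{P}}$: the associated spectral sequence degenerates and identifies $\mathbf{R}u_{K*}j_X^\dagger\Omega^*_{]Y[_{\widehat{\A}^d_\mathfrak{P}}/S_K}$ with $j_X^\dagger\Omega^*_{]Y[_\mathfrak{P}/S_K}$.

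For $d=1$ I would work locally on $\mathfrak{P}$, assumed affine, $\mathfrak{P}=\spf{A}$, with $Y=\cap_iV(\bar f_i)$ and $X=Y\cap D(t)\cap(\cup_jD(\bar g_j))$, and write $s$ for the extra coordinate. By the local description of tubes (Proposition II.4.2.11 of \cite{rigidspaces} and Remark \ref{interior}) one has $]Y[_{\widehat{\A}^1_\mathfrak{P}}=\{v_{[x]}(s)<1\}\cap u_K^{-1}(]Y[_\mathfrak{P})$, fibred over $]Y[_\mathfrak{P}$ by open unit discs in $s$, and the same argument as for Proposition \ref{cofinal2} (i.e.\ 1.2.4 and \S1.4 of \cite{iso}) shows that the quasi-compact opens $u_K^{-1}(V'_{n,m})\cap\{v_x(\pi^{-1}s^l)\le1\}$, with $l$ growing suitably fast, form a cofinal system of strict neighbourhoods of $]X[_{\widehat{\A}^1_\mathfrak{P}}$ in $]Y[_{\widehat{\A}^1_\mathfrak{P}}$. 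Restricting to affinoids in the base, acyclicity of coherent sheaves on quasi-compact affinoids kills the higher $\mathbf{R}u_{K*}$ of the coherent terms, and exactness of filtered colimits lets us pass the colimit defining $j_X^\dagger$ through cohomology; thus $\mathbf{R}u_{K*}j_X^\dagger\Omega^*_{]Y[_{\widehat{\A}^1_\mathfrak{P}}/]Y[_\mathfrak{P}}$ is the cohomology of the two-term complex $j_X^\dagger\cur{O}_{]Y[_{\widehat{\A}^1_\mathfrak{P}}}\xrightarrow{\partial_s}j_X^\dagger\cur{O}_{]Y[_{\widehat{\A}^1_\mathfrak{P}}}\,ds$ pushed to $]Y[_\mathfrak{P}$, and one must show this resolves $j_X^\dagger\cur{O}_{]Y[_\mathfrak{P}}$. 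Over the pieces above, a section of $\cur{O}$ is a power series $\sum_{n\ge0}a_ns^n$ whose coefficients $a_n$ lie in a fixed strict neighbourhood $V$ of $]X[_\mathfrak{P}$, converging for $v_x(s)\le\mu<1$ and overconvergently (the $a_n$ extend to a larger $V'$ and the series to a larger radius $\rho>\mu$). That $\ker\partial_s$ is the ``constant'' subsheaf $j_X^\dagger\cur{O}_{]Y[_\mathfrak{P}}$ is immediate since $\mathrm{char}\,K=0$: $\sum_{n\ge1}na_ns^{n-1}=0$ forces $a_n=0$ for $n\ge1$.

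The crux is surjectivity of $\partial_s$: given $\omega=\sum_{n\ge0}a_ns^n\,ds$ overconvergent, its formal primitive $f=\sum_{n\ge1}(a_{n-1}/n)s^n$ must again be overconvergent. This is the one genuinely analytic point, and I expect it to be the main obstacle --- less because the estimate is hard than because it requires arranging the cofinal system of strict neighbourhoods so that overconvergence toward $X$ inside $Y$ and overconvergence in $s$ can be used simultaneously, with uniformly comparable spectral norms across the system. The estimate itself is the familiar one, $\norm{n}^{-1}\le Cn$ for a constant $C=C(p)$: if $\norm{a_{n-1}}_V\rho^{n-1}\to0$ with $\rho>\mu$, then $\norm{a_{n-1}/n}_V\mu^n\le C\mu\cdot n(\mu/\rho)^{n-1}\cdot(\norm{a_{n-1}}_V\rho^{n-1})\to0$, the geometric factor $(\mu/\rho)^{n-1}$ dominating the linear factor $n$; since this is uniform as $V$ and $\mu$ shrink through the system, $f$ is a section of $j_X^\dagger\cur{O}_{]Y[_{\widehat{\A}^1_\mathfrak{P}}}$ on a slightly smaller strict neighbourhood and disc.

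This gives the $d=1$ case, hence, with the d\'{e}vissage above, the Proposition. All the remaining ingredients --- the filtration arguments, acyclicity of coherent sheaves on affinoids, the projection formula, and the transcription of Berthelot's standard neighbourhoods, which is essentially already Proposition \ref{cofinal2} --- are formal, so the write-up should largely be a matter of carefully following \cite{iso}, \S1.4 in the present framework.
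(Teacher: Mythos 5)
Your overall architecture matches the paper's proof: reduce to relative dimension one, do an explicit computation with overconvergent power series in the fibre coordinate (the radius-losing estimate for antiderivatives being the analytic crux), and then handle general $d$ and the absolute statement by a Gauss--Manin-type filtration/d\'{e}vissage. That part is fine, as is the observation that only division by integers is involved so the extra variable $t$ causes no trouble.

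However, there is a genuine gap in your $d=1$ argument, at the step where you set up the cofinal system and invoke affinoid acyclicity. The sets $u_K^{-1}(V'_{n,m})\cap\left\{v_x(\pi^{-1}s^l)\leq 1\right\}$ are \emph{not} neighbourhoods of $]X[_{\widehat{\A}^1_\mathfrak{P}}$ in $]Y[_{\widehat{\A}^1_\mathfrak{P}}$: the tube of $X$ (embedded via the zero section) is constrained in the fibre direction only by $v_{[x]}(s)<1$, so it contains points with $v_{[x]}(s)$ arbitrarily close to $1$ over $]X[_\mathfrak{P}$, and no set bounded by $v_x(s)\leq r^{-1/l}$ can contain it. Consequently the relevant neighbourhoods are increasing unions over the fibre radius (relative \emph{open} unit discs over affinoids of the base), they are not quasi-compact, and you cannot kill the higher direct images of the individual coherent terms by affinoid acyclicity nor simply pass the $j_X^\dagger$-colimit through cohomology. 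The correct bookkeeping is a derived inverse limit over the radius: writing $]Y[_{\widehat{\A}^1_\mathfrak{P}}$ as $\bigcup_n [Y]'_n$ one gets an $\mathbf{R}\mathrm{lim}_n\,\mathrm{colim}_m$ of the two-term de Rham complexes of $B_m\tate{r^{1/n}X}$ over $B_m$, and one must then check both that $H^0_n=\mathrm{colim}_m B_m$ and that the transition maps $H^1_n\rightarrow H^1_{n-1}$ vanish, so that the $\mathrm{lim}$ and $\mathrm{lim}^1$ terms in the Milnor sequence disappear. Your radius-loss estimate is exactly what proves this vanishing of transition maps, so the right analytic ingredient is in your proposal, but the homological framework you describe (quasi-compact cofinal neighbourhoods, termwise acyclicity, sheaf-level surjectivity of $\partial_s$) would not go through as stated and has to be replaced by this limit argument. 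A smaller omitted point: for the final passage to $\Omega^*_{/S_K}$ one needs local freeness of $j_X^\dagger\Omega^1_{]Y[_\mathfrak{P}/S_K}$ near $]X[_\mathfrak{P}$, which requires the smoothness of $\mathfrak{P}$ near $X$ together with the support and overconvergence results (Propositions \ref{support} and \ref{overconv}).
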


\begin{proof} The question is local on $\mathfrak{P}$, which we may thus assume to be affine, $\mathfrak{P}\cong \spf{A}$. Let $f_i\in A$ be functions whose reductions $\bar{f}_i$ define the ideal of $Y$ inside $P$ and choose $g_j\in A$ such that $X= (D(t)\cap (\bigcup_j D(\bar{g}_j)))\cap Y$.

First suppose that $d=1$, and let $\mathbb{D}^1_{S_K}$ denote the unit disk $(\widehat{\A}^1_{\cur{V}\pow{t}})_K=\mathrm{Spa}(S_K\tate{X})$ over $S_K$, with co-ordinate $X$. If we let $[Y]_n=\left\{\left.x\in \mathfrak{P}_K  \;\right|\; v_x(\pi^{-1}f_i^n)\leq1 \;\forall i \right\}$, then $]Y[_\mathfrak{P}=\cup_n [Y]_n$, and we may base change to $[Y]_n$ and hence assume that $]Y[_\mathfrak{P}=\mathfrak{P}_K$. Now define 
\begin{align*}
[Y]'_n &= \left\{\left.x\in \mathfrak{P}_K \times_{S_K} \mathbb{D}^1_{S_K} \;\right|\; v_x(\pi^{-1}X^n)\leq 1 \right\} \\
U_m' &= \left\{\left. x\in ]Y[_{\widehat{\A}^1_{\mathfrak{P}}}  \;\right|\; v_x(\pi^{-1}t^m) \geq 1,\; \exists j \text{ s.t. }v_x(\pi^{-1}g_j^m) \geq 1  \right\}\\
U_m &=  \left\{\left. x\in \mathfrak{P}_K  \;\right|\; v_x(\pi^{-1}t^m) \geq 1,\; \exists j \text{ s.t. }v_x(\pi^{-1}g_j^m) \geq 1  \right\} \\
U_{m,j}' &= \left\{\left. x\in ]Y[_{\widehat{\A}^1_{\mathfrak{P}}}  \;\right|\; v_x(\pi^{-1}t^m) \geq 1,\; v_x(\pi^{-1}g_j^m) \geq 1  \right\}\\
U_{m,j} &=  \left\{\left. x\in \mathfrak{P}_K  \;\right|\; v_x(\pi^{-1}t^m) \geq 1,\; v_x(\pi^{-1}g_j^m) \geq 1  \right\} \\
\end{align*}
so that, by Proposition \ref{cofinal2} and the preceding discussion, $]Y[_{\widehat{\A}^1_\mathfrak{P}}=\cup_n[Y]'_n$, and $[Y]'_n\;\cap\; U'_m$ (resp. $U_m$) is a cofinal system of neighbourhoods of $]X[_{\widehat{\A}^1_\mathfrak{P}}\;\cap\; [Y]'_n$ in $[Y]'_n$ (resp. $]X[_\mathfrak{P}$ in $\mathfrak{P}_K$). To prove the claim, it suffices to prove it after base changing to each $U_{m_0,j}$ for some fixed $m_0$, hence we may assume that there is only one $g_j$, or in other words that there exists some $g$ such that 
\begin{align*}
U_m' &= \left\{\left. x\in ]Y[_{\widehat{\A}^1_{\mathfrak{P}}}  \;\right|\; v_x(\pi^{-1}t^m) \geq 1,\; v_x(\pi^{-1}g^m) \geq 1  \right\}\\
U_m &=  \left\{\left. x\in \mathfrak{P}_K  \;\right|\; v_x(\pi^{-1}t^m) \geq 1,\; v_x(\pi^{-1}g^m) \geq 1  \right\} .
\end{align*}
Hence each $[Y]'_n\cap U_m'$ (resp. $U_m$) is affinoid, and if we let $B'_{n,m}$ (resp. $B_m$) denote the affinoid algebra over $S_K$ corresponding to $[Y]'_n\cap U_m'$ (resp. $U_m$), then we have
$$
B'_{n,m}\cong B_m \tate{r^{1/n}X} := \frac{B_m\tate{X,T}}{(\pi T-X^n)}.
$$ 
We next claim that the result holds for global sections on $\mathfrak{P}_K$, that is 
$$
\mathrm{colim}_m B_m\rightarrow \mathbf{R}\Gamma(\mathfrak{P}_K,\mathbf{R}u_{K*}j_X^\dagger \Omega^*_{]Y[_{\widehat{\A}^1_\mathfrak{P}}/]Y[_\mathfrak{P}})
$$ is an isomorphism. Indeed, letting $j_{m,n}: [Y]_n' \cap U_m' \rightarrow [Y]_n'$ denote the natural inclusion we have a sequence of quasi-isomorphisms
\begin{align*}
 \mathbf{R}\Gamma(\mathfrak{P}_K,\mathbf{R}u_{K*}j_X^\dagger \Omega^*_{]Y[_{\widehat{\A}^d_\mathfrak{P}}/\mathfrak{P}_K})  &\cong \mathbf{R}\Gamma(]Y[_{\widehat{\A}^1_\mathfrak{P}},j_X^\dagger \Omega^*_{]Y[_{\widehat{\A}^1_\mathfrak{P}}/]Y[_\mathfrak{P}}) \\
 &\cong \mathbf{R}\mathrm{lim}_n \mathbf{R}\Gamma([Y]'_n , j_X^\dagger \Omega^*_{]Y[_{\widehat{\A}^1_\mathfrak{P}}/]Y[_\mathfrak{P}}|_{[Y]'_n}) \\
 &\cong \mathbf{R}\mathrm{lim}_n \mathbf{R}\Gamma([Y]'_n , \mathrm{colim}_m j_{m,n*}\Omega^*_{[Y]'_n\cap U_m'/U_m})  \\
 &\cong\mathbf{R}\mathrm{lim}_n \mathrm{colim}_m \mathbf{R}\Gamma([Y]'_n , \mathbf{R} j_{m,n*}\Omega^*_{[Y]'_n\cap U_m'/U_m})  \\ 
 &\cong \mathbf{R}\mathrm{lim}_n \mathrm{colim}_m \mathbf{R}\Gamma([Y]'_n\cap U_m' , \Omega^*_{[Y]'_n\cap U'_m/U_m} ) \\
 &\cong \mathbf{R}\mathrm{lim}_n \mathrm{colim}_m \Gamma([Y]'_n\cap U_m' , \Omega^*_{[Y]'_n\cap U'_m/U_m} ) \\
 &\cong  \mathbf{R}\mathrm{lim}_n \mathrm{colim}_m (B_m\tate{r^{1/n}X}\rightarrow B_m\tate{r^{1/n}X}dX).
\end{align*}
Thus what we want to prove is that 
$$
\mathrm{colim}_m B_m\rightarrow \mathbf{R}\mathrm{lim}_n \mathrm{colim}_m (B_m\tate{r^{1/n}X} \rightarrow B_m\tate{r^{1/n}X} dX)
$$
is a quasi-isomorphism. Write
\begin{align*}
H^i_n &= \mathrm{colim}_mH^i(B_m\tate{r^{1/n}X} \rightarrow B_m\tate{r^{1/n}X} dX) \\
 H^j &= H^j(\mathbf{R}\mathrm{lim}_n \mathrm{colim}_m (B_m\tate{r^{1/n}X} \rightarrow B_m\tate{r^{1/n}X} dX)) 
\end{align*}
so that we have
\begin{align*}
H^0  &\cong \mathrm{lim}_n H^0_n \\
H^2 &\cong \mathrm{lim}^1_n H^1_n
\end{align*}
and an exact sequence
$$
0\rightarrow \mathrm{lim}_n H^1  \rightarrow H^1 \rightarrow \mathrm{lim}^1_n H^0 \rightarrow 0
$$
and $H^j=0$ for $j\neq0,1,2$. Since $H^0_n = \mathrm{colim}_m B_m$ for all $n$, and the transition maps $H^1_n\rightarrow H^1_{n-1}$ are all zero, it follows that $H^0=\mathrm{colim}_m B_m$, $H^1=H^2=0$ and hence
$$
\mathrm{colim}_m B_m\rightarrow \mathbf{R}\Gamma(\mathfrak{P}_K,\mathbf{R}u_{K*}j_X^\dagger \Omega^*_{]Y[_{\widehat{\A}^1_\mathfrak{P}}/]Y[_\mathfrak{P}})
$$
is a quasi-isomorphism as claimed. Since a similar calculation holds when we replace $\mathfrak{P}$ by any open affinoid subset, the claim in relative dimension 1 follows.

In the general case we consider the tower
$$
 (X,Y,\widehat{\A}^d_{\mathfrak{P}}) \overset{u^{(d)}}{\rightarrow}  (X,Y,\widehat{\A}^{d-1}_{\mathfrak{P}}) \overset{u^{(d-1)}}{\rightarrow} \ldots \overset{u^{(1)}}{\rightarrow} (X,Y,\mathfrak{P})
$$
and we know that at each stage, 
$$
j^\dagger _X\cur{O}_{]Y[_{\widehat{\A}^{k-1}_\mathfrak{P}}} \rightarrow \mathbf{R}u^{(k)}_{K*} j_X^\dagger \Omega^*_{]Y[_{\widehat{\A}^k_\mathfrak{P}}/]Y[_{\widehat{\A}^{k-1}_\mathfrak{P}}}
$$
is a quasi-isomorphism. We want to deduce that in fact
$$
j^\dagger _X\Omega^*_{]Y[_{\widehat{\A}^{k-1}\mathfrak{P}}/]Y[_\mathfrak{P}} \rightarrow \mathbf{R}u^{(k)}_{K*} j_X^\dagger \Omega^*_{]Y[_{\widehat{\A}^k_\mathfrak{P}}/]Y[_{\mathfrak{P}}}
$$
is a quasi-isomorphism. To do so, we consider the Gauss--Manin filtration $F^\bu$ on 
$$
j_X^\dagger \Omega^*_{]Y[_{\widehat{\A}^k_\mathfrak{P}}/]Y[_{\mathfrak{P}}}
$$
arising from the composition $]Y[_{\widehat{\A}^{k}_\mathfrak{P}}\rightarrow ]Y[_{\widehat{\A}^{k-1}_\mathfrak{P}} \rightarrow ]Y[_{\mathfrak{P}}$, this is defined by
$$
F^i(j_X^\dagger \Omega^*_{]Y[_{\widehat{\A}^k_\mathfrak{P}}/]Y[_{\mathfrak{P}}}):= \mathrm{im}(j_X^\dagger \Omega^{*-i}_{]Y[_{\widehat{\A}^k_\mathfrak{P}}/]Y[_{\mathfrak{P}}} \otimes  (u_K^{(k)})^*j_X^\dagger \Omega^i_{]Y[_{\widehat{\A}^{k-1}_\mathfrak{P}}/]Y[_{\mathfrak{P}}} \rightarrow j_X^\dagger \Omega^*_{]Y[_{\widehat{\A}^k_\mathfrak{P}}/]Y[_{\mathfrak{P}}}).
$$
Since the terms in the exact sequence
$$ 0 \rightarrow (u_K^{(k)})^*j_X^\dagger \Omega^1_{]Y[_{\widehat{\A}^{k-1}_\mathfrak{P}}/]Y[_{\mathfrak{P}}} \rightarrow j_X^\dagger \Omega^1_{]Y[_{\widehat{\A}^{k}_\mathfrak{P}}/]Y[_{\mathfrak{P}}} \rightarrow j_X^\dagger \Omega^1_{]Y[_{\widehat{\A}^{k}_\mathfrak{P}}/]Y[_{\widehat{\A}^{k-1}_\mathfrak{P}}}\rightarrow 0
$$
are locally free, we can deduce that
$$ \mathrm{Gr}_F^i(j_X^\dagger \Omega^*_{]Y[_{\widehat{\A}^k_\mathfrak{P}}/]Y[_{\mathfrak{P}}}) \cong (u_K^{(k)})^*j_X^\dagger \Omega^i_{]Y[_{\widehat{\A}^{k-1}_\mathfrak{P}}/]Y[_{\mathfrak{P}}} \otimes j_X^\dagger \Omega^{*-i}_{]Y[_{\widehat{\A}^{k}_\mathfrak{P}}/]Y[_{\widehat{\A}^{k-1}_\mathfrak{P}}} 
$$
and hence
\begin{align*}
\mathbf{R}u^{(k)}_{K*}\mathrm{Gr}^i_F(j_X^\dagger \Omega^*_{]Y[_{\widehat{\A}^k_\mathfrak{P}}/]Y[_{\mathfrak{P}}}) 
&\cong  j^\dagger_X\Omega^i_{]Y[_{\widehat{\A}^{k-1}_\mathfrak{P}}/]Y[_\mathfrak{P}} \otimes  \mathbf{R}u^{(k)}_{K*}(j_X^\dagger \Omega^{*-i}_{]Y[_{\widehat{\A}^k_\mathfrak{P}}/]Y[_{\widehat{\A}^{k-1}_\mathfrak{P}}}) \\ 
&\cong  j^\dagger_X\Omega^i_{]Y[_{\widehat{\A}^{k-1}_\mathfrak{P}}/]Y[_\mathfrak{P}}[-i].
\end{align*}
Thus examining the spectral sequence associated to the Gauss--Manin filtration gives
$$ \mathbf{R}u^{(k)}_{K*}(j_X^\dagger \Omega^*_{]Y[_{\widehat{\A}^k_\mathfrak{P}}/]Y[_{\mathfrak{P}}}) \cong j^\dagger_X\Omega^*_{]Y[_{\widehat{\A}^{k-1}_\mathfrak{P}}/]Y[_\mathfrak{P}}
$$
and repeatedly applying this gives
$$ \mathbf{R}u_{K*} (j_X^\dagger \Omega^*_{]Y[_{\widehat{\A}^d_\mathfrak{P}}/]Y[_{\mathfrak{P}}}) \cong j^\dagger_X\cur{O}_{]Y[_\mathfrak{P}}
$$
as required. Again, to deduce the last statement we use the Gauss--Manin filtration on $j_X^\dagger \Omega^*_{]Y[_{\widehat{\A}^d_\mathfrak{P}}/S_K}$ arising from the composition $]Y[_{\widehat{\A}^d_{\mathfrak{P}}} \rightarrow ]Y[_\mathfrak{P}\rightarrow \mathbb{D}^b_K$ in exactly the same way. (Note that local freeness of $j_X^\dagger\Omega^1_{]Y[_\mathfrak{P}/S_K}$ follows from combining smoothness of $\mathfrak{P}$ over $\cur{V}\pow{t}$ in a neighbourhood of $X$ with Propositions \ref{support} and \ref{overconv}.)
\end{proof}

Finally, we will need to know a certain degree of locality on $X$. If $Z=Y\setminus Z$ and $E$ is any sheaf on $]Y[_\mathfrak{P}$ then we define $\underline{\Gamma}_{Z}^\dagger E$ by the exact sequence 
$$ 0 \rightarrow \underline{\Gamma}_{Z}^\dagger E \rightarrow E \rightarrow j_{X}^\dagger E\rightarrow 0.
$$
Note that $j_X^\dagger$ and $\underline{\Gamma}_{Z}^\dagger$ are exact, and we have $j_X^\dagger j_{X'}^\dagger E\cong j_{X\cap X'}^\dagger E $ and $\underline{\Gamma}_{Z}^\dagger\underline{\Gamma}_{Z'}^\dagger E \cong \underline{\Gamma}_{Z\cap Z'}^\dagger E$.

\begin{lem}\label{jdlocal} Let $(X,Y,\mathfrak{P}$) be a $\cur{V}\pow{t}$-frame, and $X=\cup_{j=1}^n X_j$ a finite open cover of $X$. Then for any sheaf $E$ on $]Y[_\mathfrak{P}$ there is an exact sequence of sheaves
$$ 0\rightarrow j_X^\dagger E \rightarrow \prod_j j_{X_j}^\dagger E \rightarrow \prod_{j_0<i_j} j^\dagger_{X_{j_0}\cap X_{j_1}}E \rightarrow \ldots \rightarrow j^\dagger_{\cap_j X_j}E\rightarrow 0
$$
on $]Y[_\mathfrak{P}$.
\end{lem}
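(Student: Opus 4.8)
The plan is to recognise the displayed sequence as the augmented Čech complex attached to a finite \emph{closed} cover, and then to check exactness on stalks, where it collapses to the (acyclic) simplicial cochain complex of a simplex.

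\textbf{Identifying the tubes.} For $S\subseteq\{1,\dots,n\}$ write $X_S=\bigcap_{j\in S}X_j$, an open subscheme of $X$, hence of $Y$. Using the description $]U[_\mathfrak{P}=[\cdot]^{-1}(\mathrm{sp}_{[\cdot]}^{-1}(U))$ valid for any locally closed $U\subseteq P$ (the discussion surrounding Remark \ref{interior}) together with the fact that the preimage operations $[\cdot]^{-1}$ and $\mathrm{sp}_{[\cdot]}^{-1}$ commute with finite unions and intersections, one gets
$$ ]X_S[_\mathfrak{P}=\bigcap_{j\in S}\,]X_j[_\mathfrak{P},\qquad ]X[_\mathfrak{P}=\bigcup_{j=1}^{n}\,]X_j[_\mathfrak{P}, $$
and each of these subsets is \emph{closed} in $]Y[_\mathfrak{P}$ (being of the form $\overline{\mathrm{sp}_Y^{-1}(\cdot)}$, exactly as in Section \ref{opening}). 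Writing $i_S\colon\,]X_S[_\mathfrak{P}\hookrightarrow\,]Y[_\mathfrak{P}$ for the inclusion we have $j_{X_S}^\dagger E=i_{S*}i_S^{-1}E$, and for $S\subseteq S'$ the inclusion $]X_{S'}[_\mathfrak{P}\subseteq\,]X_S[_\mathfrak{P}$ together with the adjunction unit supplies a natural map $j_{X_S}^\dagger E\to j_{X_{S'}}^\dagger E$. The sequence in the statement is then precisely the augmented Čech complex built from these maps for the cover $\{\,]X_j[_\mathfrak{P}\,\}_{1\le j\le n}$ of $]X[_\mathfrak{P}$, with its usual alternating-sum differentials.

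\textbf{Reduction to stalks.} Exactness of a complex of sheaves may be checked stalkwise. For the closed immersion $i_S$ and any $y\in\,]Y[_\mathfrak{P}$ one has $(i_{S*}i_S^{-1}E)_y=E_y$ when $y\in\,]X_S[_\mathfrak{P}$, and $(i_{S*}i_S^{-1}E)_y=0$ otherwise (for $y$ outside the closed set $]X_S[_\mathfrak{P}$ there is an open neighbourhood of $y$ meeting it trivially). Fix $y$ and put $J_y=\{\,j\mid y\in\,]X_j[_\mathfrak{P}\,\}$; by the previous paragraph $y\in\,]X_S[_\mathfrak{P}$ iff $S\subseteq J_y$, and $y\in\,]X[_\mathfrak{P}$ iff $J_y\neq\emptyset$. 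If $J_y=\emptyset$ then every term of the complex has zero stalk at $y$, so there is nothing to prove. If $J_y\neq\emptyset$, the complex of stalks at $y$ is
$$ 0\to E_y\to\prod_{j\in J_y}E_y\to\prod_{\{j_0<j_1\}\subseteq J_y}E_y\to\cdots\to E_y\to 0, $$
in which every transition map is the identity on $E_y$ (each $j_{X_S}^\dagger E\to j_{X_{S'}}^\dagger E$ restricts, on stalks at points of $]X_{S'}[_\mathfrak{P}$, to the identity, by functoriality of the unit), so the differentials are the simplicial alternating sums.

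\textbf{Acyclicity and remarks.} The complex above is $E_y\otimes_{\Z}$ the augmented simplicial cochain complex of the full simplex on the vertex set $J_y$, which is exact because a simplex is contractible; concretely, fixing $j_\ast\in J_y$ and coning off $j_\ast$ gives an explicit contracting homotopy. Hence the complex of stalks is exact at every $y$, and so the complex of sheaves is exact. The only things needing any care are the two identifications in the first paragraph — both immediate from the preimage description of tubes — and the observation that the Čech differentials of the $j_{X_S}^\dagger$'s restrict to the simplicial differentials on stalks, which is just functoriality of the adjunction units; so there is no real obstacle here, the statement being the usual fact that the Čech resolution associated to a finite closed cover is a resolution. (Alternatively one could prove the case $n=2$, i.e. a Mayer--Vietoris sequence $0\to j_X^\dagger E\to j_{X_1}^\dagger E\oplus j_{X_2}^\dagger E\to j_{X_1\cap X_2}^\dagger E\to 0$, directly from the exact functors $\underline{\Gamma}^\dagger$ and the relation $\underline{\Gamma}_{Z}^\dagger\underline{\Gamma}_{Z'}^\dagger\cong\underline{\Gamma}_{Z\cap Z'}^\dagger$ via a snake-lemma comparison of the defining short exact sequences, and then induct on $n$; but the direct stalk computation is cleaner.)
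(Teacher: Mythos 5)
Your proof is correct, but it takes a genuinely different route from the paper. The paper follows Berthelot (Proposition 2.1.8 of \cite{iso}): it inducts on $n$, introduces the complementary functors $\underline{\Gamma}^\dagger_Z$ via the defining short exact sequence, uses their exactness and the identity $\underline{\Gamma}^\dagger_{Z}\underline{\Gamma}^\dagger_{Z'}\cong\underline{\Gamma}^\dagger_{Z\cap Z'}$, and identifies the total complex of $K^\bu\rightarrow j_{X_1}^\dagger K^\bu$ with the displayed Čech-type complex to conclude. You instead observe that in the adic setting the tubes $]X_S[_\mathfrak{P}$ are \emph{closed} subsets of $]Y[_\mathfrak{P}$ compatible with finite unions and intersections (Remark \ref{interior}(3)), so that $j^\dagger_{X_S}E=i_{S*}i_S^{-1}E$ has the obvious stalks, and exactness reduces pointwise to acyclicity of the augmented cochain complex of a simplex on the index set $J_y$. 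This is a legitimate simplification that is only available because here $j_X^\dagger=j_*j^{-1}$ for a closed inclusion of honest topological spaces; in Berthelot's classical rigid-analytic setting $]X[_\mathfrak{P}$ is an admissible open and $j_X^\dagger$ is a colimit over strict neighbourhoods, so a naive stalkwise argument is not available and the $\underline{\Gamma}^\dagger$ formalism is forced --- which is presumably why the paper simply transplants that proof. The only points needing care in your argument are exactly the ones you flag: that the differentials in the statement are the alternating-sum Čech maps induced by the adjunction units (so that on stalks they become the simplicial differentials), and the union/intersection compatibility of tubes, which the paper supplies. Your parenthetical alternative (Mayer--Vietoris for $n=2$ via $\underline{\Gamma}^\dagger$ plus induction) is essentially the paper's proof in disguise.
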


\begin{proof} We follow the proof of Proposition 2.1.8 of \cite{iso}, and proceed by induction on the size of the covering $n$. The case $n=1$ is obvious, so assume that $n\geq 2$ and let $X'=\cup_{j=2}^n X_j$. The induction hypothesis implies that
$$ 0\rightarrow j_{X'}^\dagger E \rightarrow \prod_{1<j} j_{X_j}^\dagger E \rightarrow \prod_{1<j_0<i_j} j^\dagger_{X_{j_0}\cap X_{j_1}}E \rightarrow \ldots \rightarrow j^\dagger_{\cap^n_{j=2} X_j}E\rightarrow 0
$$ 
is exact. Let $Z'=Y\setminus X'$ so that the complex
$$ K^\bu := (0\rightarrow E \rightarrow \prod_{1<j} j_{X_j}^\dagger E \rightarrow \prod_{1<j_0<i_j} j^\dagger_{X_{j_0}\cap X_{j_1}}E \rightarrow \ldots \rightarrow j^\dagger_{\cap^n_{j=2} X_j}E\rightarrow 0)
$$ 
is a resolution of $\underline{\Gamma}_{Z'}^\dagger E$. Letting $Z_1=Y\setminus X_1$ we get an exact sequence of complexes
$$ 0 \rightarrow \underline{\Gamma}_{Z_1}^\dagger K^\bu \rightarrow K^\bu \rightarrow j^\dagger_{X_1}K^\bu \rightarrow 0.
$$
We can identify $j_{X_1}^\dagger K^\bu$ with the complex
$$ 0 \rightarrow j_{X_1}^\dagger E \rightarrow \prod_{1<j} j^\dagger_{X_1\cap X_j} E \rightarrow \ldots \rightarrow \ldots j_{\cap_j X_j}^\dagger E \rightarrow 0
$$
and hence we can identify the double complex associated to $K^\bu\rightarrow j_{X_1}^\dagger K^\bu$ with
$$ 0\rightarrow E \rightarrow \prod_{j} j_{X_j}^\dagger E \rightarrow \prod_{j_0<i_j} j^\dagger_{X_{j_0}\cap X_{j_1}}E \rightarrow \ldots \rightarrow j^\dagger_{\cap_j X_j}E\rightarrow 0.
$$ 
This is quasi-isomorphic to $\underline{\Gamma}_{Z_1}^\dagger K^\bu$, which by exactness of $\underline{\Gamma}_{Z_1}^\dagger
$ is in turn quasi-isomorphic to $\underline{\Gamma}_{Z_1}^\dagger\underline{\Gamma}_{Z'}^\dagger E \cong \underline{\Gamma}_{Z}^\dagger
 E$. Thus we have a quasi-isomorphism
 $$
 \underline{\Gamma}_{Z}^\dagger E \cong \left(0\rightarrow E \rightarrow \prod_{j} j_{X_j}^\dagger E \rightarrow \prod_{j_0<i_j} j^\dagger_{X_{j_0}\cap X_{j_1}}E \rightarrow \ldots \rightarrow j^\dagger_{\cap_j X_j}E\rightarrow 0\right)
 $$
 and hence an exact sequence
 $$
 0\rightarrow j_X^\dagger E \rightarrow \prod_{j} j_{X_j}^\dagger E \rightarrow \prod_{j_0<i_j} j^\dagger_{X_{j_0}\cap X_{j_1}}E \rightarrow \ldots \rightarrow j^\dagger_{\cap_j X_j}E\rightarrow 0\
 $$
 as claimed.
\end{proof}

We can now put this all together to prove that, up to isomorphism, the rigid cohomology of $(X,Y,\mathfrak{P})$ only depends on $X$.

\begin{theo} \label{bigindep1} Let 
$$
\xymatrix{
& Y'  \ar[r]^{i'}\ar[d]^v & \mathfrak{P}' \ar[d]^u \\
X \ar[ur]^{j'} \ar[r]^j & Y \ar[r]^i & \mathfrak{P}
}
$$
be a diagram of smooth frames over $\cur{V}\pow{t}$, such that $v$ is proper, and $u$ is smooth in a neighbourhood of $X$. Then the natural maps
$$
H^i_\rig((X,Y,\mathfrak{P})/\ekd)\rightarrow H^i_\rig((X,Y',\mathfrak{P}')/\ekd)
$$
are isomorphisms for all $i\geq0$.
\end{theo}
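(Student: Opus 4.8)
The plan is to follow Berthelot's proof of invariance for classical rigid cohomology, using the Strong Fibration Theorem (Proposition \ref{strongprop}) to handle étale changes of formal model, the Poincaré Lemma (Proposition \ref{poincare}) to handle affine-space fibrations, Lemma \ref{frameetale} to factor a general smooth morphism of frames into a composite of these two, and the Mayer--Vietoris sequence of Lemma \ref{jdlocal} to pass from a statement local on $X$ (and on $\mathfrak{P}'$) to the global one. As a preliminary observation, since $]Y[_\mathfrak{P}$ is open in $\mathfrak{P}_K$ the complex $\Omega^*_{]Y[_\mathfrak{P}/S_K}$ is the restriction of $\Omega^*_{\mathfrak{P}_K/S_K}$, so $H^i_\rig((X,Y,\mathfrak{P})/\ekd)=H^i(]X[_\mathfrak{P},\,\Omega^*_{\mathfrak{P}_K/S_K}|_{]X[_\mathfrak{P}})$ depends only on $(X,\mathfrak{P})$ and the natural map in the statement is induced by $u_K$, which carries $]X[_{\mathfrak{P}'}$ into $]X[_\mathfrak{P}$; thus the rôle of $v$ is auxiliary, needed only to invoke Proposition \ref{strongprop}, whose formulation involves a compactification.

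First I would dispose of the two base cases. If $u$ is étale in a neighbourhood of $X$, then the Strong Fibration Theorem identifies, via $u_K$, a cofinal system of neighbourhoods of $]X[_\mathfrak{P}$ in $]Y[_\mathfrak{P}$ with one of $]X[_{\mathfrak{P}'}$ in $]Y'[_{\mathfrak{P}'}$; since $u$ is a morphism over $\cur{V}\pow{t}$ these identifications carry the de Rham complexes relative to $S_K$ into one another, so writing $j^\dagger_X(-)\cong\colim_{\underline m} j_{\underline m *}j_{\underline m}^{-1}(-)$ as in Lemma \ref{closed} and Proposition \ref{cofinal2} and commuting the filtered colimit past cohomology, the induced maps on $H^i_\rig$ are isomorphisms. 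If instead $(X,Y',\mathfrak{P}')=(X,Y,\widehat{\A}^d_\mathfrak{P})$ with $Y$ embedded by the zero section and $v=\mathrm{id}$, the statement is exactly the Poincaré Lemma, obtained by applying $\mathbf{R}\Gamma(]Y[_\mathfrak{P},-)$ to the quasi-isomorphism of Proposition \ref{poincare}.

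For the general case, the relative dimension $d$ of $\mathfrak{P}'$ over $\mathfrak{P}$ is locally constant near $X$, so we may assume it constant; then the conormal sheaf $\sh{I}'/\sh{I}'^2$ of $Y'$ in $\mathfrak{P}'\times_\mathfrak{P}Y$ is locally free of rank $d$ near $X$. By Lemma \ref{jdlocal} and functoriality of the maps, it suffices to prove the statement with $X$ replaced by the members $X_\lambda$ of a finite open cover $X=\bigcup_\lambda X_\lambda$ together with all their intersections. Choosing the cover fine enough, and adjusting $\mathfrak{P}'$ as necessary (passing to affine opens, or first replacing $\mathfrak{P}'$ by $\mathfrak{P}'\times_{\spf{\cur{V}\pow{t}}}\mathfrak{P}$ and $Y'$ by the closure of $X$), we may arrange that near $X_\lambda$ the conormal sheaf admits a basis lifting to sections $t_1,\dots,t_d\in\Gamma(\mathfrak{P}',\sh{I}')$, where $\sh{I}'$ is the ideal of $Y'$ in $\mathfrak{P}'$. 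Lemma \ref{frameetale} then produces $\varphi\colon\mathfrak{P}'\to\widehat{\A}^d_\mathfrak{P}$ carrying $Y'$ into $Y$, étale in a neighbourhood of $X_\lambda$, so that $(X_\lambda,Y',\mathfrak{P}')\to(X_\lambda,Y,\mathfrak{P})$ factors as $(X_\lambda,Y',\mathfrak{P}')\xrightarrow{\varphi}(X_\lambda,Y,\widehat{\A}^d_\mathfrak{P})\to(X_\lambda,Y,\mathfrak{P})$, with $\varphi$ of the type covered by the first base case (its $Y$-component being the proper $v$) and the projection covered by the second. Composing the resulting isomorphisms handles each $X_\lambda$ and each intersection, and the Mayer--Vietoris spectral sequences of Lemma \ref{jdlocal} on $]Y[_\mathfrak{P}$ and on $]Y'[_{\mathfrak{P}'}$, matched by $u_K^*$, then yield the result for $X$.

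The main obstacle is exactly this globalisation step: arranging a \emph{global} factorisation of $u$ through $\widehat{\A}^d_\mathfrak{P}$ over each piece of $X$ while preserving the properness hypothesis on the $Y$-component required by Proposition \ref{strongprop}, and verifying that the Mayer--Vietoris resolutions on the two sides are genuinely matched by $u_K^*$ so that the spectral-sequence comparison applies. Conceptually nothing new is needed here -- this is the argument of \cite{rigcohber,iso} transposed to the setting of adic spaces -- but it is where essentially all of the bookkeeping lies.
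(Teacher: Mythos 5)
Your toolkit and both base cases (the étale case via Proposition \ref{strongprop}, the affine-fibration case via Proposition \ref{poincare}) match the paper, and your preliminary observations are fine. The genuine gap is in your treatment of the general case, where you try to handle an arbitrary proper $v$ in one stroke by factoring $u$ through $\widehat{\A}^d_\mathfrak{P}$ via Lemma \ref{frameetale}. To apply that lemma you need \emph{global} sections $t_1,\dots,t_d\in\Gamma(\mathfrak{P}',\sh{I}')$ trivialising the conormal sheaf near $X$, and such sections only exist after localising on $\mathfrak{P}'$ (think of $\mathfrak{P}'=\widehat{\P}^n_\mathfrak{P}$ with $Y'\subset\P^n_Y$: global functions on $\mathfrak{P}'$ are far too scarce). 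But shrinking $\mathfrak{P}'$ forces you to shrink $Y'$ to $Y'\cap\mathfrak{P}'_\alpha$, and then the $Y$-component of your étale factor is no longer proper over $Y$ --- which is precisely the hypothesis Proposition \ref{strongprop} needs. Your parenthetical fixes (passing to affine opens, replacing $Y'$ by the closure of $X$, or replacing $\mathfrak{P}'$ by $\mathfrak{P}'\times\mathfrak{P}$) do not resolve this tension: replacing $Y'$ by a closed subscheme keeps properness but does not produce the sections, while passing to opens of $\mathfrak{P}'$ produces the sections but kills properness. You flag this as ``the main obstacle'' and then assert it is only bookkeeping; it is not --- it is the reason the classical proof is structured the way it is.

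The paper's proof (following Le Stum, Theorem 6.5.2) resolves it by a three-step reduction that is absent from your sketch. First, the case $v=\mathrm{id}$, where the question \emph{is} local on $\mathfrak{P}'$ and your argument is exactly what is done. Second, the case $v$ projective: there one constructs --- localising only on $X$ and $\mathfrak{P}$, and replacing $Y'$ by a closed subscheme containing $X$, so that properness of $Y'\to Y$ is retained --- an auxiliary frame $(X,Y',\mathfrak{P}'')$ with $\mathfrak{P}''\to\mathfrak{P}$ étale around $X$ (Le Stum's Lemma 6.5.1), then compares $\mathfrak{P}'$ and $\mathfrak{P}''$ through the fibre product $\mathfrak{P}'\times_\mathfrak{P}\mathfrak{P}''$ using the already-proven case $v=\mathrm{id}$ for both projections, and finally applies Proposition \ref{strongprop} to $(X,Y',\mathfrak{P}'')\to(X,Y,\mathfrak{P})$. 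Third, the general proper case is reduced to the projective one by Chow's lemma: one blows up $\mathfrak{P}'$ along a closed subscheme of $Y'$ disjoint from $X$ (hence contained in $V(\pi)$), which makes $Y''\to Y$ projective while leaving the generic fibre and the tubes untouched. The fibre-product comparison and the Chow/blow-up step are the missing ideas; without them your argument does not cover, for instance, a frame map whose $\mathfrak{P}'$ is a formal projective bundle or blow-up over $\mathfrak{P}$, which is exactly the situation the theorem must handle to yield independence of the frame.
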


\begin{proof} We closely follows the proof of Theorem 6.5.2 in \cite{rigcoh}. It suffices to prove that the natural morphism
$$
j^\dagger _X\Omega^*_{]Y[_\mathfrak{P}/S_K} \rightarrow \mathbf{R}u_{K*} j_X^\dagger \Omega^*_{]Y'[_{\mathfrak{P}'}/S_K}
$$
is a quasi-isomorphism, this question is clearly local on $\mathfrak{P}$, and is local on $X$ by Lemma \ref{jdlocal}. Also note that we may at any point replace $Y$ or $Y'$ by closed subschemes containing $X$.

First assume that $v=\mathrm{id}$ is the identity, so we actually have a diagram
$$
\xymatrix{
&  & \mathfrak{P}' \ar[d]^u \\
X  \ar[r]^j & Y \ar[r]^i \ar[ur]^{i'}& \mathfrak{P}
}
$$
with $u$ smooth around $X$. In this case the question is also local on $\mathfrak{P}'$, and hence we may assume that the conclusions of Lemma \ref{frameetale} hold. Hence we may factor $u$ as 
$$
(X,Y,\mathfrak{P}')\overset{u''}{\rightarrow} (X,Y,\widehat{\A}^d_\mathfrak{P}) \overset{u'}{\rightarrow} (X,Y,\mathfrak{P})
$$
where $v$ is \'{e}tale in a neighbourhood of $X$. Hence by Proposition \ref{strongprop} we have
$$
j_X^\dagger\Omega^*_{]Y[_{\widehat{\A}^d_\mathfrak{P}}/S_K}
\cong \mathbf{R}u''_{K*} j_X^\dagger\Omega^*_{]Y[_{\mathfrak{P}'}/S_K},
$$
by Proposition \ref{poincare} we have
$$
j_X^\dagger\Omega^*_{]Y[_\mathfrak{P}/S_K} \cong \mathbf{R}u'_{K*}j_X^\dagger\Omega^*_{]Y[_{\widehat{\A}^d_\mathfrak{P}}/S_K}.
$$
and combining these two then gives the result.

Next assume that $v$ is projective, then exactly as in Lemma 6.5.1 of \emph{loc. cit.} by localising on $\mathfrak{P}$ and $X$, and replacing $Y'$ by some closed subscheme containing $X$ we may assume that we have a morphism of frames 
$$
\xymatrix{
& Y'  \ar[r]^{i''}\ar[d]^v & \mathfrak{P}'' \ar[d]^{u'} \\
X \ar[ur]^{j'} \ar[r]^j & Y \ar[r]^i & \mathfrak{P}
}
$$
with $u'$ \'{e}tale around $X$. We consider the diagram of frames
$$ \xymatrix{  (X,Y',\mathfrak{P}'\times_\mathfrak{P} \mathfrak{P}'') \ar[r]\ar[d] & (X,Y',\mathfrak{P}') \ar[d] \\
(X,Y',\mathfrak{P}'') \ar[r] & (X,Y,\mathfrak{P})  }
$$
and by the case $v=\mathrm{id}$ already proven, we deduce that
$$   \mathbf{R}u'_{K*}j_X^\dagger\Omega^*_{]Y'[_{\mathfrak{P}''} /S_K}\cong  \mathbf{R}u_{K*}j_X^\dagger\Omega^*_{]Y'[_{\mathfrak{P}}/S_K}
$$
since both are isomorphic to $ \mathbf{R}u''_{K*}j_X^\dagger\Omega^*_{]Y'[_{\mathfrak{P}'\times_{\mathfrak{P}}\mathfrak{P}''} /S_K}$, where $u'':\mathfrak{P}'\times_\mathfrak{P}\mathfrak{P}''\rightarrow \mathfrak{P}$ is the canonical map. Again using Proposition \ref{strongprop} we deduce that 
$$  j_X^\dagger\Omega^*_{]Y[_\mathfrak{P}/S_K}\cong  \mathbf{R}u'_{K*}j_X^\dagger\Omega^*_{]Y'[_{\mathfrak{P}''} /S_K}\cong  \mathbf{R}u_{K*}j_X^\dagger\Omega^*_{]Y'[_{\mathfrak{P}'} /S_K}
$$
as required.

Finally we consider the general case. Thanks to Chow's Lemma (see 7.5.13 and 7.5.14 of \cite{chow}) we may blow-up $\mathfrak{P}'$ along a closed subscheme of $Y'$ outside $X$, and obtain a diagram
$$ \xymatrix{ & Y''\ar[d]^{v'} \ar[r] &  \mathfrak{P}'' \ar[d]^{u'}\\
X \ar[r]\ar[ur]\ar[dr] & Y'\ar[d]^v  \ar[r] &  \mathfrak{P}' \ar[d]^u \\
 & Y \ar[r] & \mathfrak{P}
}
$$
where $v\circ v'$ is projective. Since the closed subscheme we are blowing up is contained in $V(\pi)\subset \frak{P}'$, the induced map $u'_K$ is an isomorphism on generic fibres, as well as between tubes. Hence we get 
$$ j_X^\dagger\Omega^*_{]Y'[_{\mathfrak{P}'} /S_K} \cong \mathbf{R}u'_{K*}j_X^\dagger\Omega^*_{]Y''[_{\mathfrak{P}''} /S_K}.
$$
The projective case already proven then implies that 
$$  j_X^\dagger\Omega_{]Y[_\mathfrak{P}/S_K}\cong  \mathbf{R}(u\circ u')_{K*}j_X^\dagger\Omega^*_{]Y''[_{\mathfrak{P}''} /S_K}\cong  \mathbf{R}u_{K*}j_X^\dagger\Omega^*_{]Y'[_{\mathfrak{P}'}/S_K }
$$
as required.
\end{proof}

Of course, in the usual way by considering the fibre product of two frames this implies that the rigid cohomology of any two smooth and proper frames of the form $(X,Y,\mathfrak{P})$ and $(X,Y',\mathfrak{P}')$ are isomorphic. Exactly as in the discussion following Corollaire 1.5 of \cite{finitude}, we then get a functor 
$$
X\mapsto H^i_\rig(X/\ekd)
$$
from the category of embeddable $k\lser{t}$-varieties to $\ekd$-vector spaces. We can thus summarise the results of this section as follows.

\begin{theo} \label{maintheo}There are functors
$$ X\mapsto H^i_\rig(X/\ekd) 
$$
from the category of embeddable varieties over $k\lser{t}$ to vector spaces over $\ekd$, which can be calculated as $H^i(]Y[_\mathfrak{P},j_X^\dagger \Omega^*_{]Y[_\mathfrak{P}})$ for any smooth and proper frame $(X,Y,\mathfrak{P})$. Moreover, the functoriality morphism 
$$
f^*:H^i_\rig(X/\ekd)\rightarrow H^i_\rig(X'/\ekd)
$$
associated to a morphism $f:X'\rightarrow X$ of embeddable varieties can be calculated as that induced by a morphism of smooth and proper frames $(X',Y',\mathfrak{P}')\rightarrow (X,Y,\mathfrak{P})$.
\end{theo}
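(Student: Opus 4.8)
The plan is to bootstrap everything from Theorem~\ref{bigindep1} by the standard fibre-product argument. The first point is that a morphism of frames $(X',Y',\mathfrak{P}')\to(X,Y,\mathfrak{P})$ lying over a morphism $f\colon X'\to X$ induces a pullback map on cohomology
\[
f^*\colon H^i_\rig((X,Y,\mathfrak{P})/\ekd)\longrightarrow H^i_\rig((X',Y',\mathfrak{P}')/\ekd).
\]
Indeed, the morphism $u\colon\mathfrak{P}'\to\mathfrak{P}$ restricts to $u_K\colon{}]Y'[_{\mathfrak{P}'}\to{}]Y[_\mathfrak{P}$ carrying $]X'[_{\mathfrak{P}'}$ into $]X[_\mathfrak{P}$ compatibly with strict neighbourhoods (cf.\ Remark~\ref{interior}), so pulling back overconvergent differential forms yields $j_X^\dagger\Omega^*_{]Y[_\mathfrak{P}/S_K}\to\mathbf{R}u_{K*}\,j_{X'}^\dagger\Omega^*_{]Y'[_{\mathfrak{P}'}/S_K}$ and hence a map on cohomology; this is visibly functorial for composable morphisms of frames.

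Next I would prove that $H^i_\rig((X,Y,\mathfrak{P})/\ekd)$ does not depend on the smooth and proper frame. Given two such frames $(X,Y_1,\mathfrak{P}_1)$ and $(X,Y_2,\mathfrak{P}_2)$, let $Y_3$ be the schematic closure of the image of $X$ under the immersion $X\hookrightarrow Y_1\times_{k\pow t}Y_2$ deduced from the two open immersions, and set $\mathfrak{P}_3=\mathfrak{P}_1\times_{\cur{V}\pow t}\mathfrak{P}_2$, which contains $Y_3$ as a closed subscheme. Then $(X,Y_3,\mathfrak{P}_3)$ is again a smooth and proper frame: $X$ is open in $Y_3$, which is proper over $k\pow t$ (being closed in the product of two proper $k\pow t$-schemes), and $\mathfrak{P}_3$ is smooth over $\cur{V}\pow t$ in a neighbourhood of $X$ because near $X$ each factor is and smoothness is stable under products. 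The two projections give diagrams
\[
\xymatrix{ & Y_3 \ar[r]\ar[d] & \mathfrak{P}_3 \ar[d] \\ X \ar[ur]\ar[r] & Y_l \ar[r] & \mathfrak{P}_l }\qquad (l=1,2)
\]
in which the middle vertical arrow is proper ($Y_3$ proper, $Y_l$ separated over $k\pow t$) and the right vertical arrow is smooth in a neighbourhood of $X$, so Theorem~\ref{bigindep1} produces isomorphisms $H^i_\rig((X,Y_l,\mathfrak{P}_l)/\ekd)\isomto H^i_\rig((X,Y_3,\mathfrak{P}_3)/\ekd)$, and composing these gives a canonical isomorphism between the cohomologies of the two given frames.

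For functoriality, given $f\colon X'\to X$ of embeddable varieties, choose smooth and proper frames $(X,Y,\mathfrak{P})$ and $(X',Y',\mathfrak{P}')$. Let $Y''$ be the schematic closure of the image of $X'$ under the immersion $X'\hookrightarrow Y'\times_{k\pow t}Y$ given by $(j',\,j\circ f)$ (an immersion, since $j'$ is), and set $\mathfrak{P}''=\mathfrak{P}'\times_{\cur{V}\pow t}\mathfrak{P}$. As before $(X',Y'',\mathfrak{P}'')$ is a smooth and proper frame — $\mathfrak{P}''$ is smooth near $X'$ because $X'$ maps into the smooth loci of both factors — the first projection is a morphism of frames $(X',Y'',\mathfrak{P}'')\to(X',Y',\mathfrak{P}')$ to which Theorem~\ref{bigindep1} applies, and the second projection together with $f$ is a morphism of frames $(X',Y'',\mathfrak{P}'')\to(X,Y,\mathfrak{P})$ over $f$. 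One then defines $f^*$ as the composite of the pullback along the latter with the inverse of the Theorem~\ref{bigindep1} isomorphism attached to the former, and compatibility with composition follows from the same fibre-product manipulations.

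The one point requiring real care, and the main obstacle, is the coherence bookkeeping: one must check that all the isomorphisms furnished by Theorem~\ref{bigindep1} in these constructions are mutually compatible, so that the smooth and proper frames over a fixed $X$ organise into a system with a well-defined common value $H^i_\rig(X/\ekd)$ and so that $f^*$ is independent of the auxiliary choices. This is done exactly as in the discussion following Corollaire~1.5 of \cite{finitude}, by applying Theorem~\ref{bigindep1} to the evident comparisons among a frame, a second frame, and their fibre product; once this is in place, the remaining assertions — in particular that $f^*$ is computed by any morphism of smooth and proper frames lying over $f$ — are immediate from the constructions.
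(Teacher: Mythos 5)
Your proposal is correct and follows essentially the same route as the paper: the paper deduces Theorem \ref{maintheo} from Theorem \ref{bigindep1} by exactly this fibre-product-of-frames argument, citing the discussion following Corollaire 1.5 of \cite{finitude} for the coherence bookkeeping, just as you do.
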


We will extend this to include coefficients in the next section, and to non-embeddable varieties in the sequel \cite{rclsf3}.

\begin{rem} \label{partial} Actually, we get slightly more, since the proof shows that we can define the rigid cohomology $H^i_\rig((X,Y)/\ekd)$ of any embeddable pair $(X,Y)$ consisting of an open immersion of a $k\lser{t}$-variety into a flat, finite type $k\pow{t}$-scheme. We do so by choosing a closed immersion $Y\rightarrow \mathfrak{P}$ into a finite type formal $\cur{V}\pow{t}$-scheme, smooth over $\cur{V}\pow{t}$ around $X$.

One interesting special case of this is when $Y$ is a compactification of $X$ as a $k\lser{t}$-variety. In this case, we choose an embedding of $Y$ into a finite type formal $\cur{O}_{\ek}$-scheme, smooth around $X$, then such a formal scheme is also of finite type over $\cur{V}\pow{t}$, and smooth over $\cur{V}\pow{t}$ around $X$. Hence $H^i_\rig((X,Y)/\ekd)$ is just the usual rigid cohomology $H^i_\rig(X/\ek)$. More generally, if $Y$ is actually a $k\lser{t}$-variety, then $H^i_\rig((X,Y)/\ekd)$ is just the usual partially overconvergent rigid cohomology $H^i_\rig((X,Y)/\ek)$.

Another interesting special case is when $Y$ is taken to be a model for $X$ over $k\pow{t}$, in this case $H^i_\rig((X,Y)/\ekd)$ is a version of convergent cohomology taking values in $\ekd$ rather than $\ek$. However, since this will not be finite dimensional in general, we see no reason to believe that this should be an $\ekd$-structure on the usual convergent cohomology $H^i_\mathrm{conv}(X/\ek):=H^i_\rig((X,X)/\ek)$.
\end{rem}

\section{Relative coefficients and Frobenius structures}\label{coeffs}

In this section we introduce the coefficients of the cohomology theory $X\mapsto H^i_\rig(X/\ekd)$, namely overconvergent isocrystals (relative to $\ekd$). We follow closely the definition of overconvergent isocrystals given in Chapter 7 of \cite{rigcoh}, which is the inspiration for most of the definitions and results here. The definitions we give will transparently not depend on any choice of a smooth and proper frame containing $X$, however, the key results will be a characterisation in terms of modules with overconvergent connection on a given frame, as well a characterisation of the pullback functor induced by a morphism of varieties in terms of a morphism of frames. We also define cohomology groups with values in an overconvergent isocrystal, which a priori \emph{does} depend on a choice of frame, however, the results of the previous section (or rather, their proofs) will easily imply its independence from such choices. We then discuss Frobenius structures on isocrystals, and introduce the fundamental category of coefficients, the category $F\text{-}\mathrm{Isoc}^\dagger(X/\ekd)$ of overconvergent $F$-isocrystals on a $k\lser{t}$-variety $X$, and give a characterisation in terms of modules with overconvergent connection on a frame, together with a Frobenius structure. Nothing in this section should contain any surprises for those familiar with the theory of rigid cohomology, however, given the novel setting, we thought it best to proceed as slowly and thoroughly as we considered reasonable.

The categories of coefficients that we will consider in this section are \emph{relative} coefficients, that is their differential structure is $\ekd$-linear. This is the set-up most closely linked to classical rigid cohomology, and is also that in which it is perhaps most natural to state and prove the version of the $p$-adic monodromy theorem we will need in order to show finite dimensionality of $\ekd$-valued rigid cohomology for smooth curves. In fact, it is the proof of finite dimensionality for smooth curves in the sequel \cite{rclsf2} that requires us to introduce categories of coefficients, since our eventual strategy will be to push forward via an \'{e}tale map to $\A^1$. However, for our eventual purposes of studying questions such as the weight monodromy conjecture and independence of $\ell$, these coefficients will not be enough. In a sequel \cite{rclsf3}, we will introduce and study categories of `absolute' coefficients, i.e. those for which the connection is relative to $K$, and not $\ekd$. These objects will then come with a natural connection on their cohomology groups, the Gauss--Manin connection, and these groups will therefore become $(\varphi,\nabla)$-modules over $\ekd$. For now, though, we will start with the definition of the categories of `relative' coefficients that we will be interested in, that is overconvergent isocrystals on varieties over $k\lser{t}$ and frames over $\cur{V}\pow{t}$.

\begin{defn} \begin{enumerate}
\item Let $X/k\lser{t}$ be a variety. An $X$-frame over $\cur{V}\pow{t}$ is a frame $(U,W,\mathfrak{Q})$ over $\cur{V}\pow{t}$ together with a $k\lser{t}$-morphism $U\rightarrow X$. A morphism of $X$-frames is a morphism of frames commuting with the given morphism to $X$.
\item An overconvergent isocrystal on $X/\ekd$ is a collection of coherent $j^\dagger_{U}\cur{O}_{]W[_\mathfrak{Q}}$-modules, $\sh{E}_\mathfrak{Q}$, one for each $X$-frame $(U,W,\mathfrak{Q})$, together with isomorphisms $u^*\sh{E}_\mathfrak{Q}\rightarrow \sh{E}_{\mathfrak{Q}'}$ for every morphism of $X$-frames $u:(U',W',\mathfrak{Q}')\rightarrow (U,W,\mathfrak{Q})$, which satisfy the usual cocycle condition. The category of such objects is denoted $\mathrm{Isoc}^\dagger(X/\ekd)$.
\item Let $(X,Y,\frak{P})$ be a $\cur{V}\pow{t}$-frame. A overconvergent isocrystal on $(X,Y,\mathfrak{P})/\ekd$ is a collection of coherent $j^\dagger_{U}\cur{O}_{]W[_\mathfrak{Q}}$-modules, $\sh{E}_\mathfrak{Q}$, one for each frame $(U,W,\mathfrak{Q})$ over $(X,Y,\mathfrak{P})$, together with isomorphisms $u^*\sh{E}_\mathfrak{Q}\rightarrow \sh{E}_{\mathfrak{Q}'}$ for every morphism of frames $u:(U',W',\mathfrak{Q}')\rightarrow (U,W,\mathfrak{Q})$ such that the diagram
$$
\xymatrix{ (U',W') \ar[r]\ar[dr] & (U,W)\ar[d] \\ & (X,Y)
}
$$
commutes, and which satisfy the usual cocycle condition. The category of such objects is denoted $\mathrm{Isoc}^\dagger((X,Y,\mathfrak{P})/\ekd)$.
\end{enumerate}
\end{defn}

These are abelian tensor categories, which admit internal hom objects, and are such that the natural `realisation functors' commute with tensor products (at this stage we do not know flatness of isocrystals, and therefore do not know exactness of the realisations or commutation of realisations with internal hom). There is an obvious functor
$$\mathrm{Isoc}^\dagger(X/\ekd)\rightarrow \mathrm{Isoc}^\dagger((X,Y,\frak{P})/\ekd)
$$
induced by the forgetful functor from frames over $(X,Y,\frak{P})$ to frames over $X$. It is straightforward to verify that the category $\mathrm{Isoc}^\dagger(X/\ekd)$ is local for the Zariski topology on $X$, and that $\mathrm{Isoc}^\dagger((X,Y,\frak{P})/\ekd)$ is local for the Zariski topology on $\frak{P}$. Zariski locality of $\mathrm{Isoc}^\dagger(X/\ekd)$ with respect to $X$ follows from the lemma below.

\begin{lem} \label{colimcoh} Let $(X,Y,\frak{P})$ be a $\cur{V}\pow{t}$-frame. Then restriction followed by push-forward induces an equivalence of categories
$$
\mathrm{colim}_V \mathrm{Coh}(\cur{O}_V) \rightarrow  \mathrm{Coh}(j_X^\dagger\cur{O}_{]Y[_\frak{P}})
$$
where the colimit runs over all open neighbourhoods $V$ of $]X[_\mathfrak{P}$ inside $]Y[_\mathfrak{P}$.
\end{lem}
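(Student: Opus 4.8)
The plan is to mirror the proof of the corresponding statement in classical rigid cohomology (as in \cite{iso} and \cite{rigcoh}), using the standard neighbourhoods of Proposition~\ref{cofinal2} in place of Berthelot's. Write $j\colon ]X[_\mathfrak{P}\hookrightarrow ]Y[_\mathfrak{P}$ for the (closed) inclusion, so that $j_X^\dagger\cur{O}_{]Y[_\mathfrak{P}}=j_*j^{-1}\cur{O}_{]Y[_\mathfrak{P}}=j_*\cur{O}_{]X[_\mathfrak{P}}$, where $\cur{O}_{]X[_\mathfrak{P}}:=j^{-1}\cur{O}_{]Y[_\mathfrak{P}}$. The first step is the observation that, $j$ being a closed immersion of topological spaces, the adjoint pair $(j^{-1},j_*)$ restricts to an equivalence between $\cur{O}_{]X[_\mathfrak{P}}$-modules and $j_X^\dagger\cur{O}_{]Y[_\mathfrak{P}}$-modules: one has $j^{-1}j_*\cong\mathrm{id}$, and for any $j_X^\dagger\cur{O}_{]Y[_\mathfrak{P}}$-module $\sh{M}$ the unit $\sh{M}\to j_*j^{-1}\sh{M}$ is an isomorphism on stalks (it is the identity at points of $]X[_\mathfrak{P}$, and both sides vanish elsewhere since $j_X^\dagger\cur{O}_{]Y[_\mathfrak{P}}$ does). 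This equivalence matches coherent objects with coherent objects, and under it the functor of the lemma becomes $\sh{E}\mapsto\sh{E}|_{]X[_\mathfrak{P}}=j^{-1}\sh{E}$; it lands in coherent sheaves because $j^{-1}$ is exact, so a local finite presentation of $\sh{E}$ restricts to one. It therefore suffices to show that $\colim_V\mathrm{Coh}(\cur{O}_V)\to\mathrm{Coh}(\cur{O}_{]X[_\mathfrak{P}})$, $\sh{E}\mapsto\sh{E}|_{]X[_\mathfrak{P}}$, is an equivalence.

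For full faithfulness, given coherent $\sh{E},\sh{F}$ on a common neighbourhood $V$, the plan is to use the equivalence above to rewrite the Hom group of the images as $\Hom_{\cur{O}_{]X[_\mathfrak{P}}}(\sh{E}|_{]X[_\mathfrak{P}},\sh{F}|_{]X[_\mathfrak{P}})=\Gamma(]X[_\mathfrak{P},\mathcal{G}|_{]X[_\mathfrak{P}})$, where $\mathcal{G}:=\mathcal{H}om_{\cur{O}_V}(\sh{E},\sh{F})$ (internal $\mathcal{H}om$ out of a finitely presented module commuting with the restriction $j^{-1}$), while the source of the map to be inverted is $\colim_W\Gamma(W,\mathcal{G}|_W)$. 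One is thus reduced to proving that, for every coherent sheaf $\mathcal{G}$ on a neighbourhood $V$, the natural map $\colim_W\Gamma(W,\mathcal{G}|_W)\to\Gamma(]X[_\mathfrak{P},\mathcal{G}|_{]X[_\mathfrak{P}})$ is bijective. I would first establish this over each quasi-compact piece $[Y]_n$, taking $\mathfrak{P}$ affine so that $[Y]_n$ is affinoid: there Proposition~\ref{cofinal2}(1) gives a cofinal system of neighbourhoods $V_{n,m}$ of $K_n:=[Y]_n\cap\,]X[_\mathfrak{P}$ in $[Y]_n$, Lemma~\ref{closed} (whose hypotheses hold as in the proof of Proposition~\ref{comp1}) identifies, on $[Y]_n$, the push-forward of $\mathcal{G}|_{K_n}$ with the filtered colimit over $m$ of the push-forwards of $\mathcal{G}|_{V_{n,m}}$, and since $[Y]_n$ is quasi-compact and quasi-separated $\Gamma([Y]_n,-)$ commutes with this colimit; hence $\Gamma(K_n,\mathcal{G}|_{K_n})\cong\colim_m\Gamma(V_{n,m},\mathcal{G})$. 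Passing to the limit over $n$ — using that $]X[_\mathfrak{P}=\bigcup_n K_n$ is an increasing union of open subsets and that the $V_{\underline m}$ of Proposition~\ref{cofinal2}(2) are cofinal among neighbourhoods — then gives the statement.

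For essential surjectivity, let $\bar{\sh{E}}$ be a coherent $\cur{O}_{]X[_\mathfrak{P}}$-module. On each quasi-compact $[Y]_n$ I would cover $K_n$ by finitely many affinoid opens $U_\alpha\subset[Y]_n$ over which $\bar{\sh{E}}|_{K_n}$ admits a finite presentation; each such presentation involves only finitely many sections of $\cur{O}_{]X[_\mathfrak{P}}(U_\alpha\cap\,]X[_\mathfrak{P})=\colim_{U'}\cur{O}(U')$, so they all lift to some neighbourhood $U'_\alpha$ of $U_\alpha\cap\,]X[_\mathfrak{P}$, yielding a coherent sheaf on $U'_\alpha$; the full faithfulness just proved (applied over the affinoids $U_\alpha$, where there are only finitely many gluing and cocycle conditions to satisfy) then lets one patch these into a coherent sheaf $\sh{E}_n$ on a neighbourhood $W_n$ of $K_n$ in $[Y]_n$ with $\sh{E}_n|_{K_n}\cong\bar{\sh{E}}|_{K_n}$. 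Finally one glues the $\sh{E}_n$ along $n$, once more via full faithfulness, to obtain a coherent sheaf on a neighbourhood of $]X[_\mathfrak{P}$ in $]Y[_\mathfrak{P}$ restricting to $\bar{\sh{E}}$. The main obstacle is exactly this reassembly: since the frame $(X,Y,\mathfrak{P})$ need not be proper, $]X[_\mathfrak{P}$ and $]Y[_\mathfrak{P}$ need not be quasi-compact and the standard neighbourhoods $V_{\underline m}$ are indexed by sequences which are not monotone in $n$, so one must exhaust by the quasi-compact pieces $[Y]_n$ and keep careful track of the various local data in order to produce them all on a single neighbourhood of $]X[_\mathfrak{P}$. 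This bookkeeping, rather than any new idea, is where the real work lies.
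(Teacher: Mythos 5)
Your proposal is correct and follows essentially the same route as the paper, which proves this lemma by the argument of Lemma \ref{colimcat1}: reduce via internal hom, Lemma \ref{closed} and quasi-compactness to the pieces $[Y]_n$ with their cofinal neighbourhoods $V_{n,m}$, lift local presentations for essential surjectivity, and then glue over $n$ by choosing an increasing sequence $\underline{m}(n)$. The only difference is your explicit preliminary identification of $j_X^\dagger\cur{O}_{]Y[_\mathfrak{P}}$-modules with $\cur{O}_{]X[_\mathfrak{P}}$-modules via the closed inclusion, which the paper leaves implicit.
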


\begin{proof} Entirely similar to the proof that we will give later on for modules with connection, Lemma \ref{colimcat1}.
\end{proof}

If $u:(X,Y',\mathfrak{P}')\rightarrow (X,Y,\mathfrak{P})$ is a morphism inducing an isomorphism between cofinal systems of neighbourhoods of $]X[_\mathfrak{P}$ in $]Y[_\mathfrak{P}$ and $]X[_{\mathfrak{P}'}$ in $]Y'[_{\mathfrak{P}'}$, then the pullback functor
$$ u^*:\mathrm{Isoc}^\dagger((X,Y,\mathfrak{P})/\ekd)\rightarrow \mathrm{Isoc}^\dagger((X,Y',\mathfrak{P}')/\ekd)
$$
is an equivalence of categories. The first step in interpreting overconvergent isocrystals on an embeddable variety is the following.

\begin{prop} Let $u:(X,Y',\mathfrak{P}')\rightarrow (X,Y,\mathfrak{P})$ be a smooth and proper morphism of frames over $\cur{V}\pow{t}$. Then the pullback functor
$$
u^*:\mathrm{Isoc}^\dagger((X,Y,\mathfrak{P})/\ekd)\rightarrow \mathrm{Isoc}^\dagger((X,Y',\mathfrak{P}')/\ekd)
$$
is an equivalence of categories.
\end{prop}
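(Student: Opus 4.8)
The plan is to mimic the proof of Theorem \ref{bigindep1}, with the role of the Poincar\'{e} Lemma \ref{poincare} now played by the assertion that pullback along a projection $\widehat{\A}^d_\mathfrak{P}\to\mathfrak{P}$ is an equivalence on isocrystals. Two general facts make the reduction formal: the category $\mathrm{Isoc}^\dagger((X,Y,\mathfrak{P})/\ekd)$ is local on $\mathfrak{P}$ (and likewise on $\mathfrak{P}'$), and, by the remark immediately preceding the statement, any morphism of $\cur{V}\pow{t}$-frames inducing an isomorphism between cofinal systems of neighbourhoods of the relevant tubes induces an equivalence on isocrystals. Combined with the Strong Fibration Theorem \ref{strongprop}, the latter disposes at once of the case where $v$ is proper and $u$ is \'{e}tale in a neighbourhood of $X$. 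The remaining reductions go through verbatim as in the proof of Theorem \ref{bigindep1}: when $v=\mathrm{id}$ one localises further on $\mathfrak{P}'$ and uses Lemma \ref{frameetale} to factor $u$ as a composite $(X,Y,\mathfrak{P}')\to(X,Y,\widehat{\A}^d_\mathfrak{P})\to(X,Y,\mathfrak{P})$ whose first arrow is \'{e}tale near $X$; the case of projective $v$ is deduced from the case $v=\mathrm{id}$ by the fibre-product trick of Lemma 6.5.1 of \cite{rigcoh} together with Proposition \ref{strongprop}; and the general proper case follows via Chow's Lemma \cite{chow} and a blow-up supported in $V(\pi)$, which, being an isomorphism on generic fibres and on tubes, induces an equivalence on isocrystals by the remark above.

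It therefore remains to show that the projection $p:(X,Y,\widehat{\A}^d_\mathfrak{P})\to(X,Y,\mathfrak{P})$ induces an equivalence; by the tower $\widehat{\A}^d_\mathfrak{P}\to\widehat{\A}^{d-1}_\mathfrak{P}\to\cdots\to\mathfrak{P}$ we may assume $d=1$. Let $s:(X,Y,\mathfrak{P})\to(X,Y,\widehat{\A}^1_\mathfrak{P})$ be the zero section, a morphism of frames with $p\circ s=\mathrm{id}$, so that $s^*p^*=\mathrm{id}$; it suffices to produce a natural isomorphism $p^*s^*\cong\mathrm{id}$ on $\mathrm{Isoc}^\dagger((X,Y,\widehat{\A}^1_\mathfrak{P})/\ekd)$, after which $p^*$ is an equivalence with quasi-inverse $s^*$. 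For $\sh{F}$ in this category, the glueing isomorphisms attached to the two coordinate projections $(X,Y,\widehat{\A}^2_\mathfrak{P})\rightrightarrows(X,Y,\widehat{\A}^1_\mathfrak{P})$ — both of which are morphisms of frames over $(X,Y,\mathfrak{P})$ — endow the realisation $\sh{F}_{\widehat{\A}^1_\mathfrak{P}}$, which by Lemma \ref{colimcoh} we may realise on a standard neighbourhood, with an overconvergent stratification, and hence an overconvergent connection $\nabla_X$, in the $\widehat{\A}^1$-direction. The comparison $p^*s^*\sh{F}\cong\sh{F}$ over each frame is then the horizontal-section isomorphism associated with $\nabla_X$, whose explicit form on the standard neighbourhoods is the familiar Taylor power series in $X$ (schematically $e\mapsto\sum_{k\geq0}\nabla_X^k(e)|_{X=0}\,X^k/k!$) together with its inverse; functoriality in the frame and the cocycle condition for the resulting family are then routine.

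The main obstacle is the convergence of this power series on the overconvergent neighbourhoods $V'_{n,m}$ constructed in Section \ref{rcolsf}: this is precisely the point at which the passage from $\ek$ to $\ekd$ is delicate, and where the statement is not purely formal. It is handled exactly as in the proof of the Poincar\'{e} Lemma \ref{poincare}: on $V'_{n,m}$ the relevant coordinate ring is a relative Tate algebra of the shape $B_m\tate{r^{1/n}X}$, the $B_m$ ranging over a cofinal system of neighbourhoods in the base by Proposition \ref{cofinal2}, and the overconvergence built into the definition of $\sh{F}$ forces the series to converge there and to define a horizontal isomorphism over this cofinal system. This establishes the projection case, and with it the proposition.
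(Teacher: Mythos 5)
Your reduction scheme is the same as the paper's: the three stages (the case $Y'=Y$, handled by localising on $\mathfrak{P}'$ and using Lemma \ref{frameetale} together with the strong fibration theorem \ref{strongprop}; the projective case via the fibre product $\mathfrak{P}'\times_\mathfrak{P}\mathfrak{P}''$; the general case via Chow's Lemma and a blow-up supported in $V(\pi)$) coincide step for step with the paper's proof, as does the use of the fact that a morphism inducing an isomorphism between cofinal systems of neighbourhoods of the tubes induces an equivalence. The one place you genuinely diverge is the projection $(X,Y,\widehat{\A}^d_\mathfrak{P})\rightarrow(X,Y,\mathfrak{P})$. The paper dispatches this in one line: since the projection admits a section (the zero section) inducing the identity on the pair $(X,Y)$, and since morphisms of frames over a given frame are only required to commute over $(X,Y)$, the structure morphisms $g$ and $s\circ p\circ g$ on any test frame are related by the identity of that frame viewed as such a morphism, so the crystal glueing data already supplies a canonical isomorphism $p^*s^*\cong\mathrm{id}$ (while $s^*p^*=\mathrm{id}$ on the nose); no analysis is needed. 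Your version --- extracting an overconvergent stratification in the fibre direction from the two projections $\widehat{\A}^2_\mathfrak{P}\rightrightarrows\widehat{\A}^1_\mathfrak{P}$ and summing the Taylor series on the $V'_{n,m}$ as in the Poincar\'{e} Lemma --- is correct, but the convergence you flag as `the main obstacle' is not really an obstacle: the series you are re-summing is exactly the glueing isomorphism of $\sh{F}$ attached to the frame $(X,Y,\widehat{\A}^2_\mathfrak{P})$, which is already given as an isomorphism of $j_X^\dagger\cur{O}$-modules on a neighbourhood of the tube, so pulling it back along the graph $(\mathrm{id},s\circ p)$ yields the required isomorphism directly, convergence included. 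Thus your analytic detour buys nothing over the formal argument, though it does no harm; if you keep it, do spell out the compatibility of your realisation-level isomorphisms with the glueing data and the cocycle condition (the step you call routine), since that is precisely what the formal argument provides for free.
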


\begin{proof} The proof, as the proof of the corollary below, goes exactly as in Chapter 7 of \cite{rigcoh}, and is very similar to the proof of Theorem \ref{bigindep1} above. The question is local on $\mathfrak{P}$ and $X$, and we may also at any point replace either $Y$ or $Y'$ by a closed subscheme containing $X$. Again, we divide the proof into three stages.

First assume that the induced map $Y'\rightarrow Y$ is the identity, so that we actually have a diagram
$$
\xymatrix{
&  & \mathfrak{P}' \ar[d]^u \\
X  \ar[r]^j & Y \ar[r]^i \ar[ur]^{i'}& \mathfrak{P}
}
$$
with $u$ smooth around $X$. Here the question is also local on $\mathfrak{P}'$, and hence we may assume that we can factor $u$ as
$$
(X,Y',\mathfrak{P}')\overset{v}{\rightarrow} (X,Y,\widehat{\A}^d_\mathfrak{P}) \overset{w}{\rightarrow} (X,Y,\mathfrak{P})
$$
where $v$ is proper and \'{e}tale. Since $w$ admits a section inducing the identity on $X$, it is formal that it induces an equivalence 
$$
\mathrm{Isoc}^\dagger((X,Y,\mathfrak{P})/\ekd)\isomto \mathrm{Isoc}^\dagger((X,Y,\widehat{\A}^d_\mathfrak{P})/\ekd)
$$
and the fact that $v$ induces an equivalence 
$$
\mathrm{Isoc}^\dagger((X,Y,\widehat{\A}^d_\mathfrak{P})/\ekd)\rightarrow \mathrm{Isoc}^\dagger((X,Y',\mathfrak{P}')/\ekd)
$$
follows from the strong fibration theorem, i.e. Proposition \ref{strongprop}.

Next we assume that $Y'\rightarrow Y$ is projective, hence by localising on $X$ and $\mathfrak{P}$ and replacing $Y'$ if necessary we may assume that we have a morphism of frames 
$$
\xymatrix{
& Y'  \ar[r]^{i''}\ar[d]^v & \mathfrak{P}'' \ar[d]^{u'} \\
X \ar[ur]^{j'} \ar[r]^j & Y \ar[r]^i & \mathfrak{P}
}
$$
with $u'$ \'{e}tale around $X$. We consider the diagram of frames
$$ \xymatrix{  (X,Y',\mathfrak{P}'\times_\mathfrak{P} \mathfrak{P}'') \ar[r]\ar[d] & (X,Y',\mathfrak{P}') \ar[d] \\
(X,Y',\mathfrak{P}'') \ar[r] & (X,Y,\mathfrak{P})  }
$$
and by the case $v=\mathrm{id}$ already proven, together with the strong fibration theorem, we deduce that
\begin{align*}
\mathrm{Isoc}^\dagger((X,Y',\mathfrak{P}')/\ekd) &\cong \mathrm{Isoc}^\dagger((X,Y',\mathfrak{P}'\times_{\mathfrak{P}}\mathfrak{P}'')/\ekd) \\ &\cong \mathrm{Isoc}^\dagger((X,Y',\mathfrak{P}'')/\ekd) \cong \mathrm{Isoc}^\dagger((X,Y,\mathfrak{P})/\ekd)
.
\end{align*}

Finally we consider the general case. As in the proof of Theorem \ref{bigindep1} we may blow up $\mathfrak{P}'$ along a closed subscheme of $Y'$ outside of $X$ to obtain a frame $(X,Y'',\mathfrak{P}'')$ such that $Y''$ is projective over $Y$. Now, since the blow-up induces an isomorphism between a cofinal system of neighbourhoods of $]X[_{\mathfrak{P}'}$ in $]Y'[_{\mathfrak{P}'}$ and $]X[_{\mathfrak{P}''}$ in $]Y''[_{\mathfrak{P}''}$, we therefore have
$$ \mathrm{Isoc}^\dagger((X,Y',\mathfrak{P}')/\ekd) \cong \mathrm{Isoc}^\dagger((X,Y'',\mathfrak{P}'')/\ekd)
$$
But by the projective case already proven, we have 
$$ \mathrm{Isoc}^\dagger((X,Y,\mathfrak{P})/\ekd) \cong \mathrm{Isoc}^\dagger((X,Y'',\mathfrak{P}'')/\ekd)
$$
and the proof is complete.
\end{proof}

\begin{cor} Let $(X,Y,\mathfrak{P})$ be a smooth and proper frame over $\cur{V}\pow{t}$. Then the forgetful functor 
$$
\mathrm{Isoc}^\dagger(X/\ekd)\rightarrow \mathrm{Isoc}^\dagger((X,Y,\mathfrak{P})/\ekd)
$$
is an equivalence of categories.
\end{cor}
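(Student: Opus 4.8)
The plan is to follow Chapter~7 of \cite{rigcoh}, reducing everything to the proposition just proved, that pullback along a smooth and proper morphism of frames induces an equivalence on categories of overconvergent isocrystals. The forgetful functor in question regards a frame over $(X,Y,\mathfrak{P})$ as an $X$-frame, forgetting the structure morphisms to $Y$ and $\mathfrak{P}$; to invert it, the essential point is that every $X$-frame is dominated, by a smooth and proper morphism, by a frame over $(X,Y,\mathfrak{P})$.

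Concretely, given an $X$-frame $(U,W,\mathfrak{Q})$ with structure morphism $f\colon U\to X$, I would form $\overline{W}$, the scheme-theoretic closure of $U$ inside $W\times_{k\pow{t}}Y$, where $U$ maps in through the open immersion $U\hookrightarrow W$ on the first factor and through $U\xrightarrow{f}X\hookrightarrow Y$ on the second. Then $U\hookrightarrow\overline{W}$ is an open immersion, $\overline{W}$ is separated and of finite type over $k\pow{t}$ and carries a natural morphism to $Y$, and $\overline{W}$ closed-immerses into $\mathfrak{Q}\times_{\cur{V}\pow{t}}\mathfrak{P}$; thus $\bigl(U,\overline{W},\mathfrak{Q}\times_{\cur{V}\pow{t}}\mathfrak{P}\bigr)$ is a frame over $(X,Y,\mathfrak{P})$ via the second projections, and the first projections give a morphism of $X$-frames
$$\rho_{(U,W,\mathfrak{Q})}\colon\bigl(U,\overline{W},\mathfrak{Q}\times_{\cur{V}\pow{t}}\mathfrak{P}\bigr)\longrightarrow(U,W,\mathfrak{Q}).$$
Here $\overline{W}\to W$ is proper, since $\overline{W}$ is a closed subscheme of $W\times_{k\pow{t}}Y$ and $W\times_{k\pow{t}}Y\to W$ is the base change of the proper morphism $Y\to k\pow{t}$; and $\mathfrak{Q}\times_{\cur{V}\pow{t}}\mathfrak{P}\to\mathfrak{Q}$ is smooth in a neighbourhood of $U$: it is the base change of $\mathfrak{P}\to\cur{V}\pow{t}$, which is smooth in a neighbourhood of $X$, and the image of $U$ in $\mathfrak{P}$ is contained in $X$. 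Hence $\rho_{(U,W,\mathfrak{Q})}$ is a smooth and proper morphism of frames, and the preceding proposition yields an equivalence
$$\rho_{(U,W,\mathfrak{Q})}^*\colon\mathrm{Isoc}^\dagger((U,W,\mathfrak{Q})/\ekd)\isomto\mathrm{Isoc}^\dagger\bigl((U,\overline{W},\mathfrak{Q}\times_{\cur{V}\pow{t}}\mathfrak{P})/\ekd\bigr).$$
This construction is functorial in $(U,W,\mathfrak{Q})$, compatibly with the $\rho$'s and with the structure morphisms down to $(X,Y,\mathfrak{P})$.

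With this in hand I would prove essential surjectivity and full faithfulness by the standard glueing-along-a-cofinal-family argument, as in Chapter~7 of \cite{rigcoh}. For essential surjectivity, an isocrystal $\sh{E}$ on $(X,Y,\mathfrak{P})/\ekd$ is extended to $X/\ekd$ by declaring $\widetilde{\sh{E}}_{(U,W,\mathfrak{Q})}$ to be the image of the realisation $\sh{E}_{(U,\overline{W},\mathfrak{Q}\times\mathfrak{P})}$ under a chosen quasi-inverse of $\rho_{(U,W,\mathfrak{Q})}^*$; functoriality of the product construction together with the cocycle condition for $\sh{E}$ supplies the transition isomorphisms and cocycle condition for $\widetilde{\sh{E}}$, while the fact that two morphisms of frames agreeing on the open part induce isomorphic pullbacks on overconvergent isocrystals identifies $\widetilde{\sh{E}}$ with $\sh{E}$ on frames already lying over $(X,Y,\mathfrak{P})$. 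For full faithfulness, a morphism between the restrictions of two isocrystals on $X$ is transported along the $\rho^*$'s to a family of morphisms over all $X$-frames, which is compatible and the unique such extension because each $\rho^*$ is fully faithful. The main obstacle here is organisational rather than conceptual: one has to chase the compatibility isomorphisms through the equivalences $\rho^*$ and their chosen quasi-inverses to confirm that the transported data really satisfies the cocycle condition and is natural in the $X$-frame, and to check that the hypotheses of the preceding proposition are met at each stage (in particular that one may apply it over the base $U$ with the possibly non-smooth target $(U,W,\mathfrak{Q})$, reducing to the smooth case if necessary). The remaining verifications --- openness of $U\hookrightarrow\overline{W}$, that $\overline{W}\hookrightarrow\mathfrak{Q}\times_{\cur{V}\pow{t}}\mathfrak{P}$ is a closed immersion, the smoothness and properness of $\rho_{(U,W,\mathfrak{Q})}$, and the lemma on morphisms agreeing on open parts --- are elementary and proceed as in the references.
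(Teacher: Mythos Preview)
Your proposal is correct and follows precisely the approach the paper intends: the paper gives no explicit proof of this corollary, but the preceding proposition's proof begins ``The proof, as the proof of the corollary below, goes exactly as in Chapter~7 of \cite{rigcoh}'', and your product-frame construction $(U,\overline{W},\mathfrak{Q}\times_{\cur{V}\pow{t}}\mathfrak{P})$ together with the transport along the smooth and proper $\rho_{(U,W,\mathfrak{Q})}$ is exactly Le~Stum's argument. Your caveat about the target $(U,W,\mathfrak{Q})$ possibly not being smooth is well-placed but harmless: the factorisation through $\widehat{\A}^d_{\mathfrak{Q}}$ and the strong fibration theorem require only that $\mathfrak{Q}\times\mathfrak{P}\to\mathfrak{Q}$ be smooth near $U$, not that $\mathfrak{Q}$ itself be smooth over $\cur{V}\pow{t}$.
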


We can now interpret $\mathrm{Isoc}^\dagger((X,Y,\mathfrak{P})/\ekd)$ more concretely, first in terms of modules with stratifications, and then in terms of modules with connections. Let us first recall the definitions of stratifications and connections in rigid geometry, as well as the notion of an overconvergent stratification for frames.

For a frame $(X,Y,\mathfrak{P})$ and any $n\geq1$, we let $\mathfrak{P}^n$ denote the $n$-fold fibre product of $\mathfrak{P}$ with itself over $\cur{V}\pow{t}$, the diagonal embedding allows us to consider the frames $(X,Y,\mathfrak{P}^n)$, and the natural inclusions and projections between the different $\mathfrak{P}^n$ induce morphisms of frames
$$
(X,Y,\mathfrak{P}^n)\rightarrow (X,Y,\mathfrak{P}^m)
$$
which we will generally denote by the same letters, e.g. $\Delta$ for the diagonal morphism
$$
(X,Y,\mathfrak{P})\rightarrow (X,Y,\mathfrak{P}^2),
$$ $p_1,p_2$ for the two morphisms of frames
$$
(X,Y,\mathfrak{P}^2)\rightarrow (X,Y,\mathfrak{P}).
$$
and $p_{12},p_{23},p_{13}$ for the projections from the triple product to the double product.

Also, for any smooth rigid variety $\sh{X}$ over $S_K$, we let $\sh{X}^{(n)}$ denote the $n$th infinitesimal neighbourhood of $\sh{X}$ inside $\sh{X}\times_{S_K}\sh{X}$, and $p_i^{(n)}:\sh{X}^{(n)}\rightarrow \sh{X}$ for $i=1,2$ the two natural projections.

\begin{defn} \label{ocrelstrat}Let $(X,Y,\mathfrak{P})$ be a smooth frame over $\cur{V}\pow{t}$.  \begin{enumerate} 
\item An overconvergent stratification on a $j_X^\dagger\cur{O}_{]Y[_\mathfrak{P}}$-module $\sh{E}$ is an isomorphism
$$
\epsilon:p_2^*\sh{E}\rightarrow p_1^*\sh{E}
$$
of $j^\dagger_X\cur{O}_{]Y[_{\mathfrak{P}^2}}$-modules, called the Taylor isomorphism, such that $\Delta^*(\epsilon)=\mathrm{id}$ and $p_{13}^*(\epsilon)=p_{12}^*(\epsilon)\circ p_{23}^*(\epsilon)$ on $]Y[_{\mathfrak{P}^3}$. The category of coherent $j_X^\dagger\cur{O}_{]Y[_\mathfrak{P}}$-modules with overconvergent stratification is denoted $\mathrm{Strat}^\dagger((X,Y,\mathfrak{P})/\ekd)$.
\item A stratification on an $\cur{O}_{]Y[_\frak{P}}$-module $\sh{E}$ is a collection of compatible isomorphisms $$p_2^{(n)*}\sh{E}\isomto p_1^{(n)*}\sh{E}$$ satisfying a cocycle condition similar to that for overconvergent stratifications. The category of coherent $j_X^\dagger\cur{O}_{]Y[_\mathfrak{P}}$-modules with a stratifications as $\cur{O}_{]Y[_\frak{P}}$-modules is denoted $\mathrm{Strat}((X,Y,\mathfrak{P})/\ekd)$.
\item An integrable connection on a $j_X^\dagger\cur{O}_{]Y[_\mathfrak{P}}$-module $\sh{E}$ is a map
$$
\nabla :\sh{E} \rightarrow \sh{E}\otimes \Omega^1_{]Y[_\mathfrak{P}/S_K}
$$
which satisfies the Leibniz rule and is such that the induced map
$$
\nabla^2 :\sh{E} \rightarrow \sh{E}\otimes  \Omega^2_{]Y[_\mathfrak{P}/S_K}
$$
is zero. The category of coherent $j_X^\dagger\cur{O}_{]Y[_\mathfrak{P}}$-modules with an integrable connection is denoted $\mathrm{MIC}((X,Y,\mathfrak{P})/\ekd)$
\end{enumerate}
\end{defn}
Thus in the usual way there is an equivalence of categories
$$\mathrm{Strat}((X,Y,\mathfrak{P})/\ekd) \rightarrow \mathrm{MIC}((X,Y,\mathfrak{P})/\ekd)
$$
and pulling back via the natural morphism
$$ ]Y[_\mathfrak{P}^{(n)}\rightarrow ]Y[_{\mathfrak{P}^2}
$$
gives a functor $$\mathrm{Strat}^\dagger((X,Y,\mathfrak{P})/\ekd) \rightarrow \mathrm{Strat}((X,Y,\mathfrak{P})/\ekd).
$$
Modules with overconvergent stratifications are related to overconvergent isocrystals through the following construction. Given a smooth and proper frame $(X,Y,\mathfrak{P})$ and $\sh{F}\in\mathrm{Isoc}^\dagger((X,Y,\mathfrak{P})/\ekd)$, then we have isomorphisms
$$
p_2^*\sh{F}_\mathfrak{P} \rightarrow \sh{F}_{\mathfrak{P}^2}\leftarrow p_1^*\sh{F}_{\mathfrak{P}}
$$
and hence an isomorphism $p_2^*\sh{F}_\mathfrak{P}\rightarrow p_1^*\sh{F}_\mathfrak{P}$ which satisfies the cocycle condition on $\mathfrak{P}^3$. This induces a functor
$$
\mathrm{Isoc}^\dagger((X,Y,\mathfrak{P})/\ekd)\rightarrow \mathrm{Strat}^\dagger((X,Y,\mathfrak{P})/\ekd)
$$
given by $\sh{F}\mapsto \sh{E}:=\sh{F}_\mathfrak{P}$. This functor is easily checked to be an equivalence. Thus we obtain a series of functors
\begin{align*} \mathrm{Isoc}^\dagger(X/\ekd)\rightarrow \mathrm{Isoc}^\dagger((X,Y,\mathfrak{P})/\ekd)&\rightarrow \mathrm{Strat}^\dagger((X,Y,\mathfrak{P})/\ekd) \\ &\rightarrow \mathrm{Strat}((X,Y,\mathfrak{P})/\ekd) \rightarrow \mathrm{MIC}((X,Y,\mathfrak{P})/\ekd) 
\end{align*}
with everything except $\mathrm{Strat}^\dagger((X,Y,\mathfrak{P})/\ekd) \rightarrow \mathrm{Strat}((X,Y,\mathfrak{P})/\ekd)$ equivalences. We will shortly show that in fact it is fully faithful, however, before we do so we will need the following lemma.

\begin{lem} \label{faithrest}Let $]X[_\frak{P}^\circ$ denote the interior tube of $X$ as in Remark \ref{interior}, this is a rigid space locally of finite type over $\ek$. Then the restriction functor 
$$ \mathrm{Coh}(j_X^\dagger\cur{O}_{]Y[_\frak{P}})\rightarrow \mathrm{Coh}(\cur{O}_{]X[_\frak{P}^\circ})
$$
is faithful.
\end{lem}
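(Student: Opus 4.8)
The plan is to reduce the statement to an overconvergence property of the supports of coherent sheaves. Since $\mathrm{Hom}$ is additive and restriction to $]X[_\mathfrak{P}^\circ$ is additive, faithfulness is equivalent to the assertion that any morphism $\theta\colon\sh{E}\to\sh{F}$ of coherent $j_X^\dagger\cur{O}_{]Y[_\mathfrak{P}}$-modules with $\theta|_{]X[_\mathfrak{P}^\circ}=0$ is zero; and, replacing $\sh{F}$ by the coherent submodule $\mathrm{im}(\theta)$, it suffices to show that a coherent $j_X^\dagger\cur{O}_{]Y[_\mathfrak{P}}$-module $\sh{G}$ which restricts to $0$ on $]X[_\mathfrak{P}^\circ$ vanishes. (Recall $]X[_\mathfrak{P}^\circ=\mathrm{sp}_Y^{-1}(X)$ is open in $]Y[_\mathfrak{P}$ and contained in $]X[_\mathfrak{P}$, so this restriction is defined.)

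By Lemma \ref{colimcoh}, such a $\sh{G}$ is represented by a coherent sheaf $\sh{G}_V$ on some open neighbourhood $V$ of $]X[_\mathfrak{P}$ in $]Y[_\mathfrak{P}$, and $\sh{G}=0$ precisely when $\sh{G}_V$ restricts to $0$ on some smaller neighbourhood of $]X[_\mathfrak{P}$; so I must show that $\mathrm{supp}(\sh{G}_V)$, which by hypothesis misses $]X[_\mathfrak{P}^\circ$, in fact misses a whole neighbourhood of $]X[_\mathfrak{P}$. The question is local on $\mathfrak{P}$, so I assume $\mathfrak{P}=\spf{A}$ and pick $f_i,g'_j\in A$ with $Y=\bigcap_iV(\bar f_i)$ and $X=Y\cap D(t)\cap\bigl(\bigcup_jD(\bar g'_j)\bigr)$, so that $]Y[_\mathfrak{P}=\bigcup_n[Y]_n$ and $]X[_\mathfrak{P}^\circ=\{x\in]Y[_\mathfrak{P}\mid v_x(t)\geq1,\ \exists j\ v_x(g'_j)\geq1\}$, while the sets $U'_m$ of Section \ref{rcolsf} give cofinal neighbourhoods by Proposition \ref{cofinal2}. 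It therefore suffices to prove that for each $n$ there is an $m=m(n)$, which we may take increasing, with $\mathrm{supp}(\sh{G}_V)\cap[Y]_n\cap U'_m=\emptyset$; we then use the neighbourhood $V'_{\underline m}$.

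Fix $n$; as $[Y]_n$ is quasi-compact I may assume it affinoid, and for $m$ large $[Y]_n\cap U'_m\subset V$ by Proposition \ref{cofinal2}(1). By Proposition \ref{support} — refined by the standard fact that the support of a coherent module is the zero locus of its annihilator ideal — the set $\mathrm{supp}(\sh{G}_V|_{[Y]_n})$ is the underlying set of a closed analytic subspace $T$ of $[Y]_n$, hence, by the remark after Proposition \ref{overconv}, an overconvergent closed subset: $T=[\cdot]^{-1}([T])$ with $[T]$ compact. Since $]X[_\mathfrak{P}^\circ$ and $]X[_\mathfrak{P}=[\cdot]^{-1}(\mathrm{sp}_{[\cdot]}^{-1}(X))$ have the same image under $[\cdot]$, the disjointness $T\cap]X[_\mathfrak{P}^\circ=\emptyset$ translates, exactly as in the proof of Proposition \ref{overconv} via Lemma \ref{vals}, into the statement that no rank one point $z\in[T]$ satisfies both $v_z(t)\geq1$ and $v_z(g'_j)\geq1$ for some $j$. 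What remains is a two-function version of Proposition \ref{overconv}: the image of the compact set $[T]$ under the pair of continuous functions $(\Norm{t},\Norm{g'_j})\colon[T]\to\R^{\geq0}\times\R^{\geq0}$ (normalised as in the proof of Lemma \ref{cofinalkey}) is compact and disjoint from the closed quadrant $[1,\infty)\times[1,\infty)$, hence at positive distance from it, hence contained in $\{(a,b)\mid a\leq\eta\text{ or }b\leq\eta\}$ for some $\eta<1$; choosing $m$ with $r^{-1/m}>\eta$ (and then the maximum over the finitely many $j$) and invoking Lemma \ref{vals} once more to pass from $[T]$ back to $T$ gives $T\cap[Y]_n\cap U'_m=\emptyset$, as desired.

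The routine parts are the two reductions — to coherent modules on a neighbourhood via Lemma \ref{colimcoh}, and to the standard affinoids $[Y]_n$ via quasi-compactness — together with the elementary planar compactness argument. The point requiring care, and where the hypothesis on $]X[_\mathfrak{P}^\circ$ really enters, is the passage through the map $[\cdot]$: one must know that $\mathrm{supp}(\sh{G}_V)$ is genuinely an overconvergent closed subset (so that passing to rank one points loses no information) and that $]X[_\mathfrak{P}^\circ$, although not itself overconvergent, has the same image as $]X[_\mathfrak{P}$ in the Berkovich quotient, so that missing $]X[_\mathfrak{P}^\circ$ already suffices to feed the compactness argument for $U'_m$.
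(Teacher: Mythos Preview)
Your proof is correct and follows essentially the same strategy as the paper: reduce via Lemma \ref{colimcoh} to a coherent sheaf on an honest neighbourhood, observe that its support is a closed analytic subspace disjoint from $]X[_\mathfrak{P}^\circ$, and then use a compactness argument on rank-one points to upgrade this to disjointness from a whole neighbourhood of $]X[_\mathfrak{P}$. The one difference is your choice of the $U'_m$ system (with $t$ split off from the $g'_j$), which forces you to redo Proposition \ref{overconv} in a two-variable form; had you used the $U_m$ system instead, a single application of Proposition \ref{overconv} to each $g_j$ would have sufficed, exactly as the paper does.
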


\begin{proof} The statement is local on $]Y[_\frak{P}$, so by Lemma \ref{colimcoh} we may replace $]Y[_\frak{P}$ by $[Y]_n$. Suppose that $f:\sh{E}\rightarrow \sh{F}$ is a morphism of coherent $j_X^\dagger\cur{O}_{[Y]_n}$-modules which restrict to zero on $]X[^\circ_\frak{P}\cap [Y]_n$, we may assume that there is a neighbourhood $V$ of $]X[_\frak{P}\cap [Y]_n$ such that $f$ arises from a morphism $f_Vf:E_V\rightarrow F_V$ of coherent $\cur{O}_V$-modules. The support of $\mathrm{im}(f_V)$ is contained in a closed analytic subspace of $V$, and does not meet $]X[^\circ_\frak{P}\cap [Y]_n$. Hence by Proposition \ref{overconv} there must exist a neighbourhood $V'$ of $]X[_\frak{P}\cap [Y]_n$ contained in $V$ such that the support of $\mathrm{im}(f_V)$ does not meet $V'$. Hence $f_V$ is zero on $V'$ and thus $f$ is zero.
\end{proof}

\begin{prop} \label{ffoss} The functor 
$$
\mathrm{Strat}^\dagger((X,Y,\mathfrak{P})/\ekd) \rightarrow \mathrm{Strat}((X,Y,\mathfrak{P})/\ekd).
$$
is fully faithful, with image closed under tensor products and internal hom.
\end{prop}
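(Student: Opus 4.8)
The plan is to prove fully faithfulness by reducing to a local statement on $\mathfrak{P}$ and then comparing the two Taylor-type data on the interior tube $]X[^\circ_{\mathfrak{P}}$, where everything becomes a genuine rigid space over $\ek$ and the classical comparison between overconvergent and ordinary stratifications applies. The key observation is that the functor $\mathrm{Strat}^\dagger \to \mathrm{Strat}$ refines (via Definition \ref{ocrelstrat}) to pulling back the overconvergent Taylor isomorphism $\epsilon: p_2^*\sh{E}\to p_1^*\sh{E}$ on $]Y[_{\mathfrak{P}^2}$ along the natural map $]Y[_{\mathfrak{P}}^{(n)}\to ]Y[_{\mathfrak{P}^2}$. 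So faithfulness amounts to: a morphism $f$ of coherent $j_X^\dagger\cur{O}_{]Y[_\mathfrak{P}}$-modules which is horizontal for the overconvergent stratifications and which vanishes after pulling back to $]Y[_{\mathfrak{P}}^{(n)}$-compatible data must already vanish. But a morphism of modules-with-overconvergent-stratification is in particular a morphism of the underlying coherent sheaves, and vanishing of the induced morphism of ordinary stratifications means vanishing of that underlying morphism on the infinitesimal neighbourhoods, hence on $]X[^\circ_\mathfrak{P}$ after restriction; by Lemma \ref{faithrest} the restriction functor $\mathrm{Coh}(j_X^\dagger\cur{O}_{]Y[_\mathfrak{P}})\to \mathrm{Coh}(\cur{O}_{]X[^\circ_\mathfrak{P}})$ is faithful, so $f=0$. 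This disposes of faithfulness immediately.

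For fullness, I would argue as follows. Given $\sh{E},\sh{F}\in\mathrm{Strat}^\dagger((X,Y,\mathfrak{P})/\ekd)$ and a morphism $f$ of their images in $\mathrm{Strat}((X,Y,\mathfrak{P})/\ekd)$ — i.e. a morphism of coherent $j_X^\dagger\cur{O}_{]Y[_\mathfrak{P}}$-modules compatible with the ordinary (infinitesimal) stratifications — I must show $f$ is compatible with the overconvergent Taylor isomorphisms. Consider the morphism
$$
g := p_1^*(f)\circ \epsilon_{\sh{E}} - \epsilon_{\sh{F}}\circ p_2^*(f): p_2^*\sh{E}\longrightarrow p_1^*\sh{F}
$$
of coherent $j_X^\dagger\cur{O}_{]Y[_{\mathfrak{P}^2}}$-modules on $]Y[_{\mathfrak{P}^2}$; I want $g=0$. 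The hypothesis that $f$ respects the ordinary stratifications says exactly that $g$ pulls back to zero on every infinitesimal neighbourhood $]Y[_{\mathfrak{P}}^{(n)}$. Now I apply the same mechanism as in Lemma \ref{faithrest}, but to the frame $(X,Y,\mathfrak{P}^2)$ and to the interior tube $]X[^\circ_{\mathfrak{P}^2}$: the restriction functor $\mathrm{Coh}(j_X^\dagger\cur{O}_{]Y[_{\mathfrak{P}^2}})\to\mathrm{Coh}(\cur{O}_{]X[^\circ_{\mathfrak{P}^2}})$ is faithful, and $]X[^\circ_{\mathfrak{P}^2}$ is a rigid space over $\ek$ in which $X$ sits diagonally, with $]X[^\circ_{\mathfrak{P}^2}$ containing a cofinal system of infinitesimal neighbourhoods of the diagonal $]X[^\circ_{\mathfrak{P}}$. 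Since $g$ restricted to $]X[^\circ_{\mathfrak{P}^2}$ is a morphism of coherent sheaves vanishing on all infinitesimal neighbourhoods of this diagonal, a standard rigid-analytic argument (the sheaf-theoretic version of: a coherent-module morphism on a tube vanishing to infinite order along a subspace vanishes on a neighbourhood of that subspace, combined with the identity principle / faithfulness of restriction to the diagonal as in Le Stum Chapter 7) forces $g|_{]X[^\circ_{\mathfrak{P}^2}}=0$, hence $g=0$ by faithfulness of restriction. This gives fullness.

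Finally, closure of the image under tensor products and internal hom. I would note that the functors $\mathrm{Isoc}^\dagger\to\mathrm{Strat}^\dagger\to\mathrm{Strat}\to\mathrm{MIC}$ are all compatible with tensor products (the Taylor isomorphism of a tensor product is the tensor product of the Taylor isomorphisms, and similarly the ordinary stratification), and the overconvergent stratification on the internal hom $\underline{\Hom}(\sh{E},\sh{F})$ is determined by those of $\sh{E}$ and $\sh{F}$ via the usual formula $\epsilon_{\underline{\Hom}} = \epsilon_{\sh{F}}\circ(-)\circ\epsilon_{\sh{E}}^{-1}$. Thus if $\sh{E}_1,\sh{E}_2$ lie in the image, then $\sh{E}_1\otimes\sh{E}_2$ and $\underline{\Hom}(\sh{E}_1,\sh{E}_2)$ carry overconvergent stratifications whose underlying ordinary stratification agrees with the one computed in $\mathrm{Strat}((X,Y,\mathfrak{P})/\ekd)$; since the objects of $\mathrm{Strat}$ in question are coherent $j_X^\dagger\cur{O}_{]Y[_\mathfrak{P}}$-modules to begin with and fully faithfulness has just been proven, these refinements are actual objects of $\mathrm{Strat}^\dagger$ mapping to the given tensor/hom, so the image is closed under these operations.

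The main obstacle I anticipate is the fullness step: getting a clean statement that a morphism of coherent sheaves on the tube $]Y[_{\mathfrak{P}^2}$ which vanishes on every infinitesimal neighbourhood of the diagonal already vanishes on the interior tube $]X[^\circ_{\mathfrak{P}^2}$. This is where one genuinely needs the analytic input — it is the overconvergent analogue of the fact used in Le Stum's Chapter 7 that the Taylor isomorphism is determined by its restrictions to infinitesimal neighbourhoods — and it is not purely formal; it rests on the identity principle for coherent sheaves on (rigid-analytic) tubes together with the faithfulness provided by Proposition \ref{overconv} and Lemma \ref{faithrest}. Everything else (faithfulness, the tensor/hom compatibilities) is bookkeeping with the cocycle data.
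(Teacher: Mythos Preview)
Your proposal is correct and follows essentially the same route as the paper's proof: form the difference map $g=p_1^*(f)\circ\epsilon_{\sh{E}}-\epsilon_{\sh{F}}\circ p_2^*(f)$ on $]Y[_{\mathfrak{P}^2}$, use Lemma~\ref{faithrest} (applied to the frame $(X,Y,\mathfrak{P}^2)$) to reduce to the classical $\ek$-setting, and then invoke the analytic input from Le~Stum's Chapter~7 (specifically his Lemma~7.2.7) to conclude that a morphism vanishing on all infinitesimal neighbourhoods of the diagonal vanishes. The paper phrases the reduction as passing to the base-changed frame $(X,Y_{k\lser{t}},\mathfrak{P}_{\cur{O}_{\ek}})$ and its square, which lands directly in the $j_X^\dagger$-category over $\ek$ where Le~Stum's lemma applies verbatim; your version via the interior tube $]X[^\circ_{\mathfrak{P}^2}$ is equivalent by Remark~\ref{interior}(2).

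One small simplification: your faithfulness argument is over-engineered. Both $\mathrm{Strat}^\dagger$ and $\mathrm{Strat}$ have faithful forgetful functors to coherent $j_X^\dagger\cur{O}_{]Y[_\mathfrak{P}}$-modules (a morphism in either category \emph{is} a morphism of underlying modules, with extra compatibility), so faithfulness of $\mathrm{Strat}^\dagger\to\mathrm{Strat}$ is immediate and there is no need to invoke infinitesimal neighbourhoods or Lemma~\ref{faithrest} at that stage. The paper dispatches faithfulness in one sentence for this reason and goes straight to fullness, which is where the real content lies --- exactly as you identified.
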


\begin{proof} Since both categories admit a faithful functor to the category of coherent $j_X^\dagger\cur{O}_{]Y[_\mathfrak{P}}$-modules, it suffices to prove that the functor is full. That is, we must show that if a morphism $\sh{E}\rightarrow \sh{F}$ between modules with overconvergent stratification commutes with the finite level Taylor isomorphisms, then it commutes with the full overconvergent Taylor isomorphism. By looking at the difference between the two natural maps $p_2^*\sh{E}\rightarrow p_1^*\sh{F}$ (and $p_2^{(n)*}\sh{E}\rightarrow p_1^{(n)*}\sh{F}$ for all $n$), it suffices to show that if $\psi:p_2^*\sh{E}\rightarrow p_1^*\sh{F}$ is such that $\psi|_{]Y[_\frak{P}^{(n)}}=0$ for all $n$, then $\psi=0$. Note that this is a question purely about coherent $j_X^\dagger\cur{O}_{]X[_\frak{P}}$-modules.

Let $\frak{P}'$ denote $\frak{P}\otimes_{\cur{V}\pow{t}}\cur{O}_{\ek}$ and $Y'$ denote $Y\otimes_{k\pow{t}}k\lser{t}$. Then Lemma \ref{faithrest} above implies that the restriction functor from coherent $j_X^\dagger\cur{O}_{]Y[_\frak{P}}$-modules to coherent $j_X^\dagger\cur{O}_{]Y'[_{\frak{P}'}}$-modules is faithful. Similarly the restriction functor from $j_X^\dagger\cur{O}_{]Y[_{\frak{P}^2}}$-modules to $j_X^\dagger\cur{O}_{]Y'[_{\frak{P}'^2}}$-modules is faithful, so it suffices to prove the corresponding statement for the tubes over $\ek$, that is if we have a morphism $\psi:p_2^*\sh{E}\rightarrow p_1^*\sh{F}$ between coherent $j_X^\dagger\cur{O}_{]Y'[_{\frak{P}'^2}}$-modules such that $\psi|_{]Y'[_{\frak{P}'}^{(n)}}=0$ for all $n$, then $\psi=0$. But this is just a translation into the language of adic spaces of Lemma 7.2.7 of \cite{rigcoh}.

The statement about tensor product and internal hom is then straightforward.
\end{proof}

\begin{defn} \label{ocrelcon}We say that an integrable connection is overconvergent if it is in the essential image of this functor. The category of modules with overconvergent connections is denoted 
$$
\mathrm{MIC}^\dagger((X,Y,\mathfrak{P})/\ekd) \subset \mathrm{MIC}((X,Y,\mathfrak{P})/\ekd).
$$
\end{defn}
In other words, a connection is overconvergent if the Taylor series converges in a neighbourhood of $]X[_{\mathfrak{P}^2}$. The following theorem summarises the results of this section so far.

\begin{theo} \label{isocmic} Let $X$ be a $k\lser{t}$-variety, and $(X,Y,\mathfrak{P})$ a smooth and proper frame containing $X$. Then the realisation functor $\sh{E}\mapsto \sh{E}_\mathfrak{P}$ induces an equivalence of categories
$$ \mathrm{Isoc}^\dagger(X/\ekd)\rightarrow \mathrm{MIC}^\dagger((X,Y,\mathfrak{P})/\ekd)
$$
from overconvergent isocrystals on $X/\ekd$ to coherent $j_X^\dagger\cur{O}_{]Y[_\mathfrak{P}}$-modules with an overconvergent, integrable connection.
\end{theo}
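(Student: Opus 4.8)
The plan is to assemble Theorem \ref{isocmic} from the equivalences and the one fully faithful functor established earlier in this section, exactly in the spirit of Chapter 7 of \cite{rigcoh}. Recall that we have built a chain of functors
\begin{align*}
\mathrm{Isoc}^\dagger(X/\ekd)\rightarrow \mathrm{Isoc}^\dagger((X,Y,\mathfrak{P})/\ekd)&\rightarrow \mathrm{Strat}^\dagger((X,Y,\mathfrak{P})/\ekd) \\ &\rightarrow \mathrm{Strat}((X,Y,\mathfrak{P})/\ekd) \rightarrow \mathrm{MIC}((X,Y,\mathfrak{P})/\ekd)
\end{align*}
in which every arrow except $\mathrm{Strat}^\dagger\rightarrow \mathrm{Strat}$ is already known to be an equivalence: the first by the Corollary above (valid since $(X,Y,\mathfrak{P})$ is smooth and proper), the second by the construction $\sh{F}\mapsto\sh{F}_\mathfrak{P}$ discussed just before Lemma \ref{faithrest}, and the last by the standard equivalence between stratifications and integrable connections on a smooth rigid space. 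By Proposition \ref{ffoss}, the remaining functor $\mathrm{Strat}^\dagger((X,Y,\mathfrak{P})/\ekd)\rightarrow\mathrm{Strat}((X,Y,\mathfrak{P})/\ekd)$ is fully faithful.

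First I would compose all of these to obtain a fully faithful functor $\mathrm{Isoc}^\dagger(X/\ekd)\rightarrow \mathrm{MIC}((X,Y,\mathfrak{P})/\ekd)$, $\sh{E}\mapsto \sh{E}_\mathfrak{P}$; full faithfulness is immediate since it is a composite of equivalences and a fully faithful functor. Next I would identify its essential image. By Definition \ref{ocrelcon}, the category $\mathrm{MIC}^\dagger((X,Y,\mathfrak{P})/\ekd)$ is \emph{by definition} the essential image of $\mathrm{Strat}^\dagger((X,Y,\mathfrak{P})/\ekd)\rightarrow\mathrm{Strat}((X,Y,\mathfrak{P})/\ekd)\rightarrow\mathrm{MIC}((X,Y,\mathfrak{P})/\ekd)$, and since the functors $\mathrm{Isoc}^\dagger(X/\ekd)\rightarrow \mathrm{Isoc}^\dagger((X,Y,\mathfrak{P})/\ekd)\rightarrow\mathrm{Strat}^\dagger((X,Y,\mathfrak{P})/\ekd)$ are equivalences, the essential image of the full composite is exactly $\mathrm{MIC}^\dagger((X,Y,\mathfrak{P})/\ekd)$. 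Hence $\sh{E}\mapsto\sh{E}_\mathfrak{P}$ corestricts to an equivalence $\mathrm{Isoc}^\dagger(X/\ekd)\isomto\mathrm{MIC}^\dagger((X,Y,\mathfrak{P})/\ekd)$, which is the assertion of the theorem.

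In fact there is essentially no obstacle left at this stage: all the real work has been done in the preceding propositions (the Strong Fibration Theorem, the Poincar\'{e} Lemma, the descent along smooth and proper morphisms of frames, and the full faithfulness in Proposition \ref{ffoss}, itself reduced via Lemma \ref{faithrest} to the known statement Lemma 7.2.7 of \cite{rigcoh} over $\ek$). The one point worth spelling out carefully is the compatibility of the realisation functor $\sh{E}\mapsto\sh{E}_\mathfrak{P}$ appearing in the statement with the composite functor I have described, i.e.\ that the coherent $j_X^\dagger\cur{O}_{]Y[_\mathfrak{P}}$-module underlying the image of $\sh{E}$ under the composite is indeed $\sh{E}_\mathfrak{P}$ together with the connection coming from its overconvergent stratification; this is a direct unwinding of the construction of the functor $\mathrm{Isoc}^\dagger((X,Y,\mathfrak{P})/\ekd)\rightarrow\mathrm{Strat}^\dagger((X,Y,\mathfrak{P})/\ekd)$ via the isomorphisms $p_2^*\sh{F}_\mathfrak{P}\rightarrow\sh{F}_{\mathfrak{P}^2}\leftarrow p_1^*\sh{F}_\mathfrak{P}$ and presents no difficulty. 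One could also remark that everything is compatible with tensor products and internal hom, by the corresponding statement in Proposition \ref{ffoss} and the fact that the realisation functors commute with tensor products, although this is not needed for the statement as phrased.
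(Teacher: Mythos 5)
Your proposal is correct and is essentially the paper's own argument: the theorem is stated there as a summary of the chain of functors already constructed, with every arrow an equivalence except $\mathrm{Strat}^\dagger\rightarrow\mathrm{Strat}$, which is fully faithful by Proposition \ref{ffoss}, and with $\mathrm{MIC}^\dagger$ defined as the essential image, so the composite corestricts to the claimed equivalence. Your extra remark on identifying the composite with the realisation functor $\sh{E}\mapsto\sh{E}_\mathfrak{P}$ matches the construction given in the paper just before Proposition \ref{ffoss}.
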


Of course, this equivalence is natural in the sense that if $u:(X',Y',\mathfrak{P}')\rightarrow (X,Y,\mathfrak{P})$ is a morphism of smooth and proper frames over $\cur{V}\pow{t}$, and $f:X'\rightarrow X$ the induced morphism of $k\lser{t}$-varieties, then the diagram
$$
\xymatrix{  \mathrm{Isoc}^\dagger(X/\ekd) \ar[r]^{f^*}\ar[d] & \mathrm{Isoc}^\dagger(X'/\ekd) \ar[d] \\ 
 \mathrm{MIC}^\dagger((X,Y,\mathfrak{P})/\ekd) \ar[r]^{u^*} & \mathrm{MIC}^\dagger((X',Y',\mathfrak{P}')/\ekd) 
}
$$
commutes up to natural isomorphism. The following particular case of this naturality will be useful.

\begin{cor} Let $u:(X,Y',\mathfrak{P}')\rightarrow (X,Y,\mathfrak{P})$ be a morphism of frames inducing the identity on $X$, and let $\sh{F}\in\mathrm{Isoc}^\dagger(X/\ekd)$ be an overconvergent isocrystal, with realisations $\sh{E}\in\mathrm{MIC}^\dagger((X,Y,\mathfrak{P})/\ekd)$ and $\sh{E}'\in \mathrm{MIC}^\dagger((X,Y',\mathfrak{P}')/\ekd)$. Then $\sh{E}'\cong u^* \sh{E}$ as modules with connection.
\end{cor}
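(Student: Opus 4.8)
The plan is to deduce the corollary purely from functoriality of the realisation construction, observing that nothing along the way requires $u$ (or the frames) to be smooth and proper: only the source category $\mathrm{Isoc}^\dagger(X/\ekd)$ plays that role, and it is fixed, so the statement is genuinely \emph{formal} and the equivalence-of-categories results proved above are not needed. Concretely, I would show that each functor in the chain $\mathrm{Isoc}^\dagger(X/\ekd)\to\mathrm{Isoc}^\dagger((X,Y,\mathfrak{P})/\ekd)\to\mathrm{Strat}^\dagger((X,Y,\mathfrak{P})/\ekd)\to\mathrm{Strat}((X,Y,\mathfrak{P})/\ekd)\to\mathrm{MIC}((X,Y,\mathfrak{P})/\ekd)$, which together take $\sh{F}$ to the realisation $\sh{E}=\sh{F}_\mathfrak{P}$ with its connection, is natural with respect to an arbitrary morphism of frames, and in particular with respect to $u$.

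First I would produce the underlying isomorphism of $j_X^\dagger\cur{O}_{]Y'[_{\mathfrak{P}'}}$-modules. Since $u$ induces the identity on $X$, it is in particular a morphism of $X$-frames, so the transition datum built into $\sh{F}\in\mathrm{Isoc}^\dagger(X/\ekd)$ supplies a canonical isomorphism $\varphi:u^*\sh{F}_\mathfrak{P}\isomto\sh{F}_{\mathfrak{P}'}$, i.e. $u^*\sh{E}\cong\sh{E}'$ as coherent $j_X^\dagger\cur{O}$-modules. It then remains to check that $\varphi$ is horizontal.

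Next I would trace the connection through the stratification. Recall that the connection on $\sh{E}$ is the linearisation of the overconvergent Taylor isomorphism $\epsilon:p_2^*\sh{E}\isomto p_1^*\sh{E}$, which by construction is the composite of the transition isomorphisms of $\sh{F}$ attached to the two projections $p_1,p_2:(X,Y,\mathfrak{P}^2)\to(X,Y,\mathfrak{P})$, and similarly for $\epsilon'$ on $(X,Y',\mathfrak{P}'^2)$. The morphism $u$ induces morphisms of $X$-frames $u^n:(X,Y',\mathfrak{P}'^n)\to(X,Y,\mathfrak{P}^n)$ for all $n$, compatible with all the diagonals and projections among the $\mathfrak{P}^n$ and with the canonical maps $]Y[^{(n)}_\mathfrak{P}\to]Y[_{\mathfrak{P}^2}$ to the infinitesimal neighbourhoods. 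Applying the cocycle condition for the transition data of $\sh{F}$ to the identities $p_i\circ u^2=u\circ p_i'$ yields a commuting square identifying $(u^2)^*p_i^*\sh{F}_\mathfrak{P}\cong p_i'^*u^*\sh{F}_\mathfrak{P}\cong p_i'^*\sh{F}_{\mathfrak{P}'}$ with $(u^2)^*\sh{F}_{\mathfrak{P}^2}\cong\sh{F}_{\mathfrak{P}'^2}$, and hence shows that $(u^2)^*\epsilon$ is carried onto $\epsilon'$ by $\varphi$. Restricting along the first infinitesimal neighbourhood, and using that the pullback on differentials $u_K^*\Omega^1_{]Y[_\mathfrak{P}/S_K}\to\Omega^1_{]Y'[_{\mathfrak{P}'}/S_K}$ induced by $u$ is exactly the one defining $u^*$ on $\mathrm{MIC}$, this says precisely that $\varphi$ intertwines the connection of $u^*\sh{E}$ with that of $\sh{E}'$.

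The only real work is the diagram chase in the previous paragraph, namely keeping track of the identifications $(u^n)^*p_\bullet^*\sh{F}_\mathfrak{P}\cong p_\bullet'^*\sh{F}_{\mathfrak{P}'}$ and verifying via the cocycle condition that $(u^2)^*\epsilon=\epsilon'$; everything else is the standard translation between (overconvergent) stratifications and connections, together with the fact that $u^*$ on $\mathrm{MIC}((X,Y,\mathfrak{P})/\ekd)$ is by definition compatible with $u^*$ on $\mathrm{Strat}((X,Y,\mathfrak{P})/\ekd)$. One minor point to record is that $\sh{E}$ and $\sh{E}'$ genuinely lie in $\mathrm{MIC}^\dagger$ rather than just $\mathrm{MIC}$, but this is already part of the hypothesis (it follows from Theorem \ref{isocmic} applied to the smooth and proper frames into which $X$ embeds), so it poses no difficulty. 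Since the whole chain is natural in the frame for arbitrary morphisms, no smoothness or properness of $u$ enters the argument.
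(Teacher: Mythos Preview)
Your proposal is correct and follows essentially the same route as the paper, which presents the corollary as an immediate consequence of the naturality of the realisation functor (the paper gives no explicit proof, merely stating that ``the following particular case of this naturality will be useful''). Your write-up simply unpacks that formal naturality argument: the transition datum $u^*\sh{F}_{\mathfrak{P}}\isomto\sh{F}_{\mathfrak{P}'}$ is part of the isocrystal structure, and the cocycle condition applied to $p_i\circ u^2 = u\circ p_i'$ shows it intertwines the Taylor isomorphisms and hence the connections.
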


We now define cohomology groups with coefficients in an overconvergent isocrystal. So suppose that $X$ is an embeddable variety and $\sh{E}\in\mathrm{Isoc}^\dagger(X/\ekd)$ is an overconvergent isocrystal. For every smooth and proper frame $(X,Y,\mathfrak{P})$ over $\cur{V}\pow{t}$ we can realise $\sh{E}$ on $(X,Y,\mathfrak{P})$ to give a $j_X^\dagger\cur{O}_{]Y[_\mathfrak{P}}$-module $\sh{E}_\mathfrak{P}$ with an overconvergent integrable connection. Then we define the cohomology groups
$$
H^i_\rig((X,Y,\mathfrak{P})/\ekd,\sh{E}):= H^i(]Y[_\mathfrak{P},\sh{E}\otimes \Omega^*_{]Y[_\mathfrak{P}/S_K}),
$$
as in the constant coefficient case these are vector spaces over $\ekd$. Actually, to check that this is compatible with the definition we gave in the constant case, we will need the following lemma.

\begin{lem} Let $(X,Y,\mathfrak{P})$ be a smooth frame. Then for any $\cur{O}_{]Y[_\mathfrak{P}}$-modules $\cur{E},\cur{F}$ we have a natural isomorphism
$$
j_X^\dagger(\cur{E}\otimes_{\cur{O}_{]Y[_\mathfrak{P}}}\cur{F}) \cong j_X^\dagger\cur{E} \otimes_{\cur{O}_{]Y[_\mathfrak{P}}} \cur{F}
$$
of $j_X^\dagger\cur{O}_{]Y[_\mathfrak{P}}$-modules.
\end{lem}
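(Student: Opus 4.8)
The plan is to construct a natural morphism $j_X^\dagger\cur{E}\otimes_{\cur{O}_{]Y[_\mathfrak{P}}}\cur{F}\to j_X^\dagger(\cur{E}\otimes_{\cur{O}_{]Y[_\mathfrak{P}}}\cur{F})$ and then to check that it is an isomorphism by passing to stalks, the key point being that $]X[_\mathfrak{P}$ is a \emph{closed} subset of $]Y[_\mathfrak{P}$.

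First, to build the map, write $j:]X[_\mathfrak{P}\to\,]Y[_\mathfrak{P}$ for the inclusion, so that $j_X^\dagger=j_*j^{-1}$. Since $j^{-1}$ is monoidal and $j^{-1}j_*\cong\mathrm{id}$ (as $j$ is the inclusion of a subspace), one has $j^{-1}(j_X^\dagger\cur{E}\otimes_{\cur{O}}\cur{F})\cong j^{-1}\cur{E}\otimes_{j^{-1}\cur{O}}j^{-1}\cur{F}\cong j^{-1}(\cur{E}\otimes_{\cur{O}}\cur{F})$, and applying the unit $\mathrm{id}\to j_*j^{-1}$ of the adjunction $(j^{-1},j_*)$ to $j_X^\dagger\cur{E}\otimes_{\cur{O}}\cur{F}$, then composing with $j_*$ of the isomorphism just displayed, yields the desired natural map. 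Alternatively, one may use the cofinal colimit description $j_X^\dagger(-)\cong\mathrm{colim}_{\underline m}\,j_{\underline m *}j_{\underline m}^{-1}(-)$ furnished by Lemma~\ref{closed} and Proposition~\ref{cofinal2}, together with the fact that the sheaf tensor product (being a left adjoint in each variable) commutes with filtered colimits, and the evident maps $j_{\underline m *}j_{\underline m}^{-1}\cur{E}\otimes_{\cur{O}}\cur{F}\to j_{\underline m *}\bigl(j_{\underline m}^{-1}\cur{E}\otimes_{\cur{O}_{V_{\underline m}}}j_{\underline m}^{-1}\cur{F}\bigr)=j_{\underline m *}j_{\underline m}^{-1}(\cur{E}\otimes_{\cur{O}}\cur{F})$, where $j_{\underline m}:V_{\underline m}\to\,]Y[_\mathfrak{P}$ is the inclusion of the standard neighbourhood; the two constructions agree.

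It then remains to check that this map is an isomorphism, and this can be done on stalks, since the tensor product of $\cur{O}$-modules commutes with the formation of stalks. Because $]X[_\mathfrak{P}$ is closed in $]Y[_\mathfrak{P}$, for any $\cur{O}_{]Y[_\mathfrak{P}}$-module $\cur{G}$ we have $(j_X^\dagger\cur{G})_x\cong\cur{G}_x$ whenever $x\in\,]X[_\mathfrak{P}$, and $(j_X^\dagger\cur{G})_x=0$ otherwise (the latter because a point not lying in a closed set has an open neighbourhood disjoint from it, on whose intersection with $]X[_\mathfrak{P}$, namely $\emptyset$, every sheaf of modules vanishes). Hence at a point $x\in\,]X[_\mathfrak{P}$ both sides have stalk $\cur{E}_x\otimes_{\cur{O}_{]Y[_\mathfrak{P},x}}\cur{F}_x$ and the map is the identity, while at a point $x\notin\,]X[_\mathfrak{P}$ both stalks vanish; so the map is an isomorphism, and its construction makes naturality automatic. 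There is essentially no obstacle here: the only points needing care are the identities $j^{-1}j_*\cong\mathrm{id}$ and the stalk computation for $j_X^\dagger\cur{G}$, both immediate consequences of $]X[_\mathfrak{P}$ being a closed subspace; note in particular that smoothness of the frame plays no role.
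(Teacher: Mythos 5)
Your proof is correct and is essentially the paper's argument in slightly different clothing: both hinge on $]X[_\mathfrak{P}$ being a closed subset, on $j^{-1}j_*\cong\mathrm{id}$, and on compatibility of the tensor product with restriction, the paper phrasing the verification as ``both sides are supported on $]X[_\mathfrak{P}$, so it suffices to check after applying $j^{-1}$'' where you check on stalks. Your closing remarks (smoothness is not used; naturality is automatic) are also accurate, and the alternative colimit construction is unnecessary but harmless.
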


\begin{proof} First note that since $j_X^\dagger\cur{E}$ is supported on $]X[_\mathfrak{P}$, so is $j_X^\dagger\cur{E} \otimes_{\cur{O}_{]Y[_\mathfrak{P}}} \cur{F}$, and hence it suffices to prove the lemma after applying $j^{-1}$. But now we simply have
\begin{align*} j^{-1}(j_X^\dagger(\cur{E}\otimes_{\cur{O}_{]Y[_\mathfrak{P}}}\cur{F})) &= j^{-1}(\cur{E}\otimes_{\cur{O}_{]Y[_\mathfrak{P}}}\cur{F}) \\
&= j^{-1}\cur{E}\otimes_{j^{-1}\cur{O}_{]Y[_\mathfrak{P}}}j^{-1}\cur{F} \\
&=  j^{-1}j_*j^{-1}\cur{E}\otimes_{j^{-1}\cur{O}_{]Y[_\mathfrak{P}}}j^{-1}\cur{F} \\
&= j^{-1}(j_*j^{-1}\cur{E}\otimes_{\cur{O}_{]Y[_\mathfrak{P}}}\cur{F}) \\
&= j^{-1} (j_X^\dagger\cur{E}\otimes_{\cur{O}_{]Y[_\mathfrak{P}}}\cur{F})
\end{align*}
as required.
\end{proof}
Note that this lemma also implies that rigid cohomology can be calculated as 
$$
H^i_\rig((X,Y,\mathfrak{P})/\ekd,\sh{E}):= H^i(]X[_\mathfrak{P},j^{-1}(\sh{E}\otimes \Omega^*_{]Y[_\mathfrak{P}/S_K})) .
$$

\begin{theo} \label{maintheocoeffs} Up to natural isomorphism $H^i_\rig((X,Y,\mathfrak{P})/\ekd,\sh{E})$ only depends on $X$ and $\sh{E}$ and not on the choice of smooth and proper frame $(X,Y,\mathfrak{P})$. Moreover, the pullback morphism
$$
u^{-1}:H^i_\rig((X,Y,\mathfrak{P})/\ekd,\sh{E})\rightarrow H^i_\rig((X',Y',\mathfrak{P}')/\ekd,f^*\sh{E})
$$
determined by any morphism of frames $u:(X',Y',\mathfrak{P}')\rightarrow (X,Y,\mathfrak{P})$ only depends on the morphism $f:X'\rightarrow X$ induced by $u$. Hence we get a functor
$$
(X,\sh{E}) \rightarrow H^i_\rig(X/\ekd,\sh{E})
$$
in the sense that for any pair of morphisms $f:X'\rightarrow X$ and $f^*\sh{E}\rightarrow \sh{E}'$ there is an induced morphism
$$
H^i_\rig(X/\ekd,\sh{E})\rightarrow H^i_\rig(X'/\ekd,\sh{E}').
$$
\end{theo}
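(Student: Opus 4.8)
The plan is to mirror the proof of Theorem~\ref{bigindep1}, now carrying an overconvergent isocrystal along. By Theorem~\ref{isocmic} and the corollary following it, realising $\sh{E}\in\mathrm{Isoc}^\dagger(X/\ekd)$ on two smooth and proper frames $(X,Y,\mathfrak{P})$ and $(X,Y',\mathfrak{P}')$ yields coherent modules with overconvergent integrable connection $\sh{E}_\mathfrak{P}$, $\sh{E}_{\mathfrak{P}'}$, and for a morphism of frames $u$ inducing the identity on $X$ one has $\sh{E}_{\mathfrak{P}'}\cong u^*\sh{E}_\mathfrak{P}$ compatibly with the connections. So, exactly as in the constant-coefficient case, I would reduce to showing that whenever $u\colon(X,Y',\mathfrak{P}')\rightarrow(X,Y,\mathfrak{P})$ is a smooth and proper morphism of frames in the sense of Theorem~\ref{bigindep1} the natural map
$$
\sh{E}_\mathfrak{P}\otimes j_X^\dagger\Omega^*_{]Y[_\mathfrak{P}/S_K}\rightarrow \mathbf{R}u_{K*}\bigl(u^*\sh{E}_\mathfrak{P}\otimes j_X^\dagger\Omega^*_{]Y'[_{\mathfrak{P}'}/S_K}\bigr)
$$
is a quasi-isomorphism -- a statement that is local on $\mathfrak{P}$, and local on $X$ by Lemma~\ref{jdlocal}.

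From there I would run the same three-stage argument as in Theorem~\ref{bigindep1}. When $Y'\rightarrow Y$ is the identity, Lemma~\ref{frameetale} factors $u$ through $(X,Y,\widehat{\A}^d_\mathfrak{P})$ as an \'etale morphism of frames followed by the projection; for the \'etale piece the Strong Fibration Theorem (Proposition~\ref{strongprop}) identifies a cofinal system of neighbourhoods of $]X[_{\widehat{\A}^d_\mathfrak{P}}$ in $]Y[_{\widehat{\A}^d_\mathfrak{P}}$ with one of $]X[_{\mathfrak{P}'}$ in $]Y'[_{\mathfrak{P}'}$, hence identifies the two de Rham complexes \emph{together with their connections}, while the projection is handled by the coefficient Poincar\'e Lemma discussed below. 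The case $v$ projective is then reduced to the case $v=\mathrm{id}$ by passing to fibre products of frames as before, and the general case is reduced to the projective case by blowing up $\mathfrak{P}'$ along a closed subscheme of $Y'$ disjoint from $X$ (Chow's Lemma), using that this leaves generic fibres and tubes unchanged; the functoriality of the realisation of $\sh{E}$ guarantees that every identification made along the way respects the isocrystal structure.

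The one genuinely new ingredient, and the step I expect to be the main obstacle, is a version of the Poincar\'e Lemma with coefficients: for $(X,Y,\mathfrak{P})$ smooth, $\sh{E}\in\mathrm{MIC}^\dagger((X,Y,\mathfrak{P})/\ekd)$ and $u\colon(X,Y,\widehat{\A}^d_\mathfrak{P})\rightarrow(X,Y,\mathfrak{P})$, the natural map
$$
\sh{E}\otimes j_X^\dagger\Omega^*_{]Y[_\mathfrak{P}/S_K}\rightarrow \mathbf{R}u_{K*}\bigl(u^*\sh{E}\otimes j_X^\dagger\Omega^*_{]Y[_{\widehat{\A}^d_\mathfrak{P}}/S_K}\bigr)
$$
should be a quasi-isomorphism. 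Following the proof of Proposition~\ref{poincare}, I would first treat the \emph{relative} complex $j_X^\dagger\Omega^*_{]Y[_{\widehat{\A}^d_\mathfrak{P}}/]Y[_\mathfrak{P}}$: since $u^*\sh{E}$ carries the trivial connection in the fibre directions, $u^*\sh{E}\otimes j_X^\dagger\Omega^*_{]Y[_{\widehat{\A}^d_\mathfrak{P}}/]Y[_\mathfrak{P}}$ is obtained from $\sh{E}$ by applying $u^*$ and tensoring with the relative de Rham complex of the structure sheaf, so the explicit term-by-term integration argument of Proposition~\ref{poincare} -- which is linear over the base affinoid algebras $B_m$ -- should go through verbatim with $B_m$ replaced by the coherent $B_m$-modules cut out by $\sh{E}$, giving $\mathbf{R}u_{K*}\bigl(u^*\sh{E}\otimes j_X^\dagger\Omega^*_{]Y[_{\widehat{\A}^d_\mathfrak{P}}/]Y[_\mathfrak{P}}\bigr)\cong\sh{E}$; the Gauss--Manin filtration argument of that proposition then promotes this to the absolute complex over $S_K$. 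The delicate point here is precisely that flatness of isocrystals is not yet available, so one cannot simply invoke a projection formula but must instead check that the convergence estimates of Proposition~\ref{poincare} hold coefficientwise, and must control local freeness of the successive graded pieces of the Gauss--Manin filtration using Propositions~\ref{support} and~\ref{overconv} together with smoothness of $\mathfrak{P}$ around $X$, exactly as in Proposition~\ref{poincare}.

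Once independence of the frame is established, the remaining assertions follow by the standard fibre-product manoeuvre. Given two morphisms of smooth and proper frames lifting a fixed $f\colon X'\rightarrow X$, they are both dominated by the fibre-product frame, over which the two de Rham complexes with coefficients are canonically identified compatibly with $f^*\sh{E}$; hence the induced maps on cohomology agree, and $u^{-1}$ depends only on $f$. This is the analogue of the discussion following Corollaire~1.5 of \cite{finitude} already invoked for Theorem~\ref{maintheo}. Finally, a morphism $f^*\sh{E}\rightarrow\sh{E}'$ of overconvergent isocrystals realises to a morphism of the associated modules with connection, hence of de Rham complexes, hence to a map on cohomology; composing with $f^*$ gives the claimed functoriality of $(X,\sh{E})\mapsto H^i_\rig(X/\ekd,\sh{E})$.
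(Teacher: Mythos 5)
Your proposal is correct and is essentially the paper's own argument: the paper's proof of Theorem \ref{maintheocoeffs} is literally the one-line remark that it is ``identical to the case of constant coefficients treated in the previous section'', and your write-up simply spells out that identical argument (realisation via Theorem \ref{isocmic}, the three-stage reduction of Theorem \ref{bigindep1}, and the coefficient version of Proposition \ref{poincare}, whose term-by-term integration indeed goes through with $B_m$ replaced by the coherent $B_m$-modules attached to $\sh{E}$).
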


\begin{proof} Identical to the case of constant coefficients $\sh{E}=\cur{O}^\dagger_{X/\ekd}$ treated in the previous section.
\end{proof}

We next show a characterisation of overconvergence of a connection, analogous Theorem 4.3.9 of \cite{rigcoh}. This characterisation will be local, so we will need to know that overconvergence itself is a suitably local property. Clearly $\mathrm{MIC}((X,Y,\mathfrak{P})/\ekd)$ and $\mathrm{MIC}^\dagger((X,Y,\mathfrak{P})/\ekd)$ are local on $\mathfrak{P}$, we will need to know that they are also local on $X$.

We will let $(X,Y,\frak{P})$ be a smooth frame, and for all open neighbourhoods $V$ of $]X[_\mathfrak{P}$ inside $]Y[_\mathfrak{P}$  we will let $ \mathrm{MIC}(\cur{O}_V/S_K)$ denote the category of coherent $\cur{O}_V$-modules with integrable connection relative to $S_K$.

\begin{lem} \label{colimcat1} Restriction followed by push-forward induces an equivalence of categories
$$
\mathrm{colim}_V \mathrm{MIC}(\cur{O}_V/S_K) \rightarrow  \mathrm{MIC}((X,Y,\mathfrak{P})/\ekd)
$$
where the colimit runs over all open neighbourhoods $V$ of $]X[_\mathfrak{P}$ inside $]Y[_\mathfrak{P}$.
\end{lem}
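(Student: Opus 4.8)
The plan is to deduce the statement from its coherent-sheaf analogue, Lemma \ref{colimcoh}, by carrying the differential data along for the ride. Write $\Phi$ for the functor in question, $(E_V,\nabla)\mapsto (j_X^\dagger E_V,j_X^\dagger\nabla)$. That $\Phi$ is well defined uses that $j_*j^{-1}$ preserves additive maps of sheaves, that $\Omega^1_{V/S_K}=\Omega^1_{]Y[_\mathfrak{P}/S_K}|_V$, and that, by the compatibility of $j_X^\dagger$ with tensor products established above, $j_X^\dagger(E_V\otimes_{\cur{O}_V}\Omega^1_{V/S_K})$ is identified with $j_X^\dagger E_V\otimes\Omega^1_{]Y[_\mathfrak{P}/S_K}$, so that $j_X^\dagger\nabla$ is again an integrable connection. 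Throughout I will use that, by Lemma \ref{closed} together with the cofinality statement of Proposition \ref{cofinal2}, one has $j_X^\dagger\sh{F}=\mathrm{colim}_V j_{V*}j_V^{-1}\sh{F}$ for any sheaf $\sh{F}$ on $]Y[_\mathfrak{P}$, the colimit running over open neighbourhoods $V$ of $]X[_\mathfrak{P}$.

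Faithfulness and fullness will be the easy parts. A morphism in $\mathrm{MIC}((X,Y,\mathfrak{P})/\ekd)$ is in particular a morphism of the underlying coherent $j_X^\dagger\cur{O}_{]Y[_\mathfrak{P}}$-modules, so faithfulness of $\Phi$ is immediate from the faithfulness part of Lemma \ref{colimcoh}. For fullness, let $(E_V,\nabla_E)$ and $(F_V,\nabla_F)$ be objects on some $V$ and let $\phi\colon j_X^\dagger E_V\to j_X^\dagger F_V$ be horizontal; by Lemma \ref{colimcoh} the underlying $\cur{O}$-linear map descends to $\phi_{V'}\colon E_{V'}\to F_{V'}$ for some $V'\subseteq V$. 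The defect $\nabla_F\circ\phi_{V'}-(\phi_{V'}\otimes\mathrm{id})\circ\nabla_E$ is $\cur{O}_{V'}$-linear (the $df$-terms cancel), hence a morphism of coherent sheaves whose $j_X^\dagger$ vanishes since $\phi$ is horizontal; by the faithfulness part of Lemma \ref{colimcoh} it vanishes on some smaller $V''\subseteq V'$, so $\phi_{V''}$ is horizontal and represents $\phi$.

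The substance is essential surjectivity, and this is where I expect the only real work. Let $(\sh{E},\nabla)$ be an object of $\mathrm{MIC}((X,Y,\mathfrak{P})/\ekd)$; by Lemma \ref{colimcoh} I may assume $\sh{E}=j_X^\dagger E_V$ for a coherent $\cur{O}_V$-module on some neighbourhood $V$, and I must descend $\nabla$. The construction is local on $\mathfrak{P}$: shrinking $\mathfrak{P}$ around $X$ I may assume $\mathfrak{P}$ is quasi-compact, that $\Omega^1_{]Y[_\mathfrak{P}/S_K}|_V$ is free (after shrinking $V$, since $j_X^\dagger\Omega^1_{]Y[_\mathfrak{P}/S_K}$ is locally free by smoothness of $\mathfrak{P}$ over $\cur{V}\pow{t}$ near $X$ together with Propositions \ref{support} and \ref{overconv}, exactly as in the proof of Proposition \ref{poincare}), and that $E_V$ admits a finite presentation $\cur{O}_V^s\to\cur{O}_V^r\to E_V\to 0$ over $V$. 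Since $E_V\otimes_{\cur{O}_V}\Omega^1_{V/S_K}$ is coherent, $\nabla$ evaluated on the $r$ generators lands in $\mathrm{colim}_{V'}\Gamma(V',E_V\otimes\Omega^1_{V/S_K})$, hence is already defined over some $V'\subseteq V$; feeding in the finitely many generating relations, whose images under the Leibniz-type expression vanish in $\sh{E}$ and therefore on some $V''\subseteq V'$, the Leibniz rule then yields a well-defined $S_K$-linear connection $\nabla'$ on $E_{V''}$ with $j_X^\dagger\nabla'=\nabla$ under the given identification. Integrability follows because the curvature $(\nabla')^2\colon E_{V''}\to E_{V''}\otimes\Omega^2_{V''/S_K}$ is $\cur{O}_{V''}$-linear with vanishing $j_X^\dagger$, hence vanishes on a still smaller neighbourhood by Lemma \ref{colimcoh} again. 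Finally, two such connections built over two patches of $\mathfrak{P}$ agree after applying $j_X^\dagger$, hence agree on a common smaller neighbourhood by faithfulness of $\Phi$; since $\mathfrak{P}$ is quasi-compact these glue to a connection on $E_{V'''}$ for some $V'''$, giving the required preimage of $(\sh{E},\nabla)$. The main obstacle, then, is purely the bookkeeping in this last paragraph: ensuring that imposing Leibniz actually produces a \emph{well-defined} operator (not merely one that agrees with $\nabla$ on $]X[_\mathfrak{P}$), which is exactly what the finite presentation plus the shrinking argument is there to guarantee.
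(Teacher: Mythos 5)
Your reduction to Lemma \ref{colimcoh} is legitimate in principle (that lemma is stated earlier, even though the paper in fact proves it by the same argument as the present lemma), and your faithfulness and fullness arguments are fine: descend the underlying $\cur{O}$-linear map and then kill the $\cur{O}$-linear ``defect'' by faithfulness. The problem is in essential surjectivity, and it is exactly the point the paper's proof is built around: the tube $]Y[_\mathfrak{P}$ and its neighbourhoods $V$ of $]X[_\mathfrak{P}$ are \emph{not} quasi-compact, even after localising on $\mathfrak{P}$; on an affine piece of $\mathfrak{P}$ one only has the exhaustion $]Y[_\mathfrak{P}=\bigcup_n [Y]_n$ by quasi-compacts, and the cofinal neighbourhoods are the sets $V_{\underline{m}}=\bigcup_n V_{n,\underline{m}(n)}$ of Proposition \ref{cofinal2}. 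Because of this, two of your steps are unjustified. First, the claim that after shrinking $V$ (locally on $\mathfrak{P}$) $E_V$ admits a global finite presentation $\cur{O}_V^s\to\cur{O}_V^r\to E_V\to 0$ does not follow from anything you cite: coherence gives presentations on affinoid pieces such as the $V_{n,m,j}$, but $V$ is a rising union of such pieces, and quasi-compactness of $\mathfrak{P}$ does not help because the failure of quasi-compactness is in the radial direction $n$, not along an open cover of $\mathfrak{P}$. Second, your assertion that $\nabla$ of the generators ``lands in $\mathrm{colim}_{V'}\Gamma(V',E_V\otimes\Omega^1)$'' is deduced from the sheaf-level identity $j_X^\dagger\sh{F}=\mathrm{colim}_V j_{V*}j_V^{-1}\sh{F}$, but exchanging global sections over the non-quasi-compact $]Y[_\mathfrak{P}$ with this filtered colimit needs an argument (this one is repairable: global sections are $\mathrm{Hom}$ from $j_X^\dagger\cur{O}_{]Y[_\mathfrak{P}}$, so full faithfulness in Lemma \ref{colimcoh} gives it; the paper instead uses that sections commute with filtered colimits on the quasi-compact $[Y]_n$).

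The first gap is the serious one, and your closing gluing step does not repair it: if you fix it by working on the $[Y]_n$ (where finite covers and presentations exist), you end up with countably many connections, one on some $V_{n,\underline{m}(n)}$ for each $n$, and you must successively increase $\underline{m}(n)$ so that they agree and glue over the exhaustion --- this diagonal bookkeeping is precisely the second half of the paper's proof and is not subsumed by ``finitely many patches of a quasi-compact $\mathfrak{P}$ glue''. (One could instead avoid presentations by descending the connection as an $\cur{O}$-linear splitting of the first jet sequence of $E_V$ and invoking fullness of Lemma \ref{colimcoh}, but that is not the argument you gave, and it needs the compatibility of $j_X^\dagger$ with jets.) So as written the proof has a genuine gap at the finite-presentation step, whose repair requires engaging with the $[Y]_n$ and $V_{\underline{m}}$ structure that your argument skips.
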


\begin{proof} Let us first prove the corresponding result where we replace $]Y[_\mathfrak{P}$ by the quasi-compact tubes $[Y]_n$, so that the quasi-compact opens $V_{n,m}=[Y]_n\cap U_m$ form a cofinal system of neighbourhoods of $]X[_\mathfrak{P}\cap [Y]_n$ inside $[Y]_n$. By using the internal hom for coherent modules with connection, full faithfulness boils down to showing that for any coherent module with integrable connection $\sh{E}$ on some $V_{n,m}$,
$$ \mathrm{colim}_{m'\geq m}\Gamma(V_{n,m'},\sh{E})^{\nabla=0} \isomto \Gamma([Y]_n,j_*j^{-1}\sh{E})^{\nabla=0}.
$$
Since the natural morphism $$ \mathrm{colim}_{m'\geq m}\Gamma(V_{n,m'},\sh{E}) \rightarrow  \Gamma([Y]_n,j_*j^{-1}\sh{E}) $$ is horizontal, it certainly suffices to prove that this latter morphism is an isomorphism, which follows from Lemma \ref{closed} together with the fact that global sections commute with filtered direct limits on quasi-compact topological spaces. Note that this argument also shows that
$$ \mathrm{colim}_V \mathrm{Coh}(\cur{O}_V) \rightarrow  \mathrm{Coh}(j_X^\dagger\cur{O}_{[Y]_n})
$$
is fully faithful.

To show essential surjectivity, let $\sh{E}$ be some coherent $j_X^\dagger\cur{O}_{[Y]_n}$-module with connection, we first claim that $\sh{E}$ itself comes from a coherent $\cur{O}_V$-module for some $V$. By the full faithfulness for coherent modules, this is local on a finite open covering of $[Y]_n$, hence we may assume that $\sh{E}$ has a presentation. Again using the full faithfulness for coherent modules, this presentation must come from a presentation $$\cur{O}_V^k \rightarrow \cur{O}_V^{k'}\rightarrow E_V\rightarrow 0$$ on some $V$. Now an entirely similar argument to above, using internal hom for abelian sheaves and Lemma \ref{closed} shows that the integrable connection on $\sh{E}$ must come from some integrable connection on $E_V|_{V'}$ for some $V'\subset V$.

We now turn to the original case. So suppose that $\varphi_V:E_V\rightarrow F_V$ is a morphism of coherent $\cur{O}_V$-modules with connection on some neighbourhood $V$ of $]X[_\mathfrak{P}$, such that the induced morphism between coherent $j_X^\dagger\cur{O}_{]Y[_\mathfrak{P}}$-modules is zero. Then for all $n$ the induced morphism of $j_X^\dagger\cur{O}_{[Y]_n}$-modules is zero, and hence there exists some $m$ such that the restriction of $\varphi_V$ to $V\cap V_{n,m}$ is zero. By taking the union over all $n$, there thus exists some sequence $\underline{m}(n)\rightarrow \infty$ such that the restriction of $\varphi_V$ to $V\cap V_{\underline{m}}$ is zero. Hence 
$$
\mathrm{colim}_V \mathrm{MIC}(\cur{O}_V/S_K) \rightarrow  \mathrm{MIC}((X,Y,\mathfrak{P})/\ekd)
$$ is faithful. Similarly, if $E_V$ and $F_V$ are coherent modules with connection on some neighbourhood $V$ of $]X[_\mathfrak{P}$, and $\varphi:\sh{E}\rightarrow \sh{F}$ is a horizontal morphism between the  induced coherent $j_X^\dagger\cur{O}_{]Y[_\mathfrak{P}}$-modules, then for all $n$ we can find some $m=\underline{m}(n)$ such that $V_{n,m}\subset V$ and $\varphi|_{[Y]_n}$ comes from a morphism $\varphi_{n}:E_V|_{V_{n,m}}\rightarrow F_V|_{V_{n,m}}$. By increasing each $m(n)$ in turn, we can ensure that $\varphi_n|_{[Y]_{n-1}}$ agrees with $\varphi_{n-1}$. Hence taking the union over all $n$ gives us a morphism $E_V|_{V_{\underline{m}}}\rightarrow F_V|_{V_{\underline{m}}}$
for some $\underline{m}$. Hence the functor is full.

To show essential surjectivity, suppose that we have some coherent $j_X^\dagger\cur{O}_{]Y[_\mathfrak{P}}$-module with integrable connection $\sh{E}$. Then for all $n$ we know that there exists some $m=m(n)$ such that $\sh{E}|_{[Y]_n}$ comes from some coherent module with connection $E_n$ on $V_{n,m}$ Again, by possibly increasing each $\underline{m}(n)$ in turn, we can ensure that we have isomorphisms $E_{n}|_{V_{n-1,\underline{m}(n-1)}}\cong E_{n-1}$, and hence we can glue the $E_n$ to give a coherent module with connection $E_{\underline{m}}$ on $V_{\underline{m}}$ inducing $\sh{E}$.  \end{proof}

\begin{cor} \label{localconrel} Both the category $\mathrm{MIC}((X,Y,\mathfrak{P})/\ekd)$ and the condition of being overconvergent are local on $X$.
\end{cor}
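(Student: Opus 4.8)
The statement that $\mathrm{MIC}((X,Y,\mathfrak{P})/\ekd)$ is local on $X$ unwinds to a descent claim: for a finite open cover $X=\bigcup_j X_j$, giving an object of $\mathrm{MIC}((X,Y,\mathfrak{P})/\ekd)$ is the same as giving objects $\sh{E}_j\in\mathrm{MIC}((X_j,Y,\mathfrak{P})/\ekd)$ together with horizontal gluing isomorphisms over the frames $(X_j\cap X_{j'},Y,\mathfrak{P})$ satisfying the cocycle condition. The plan is to deduce this from Lemma \ref{colimcat1}, which identifies $\mathrm{MIC}((X,Y,\mathfrak{P})/\ekd)$ with $\colim_V \mathrm{MIC}(\cur{O}_V/S_K)$ over the open neighbourhoods $V$ of $]X[_\mathfrak{P}$ in $]Y[_\mathfrak{P}$, together with two facts: (i) coherent modules with integrable connection relative to $S_K$ form a stack for the admissible topology on rigid spaces over $S_K$; and (ii) by Remark \ref{interior}(3) one has $]X[_\mathfrak{P}=\bigcup_j]X_j[_\mathfrak{P}$ and $]X_j\cap X_{j'}[_\mathfrak{P}=]X_j[_\mathfrak{P}\cap]X_{j'}[_\mathfrak{P}$. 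Concretely, given descent data one realises each $\sh{E}_j$ on an open neighbourhood $V_j$ of $]X_j[_\mathfrak{P}$ and the gluing isomorphisms on neighbourhoods of the $]X_j[_\mathfrak{P}\cap]X_{j'}[_\mathfrak{P}$; using the full faithfulness part of Lemma \ref{colimcat1} one shrinks the $V_j$ until the gluing data becomes an honest cocycle of isomorphisms of $\cur{O}$-modules with connection over the overlaps $V_j\cap V_{j'}$, and glues to a module with integrable connection on $V=\bigcup_j V_j$, an open neighbourhood of $]X[_\mathfrak{P}$. Independence from the auxiliary choices, and that the construction inverts restriction, follow again from the colimit description.

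For overconvergence, fix a finite open cover $X=\bigcup_j X_j$ and $\sh{E}\in\mathrm{MIC}((X,Y,\mathfrak{P})/\ekd)$; I must show $\sh{E}$ is overconvergent if and only if each $j_{X_j}^\dagger\sh{E}$ is. One direction is immediate: the forgetful functor $\mathrm{Strat}^\dagger((X,Y,\mathfrak{P})/\ekd)\to\mathrm{Strat}((X,Y,\mathfrak{P})/\ekd)$ and all the equivalences feeding into $\mathrm{MIC}$ are compatible with restriction along the morphisms of frames $(X_j,Y,\mathfrak{P})\to(X,Y,\mathfrak{P})$, so overconvergence of $\sh{E}$ forces overconvergence of each $j_{X_j}^\dagger\sh{E}$. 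For the converse, recall that $\sh{E}$ is overconvergent precisely when the finite-level stratification data on the infinitesimal neighbourhoods $]Y[_\mathfrak{P}^{(n)}$ come from a single isomorphism $\epsilon:p_2^*\sh{E}\isomto p_1^*\sh{E}$ of $j_X^\dagger\cur{O}_{]Y[_{\mathfrak{P}^2}}$-modules. By hypothesis such an $\epsilon_j$ exists over each $]X_j[_{\mathfrak{P}^2}$, and by Remark \ref{interior}(3) applied to $\mathfrak{P}^2$ and the cover induced on the diagonal copy of $X$ in $P^2$ one has $]X[_{\mathfrak{P}^2}=\bigcup_j]X_j[_{\mathfrak{P}^2}$ and $]X_j\cap X_{j'}[_{\mathfrak{P}^2}=]X_j[_{\mathfrak{P}^2}\cap]X_{j'}[_{\mathfrak{P}^2}$; so it suffices to check that $\epsilon_j$ and $\epsilon_{j'}$ agree over the overlap. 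Their difference is a morphism $p_2^*\sh{F}\to p_1^*\sh{F}$, with $\sh{F}=j_{X_j\cap X_{j'}}^\dagger\sh{E}$, which vanishes on every infinitesimal neighbourhood since both $\epsilon_j$ and $\epsilon_{j'}$ restrict there to the canonical finite-level stratification; hence it is zero by the vanishing argument used in the proof of Proposition \ref{ffoss} (which rests on Lemma \ref{faithrest} and Lemma 7.2.7 of \cite{rigcoh}). The $\epsilon_j$ therefore glue, via the descent statement of the first paragraph now applied over $\mathfrak{P}^2$, to the required $\epsilon$.

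The step I expect to be the main obstacle is precisely this gluing of Taylor isomorphisms: one must ensure that the local extensions $\epsilon_j$ agree not merely formally but on genuine open neighbourhoods of each $]X_j\cap X_{j'}[_{\mathfrak{P}^2}$, so that the descent machinery produces a bona fide isomorphism defined on an open neighbourhood of the full $]X[_{\mathfrak{P}^2}$ rather than only on the $j_X^\dagger$-colimit. Once the faithfulness statement of Proposition \ref{ffoss} is invoked to pin the overlaps down, the remaining assembly is a formal consequence of Lemma \ref{colimcat1} and Remark \ref{interior}(3); everything in the first paragraph is likewise routine given the stack property of $\mathrm{MIC}(\cur{O}_{(-)}/S_K)$.
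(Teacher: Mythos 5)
Your argument is correct and follows essentially the same route as the paper: locality of $\mathrm{MIC}((X,Y,\mathfrak{P})/\ekd)$ is obtained by using Lemma \ref{colimcat1} to realise the local objects (and gluing data) on genuine open neighbourhoods $V_j$ of the $]X_j[_\mathfrak{P}$ and gluing over $V=\bigcup_j V_j$, and locality of overconvergence is reduced to gluing the Taylor isomorphisms, i.e.\ to locality of $\mathrm{Strat}^\dagger$, with agreement on overlaps pinned down by the faithfulness underlying Proposition \ref{ffoss} --- which is exactly what the paper's terse ``since $\mathrm{Strat}^\dagger$ is also local on $X$'' compresses. The only detail you leave implicit, that the glued $\epsilon$ satisfies $\Delta^*(\epsilon)=\mathrm{id}$ and the cocycle identity on $]Y[_{\mathfrak{P}^3}$, follows by the same local faithfulness argument and is routine.
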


\begin{proof} Assume that we have an open cover $X=\cup_j X_j$, and compatible objects $\sh{E}_j\in\mathrm{MIC}((X_j,Y,\mathfrak{P})/\ekd)$. By the previous lemma these extend to a compatible collection of coherent $\cur{O}_{V_j}$-modules with connection on some open neighbourhoods $V_j$ of $]X_j[_\mathfrak{P}$ inside $]Y[_\mathfrak{P}$. Hence these glue to to give a coherent module with connection on $V=\cup_j V_j$, which is a neighbourhood of $]X[_\mathfrak{P}$ inside $]Y[_\mathfrak{P}$. An entirely similar argument shows that morphisms glue as well.
Thus $\mathrm{MIC}((X,Y,\mathfrak{P})/\ekd)$ is local on $X$, and since $\mathrm{Strat}^\dagger((X,Y,\mathfrak{P})/\ekd)$ is also local on $X$, so is the overconvergence condition.
\end{proof}

Thus we can test overconvergence locally, and we have the following more concrete criterion. Let $(X,Y,\mathfrak{P})$ be a smooth frame such that $\mathfrak{P}$ is affine and $X=Y\cap D(\bar{g})$ for $\bar{g}$ the reduction of some $g\in\cur{O}_\mathfrak{P}$. Assume further that $\Omega^1_{\mathfrak{P}/\cur{V}\pow{t}}$ has a basis $dt_1,\ldots,dt_n$ in a neighbourhood of $X$, for some functions $t_i\in\cur{O}_\mathfrak{P}$.  Let $(\sh{E},\nabla)\in \mathrm{MIC}((X,Y,\mathfrak{P})/\ekd)$, and let $\partial_i:\sh{E}\rightarrow \sh{E}$ be the derivations corresponding to $dt_i$. For any multi-index $\underline{k}=(k_1,\ldots,k_l)$ we set $\underline{\partial}^{\underline{k}}=\partial_1^{k_1}\ldots\partial_l^{k_l}$.

\begin{prop} \label{ocrel}  Let $V$ be an open neighbourhood of $]X[_\mathfrak{P}$ inside $]Y[_\mathfrak{P}$ such that $dt_1,\ldots,dt_l$ are a basis for $\Omega^1_{V/S_K}$ and $(\sh{E},\nabla)$ arises from a module with integrable connection $(E_V,\nabla_V)$ on $V$. Then $(\sh{E},\nabla)$ is overconvergent if and only if for all $n$, there exists some $m$ and some $d\geq n$ such that $[Y]_d\cap U_m \subset V$, and for every section $e\in \Gamma([Y]_d \cap U_m,E_V)$ we have
$$
\Norm{\frac{\underline{\partial}^{\underline{k}}e}{\underline{k}!}}(r^{-\frac{\norm{\underline{k}}}{n}})\rightarrow 0 
$$
where $\Norm{\cdot}$ is some Banach norm on $ \Gamma([Y]_n \cap U_m,E_V)$.
\end{prop}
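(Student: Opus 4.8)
The plan is to follow the proof of Theorem 4.3.9 of \cite{rigcoh}, transported into the language of adic spaces and the relative setting over $\cur{V}\pow{t}$. By Corollary \ref{localconrel} overconvergence of $(\sh{E},\nabla)$ may be checked locally on $X$, and the condition in the statement is local in the same sense, so we work throughout with the affine frame $(X,Y,\mathfrak{P})$ equipped with the coordinates $t_i$ and the single function $g$, as in the hypotheses. Recall from Definition \ref{ocrelcon} that $(\sh{E},\nabla)$ is overconvergent exactly when the compatible system of finite-level Taylor isomorphisms on the infinitesimal neighbourhoods $]Y[_\mathfrak{P}^{(N)}$ determined by $\nabla$ extends to an isomorphism $\epsilon\colon p_2^*\sh{E}\to p_1^*\sh{E}$ of $j_X^\dagger\cur{O}_{]Y[_{\mathfrak{P}^2}}$-modules; by Lemma \ref{colimcoh} applied to the frame $(X,Y,\mathfrak{P}^2)$, such an $\epsilon$, if it exists, is induced by an isomorphism over some genuine open neighbourhood of $]X[_{\mathfrak{P}^2}$ in $]Y[_{\mathfrak{P}^2}$.

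The first step is to describe $]Y[_{\mathfrak{P}^2}$ in a neighbourhood of the diagonal. Put $\tau_i:=p_2^*t_i-p_1^*t_i$. Then $p_1^*t_i$ together with the $\tau_i$ form coordinates in a neighbourhood of the diagonal in $\mathfrak{P}^2$, and exactly as in \S1.3 of \cite{iso} and \S4.3 of \cite{rigcoh} this exhibits, on a suitable neighbourhood of $]X[_{\mathfrak{P}^2}$, the tube $]Y[_{\mathfrak{P}^2}$ as (an open subspace of) a polydisc bundle over $]Y[_\mathfrak{P}$ with fibre coordinates the $\tau_i$; moreover the standard neighbourhoods of $]X[_{\mathfrak{P}^2}$ built as in \S\ref{rcolsf} can be taken, near the diagonal, of the form $(V_{d,m}\text{ inside }]Y[_\mathfrak{P})\times\{\norm{\tau_i}\le r^{-1/n}\ \forall i\}$, and these are cofinal as $d$, $m$, $n$ vary. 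Under this identification the finite-level Taylor isomorphism on $]Y[_\mathfrak{P}^{(N-1)}$ is the familiar formula
$$
\epsilon(p_2^*e)=\sum_{\norm{\underline k}<N}p_1^*\Bigl(\tfrac{\underline{\partial}^{\underline k}e}{\underline k!}\Bigr)\,\underline{\tau}^{\underline k},
$$
integrability of $\nabla$ being precisely what makes this compatible and a cocycle.

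It follows that $\epsilon$ extends to a $j_X^\dagger\cur{O}_{]Y[_{\mathfrak{P}^2}}$-linear morphism --- automatically an isomorphism, since $\Delta^*\epsilon=\mathrm{id}$ and one obtains an inverse from the cocycle relation together with the symmetry $\tau_i\mapsto-\tau_i$ --- if and only if, for $e$ ranging over sections of $\sh{E}$ over the relevant neighbourhoods, the formal power series $\sum_{\underline k}\bigl(\underline{\partial}^{\underline k}e/\underline k!\bigr)\,\underline{\tau}^{\underline k}$ converges on some neighbourhood of $]X[_{\mathfrak{P}^2}$, hence by cofinality on some $V_{d,m}\times\{\norm{\tau_i}\le r^{-1/n}\}$. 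Since $\cur{V}\pow{t}$, and hence $S_K$, is Noetherian, $\Gamma(V_{d,m},E_V)$ is a Banach module over the affinoid algebra $\Gamma(V_{d,m},\cur{O})$, and the standard Cauchy criterion for convergence of a power series on the closed polydisc of radius $r^{-1/n}$ translates this into exactly the growth condition $\Norm{\frac{\underline{\partial}^{\underline{k}}e}{\underline{k}!}}(r^{-\norm{\underline{k}}/n})\to 0$ of the statement. Finally one must reconcile the roles of $[Y]_d\cap U_m$ (where the sections $e$ live) and $[Y]_n\cap U_m$ (which carries the Banach norm), and the various candidate norms: using coherence of $\sh{E}$ to reduce to a presentation and hence to finitely many generating sections, together with equivalence of Banach norms on the quasi-compact affinoid $[Y]_n\cap U_m$, this is a routine comparison and yields the criterion.

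The main obstacle is the first step: writing $]Y[_{\mathfrak{P}^2}$ near $]X[_{\mathfrak{P}^2}$ as a polydisc bundle and matching the cofinal systems of neighbourhoods precisely, in particular tracking how the radius $r^{-1/n}$ in the $\tau$-directions forces the index $d$ (governing $[Y]_d$) to be taken larger than $n$. This computation is purely local and proceeds exactly as in \cite{iso} and \cite{rigcoh}, but it is where essentially all of the bookkeeping sits; once it is in place the remainder is the classical Cauchy-estimate argument.
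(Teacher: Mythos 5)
Your proposal is correct and follows essentially the same route as the paper: pass to the difference coordinates $\tau_i$, identify a neighbourhood of $]X[_{\mathfrak{P}^2}$ in $]Y[_{\mathfrak{P}^2}$ with $V_{n,m}\times \mathbb{D}^l_{S_K}(r^{-1/n})$, write the Taylor isomorphism as $e\mapsto\sum_{\underline{k}}\frac{\underline{\partial}^{\underline{k}}e}{\underline{k}!}\tau^{\underline{k}}$, and translate convergence on the polydisc into the stated norm condition. The one step you defer to \cite{iso} and \cite{rigcoh} --- the polydisc-bundle description of the tube in the double product, with the radius $r^{-1/n}$ forcing the index $d\geq n$ --- is exactly what the paper extracts from its own Strong Fibration Theorem (Proposition \ref{strongprop}) and its proof, so no genuinely new argument is missing.
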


\begin{proof} Define $\tau: \frak{P}\times_{\cur{V}\pow{t}} \frak{P}\rightarrow \widehat{\A}^l_{\frak{P}}$ by $\tau_i=p_1^*(t_i)-p_2^*(t_i)$. Let $V_{n,m}=[Y]_n\cap U_m$, be the standard neighbourhoods of $[Y]_n\cap ]X[_\frak{P}$ inside $[Y]_n$, and let $W_{n,m}\subset ]Y[_{\frak{P}^2}$ denote the similar standard neighbourhoods of $]X[_{\frak{P}^2}$ inside $]Y[_{\frak{P}^2}$. By Proposition \ref{strongprop} and its proof, there exists some $d\geq n$ such that for all $m\gg0$, $\tau$ induces an isomorphism $W \cong V_{n,m}\times_{\mathbb{D}^b_K} \mathbb{D}^l_{S_K}(r^{-1/n})$ for some open $W_{n,m} \subset W \subset W_{d,m}$, where $$\mathbb{D}^l_{S_K}(r^{-1/n})=\mathrm{Spa}(S_K\tate{r^{1/n}X_1,\ldots,r^{1/n}X_l})$$ is the polydisc of radius $r^{-1/n}$. Then we have two maps $p_1,p_2:V_{n,m}\times \mathbb{D}^l_{S_K}(r^{-1/n})\rightrightarrows V_{d,m}$, and possibly after increasing $m$ we may assume that $V_{d,m}\subset V$.

If we let $M=\Gamma(V_{d,m},E_V)$, $A=\Gamma(V_{d,m},\cur{O}_{]Y[_\frak{P}})$ and $B=\Gamma(V_{n,m},\cur{O}_{]Y[_\frak{P}})$ then the formal Taylor morphism
$$ E_V|_{V_{d,m}} \rightarrow \varprojlim_n (p_2^{(n)*}E_V|_{V_{n,m}\times \mathbb{D}^l_{S_K}(r^{-1/n})})
$$
can be identified with the map
\begin{align*} M &\rightarrow M\otimes_A B\pow{\tau} \\
e &\mapsto \sum_{\underline{k}} \frac{\underline{\partial}^{\underline{k}}e}{\underline{k}!}\tau^{\underline{k}}
\end{align*}
where $\tau^{\underline{k}}=\tau_1^{k_1}\ldots \tau_l^{k_l}$. Then $\sh{E}$ is overconvergent if and only if we can choose $m$ so that this Taylor series actually converges on $V_{n,m} \times  \mathbb{D}^l_{S_K}(r^{-1/n})$, or in other words, if we can choose $m$ such that 
$$ \sum_{\underline{k}} \frac{\underline{\partial}^{\underline{k}}e}{\underline{k}!}\tau^{\underline{k}} \in M\otimes_A B\tate{r^{1/n}\tau}
$$ 
for all $e$. The proposition follows.
\end{proof}

\begin{cor} \label{subquotext1} For any smooth frame $(X,Y,\frak{P})$ the full subcategory 
$$\mathrm{MIC}^\dagger((X,Y,\mathfrak{P})/\ekd) \subset \mathrm{MIC}((X,Y,\mathfrak{P})/\ekd)
$$ is stable under subobjects and quotients. 
\end{cor}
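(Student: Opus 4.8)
The plan is to deduce the statement from the explicit criterion of Proposition \ref{ocrel}, by transferring the convergence estimate across the short exact sequence defined by a subobject.

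First I would reduce to the local situation of Proposition \ref{ocrel}. Since $\mathrm{MIC}((X,Y,\mathfrak{P})/\ekd)$ is abelian and both this category and the condition of overconvergence are local on $\mathfrak{P}$ (the remark preceding Corollary \ref{localconrel}) and on $X$ (Corollary \ref{localconrel}), one may assume $\mathfrak{P}=\spf{A}$ is affine, that $\Omega^1_{\mathfrak{P}/\cur{V}\pow{t}}$ is free with basis $dt_1,\ldots,dt_l$ in a neighbourhood of $X$ (using smoothness of $\mathfrak{P}$ over $\cur{V}\pow{t}$ around $X$), and that $X=Y\cap D(\bar g)$ for a single $g\in\cur{O}_{\mathfrak{P}}$. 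Given $\sh{E}\in\mathrm{MIC}^\dagger((X,Y,\mathfrak{P})/\ekd)$ and a short exact sequence $0\to\sh{F}\to\sh{E}\to\sh{G}\to 0$ in $\mathrm{MIC}((X,Y,\mathfrak{P})/\ekd)$, Lemma \ref{colimcat1} lets one realise $\sh{E}$ by a coherent $\cur{O}_V$-module with integrable $S_K$-linear connection $(E_V,\nabla_V)$ on a neighbourhood $V$ of $]X[_{\mathfrak{P}}$ in $]Y[_{\mathfrak{P}}$; after shrinking $V$, replacing $F_V$ by the image of the realised map $F_V\to E_V$ (a coherent subsheaf carrying the restricted connection, with $j_X^\dagger F_V=\sh{F}$ since $j_X^\dagger$ is exact), and setting $G_V:=E_V/F_V$, one obtains a short exact sequence $0\to F_V\to E_V\to G_V\to 0$ of coherent $\cur{O}_V$-modules with integrable $S_K$-linear connection realising the original one.

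Then I would feed this into the criterion of Proposition \ref{ocrel}. For each $n$ it yields integers $m$, $d\geq n$ with $W:=[Y]_d\cap U_m\subset V$ affinoid and a Banach norm $\Norm{\cdot}_E$ on $\Gamma([Y]_n\cap U_m,E_V)$ such that $\Norm{\underline\partial^{\underline k}e/\underline k!}_E(r^{-\norm{\underline k}/n})\to 0$ as $\norm{\underline k}\to\infty$ for every $e\in\Gamma(W,E_V)$, where $\underline\partial^{\underline k}=\partial_1^{k_1}\cdots\partial_l^{k_l}$ is built from the derivations $\partial_i$ dual to $dt_i$. The key point is that on the affinoids $W$ and $[Y]_n\cap U_m$ taking global sections is exact on coherent sheaves (Kiehl), so $\Gamma(F_V)\hookrightarrow\Gamma(E_V)$ is a \emph{closed} sub-Banach-module and $\Gamma(E_V)\twoheadrightarrow\Gamma(G_V)$ is strict. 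For $\sh{F}$: the connection on $F_V$ is the restriction of that on $E_V$, so $\underline\partial^{\underline k}e$ is the same computed in $F_V$ or in $E_V$; equipping $\Gamma([Y]_n\cap U_m,F_V)$ with the subspace norm induced by $\Norm{\cdot}_E$ (a Banach norm, hence admissible in the criterion since any two Banach norms on a finite module over an affinoid algebra are equivalent), the estimate for $\sh{F}$ is literally a restriction of that for $\sh{E}$. For $\sh{G}$: every section $\bar e\in\Gamma(W,G_V)$ lifts to some $e\in\Gamma(W,E_V)$ by exactness of $\Gamma$, the derivations commute with the projection so $\underline\partial^{\underline k}\bar e=\overline{\underline\partial^{\underline k}e}$, and using the quotient norm $\Norm{\cdot}_G$ induced by $\Norm{\cdot}_E$ (for which the projection is norm-non-increasing) one gets $\Norm{\underline\partial^{\underline k}\bar e/\underline k!}_G(r^{-\norm{\underline k}/n})\leq\Norm{\underline\partial^{\underline k}e/\underline k!}_E(r^{-\norm{\underline k}/n})\to 0$. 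By Proposition \ref{ocrel} again, $\sh{F}$ and $\sh{G}$ are overconvergent, and the general case follows by locality on $X$.

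I do not expect a serious obstacle: the real content is Proposition \ref{ocrel}, and the rest is the standard functional analysis of coherent sheaves on affinoids — exactness of $\Gamma$, strictness of maps of finite Banach modules over affinoid algebras, and uniqueness up to equivalence of their topology — all of which transfer verbatim to rigid spaces over $S_K$. The one mild subtlety is to arrange that the \emph{same} data $W$ and $\Norm{\cdot}_E$ governs $F_V$ and $G_V$ as governs $E_V$, rather than re-choosing $m,d$ for each and risking incompatibilities; this is automatic here because the norms on $\Gamma(-,F_V)$ and $\Gamma(-,G_V)$ are by construction the subspace and quotient norms of $\Norm{\cdot}_E$.
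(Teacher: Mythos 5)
Your proposal is correct and is essentially the paper's argument: the paper deduces the corollary from Proposition \ref{ocrel} in one line by invoking strictness of the induced maps of Banach $\Gamma([Y]_n\cap U_m,\cur{O}_V)$-modules, which is exactly what justifies your use of the subspace and quotient norms (and their equivalence with any Banach norm) to transfer the convergence estimate to $\sh{F}$ and $\sh{G}$. Your write-up merely makes explicit the locality reduction, the realisation of the exact sequence on a neighbourhood $V$, and the lifting of sections over the affinoid, all of which are implicit in the paper's proof.
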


\begin{proof}  This just follows from the fact that for any map $E_F\rightarrow F_V$ of coherent $\cur{O}_V$-modules, the map
$$
\Gamma([Y]_{n} \cap U_{m},F_V)\rightarrow \Gamma([Y]_{n} \cap U_{m},E_V)
$$
is a strict morphism of Banach $\Gamma([Y]_{n} \cap U_{m},\cur{O}_V)$-modules.\end{proof}

We will also need to know functoriality of coefficients and cohomology under certain extensions of $\ekd$, in particular the following three cases. 

\begin{enumerate} \item The finite extension of $\ekd$ corresponding to a finite separable extension of $k\lser{t}$.
\item The extension determined by some Frobenius lift $\sigma:\ekd\rightarrow \ekd$.
\item The extension $\ekd\rightarrow \ek$.
\end{enumerate}

Firstly, let $F/k\lser{t}$ be a finite separable extension of $k\lser{t}$, $A\subset F$ the ring of integers, and $l/k$ the induced extension of residue fields. Let $L$ be the unramified extension of $K$ lifting $l/k$, with extension $\cur{W}/\cur{V}$ of rings of integers. Let $\cur{E}^{\dagger,F}_K$ and $\cur{E}^F_K$ be the unramified extensions of $\ekd$ and $\ek$ respectively lifting $F/k\lser{t}$, with rings of integers $\cur{O}_{\cur{E}_K^{\dagger,F}}$ and $\cur{O}_{\cur{E}_K^F}$ respectively. These are unique up to non-unique isomorphism, and can be described concretely as follows. Choose a uniformiser $u$ for $F$, so that $F\cong l\lser{u}$. Then we have 
\begin{align*}
\cur{E}_K^F &\cong \left\{\left.\sum_i a_iu^i \in L\pow{u,u^{-1}}\;\right|\;  \sup_i\norm{a_i}<\infty,\;a_i\rightarrow 0\text{ as }i\rightarrow -\infty \right\} \\
\cur{E}_K^{\dagger,F} &\cong \left\{\left.\sum_i a_iu^i \in\cur{E}^F_K\;\right|\;  \exists \eta<1 \text{ s.t. } \norm{a_i}\eta^i\rightarrow 0\text{ as }i\rightarrow -\infty \right\} \\
\cur{O}_{\cur{E}_K^F}&\cong \cur{E}_K^F \cap \cur{W}\pow{u,u^{-1}},\;\; \cur{O}_{\cur{E}_K^{\dagger,F}}\cong \cur{E}_K^{\dagger,F}  \cap \cur{W}\pow{u,u^{-1}}.
\end{align*}
Thus $\cur{W}\pow{u}\subset \cur{E}_K^F$, and we let $S_K^F=\cur{W}\pow{u}\otimes_\cur{W} L\subset \cur{E}_K^F$. Note that although the notation does not suggest so, $S_K^F$ depends on the choice of parameter $u$. Thus the rings
$$(\cur{W}\pow{u},S_K^F)\subset (\cur{O}_{\cur{E}_K^{\dagger,F}}, \cur{E}_K^{\dagger,F})\subset (\cur{O}_{\cur{E}_K^F},\cur{E}_K^F)$$
are of exactly the same form as the rings
$$
(\cur{V}\pow{t},S_K)\subset (\cur{O}_{\ekd},\ekd)\subset (\cur{O}_{\ek},\ek)
$$
but associated to the pair $(L,u)$ rather than the pair $(K,t)$. The base extension $$(k\lser{t},k\pow{t},\cur{V}\pow{t})\rightarrow (F,A,\cur{W}\pow{u})$$ then determines a base change functor
$$
\mathrm{Isoc}^\dagger(X/\ekd)\rightarrow \mathrm{Isoc}^\dagger(X_F/\cur{E}_K^{\dagger,F}),
$$
which we will generally denote by $\sh{E}\mapsto \sh{E}_F$. \emph{A priori}, this construction depends on the choice of parameter $u$. However, since $\cur{O}_{\cur{E}_K^{\dagger,F}}= \mathrm{colim}_m \cur{W}\pow{u}\tate{r^{-1/m}u^{-1}}$ is independent of $u$, one can use the standard neighbourhoods $V'_{n,m}$ of \S\ref{rcolsf} together with Lemma \ref{colimcoh} to show that neither the category $\mathrm{Isoc}^\dagger(X_F/\cur{E}_K^{\dagger,F})$, nor the base extension functor functor 
$$
\mathrm{Isoc}^\dagger(X/\ekd)\rightarrow \mathrm{Isoc}^\dagger(X_F/\cur{E}_K^{\dagger,F}),
$$
nor the cohomology of such objects, as vector spaces over $\cur{E}_{K}^{\dagger,F}$, depend on the choice of $u$. 

To discuss Frobenius structures and pullback, we fix a Frobenius $\sigma_K$ on $K$, that is a field automorphism preserving $\cur{V}$ and lifting the absolute $q$-power Frobenius on $k$.

\begin{defn} A Frobenius on $\cur{V}\pow{t}$ is a $\pi$-adically continuous endomorphism $\sigma_{\cur{V}\pow{t}}:\cur{V}\pow{t}\rightarrow \cur{V}\pow{t}$ which is semi-linear over $\sigma_K$ and lifts the absolute $q$-power Frobenius on $k\pow{t}$.
\end{defn}

Such a Frobenius extends uniquely to a continuous endomorphism of $\cur{O}_{\ek}$, and hence $\ek$, and this preserves the subrings $\cur{O}_{\ekd}$, $\ekd$ and $S_K$. We will henceforth assume that we have chosen a Frobenius on $\cur{V}\pow{t}$, and we endow the rings $S_K,\cur{O}_{\ekd},\ekd,\cur{O}_{\ek}$ and $\ek$ with the induced Frobenii, all of which we will denote by the same letter $\sigma$.

Exactly as above, if we let $X'$ denote the base change of $X$ by the $q$-power Frobenius on $k$, then we get a pullback functor
$$ \sigma^*:\mathrm{Isoc}^\dagger(X/\ekd)\rightarrow \mathrm{Isoc}^\dagger(X'/\ekd)
$$
which we can compose with pullback via relative Frobenius $X\rightarrow X'$, which is $k\lser{t}$-linear, to get a $\sigma$-linear Frobenius pullback functor
$$
F^*:\mathrm{Isoc}^\dagger(X/\ekd)\rightarrow \mathrm{Isoc}^\dagger(X/\ekd).
$$

\begin{defn} \label{maindefs} An overconvergent $F$-isocrystal on $X/\ekd$ is an object $\sh{E}\in \mathrm{Isoc}^\dagger(X/\ekd)$ together with an isomorphism $\varphi:F^*\sh{E}\rightarrow \sh{E}$. The category of overconvergent $F$-isocrystals on $X/\ekd$ is denoted $F\text{-}\mathrm{Isoc}^\dagger(X/\ekd)$.
\end{defn}

\begin{rem} Note that this definition depends on the choice of Frobenius $\sigma$ on $\cur{V}\pow{t}$.
\end{rem}

It is not difficult to see that this construction is compatible with the previous construction associated to a finite separable extension $F/ k\lser{t}$. That is, if we have chosen a Frobenius on $\cur{O}_F$ compatible with that on $\cur{V}\pow{t}$, then this induces a Frobenius pullback on $\mathrm{Isoc}^\dagger(X_F/\cur{E}_K^{\dagger,F})$ and we get a commutative diagram
$$
\xymatrix{ \mathrm{Isoc}^\dagger(X/\ekd) \ar[r] \ar[d]^{F^*} & \mathrm{Isoc}^\dagger(X_F/\cur{E}_K^{\dagger,F}) \ar[d]^{F^*} \\
\mathrm{Isoc}^\dagger(X/\ekd) \ar[r] & \mathrm{Isoc}^\dagger(X_F/\cur{E}_K^{\dagger,F})
}
$$
at least up to natural isomorphism. Thus there is an induced base extension functor 
$$
F\text{-}\mathrm{Isoc}(X/\ekd)\rightarrow F\text{-}\mathrm{Isoc}^\dagger(X_F/\cur{E}_K^{\dagger,F})
$$
which we will again denote by $\sh{E}\mapsto \sh{E}_F$. Again, this is compatible with pullback via morphisms of $k\lser{t}$ varieties $U\rightarrow X$.

Finally, we consider the extension $\ekd\rightarrow \ek$. Let $(X,Y,\mathfrak{P})$ be a smooth and proper frame over $\cur{V}\pow{t}$, and let $(X,Y_{k\lser{t}},\mathfrak{P}_{\cur{O}_{\ek}})$ denote the base change of this frame to $\cur{O}_{\ek}$, this is a smooth and proper frame over $\cur{O}_{\ek}$ in the usual sense of Berthelot's rigid cohomology. Since $\cur{O}_{\ek}\cong \cur{V}\pow{t}\tate{t^{-1}}$, there is a natural open immersion of rigid spaces
$$ ]Y_{k\lser{t}}[_{\mathfrak{P}_{\cur{O}_{\ek}}} \rightarrow ]Y[_\mathfrak{P}
$$
over $S_K$ such that $(j_X^\dagger\cur{O}_{]Y[_\mathfrak{P}})|_{]Y_{k\lser{t}}[_{\mathfrak{P}_{\cur{O}_{\ek}}}}= j_X^\dagger\cur{O}_{]Y_{k\lser{t}}[_{\mathfrak{P}_{\cur{O}_{}\ek}}}$ (which follows, for example, by the concrete description of a cofinal system of neighbourhoods in both cases). This induces a functor
$$
\mathrm{MIC}^\dagger((X,Y,\mathfrak{P})/\ekd)\rightarrow \mathrm{MIC}^\dagger((X,Y_{k\lser{t}},\mathfrak{P}_{\cur{O}_{\ek}})/\ek)
$$
which is simply given by restriction. Here the latter category is the usual category of coherent modules with overconvergent connection as defined for example in Chapter 6 of \cite{rigcoh}. Actually, the definition there is in terms of Tate's rigid spaces rather than Huber's adic spaces, but exactly the same sort of methods as used in Section \ref{opening} will show that the two points of view are equivalent.
The induced functor
$$ \mathrm{Isoc}^\dagger(X/\ekd) \rightarrow \mathrm{Isoc}^\dagger(X/\ek)
$$
is independent of the choice of frame $(X,Y,\mathfrak{P})$ and will be denoted $\sh{E}\mapsto \hat{\sh{E
}}$ (the notation is meant to suggest  a `quasi-completion', that is $\pi$-adic completion in the horizontal variable $t^{-1}$ but not the vertical variables). Again, this is easily seen to be compatible with all previous constructions of Frobenius base change, base change via a finite separable extension of $k\lser{t}$ and pullback via a morphism of $k\lser{t}$-varieties.

All of these `base changes' induce corresponding base change morphisms on cohomology, in that we have canonical base change morphisms
\begin{align*} H^i_\rig(X/\ekd,\cur{E})\otimes_{\ekd}\cur{E}_K^{\dagger,F} &\rightarrow H^i_\rig(X_F /\cur{E}_K^{\dagger,F},\cur{E}_F) \\
H^i_\rig(X/\ekd,\cur{E})\otimes_{\ekd,\sigma}\ekd &\rightarrow H^i_\rig(X /\ekd,F^*\cur{E}) \\
H^i_\rig(X/\ekd,\cur{E})\otimes_{\ekd,\sigma}\ek &\rightarrow H^i_\rig(X /\ek,\hat{\cur{E}}) 
\end{align*}
which are all compatible, in the sense that we leave it to the reader to make precise. In particular, if $\cur{E}\in F\text{-}\mathrm{Isoc}^\dagger(X/\ekd)$ then we get a natural $\sigma$-linear morphism
$$ H^i_\rig(X/\ekd,\cur{E}) \rightarrow H^i_\rig(X /\ekd,\cur{E})
$$
which commutes with the extensions $\ekd\rightarrow \cur{E}_{K}^{\dagger,F}$ and $\ekd\rightarrow \ek$.

We will end this section by noting a couple of easy corollaries of the naturality of Theorem \ref{isocmic}, which give a concrete interpretation of Frobenius pullbacks and Frobenius structures, and will be useful in the sequels \cite{rclsf2,rclsf3}.

\begin{defn} Let $(X,Y,\mathfrak{P})$ be a frame. A Frobenius on $(X,Y,\mathfrak{P})$ is a $\sigma$-linear endomorphism $\varphi$ of $\mathfrak{P}$ lifting the absolute $q$-power Frobenius on $P$. 
\end{defn}

Note that such a Frobenius induces a $\sigma$-linear pullback functor $$\varphi^*:\mathrm{MIC}^\dagger((X,Y,\mathfrak{P})/\ekd)\rightarrow \mathrm{MIC}^\dagger((X,Y,\mathfrak{P})/\ekd),$$
more generally, if $u:(X',Y',\mathfrak{P}')\rightarrow (X,Y,\mathfrak{P})$ is a Frobenius semi-linear morphism of smooth and proper frames over $\cur{V}\pow{t}$ then we get a pullback functor
$$ u^*:\mathrm{MIC}^\dagger((X,Y,\mathfrak{P})/\ekd)\rightarrow \mathrm{MIC}^\dagger((X',Y',\mathfrak{P}')/\ekd)
$$
which is $\sigma$-linear over $\ekd$.

\begin{defn} Let $(X,Y,\mathfrak{P})$ be a frame with Frobenius $\varphi$. Then a Frobenius structure on an object $\sh{E}\in\mathrm{MIC}^\dagger((X,Y,\mathfrak{P})/\ekd)$ is an isomorphism $\varphi^*\sh{E}\isomto \sh{E}$ in $\mathrm{MIC}^\dagger((X,Y,\mathfrak{P})/\ekd)$. The category of modules with an overconvergent integrable connection together with a Frobenius structure is denoted $\varphi\text{-}\mathrm{MIC}^\dagger((X,Y,\mathfrak{P})/\ekd)$.
\end{defn}

\begin{prop}\label{frobisoc} \begin{enumerate} \item Let $u:(X,Y',\mathfrak{P}')\rightarrow (X,Y,\mathfrak{P})$ be a Frobenius semi-linear morphism of smooth and proper frames over $\cur{V}\pow{t}$, such that the induced morphism $X\rightarrow X$ is the absolute $q$-power Frobenius. Then the Frobenius pullback functor
$$
F^*:\mathrm{Isoc}^\dagger(X/\ekd)\rightarrow \mathrm{Isoc}^\dagger(X/\ekd)
$$
can be identified with the functor 
$$ u^*:\mathrm{MIC}^\dagger((X,Y,\mathfrak{P})/\ekd)\rightarrow \mathrm{MIC}^\dagger((X,Y',\mathfrak{P}')/\ekd).
$$
\item Let $(X,Y,\mathfrak{P})$ be a smooth and proper frame over $\cur{V}\pow{t}$ with Frobenius $\varphi$. Then there is an equivalence of categories
$$
F\text{-}\mathrm{Isoc}^\dagger(X/\ekd)\cong \varphi\text{-}\mathrm{MIC}^\dagger((X,Y,\mathfrak{P})/\ekd).
$$
\end{enumerate}
\end{prop}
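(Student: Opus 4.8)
The plan is to deduce both statements formally from the naturality of Theorem \ref{isocmic}, i.e.\ from the compatibility of the realisation equivalence $\sh{E}\mapsto\sh{E}_\mathfrak{P}$ with pullback along morphisms of frames, once that naturality has been extended from $\cur{V}\pow{t}$-linear to Frobenius semilinear morphisms. The one genuinely new ingredient needed is the observation that realisation commutes with base change of the frame $(X,Y,\mathfrak{P})$ along the Frobenius $\sigma$ of $\cur{V}\pow{t}$, and that on the isocrystal side this base change is precisely the functor $\sigma^*$ introduced above.

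For part (i) I would first form the base change $(X^{(\sigma)},Y^{(\sigma)},\mathfrak{P}^{(\sigma)})$ of $(X,Y,\mathfrak{P})$ along $\sigma$, observing that $X^{(\sigma)}$ is exactly the variety $X'$ obtained by base changing $X$ along the $q$-power Frobenius of $k\lser{t}$ that appears in the definition of $F^*$. By the universal property of this base change, the given Frobenius semilinear morphism of frames $u\colon(X,Y',\mathfrak{P}')\to(X,Y,\mathfrak{P})$ inducing the absolute Frobenius on $X$ factors canonically as a $\cur{V}\pow{t}$-linear morphism of frames $\tilde u\colon(X,Y',\mathfrak{P}')\to(X^{(\sigma)},Y^{(\sigma)},\mathfrak{P}^{(\sigma)})$ inducing the relative Frobenius $X\to X^{(\sigma)}$, followed by the $\sigma$-base-change projection. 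I would then check directly from the definitions that realisation on $\mathfrak{P}^{(\sigma)}$ intertwines $\sigma^*\colon\mathrm{Isoc}^\dagger(X/\ekd)\to\mathrm{Isoc}^\dagger(X^{(\sigma)}/\ekd)$ with $\sigma$-semilinear base change of modules with overconvergent connection, and apply the naturality of Theorem \ref{isocmic} to the $\cur{V}\pow{t}$-linear morphism $\tilde u$ in order to intertwine $\tilde u^*$ with pullback along relative Frobenius. Composing the two steps identifies $u^*$ with $F^*=(\text{relative Frobenius pullback})\circ\sigma^*$; overconvergence is automatically preserved, since by construction $u^*$ lands in $\mathrm{MIC}^\dagger$.

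For part (ii) I would apply part (i) to $u=\varphi$ with $(X,Y',\mathfrak{P}')=(X,Y,\mathfrak{P})$: a Frobenius $\varphi$ on the frame is in particular such a semilinear endomorphism of $(X,Y,\mathfrak{P})$ — it restricts to $Y$ and to $X$ because it lifts the absolute $q$-power Frobenius of $P$ and $\sigma$ carries $(\pi)$ into $(\pi)$, and it induces the absolute Frobenius on $X$ — so under $\sh{E}\mapsto\sh{E}_\mathfrak{P}$ the functor $\varphi^*$ on $\mathrm{MIC}^\dagger((X,Y,\mathfrak{P})/\ekd)$ is identified with $F^*$ on $\mathrm{Isoc}^\dagger(X/\ekd)$. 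An object of $F\text{-}\mathrm{Isoc}^\dagger(X/\ekd)$ is a pair consisting of $\sh{E}$ and an isomorphism $F^*\sh{E}\isomto\sh{E}$; under realisation this becomes a coherent $j_X^\dagger\cur{O}_{]Y[_\mathfrak{P}}$-module with overconvergent integrable connection together with an isomorphism $\varphi^*\sh{E}_\mathfrak{P}\isomto\sh{E}_\mathfrak{P}$, i.e.\ an object of $\varphi\text{-}\mathrm{MIC}^\dagger((X,Y,\mathfrak{P})/\ekd)$, and morphisms correspond in the same way, so the functor is an equivalence because $\sh{E}\mapsto\sh{E}_\mathfrak{P}$ is. I expect the only real work to be the semilinear step in part (i): setting up base change of a frame along $\sigma$ and verifying on the nose that it matches $\sigma^*$ and $\sigma$-semilinear base change of connections; everything after that is formal.
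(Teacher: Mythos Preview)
Your proposal is correct and matches what the paper intends: the paper states this proposition without proof, introducing it only as ``a couple of easy corollaries of the naturality of Theorem \ref{isocmic}'', so your factorisation of the semilinear morphism $u$ through the $\sigma$-base change $(X^{(\sigma)},Y^{(\sigma)},\mathfrak{P}^{(\sigma)})$ followed by a $\cur{V}\pow{t}$-linear morphism, together with the tautological identification of $\sigma^*$ on isocrystals with $\sigma$-base change of frames, is exactly the argument being alluded to. You have in fact written out more than the paper does.
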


\bibliographystyle{amsplain}\addcontentsline{toc}{section}{References}
\bibliography{/Users/cdl10/Documents/Dropbox/Maths/lib.bib}

\end{document}